\documentclass[12pt, reqno]{amsart}%
\usepackage[english]{babel}
\usepackage{amsthm}
\usepackage{amsmath}
\usepackage{amssymb}
\usepackage{dsfont}
\usepackage{graphicx}
\usepackage{esint}
\usepackage[shortlabels]{enumitem}
\usepackage{amscd,mdwlist,relsize}
\usepackage[margin=1.2in]{geometry}
\usepackage{color}
\usepackage{cite}
\usepackage{bbm, mathrsfs,pgf,tikz}
\usepackage{tcolorbox}
\usepackage[backgroundcolor=white, bordercolor=blue,
linecolor=blue]{todonotes}
\usepackage{url}
\usepackage{dsfont}
\usepackage{cases}
\usetikzlibrary{arrows}

\usepackage[pagebackref=false, pdfpagelabels, plainpages=false]{hyperref} 
\hypersetup{
colorlinks=true,
linkcolor=red,
filecolor=magenta,
urlcolor=cyan,
citecolor=blue,
}

\theoremstyle{plain}  
\newtheorem{theorem}{Theorem}[section]
\newtheorem{lemma}[theorem]{Lemma}

\newtheorem{corollary}[theorem]{Corollary}

\theoremstyle{definition}  
\newtheorem{definition}[theorem]{Definition}
\newtheorem{example}[theorem]{Example}

\newtheorem{remark}[theorem]{Remark}   
\theoremstyle{remark}
\newtheorem*{remark*}{Remark}

\numberwithin{equation}{section}

\DeclareMathOperator{\dist}{dist}
\DeclareMathOperator{\supp}{supp}

\DeclareMathOperator{\loc}{loc}

\DeclareMathOperator{\pv}{\operatorname{p.\!v.}}

\DeclareMathOperator{\cE}{\mathcal{E}}

\DeclareMathOperator{\cJ}{\mathcal{J}}

\DeclareMathOperator{\R}{\mathbb{R}}

\renewcommand{\d}{\,\mathrm{d}}

\newcommand{\eps}{\varepsilon}

\newcommand{\vertiii}[1]{{\left\vert\kern-0.25ex\left\vert\kern-0.25ex\left\vert #1 \right\vert\kern-0.25ex\right\vert\kern-0.25ex\right\vert}}
\addto\extrasenglish{%

}
\makeatletter
\@namedef{subjclassname@2020}{\textup{2020} Mathematics Subject Classification}
\makeatother

\begin{document}

\title{Robust interpolation inequalities via Chebyshev-type integral inequalities}

\author{Guy Foghem}
\address{{\tiny Brandenburgische Technische Universit\"at Cottbus--Senftenberg, Fakult\"{a}t 1: MINT Fachgebiet Mathematik, Platz der Deutschen Einheit 1, 03046 Cottbus, Germany.} \href{https://orcid.org/0000-0002-8917-7309}{ORCID}}
\email{guy.foghem[at]b-tu.de}
\thanks{Financial support from the Deutsche Forschungsgemeinschaft (DFG) through the Walter Benjamin Programme (project FO~1699/1-1) is gratefully acknowledged.}

\begin{abstract}
We establish  robust log-convex interpolation inequalities within the scale of Gagliardo seminorms. We achieve this by  deriving some Chebyshev-type integral inequalities for general non-synchronous functions. Our primary motivation for establishing these robust interpolation inequalities stems from the study of the asymptotic nonlocal-to-local stability of weak solutions to the boundary  Dirichlet problem associated with the regional fractional $p$-Laplacian. More precisely, if $u_s \in W^{s,p}(\Omega)$ weakly satisfies $(-\Delta)_{p, \Omega}^s u_s = f_s $ in $\Omega$ and $ \gamma^s_0(u_s) = g_s$ on  $\partial\Omega,$  with $\frac{1}{p} < s \leq 1$  and  $\Omega \subset \R^d$ is bounded Lipschitz, then, under appropriate convergence of the data $f_s$ and $g_s$ as $s \to 1^-$, we establish that  $\| u_s - u_1 \|_{W^{\eta,p}(\Omega)} \xrightarrow{s \to 1^-} 0 $ for all $0 \leq \eta < 1$.
\end{abstract}

\keywords{Robust interpolation inequalities, Chebyshev-type integral inequalities, Fractional Sobolev spaces, Regional fractional $p$-Laplace operator, Regional Dirichlet problem}
\subjclass[2020]{
26D15, 
35J20, 
35J92, 
46E35, 
46B70 , 
49J40, 
49J45  
}

\maketitle


\tableofcontents

\section{Introduction}\label{sec:introduction}
Given an open set $\Omega\subset\R^d$, the Gagliardo seminorm of order $s\in [0,1]$ is defined
by
\begin{align*}
|u|^p_{W^{s,p}(\Omega)} :=
\begin{cases}
\|u\|_{L^p(\Omega)}^p, & \text{if } s = 0, \\
\displaystyle  \iint_{\Omega\times\Omega}
\frac{|u(x) - u(y)|^p}{|x - y|^{d + sp}}\d y\d x, & \text{if } 0 < s < 1, \\
\|\nabla u\|_{L^p(\Omega)}^p, & \text{if } s = 1.
\end{cases}
\end{align*}
Throughout, we adopt the convention that $|u|_{W^{s,p}(\Omega)}=\infty$ when $u\in L^p(\Omega)~\setminus~W^{s,p}(\Omega)$. For $0<\sigma<\eta<s< 1$ with $\eta=\theta \sigma+ (1-\theta)s$ with $\theta\in (0,1)$,
\begin{align*}
\iint_{\R^d \times \R^d}
\frac{|u(x) - u(y)|^p}{|x - y|^{d + \eta p}}\d y\d x=  \iint_{\R^d \times \R^d}
\frac{|u(x) - u(y)|^p}{|x - y|^{d + \theta \sigma p}|x - y|^{d + (1-\theta)sp}}\d y\d x.
\end{align*}
Applying Hölder's inequality yields the following well-known interpolation inequality, also recognized as a log-convex inequality
\begin{align*}
|u|_{W^{\eta, p}(\R^d)}\leq |u|^{\theta}_{W^{\sigma, p}(\R^d)} |u|^{1-\theta}_{W^{s, p}(\R^d)}.
\end{align*}
This estimate also remains true up a constant, for $s=1$ or $\sigma=0$. Roughly speaking, this estimate can be genuinely interpreted as a quantitative expression reflecting  the interpolation identity $W^{\eta, p}(\R^d) = \big[ W^{\sigma, p}(\R^d),\, W^{s, p}(\R^d) \big]_{\theta, p},$
where the right-hand side denotes the real interpolation space via Lions-Peetre $ K$-method; see for instance \cite{BeLo76,Trie95-interpolation,Lun18}. However, the above inequality is not robust, in the sense that it is not consistent as $\eta \to 1^-$ or $\eta \to 0^+$. Indeed, it is a well-known fact that for any sufficiently smooth, non-constant function $u \in L^p(\R^d)$, we have
\begin{align*}
|u|^p_{W^{\eta, p}(\R^d)} \xrightarrow{\eta\to 1^-}\infty\quad \text{ and }\quad |u|^p_{W^{\eta, p}(\R^d)} \xrightarrow{\eta\to 0^+}\infty.
\end{align*}
The overarching goal of this paper is to establish a robust interpolation inequality, ensuring that the blow-up near $\eta=0$ and $\eta=1$ is avoided.  To remedy these two anomalies, Brezis--Bourgain--Mironescu \cite{BBM01} and Maz'ya--Shaposhnikova \cite{MS02} showed, respectively, that the factor $\eta(1-\eta)$ annihilates  the singularities at $\eta=1$ and $\eta=0$.  More precisely, one has
\begin{align} \label{eq:frac-BBM}
& \eta(1-\eta)|u|^p_{W^{\eta, p}(\R^d)}
\xrightarrow{\eta\to 1^-} K_{d,p}  \frac{|\mathbb{S}^{d-1}|}{p}\|\nabla u\|^p_{L^p(\R^d)}, \qquad u\in W^{1,p}(\R^d),\\[2ex]
\label{eq:frac-mazya}
&\eta(1-\eta)|u|^p_{W^{\eta, p}(\R^d)} \xrightarrow{\eta\to 0^+}\frac{2|\mathbb{S}^{d-1}|}{p}\| u\|^p_{L^p(\R^d)}, \qquad u\in \bigcup_{0 < s < 1} W^{s,p}(\R^d),
\end{align}
where the constant $K_{d,p}$,  see \cite{Fog23}, is given by
\begin{align*}
K_{d,p}
&= \frac{1}{|\mathbb{S}^{d-1}|}\int_{\mathbb{S}^{d-1}} |w_d|^p \, d\sigma_{d-1}(w)
= \frac{\Gamma\big(\frac{d}{2}\big) \Gamma\big(\frac{p+1}{2}\big)}{\Gamma\big(\frac{d+p}{2}\big) \Gamma\big(\frac{1}{2}\big)}.
\end{align*}
The original proof of \eqref{eq:frac-mazya}, i.e., the  limit of the fractional seminorm as $\eta\to0^+$  due to Maz'ya--Shaposhnikova~\cite{MS02} is quite technical and uses advance tools like robust hardy inequalities. We  present  in Theorem \ref{thm:mazy'a-bbm-limit} a relatively simple approach asymptotic of the limit as $\eta\to 0^+$
as well as the proof of \eqref{eq:frac-BBM}, i.e., the   asymptotic as $ \eta \to 1^- $, due to Brezis--Bourgain--Mironescu~\cite{BBM01}.
Note in passing that, the formula of Maz'ya--Shaposhnikova was recently extended to a more general setting in \cite{DDGA25}.
In view of the aforementioned asymptotics, it is natural to opt for the following renormalized Sobolev seminorm
$[u]_{W^{s,p}(\Omega)}$ of order $s \in [0,1]$,
\begin{align*}
[u]^p_{W^{s,p}(\Omega)} :=
\begin{cases}
\displaystyle \frac{2|\mathbb{S}^{d-1}|}{p} \|u\|^p_{L^p(\Omega)}, & \text{if } s = 0, \\
\displaystyle s(1 - s) \iint_{\Omega\times\Omega}
\frac{|u(x) - u(y)|^p}{|x - y|^{d + sp}}\d y\d x, & \text{if } 0 < s < 1, \\
\displaystyle K_{d,p} \frac{|\mathbb{S}^{d-1}|}{p} \|\nabla u\|^p_{L^p(\Omega)}, & \text{if } s = 1.
\end{cases}
\end{align*}
The endpoint $p = \infty$ may be viewed heuristically as the limit of $ [u]_{W^{s,p}(\R^d)}$ or $|u|_{W^{s,p}(\R^d)}$ as $p \to \infty$, thereby motivating
\begin{align*}
|u|_{W^{s,\infty}(\Omega)}= [u]_{W^{s,\infty}(\Omega)} :=
\begin{cases}
\displaystyle \|u\|_{L^\infty(\Omega)}, & \text{if } s = 0, \\
\displaystyle \sup_{x,y\in \Omega,\,x \neq y} \frac{|u(x) - u(y)|}{|x-y|^s}, & \text{if } 0 < s < 1, \\
\displaystyle \|\nabla u\|_{L^\infty(\Omega)}, & \text{if }s = 1.
\end{cases}
\end{align*}
We must add the caveat that our convention for $[u]_{W^{0,p}(\Omega)}$, for a general $\Omega \subset \R^d$, does not necessarily reflect the limiting behavior of  $[u]_{W^{s,p}(\Omega)}$ as $s \to 0^+$.
In fact, while the asymptotic behavior of $[u]_{W^{s,p}(\Omega)}$ as $s \to 1^-$ is governed by the geometric regularity of $\Omega$, we shall in see Section \ref{sec:assymp-s-near-1-0} that the asymptotic behavior of $[u]_{W^{s,p}(\Omega)}$ as $s \to 1^-$  depends on the geometric shape of $\Omega$.
\noindent Our primary motivation for considering the robust interpolation inequality stems from the study of the asymptotic behavior of solutions to the Dirichlet problem associated with the regional fractional $p$-Laplacian $(-\Delta)^s_{p,\Omega}$ as $s\to 1^-$. Therefore, another natural choice for renormalizing the Gagliardo seminorm is to consider the normalization constant associated with the fractional $p$-Laplacian $(-\Delta)^s_p \equiv (-\Delta)^s_{p,\mathbb{R}^d}$, where the regional operator $(-\Delta)^s_{p,\Omega} $ is defined by
\begin{align*}
\begin{split}
(-\Delta)^s_{p,\Omega} u(x)
&:= \widetilde{C}_{d,p,s}\pv \int_{\Omega}\frac{|u(x)-u(y)|^{p-2} (u(x)-u(y))}{|x-y|^{d+sp}}\d y,\qquad s\in (0,1)\\
(-\Delta)^1_{p,\Omega} u(x)
&:= -\Delta_p u(x)	=-\mathrm{div}(|\nabla u(x)|^{p-2}\nabla u(x)), \qquad \qquad s=1.
\end{split}
\end{align*}
Here following \cite{Fog25}, our chosen normalizing constant is
\begin{align}\label{eq:normalized-frac-cons}
\widetilde{C}_{d,p,s} =
\begin{cases}
C_{d,p,s}, & \text{if } sp > 1, \\
C_{d,2,\frac{sp}{2}}, & \text{if } sp \leq 1
\end{cases}
\quad \text{with}\quad
C_{d,p,s}= \frac{s(1-2s)\pi^{\frac{1-d}{2}}\Gamma(\frac{d+sp}{2})}{\Gamma(\frac{1+s p}{2})\Gamma(p(1-s)) \cos(s\pi)}.
\end{align}
 Namely, we consider the renormalized seminorm
\begin{align*}
[u]^{*p}_{W^{s,p}(\Omega)}
& :=
\begin{cases}
\|u\|_{L^p(\Omega)}^p, & \text{if } s = 0, \\
\displaystyle \frac{\widetilde{C}_{d,p,s}}{2} \iint_{\Omega\times\Omega}
\frac{|u(x) - u(y)|^p}{|x - y|^{d + sp}}\d y\d x, & \text{if } 0 < s < 1, \\
\|\nabla u\|_{L^p(\Omega)}^p, & \text{if } s = 1,
\end{cases}\\
[u]^{*}_{W^{s,\infty}(\Omega)} &:=[u]_{W^{s,\infty}(\Omega)}=|u|_{W^{s,\infty}(\Omega)}
\qquad \text{ if } p=\infty.
\end{align*}
In short, we can write $[u]^{*}_{W^{s,p}(\Omega)} =\big(B_{d,p,s}\big)^{\frac{1}{p}}[u]_{W^{s,p}(\Omega)},$ $s\in [0,1]$
where

\begin{align*}
B_{d,p,s}=  \frac{\widetilde{C}_{d,p,s}}{2s(1-s)}, \qquad B_{d,p,0}=\frac{p}{2|\mathbb{S}^{d-1}|}\qquad\text{and}\qquad B_{d,p,1}=\frac{p}{K_{d,p}|\mathbb{S}^{d-1}|}.
\end{align*}
As we  show in Section \ref{sec:assymp-s-near-1-0}, the normalization constant $\widetilde{C}_{d,p,s}$, is chosen so as to ensure that the following properties hold true for $u\in C_c^\infty(\R^d)$.
\begin{itemize}
\item $\widehat{(-\Delta)^s u}(\xi)= |\xi|^{2s}\widehat{u}(\xi)$  for all $u\in C_c^\infty(\R^d)$, where $ \widehat{u}(\xi) =\int_{\R^d} e^{-ix\cdot \xi} u(x)\d x$.
\item  $(-\Delta)^s_p u(x)\to -\Delta_p u(x)\quad$  and  $\quad[u]^*_{W^{s,p}(\R^d)}\to  \|\nabla u\|_{L^{p}(\R^d)}$ as  $s\to1^-$.
\item $(-\Delta)^s_p u(x)\to |u(x)|^{p-2} u(x)\quad$  and  $\quad[u]^*_{W^{s,p}(\R^d)}\to \|u\|_{L^{p}(\R^d)}$ as  $s\to0^+$.
\item The following asymptotic behaviors hold
\begin{align*}
\lim_{s\to 1^-} \frac{K_{d,p}C_{d,p,s}}{s(1-s)}=\lim_{s\to 0^+}\frac{\Gamma(p+1)C_{d,p,s}}{s(1-s)}=\lim_{s\to 0^+} \frac{2C_{d,2, \frac{sp}{2}}}{s(1-s)}=\frac{2p}{|\mathbb{S}^{d-1}|}.
\end{align*}
\end{itemize}
In particular we have $\widetilde{C}_{d,p,s}\asymp s(1-s)$. Others normalizing constants for the fractional $p$-Laplacian are proposed in \cite{dTGCV21} and also in \cite{DJS25} where the need for asymptotic near $s \to 0^+$ analysis, in connection with the logarithmic $p$-Laplacian, is considered.
\noindent  Our first main result consists of the following robust interpolation inequality.
\begin{theorem}
\label{thm:robust-interpo-esti}
Let $u\in L^p(\mathbb{R}^d)$ with $1\leq p\leq \infty$ and let $0\leq \sigma<\eta<s\leq 1$ say $\eta=\theta s+(1-\theta)\sigma$ with $
\theta=\frac{\eta-\sigma}{s-\sigma}\in(0,1).$
Then the following variants of the log-convex interpolation inequalities hold.
\medskip

\noindent
$\mathbf{(I)}$. (Dimension free constant).  For the renormalized seminorm $[\cdot]_{W^{s,p}(\R^d)}$ we have
\begin{align*}
[u]_{W^{\eta,p}(\R^d)}
&\leq
\varkappa N_p(\sigma,\eta,s)
\,[u]_{W^{s,p}(\R^d)}^{\theta}
\cdot [u]_{W^{\sigma,p}(\R^d)}^{1-\theta},\\
[u]_{W^{\eta,p}(\R^d_+)}
&\leq
4^{1/p}\varkappa N_p(\sigma,\eta,s)
\,[u]_{W^{s,p}(\R^d_+)}^{\theta}
\cdot [u]_{W^{\sigma,p}(\R^d_+)}^{1-\theta},
\end{align*}
where $\tiny{\varkappa = \begin{cases} 2^{\frac{1}{p}} & \text{if } d = 1, \\[-2pt]
 1 & \text{if } d \geq 2. \end{cases}}$ and the dimension-free constant $N_p(\sigma,\eta,s)$ is defined by
\begin{align*}
N_p(\sigma,\eta,s)
= 2^{\frac{(s-\eta)}{s-\sigma}(\eta+1-\sigma)}
\left(\frac{s-\sigma}{\eta-\sigma}\right)^{\frac{\eta-\sigma}{(s-\sigma)p}}
\left(\frac{s-\sigma}{s-\eta}\right)^{\frac{s-\eta}{(s-\sigma)p}},
\end{align*}
and satisfies $1\leq N_p(\sigma,\eta,s)\leq  2^{1+\frac{1}{p}}$.
\medskip

$\mathbf{(II)}$. (Gagliardo seminorm).
For the Gagliardo  seminorm $|\cdot|_{W^{s,p}(\R^d)}$ we have
\begin{align*}
|u|_{W^{\eta,p}(\R^d)}
&\leq
\Upsilon_{d,p}(\sigma,\eta,s)
|u|_{W^{s,p}(\R^d)}^\theta\cdot
|u|_{W^{\sigma,p}(\mathbb{R}^d)}^{1-\theta},\\
|u|_{W^{\eta,p}(\R^d_+)}
&\leq
4^{1/p}\Upsilon_{d,p}(\sigma,\eta,s)
|u|_{W^{s,p}(\R^d_+)}^\theta\cdot
|u|_{W^{\sigma,p}(\R^d_+)}^{1-\theta}.
\end{align*}
where, we have   $\Upsilon_{d,\infty}(\sigma,\eta,s)=N_\infty(\sigma,\eta,s),$ while
\begin{align*}
\Upsilon_{d,p}(\sigma,\eta,s)
&=
\frac{\varkappa N_p(\sigma,\eta,s)}{[\eta(1-\eta)]^{\frac{1}{p}}}
\times
\begin{cases}
\big[(s(1-s))^\theta(\sigma(1-\sigma))^{1-\theta}\big]^{\frac{1}{p}},
& \sigma\neq0,\ s\neq1,
\\
\big[(K_{d,p}\tfrac{|\mathbb{S}^{d-1}|}{p})^\theta
(\sigma(1-\sigma))^{1-\theta}\big]^{\frac{1}{p}},
& \sigma\neq0,\, s=1,
\\
\big[(s(1-s))^\theta(\tfrac{ 2|\mathbb{S}^{d-1}|}{p})^{1-\theta}\big]^{\frac{1}{p}},
& \sigma=0, \, s\neq1,
\\
\big[\tfrac{|\mathbb{S}^{d-1}|}{p}K_{d,p}^\theta2^{1-\theta}\big]^{\frac{1}{p}},
& \sigma=0,\, s=1.
\end{cases}
\end{align*}

$\mathbf{(III)}$. (Renormalized seminorm).  For the renormalized seminorm $[\cdot]^*_{W^{s,p}(\R^d)}$ we have
\begin{align*}
[u]^*_{W^{\eta,p}(\mathbb{R}^d)}
&\leq
\Upsilon^*_{d,p}(\sigma,\eta,s)
[u]_{W^{s,p}(\mathbb{R}^d)}^{*\theta}
[u]_{W^{\sigma,p}(\mathbb{R}^d)}^{*\,(1-\theta)},
\\
[u]^*_{W^{\eta,p}(\mathbb{R}^d_+)}
&\leq
4^{1/p}\Upsilon^*_{d,p}(\sigma,\eta,s)
[u]_{W^{s,p}(\mathbb{R}^d_+)}^{*\theta}
[u]_{W^{\sigma,p}(\mathbb{R}^d_+)}^{*\,(1-\theta)},
\end{align*}
where, we have  $\Upsilon^*_{d,\infty}(\sigma,\eta,s)=N_\infty(\sigma,\eta,s),$ while
\begin{align*}
\Upsilon^*_{d,p}(\sigma,\eta,s)
=\varkappa N_p(\sigma,\eta,s) \Big(\frac{B_{d,p,\eta}}
{B_{d,p,\sigma}^{\theta}B_{d,p,s}^{1-\theta}}
\Big)^{\frac{1}{p}} \leq 2^{1+\frac{2}{p}}\frac{B_{\max}}{B_{\min}},
\end{align*}
with  $B_{\max}= \max_{s\in [0,1]} B_{d,p,s}$ and $B_{\min}= \min_{s\in [0,1]} B_{d,p,s}$.
\end{theorem}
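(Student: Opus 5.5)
The plan is to reduce everything to a one-dimensional weighted integral of the $L^p$-modulus of continuity. Then a Chebyshev-type inequality for non-synchronous functions replaces the plain H\"older step, which is the step that loses the factor $\eta(1-\eta)$.

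\textbf{Reduction to one variable.} For $0<\eta<1$ I substitute $y=x+h$ and pass to polar coordinates $h=rw$. Define
\begin{align*}
F(r):=\int_{\mathbb{S}^{d-1}}\|u(\cdot+rw)-u\|_{L^p(\R^d)}^p\,\d\sigma_{d-1}(w),
\end{align*}
so that $|u|^p_{W^{\eta,p}(\R^d)}=\int_0^\infty F(r)\,r^{-1-\eta p}\,\d r$. This $F$ has three structural properties.
\begin{itemize}
\item $F(r)\le 2^p|\mathbb{S}^{d-1}|\,\|u\|^p_{L^p}$. This bound is what the endpoint $\sigma=0$ sees, matching the constant $\frac{2|\mathbb{S}^{d-1}|}{p}$ in \eqref{eq:frac-mazya}.
\item $F(r)\le r^pK_{d,p}|\mathbb{S}^{d-1}|\,\|\nabla u\|^p_{L^p}$. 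This bound is what the endpoint $s=1$ sees.
\item $F$ is quasi-monotone: $F(2r)\le 2^pF(r)$ from $\Delta_{2h}u=\Delta_hu(\cdot+h)+\Delta_hu$, and $r\mapsto F(r)/r^p$ is quasi-decreasing in the sense $F(t)/t^p\le 2^p F(r)/r^p$ for $t\ge r$. In dimension $d=1$ the sphere is just $\{\pm1\}$, and the averaging costs the extra factor $\varkappa=2^{1/p}$.
\end{itemize}

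\textbf{Chebyshev step.} I write
\begin{align*}
F(r)r^{-1-\eta p}=\big(F(r)r^{-p}\big)^{a}\,F(r)^{1-a}\,r^{-1-\eta p+ap}
\end{align*}
and view this as a product of a (quasi-)decreasing factor and a (quasi-)increasing factor, integrated against a power weight. The inequality I aim to prove is the Chebyshev-type inequality for oppositely monotone $f,g$ and a measure $\mu$:
\begin{align*}
\int fg\,\d\mu\le \frac{\int f\,\d\mu\int g\,\d\mu}{\mu(\text{total})}.
\end{align*}
It must be stated for quasi-monotone functions and for measures that are normalized pieces of $r^{-1-\sigma p}\,\d r$ and $r^{-1-sp}\,\d r$. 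The quasi-monotonicity constants should produce the power of $2$ in $N_p$.

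Concretely, I split $\int_0^\infty=\int_0^R+\int_R^\infty$. On $(0,R)$ I compare the $\eta$-weight with the $s$-weight, and on $(R,\infty)$ with the $\sigma$-weight. The resulting elementary integrals $\int_0^Rr^{(s-\eta)p-1}\,\d r=\frac{R^{(s-\eta)p}}{(s-\eta)p}$ and $\int_R^\infty r^{-(\eta-\sigma)p-1}\,\d r=\frac{R^{-(\eta-\sigma)p}}{(\eta-\sigma)p}$ get multiplied by $s(1-s)$ and $\sigma(1-\sigma)$ after normalization. Optimizing in $R$ (the standard $K$-method balancing) should give
\begin{align*}
\eta(1-\eta)|u|^p_{W^{\eta,p}}\lesssim\big(s(1-s)|u|^p_{W^{s,p}}\big)^\theta\big(\sigma(1-\sigma)|u|^p_{W^{\sigma,p}}\big)^{1-\theta}.
\end{align*}
The explicit factors $\big(\frac{s-\sigma}{\eta-\sigma}\big)^{\frac{\eta-\sigma}{(s-\sigma)p}}\big(\frac{s-\sigma}{s-\eta}\big)^{\frac{s-\eta}{(s-\sigma)p}}$ are exactly the constants coming out of this optimization. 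The bound $N_p\le2^{1+1/p}$ then follows from the elementary estimate $x^{-x}(1-x)^{-(1-x)}\le2$ for $x=\theta$, together with the exponent bound $\frac{s-\eta}{s-\sigma}(\eta+1-\sigma)\le 1$.

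The endpoint cases $\sigma=0$ and $s=1$ use the two bounds on $F$ in place of the corresponding seminorm. This yields (I). Part (II) is (I) rewritten: divide by $\eta(1-\eta)$ and reinsert the normalizing factors, including the $\sigma=0$ and $s=1$ conventions. Part (III) is also (I) rewritten, using $[u]^*=B^{1/p}[u]$ and $B_{\min}\le B_{d,p,\cdot}\le B_{\max}$. For $p=\infty$ the argument is a direct pointwise interpolation $|\Delta_hu|\le\min(\cdot,\cdot)$ with the same splitting at $R$.

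\textbf{Half-space and main obstacle.} For $\R^d_+$ I would extend $u$ by even reflection $\bar u$. Then $|\bar u|_{W^{\tau,p}(\R^d)}^p\le 4|u|^p_{W^{\tau,p}(\R^d_+)}$ for every $\tau$: there are two diagonal and two cross blocks, and $|x-y^*|\ge|x-y|$ handles the cross terms. Applying the $\R^d$ result to $\bar u$ and using the trivial lower bound $|u|_{W^{\eta,p}(\R^d_+)}\le|\bar u|_{W^{\eta,p}(\R^d)}$ gives the factor $4^{1/p}$.

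The main obstacle I expect is the Chebyshev step with only quasi-monotone functions. One has to choose the normalized measures so that the constant that comes out is dimension free and exactly $N_p$, with no hidden $p$-dependent blow-up as $\eta\to0^+$ or $\eta\to1^-$. I would first try replacing $F$ by its monotone envelopes $\sup_{t\le r}F(t)$ and $\inf_{t\le r}F(t)/r^p$, which are truly monotone and comparable to $F$ up to factors of $2^p$. I would then apply the classical Chebyshev inequality to these envelopes.
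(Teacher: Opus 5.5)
Your overall architecture is the paper's: split at $R$, compare the $\eta$-weight with the $s$-weight on $(0,R)$ and with the $\sigma$-weight on $(R,\infty)$ by a Chebyshev-type inequality, optimize via Lemma \ref{lem:min-power-interpolation}, and reflect for $\R^d_+$. The gap is that the monotonicity you feed into the Chebyshev step is not available in general.

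The long-range comparison needs $F$ to be quasi-increasing, $F(t)\le CF(r)$ for $t\le r$. You never justify this; $F(2r)\le 2^pF(r)$ points the other way. The short-range comparison needs $F(t)/t^p$ to be quasi-decreasing. For $d\ge 2$ both hold (Lemma \ref{lem:lp-diff-growth-spher}, Theorem \ref{thm:lp-first-order-diff}). They hold only through the rotation trick $tw=r(O_+w+O_-w)$ for $t\le 2r$, together with Funk--Radon slicing for odd $d$.

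For $d=1$ both properties are false. Take $u(x)=\sin(2\pi x)\phi(x/N)$ with $0\neq\phi\in C_c^\infty(\R)$. Then $F(1)\le 2N^{1-p}\|\phi'\|_{L^p(\R)}^p$, while $F(1/2)$ and $F(3/2)$ are $\gtrsim N$. Your monotone-envelope fallback does not help, because comparability of the envelopes with $F$ is exactly these two properties. So your argument gives nothing for $d=1$ beyond the case $\sigma=0$, $s=1$, where the two pointwise bounds on $F$ suffice. In particular, $\varkappa=2^{1/p}$ is not the cost of averaging over $\{\pm1\}$.

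The paper needs neither property.
\begin{itemize}
\item \textbf{Short range, on $(0,r)$.} It uses only the exact dyadic decrease $g(2t)\le g(t)$ of $g(t)=U(t)/t^p$, which holds in every dimension. This is combined with the dyadic Chebyshev inequality (Theorem \ref{thm:chebyshev-cadic}, applied through Theorem \ref{thm:cadic-monotonicity}). It yields $L_p(\eta,u,r)\le L_p(s,u,r)$ with the normalization $(2r)^{-(1-s)}$, so the loss is only $2^{(s-\eta)p}$. This is the origin of the factor $2^{(s-\eta)\theta}$ in $N_p$.
\item \textbf{Long range, on $(r,\infty)$, for $d=1$.} It passes to the Hardy average $AU(t)=\frac1t\int_0^tU$, which satisfies $U(x)\le 2^{p+1}AU(y)$ for all $0<x\le y$ in every dimension. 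The substitutions $z=(r/t)^{\eta p}$ and $z=(r/t)^{\sigma p}$ give $\eta r^{\eta p}\int_r^\infty U\,t^{-1-\eta p}\d t\le 2^{p+1}\sigma r^{\sigma p}\int_r^\infty AU\,t^{-1-\sigma p}\d t$. Fubini then bounds the last integral by $\frac{1}{1+\sigma p}\int_0^\infty U\,t^{-1-\sigma p}\d t$. So the $\eta$-tail is controlled by the \emph{full} $\sigma$-seminorm. The factor $(1-\sigma)^{-1}$ this produces is absorbed by $1-\eta$, and that is where $\varkappa^p=2$ comes from.
\end{itemize}

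Even for $d\ge 2$ your route does not reach the stated constant. A Chebyshev inequality with a $2^p$-almost decreasing factor loses $2^p$ on the short-range side as well. Optimizing in $R$ then gives $2\kappa(\theta)^{1/p}$, which is strictly larger than $N_p=2^{1-\theta(1-s+\eta)}\kappa(\theta)^{1/p}$.
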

\smallskip

\noindent It worth mentioning that, the cactus case $p = 2$ is particularly amenable to analysis using the Fourier transform. As obtained in  Theorem \ref{thm:robust-interpo-L2}, we get  $\Upsilon^*_{d,2}(\sigma,\eta, s)=1$, namely we have the following sharper estimate
\begin{align*}
[u]^*_{H^\eta (\R^d)}\leq[u]^{*\,\theta}_{H^s (\R^d)} \cdot  [u]^{*\,(1-\theta)}_{H^\sigma(\R^d)}.
\end{align*}
However, the resulting constant $N_2(\sigma, \eta, s)$ obtained with this method is less sharp when the dimension is large. Indeed  for  the seminorms $[\cdot ]_{H^t (\R^d)}$, we deduce
\begin{align*}
[u]_{H^\eta (\R^d)}
&\leq N_2(\sigma, \eta, s)[u]_{H^s (\R^d)}^{\theta}  \cdot [u]_{H^\sigma(\R^d)}^{1-\theta}
\end{align*}
 where $N_2(\sigma, \eta, s) =\big(\tfrac{B_{d,2,\sigma}^{\theta}B_{d,2,s}^{1-\theta}}{B_{d,2,\eta}} \big)^{1/2}\leq \sqrt{2d}.$
It is worth emphasizing that, rather than interpolating norms, our interpolation inequality applies only to seminorms. More generally, inequalities of this type are known as Gagliardo–Nirenberg inequalities. We refer the interested reader to \cite{BM18} for a thorough and comprehensive treatment of these inequalities. The robust log-convex inequality in the case $\sigma = 0$ and $s = 1$ was carried out in a less precise form in \cite[Theorem 3.54]{guy-thesis} and the non-robust ones can be found in \cite[Chapter 7]{Leo23}.
Although we did not set out  to address the open problem posed in \cite[Question 1.10]{BSY22}, the robust interpolation inequality established here yields, as a byproduct, a positive answer to that question in a considerably more general setting. It is important to highlight that a different answer to the latter open question is given in \cite[Theorem 3.9]{DoMi23}.
The key tool for establishing the robust interpolation inequality in Theorem \ref{thm:robust-interpo-esti} is the decomposition of the Gagliardo seminorm into short-range and long-range interactions. To this end, let us observe that passing through polar coordinates yields
\begin{align*}
\iint_{\R^d \times \R^d} \frac{ |u(x + h) - u(x)|^p}{|h|^{d+s p}}\d h\d x
&= \int_0^\infty   \frac{U(t) }{t^{1+s p}}\d t,
\end{align*}
where $U:[0,\infty)\to [0,\infty)$ is the averaged $L^p$-modulus of continuity of $u$,
\begin{align*}
U(t) &= \int_{\mathbb{S}^{d-1}} \int_{\R^d} |u(x + t\omega) - u(x)|^p \d x \d\sigma_{d-1}(\omega).
\end{align*}
\begin{definition}[Short and long range interaction]
\label{def:long-short-range}
\noindent We introduce the  short-range (local-range) interaction  $L_p(\cdot,u,r):[0,1]\to [0,\infty]$  for $0\leq s< 1$ by
\begin{align*}
 L_p(s, u,r)&:= \frac{(1-s)^{\frac{1}{p}}}{(2r)^{(1-s)} } \Big(\int_0^r \frac{U(t)}{t^{1+s p}} \d t\Big)^{\frac{1}{p}}=\frac{(1-s)^{\frac{1}{p}}}{(2r)^{(1-s)} } \Big(\iint_{\R^d \times B(x,r)} \hspace{-1ex} \frac{ |u(x) - u(y)|^p}{|x-y|^{d+s p}}\d y \d x\Big)^{\frac{1}{p}}.
\end{align*}
 The long-range (or nonlocal-range) interaction $T_p(\cdot,u,r):[0,1]\to [0,\infty)$ also called the fractional tail is defined  for $0<s\leq 1$ by
\begin{align*}
T_p(s, u,r)&:= s^{\frac{1}{p}}r^{s}\Big(\int_r^\infty \frac{U(t)}{t^{1+s p}} \d t\Big)^{\frac{1}{p}}=s^{\frac{1}{p}}r^{s} \Big(\iint_{\R^d \times B^c(x,r)} \hspace{-1ex} \frac{ |u(x) - u(y)|^p}{|x-y|^{d+s p}}\d y \d x\Big)^{\frac{1}{p}}.
\end{align*}
The definition of the endpoint cases $L_p(1, u, r)$ and $T_p(0, u, r)$ are fully justified by Theorem~\ref{thm:asymp-larg-short}, which establishes the following asymptotic limits
\begin{align*}
L_p(1, u,r)= \lim_{s\to1^-} L_p(s, u,r)
&= \Big(\frac{|\mathbb{S}^{d-1}|}{p} K_{d,p}\Big)^{\frac{1}{p}}\|\nabla u\|_{L^p(\R^d)}= [u]_{W^{1,p}(\R^d)},\\
T_p(0, u,r)= \lim_{s\to0^+} T_p(s, u,r)
&=\Big(\frac{2|\mathbb{S}^{d-1}|}{p}\Big)^{\frac{1}{p}}\|u\|_{L^p(\R^d)}=[u]_{W^{0,p}(\R^d)}.
\end{align*}
By heuristically taking the limit as $p \to \infty$, we analogously define the quantities
\begin{align*}
 L_\infty(s, u,r)
&:=(2r)^{-(1-s)}\hspace{-2ex}\sup_{0<|x-y|\leq r} \frac{|u(x)-u(y)|}{|x-y|^s}\quad \qquad (0\leq s< 1),
\\
 T_\infty(s, u,r) & := r^{s}\sup_{|x-y|\geq r} \frac{|u(x)-u(y)|}{|x-y|^s} \quad \qquad\quad  (0<s\leq 1),\\
L_\infty(1,u,r) &:= [u]_{W^{1,\infty}(\R^d)},
\qquad\text{and}\qquad  T_\infty(0,u,r) :=[u]_{W^{0,\infty}(\R^d)}.
\end{align*}
\end{definition}
The short-range interaction  $L_p(s,u,r)$ encodes  the local (and hence the regularity of $u$) contribution to the fractional seminorm $|u|_{W^{s,p}(\R^d)}$  by  capturing  the behavior of $u$ at short scales inside the ball $ B(0,r)$.
The long-range interaction  $T_p(s,u,r)$ encodes the non-local contribution to the fractional seminorm by capturing the behavior of $u$ at long scales outside the ball $ B(0,r)$. In other words $T_p(s,u,r)$ captures  the nonlocal effect of $|u|_{W^{s,p}(\R^d)}$.
\noindent The cornerstone of our approach to Theorem~\ref{thm:robust-interpo-esti} is the following monotonicity result, which stands as our second main result.
\begin{theorem}[Monotonicity of  short and long range interaction]
\label{thm:r-split-monot-semin}
Let  $u\in L^p(\R^d)$, $1\leq p\leq \infty$ and $r>0$. The following assertions are true.
\begin{enumerate}[$(A)$]
\item
The map $s\mapsto  L_p(s,u,r)$ is continuous and  increasing. Namely, we have
\begin{align*}
L_p(\eta, u,r)\leq  L_p(s, u,r)\qquad \text{for \,\, $0\leq  \eta<s\leq 1$}.
\end{align*}
In fact for $p=\infty$ we have the better estimate $
2^{-\eta}L_\infty(\eta, u,r)\leq  2^{-s}L_\infty(s, u,r).$

\item The map $s\mapsto T_p(s,u,r)$ is continuous and for $0\leq \sigma< \eta \leq 1$ we have
\begin{align*}
T_p(\eta, u,r) &\leq 2^{1+\frac{1}{p}}\, \sigma^{\frac{1}{p}} r^{ \sigma}\Big(\int_0^\infty \frac{U(t)}{t^{ 1+\sigma p}} \d t\Big)^{\frac{1}{p}}=2^{1+\frac{1}{p}}  r^\sigma(1-\sigma)^{-\frac{1}{p}}[u]_{W^{\sigma,p}(\R^d)}.
\end{align*}
\item If $d \geq 2$ then the map $s \mapsto T_p(s, u, r)$ is $2$-almost decreasing, that is,
\begin{align*}
T_p(\eta, u, r) \leq2 T_p(\sigma, u, r) \qquad \text{ for }\quad  0 \leq \sigma <\eta \leq 1.
\end{align*}
In fact, this estimate remains true provided that the following condition holds:
\begin{align}\label{cond:interp-admissible}
\tag{G}
d\geq 2\quad \text{or} \quad \sigma = 0 \quad \text{or} \quad p = 2 \quad \text{or} \quad p = \infty.
\end{align}
\end{enumerate}
\end{theorem}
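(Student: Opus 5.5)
The plan is to reduce everything to one-dimensional weighted integrals of the averaged modulus $U$ and to compare the probability weights that appear as the exponent varies. Two elementary structural facts about $U$ will drive the argument, and both follow from the triangle inequality, convexity of $|\cdot|^p$ and translation invariance of Lebesgue measure.

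The first fact is the splitting inequality
\begin{align*}
U(t)\leq 2^{p-1}\big(U(\tau)+U(t-\tau)\big)\qquad (0<\tau<t).
\end{align*}
It gives $U(2t)\leq 2^pU(t)$, so $g(t):=U(t)/t^p$ satisfies $g(2t)\leq g(t)$. It also gives $U\leq 2^p|\mathbb{S}^{d-1}|\,\|u\|^p_{L^p}$.

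The second fact is that, for $d\geq 2$, $U$ is almost increasing: $U(t)\leq 2^pU(\rho)$ whenever $\rho\geq t/2$. The reason is that every vector $t\omega$ can be written as $\rho\omega_1+\rho\omega_2$ with $|\omega_1|=|\omega_2|=1$ if $2\rho\geq t$. As $\omega$ runs over the sphere, this decomposition can be chosen rotation-equivariantly, so after averaging over $\mathbb{S}^{d-1}$ both pieces produce $U(\rho)$. This equivariance argument is where $d\geq2$ is used.

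\textbf{Step (A).} After the substitution $t=r\tau$,
\begin{align*}
L_p(s,u,r)^p=\frac{2^{-(1-s)p}}{p}\int_0^1 g(r\tau)\,w_s(\tau)\d\tau,\qquad w_s(\tau)=(1-s)p\,\tau^{(1-s)p-1}.
\end{align*}
Here $w_s$ is a probability density on $(0,1)$ that moves mass toward $0$ as $s$ increases. If $g$ were decreasing, the Chebyshev integral inequality (stochastic dominance of $w_s$ over $w_\eta$) would give monotonicity at once. Since $g$ is only dyadically decreasing, I would prove the Chebyshev-type inequality for such non-synchronous functions directly. The plan is to transport $w_\eta$ onto $w_s$ by the increasing map $\tau\mapsto\tau^{(1-\eta)/(1-s)}$, which lies below the identity, and to split $(0,1)$ into dyadic shells. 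Inside a shell, $g$ can increase at most by a factor controlled by $2^{p}$; this loss should be exactly what the prefactor $2^{-(1-s)p}$ absorbs, since the transport moves mass through $(1-s)$-proportionally many extra dyadic shells.

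I expect this bookkeeping to be the main obstacle. If the sharp constant does not come out, the first fallback is to replace $g$ by its decreasing envelope $\tilde g(\tau)=\sup_{\tau'\geq\tau}g(\tau')$, which is comparable to $g$ up to the factor $2^p$ coming from $g(2t)\leq g(t)$, and then to apply the classical Chebyshev inequality to $\tilde g$. The case $p=\infty$ is a direct pointwise statement about suprema: the difference quotient at distance $\leq r$ of order $\eta$ is bounded by $r^{s-\eta}$ times that of order $s$, which yields the improved factor $2^{-\eta}$ versus $2^{-s}$. Continuity in $s$ follows from dominated and monotone convergence, together with Theorem~\ref{thm:asymp-larg-short} at the endpoints.

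\textbf{Step (B).} I would integrate the splitting inequality against a probability density $\rho_t$ on $(0,t)$. With a density of the form $\rho_t(\tau)\propto\tau^{\sigma p-1}$, symmetrized so that it also controls the term $U(t-\tau)$, this gives $U(t)\leq 2^{p}\int_0^t U(\tau)\,k_t(\tau)\d\tau$. Inserting this into $\eta r^{\eta p}\int_r^\infty U(t)t^{-1-\eta p}\d t$ and applying Fubini, the kernel becomes
\begin{align*}
\eta r^{\eta p}\int_{\max(r,\tau)}^\infty t^{-1-\eta p}k_t(\tau)\d t.
\end{align*}
I would bound this by $2\sigma r^{\sigma p}\tau^{-1-\sigma p}$ by treating the regions $\tau<r$ and $\tau\geq r$ separately, using the elementary comparison $\max(r,\tau)^{-a}\leq r^{(\sigma-\eta)p}\tau^{-1-\sigma p}\,\cdots$ in each region. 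The final identity with $[u]_{W^{\sigma,p}}$ is just the definition of the renormalized seminorm.

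\textbf{Step (C).} By the almost-increasing property of $U$, the tail weights $\eta r^{\eta p}t^{-1-\eta p}$ on $(r,\infty)$ are probability densities that concentrate near $r$ as $\eta$ grows. The reverse Chebyshev inequality for almost increasing functions then gives $T_p(\eta)^p\leq 2^pT_p(\sigma)^p$, which is the claimed factor $2$ after taking $p$-th roots. Dyadic slack of the same kind as in (A) has to be handled here as well. The remaining cases of condition~\eqref{cond:interp-admissible} are handled separately:
\begin{itemize}
\item For $\sigma=0$, the bound follows from $U\leq 2^p|\mathbb{S}^{d-1}|\|u\|^p$ together with $T_p(0)$ from Theorem~\ref{thm:asymp-larg-short}.
\item For $p=\infty$, the suprema are compared directly.
\item For $p=2$, I would use Plancherel: $U(t)=\int|\widehat u|^2\,m(t|\xi|)\d\xi$ with a radial multiplier $m$. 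Checking the almost-increasing property for $m$ reduces the claim to a one-variable estimate, valid even in $d=1$, where $m(t)=4(1-\cos t)$ up to normalization.
\end{itemize}
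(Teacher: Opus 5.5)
\textbf{Step (A) has a genuine gap.} Your rewriting of $L_p(s,u,r)^p$ against the densities $w_s$ is correct, and so is the heuristic that $2^{-(1-s)p}$ pays for the dyadic defect of $g$. The transport $\tau\mapsto\tau^{(1-\eta)/(1-s)}$, however, cannot give constant $1$.
\begin{itemize}
\item It compares $g$ at points that are not dyadically related, so it needs a within-shell bound $g(\tau')\le Cg(\tau)$.
\item In $d=1$, where (A) is also asserted, no such bound holds. For $u=\sin(x)\mathds{1}_{[0,2\pi N]}(x)$ one has $U(2\pi)=O(1)$ but $U(\pi),U(3\pi)\gtrsim N$. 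Hence $g(3\pi)/g(2\pi)\to\infty$, although $2\pi$ and $3\pi$ lie in one dyadic shell (rescale $u$ to match your shells). The same example shows your decreasing envelope is not comparable to $g$.
\item Even for $d\ge2$, where $C=2^p$ is available, a fixed loss $C>1$ cannot be absorbed by the gain $2^{(s-\eta)p}$, which tends to $1$ as $\eta\uparrow s$.
\end{itemize}
The missing idea is to decompose along geometric fibres rather than transport mass. With $a=(1-s)p<b=(1-\eta)p$,
\[
\int_0^r g(t)t^{a-1}\d t=\int_{r/2}^{r}\tau^{b-1}\sum_{n\ge0}2^{-bn}\,g(2^{-n}\tau)\,(2^{-n}\tau)^{a-b}\d\tau .
\]
On each fibre, $n\mapsto g(2^{-n}\tau)$ is nondecreasing; this is exactly $g(2t)\le g(t)$, with no within-shell information. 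So is $n\mapsto(2^{-n}\tau)^{a-b}$. Apply the discrete Chebyshev inequality with weights $2^{-bn}$, then compare the fibre sums of $t^{a-b}$ with $\int_0^{2r}t^{a-1}\d t$. This gives exactly $2^{-b}L(b)\le 2^{-a}L(a)$, i.e.\ $L_p(\eta,u,r)\le L_p(s,u,r)$; see Theorems~\ref{thm:chebyshev-cadic} and~\ref{thm:cadic-monotonicity}.

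\textbf{Step (B) fails as designed, and so does (C) for $p=2$, $d=1$.}

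For (B): integrating $U(t)\le2^{p-1}(U(\tau)+U(t-\tau))$ against any density $\rho_t$ on $(0,t)$ gives the symmetric kernel $k_t(\tau)=\frac12(\rho_t(\tau)+\rho_t(t-\tau))$, which has mass $\frac12$ on $[t/2,t)$.
\begin{itemize}
\item Hence your $K(\tau)=\eta r^{\eta p}\int_{\max(r,\tau)}^\infty t^{-1-\eta p}k_t(\tau)\d t$ has mass at least $\frac{1-2^{-\eta p}}{2p}$ on $[r/2,2r)$.
\item The target $2\sigma r^{\sigma p}\tau^{-1-\sigma p}$ has mass $\frac2p(2^{\sigma p}-2^{-\sigma p})=O(\sigma)$ there.
\item So the kernel bound is impossible for $\sigma\ll\eta$, whatever $\rho_t$ you choose. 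For your $\rho_t\propto\tau^{\sigma p-1}$ and $\tau\ge r$, the reflected part alone equals $\frac{\sigma\eta p}{2}B(\sigma p,1+\eta p)\,r^{\eta p}\tau^{-1-\eta p}\approx\frac{\eta}{2}r^{\eta p}\tau^{-1-\eta p}$, with $B$ the Beta function.
\end{itemize}
The averaging scale must exceed $t$ and depend on $\sigma$. The paper does this in three steps:
\begin{itemize}
\item Evenness of $U$ gives $U(t)\le 2^{p+1}AU(t')$ for all $t\le t'$, with $AU$ the Hardy average.
\item The two tail laws are coupled by quantiles: $t=rz^{-1/(\eta p)}\le t'=rz^{-1/(\sigma p)}$ for $z\in(0,1)$.
\item Fubini finishes: $\int_r^\infty AU(t)t^{-1-\sigma p}\d t\le\frac{1}{1+\sigma p}\int_0^\infty U(z)z^{-1-\sigma p}\d z$.
\end{itemize}

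For (C) with $p=2$, $d=1$: the multiplier $m(\lambda)=4(1-\cos\lambda)$ vanishes on $2\pi\mathbb{Z}$. It is therefore not almost increasing, and no Chebyshev or monotone-likelihood-ratio argument applies. The example above shows that $U$ itself is not almost increasing when $d=1$. What is needed is the oscillatory bound
\[
\eta\int_1^\infty(1-\cos\xi t)t^{-1-2\eta}\d t\le 4\sigma\int_1^\infty(1-\cos\xi t)t^{-1-2\sigma}\d t,
\]
uniformly in $\xi$. Lemma~\ref{lem:tail-short-cos} proves it by case analysis:
\begin{itemize}
\item integration by parts for $\xi\ge8$;
\item the constant $C_{1,\eta}$ for small exponents;
\item for $\sigma\le\frac14\le\eta$ and $\xi\le3$, a Gamma-mixture representation with a $4$-almost decreasing kernel, combined with the monotone likelihood ratio property of the Gamma family.
\end{itemize}

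The rest of your plan is sound: (A) for $p=\infty$, and (C) for $d\ge2$, $\sigma=0$ and $p=\infty$. Your randomized splitting $t\omega=\rho\omega_1+\rho\omega_2$ is a valid alternative to the paper's skew-rotation/Funk--Radon proof, provided $\omega_1$ is averaged uniformly over $\{\langle\omega_1,\omega\rangle=t/(2\rho)\}$. A deterministic rotation-equivariant choice does not exist for $d\ge3$.
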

Let us make three important comments regarding Theorem \ref{thm:r-split-monot-semin}. (a) Broadly speaking, Theorem \ref{thm:r-split-monot-semin} implies that the mapping $s \mapsto L_p(s,u,r)$ increases until it reaches the local part of $u$, given by $L_p(1,u,r) = [u]_{W^{1,p}(\R^d)}$, whereas $s \mapsto T_p(s,u,r)$ essentially decreases until it reaches the nonlocal part of $u$, given by $T_p(0,u,r) = [u]_{W^{0,p}(\R^d)}$. This corroborates the idea hinted above that $L_p(s,u,r)$ truly encodes the locality of $u$, while $T_p(s,u,r)$ encodes the nonlocality of $u$. (b) The gap for $d=1$ is due to the fact that for $d \geq 2$  that our strategy relies heavily  on the  slicing method of the unit sphere $\mathbb{S}^{d-1}$ by great spheres, which brings into play the so-called Funk–Radon transform (see Theorem \ref{thm:slicing-integ-on-sphere}). However, although we were unable to prove the inequality $T_p(\eta, u, 1) \le 2T_p(\sigma, u, 1)$ for $0 \le \sigma < \eta < 1$ in general when $d=1$, we have shown that it holds in the special cases $\sigma=0$, $p=2$, and $p=\infty$. It is therefore natural to formulate the following conjecture.

\medskip
\textbf{Conjecture (open problem).}
There exists $C_p > 0$ such that for all $u \in L^p(\mathbb{R})$
\begin{align*}
\eta\int_1^\infty\frac{U(t)}{t^{1+\eta p}}\d t\leq C_p  	\sigma \int_1^\infty\frac{U(t)}{t^{1+\sigma p}}\d t \qquad 0<\sigma<\eta<1 \quad \& \quad d=1.
\end{align*}
This estimate is true for $\sigma=0$ with $C_p=2^{p}$. In Section \ref{sec:conjecture}, we present some related observations  and explain how a sharper constant $C_p$ can help improve the monotonicity properties of the map $s \mapsto T_p(s,u,r)$ for $d \geq 1$.

(c) The proof of Theorem \ref{thm:r-split-monot-semin} relies on the application of Chebyshev-type inequalities.  Loosely speaking, we say that a class $\mathcal{C}$ of pairs of integrable functions $(f,g)$ over a measure space $(X, \mu)$ with $0<\mu(X)<\infty$ satisfies a Chebyshev-type integral inequality if there exists a constant $c > 0$, independent of $f$ and $g$, such that
\begin{align*}
\int_X f(x)g(x)\d \mu (x) \geq \,  (\leq ) \, c \Big(\int_X f(x)\d \mu (x) \Big)\Big(\int_X g(x)\d \mu (x) \Big) \quad\text{for all $(f,g) \in \mathcal{C}$}.
\end{align*}
For $c = 1/\mu(X)$, one recovers the classical Chebyshev integral inequality, which holds when $f$ and $g$ are synchronous (resp. asynchronous), i.e., they satisfy
\begin{align*}
(f(x)-f(y))(g(x)-g(y))\geq \, (\leq )\, 0 \qquad  \text{for all $x,y\in X$}.
\end{align*}
Most the existing literature only addresses the Chebyshev integral inequality with the specific constant $c = 1/\mu(X)$. This is likely due to the fact that such inequalities are traditionally applied in probability theory rather than in mathematical analysis. Given the lack of references treating the inequality with a general constant $c$, we provide a detailed discussion in Section \ref{sec:chebyshev-ineq} on several classes of functions satisfying these general inequalities; serving as a key ingredient to prove Theorem \ref{thm:r-split-monot-semin}.  In particular, given $r>0$ and a suitable measurable function $g: [0,\infty)\to [0,\infty]$ we investigate  in Theorem \ref{thm:cadic-monotonicity} the monotonicity of the following singular integrals
$T,L: [0,\infty) \to[0,\infty]$ with
\begin{align*}
z\mapsto L(z)&= zr^{-z}\int_0^r  g(t)t^{z-1} \d t,
\qquad \text{with} \qquad  L(0)= g(0), \\
z\mapsto T(z)&= zr^{z} \int_r^\infty g(t) t^{-z-1} \d t,
\qquad \text{with} \qquad T(0)=g(\infty).
\end{align*}
As a primary application of the robust log-convex inequalities from Theorem \ref{thm:robust-interpo-esti}, we are able obtain the strong convergence of weak solutions to the Dirichlet boundary value problem associated with the fractional $p$-Laplacian $(-\Delta)^s_{p,\Omega}$.

\begin{theorem}
\label{thm:converg-dirichlet-frac-regional}
Let $\Omega \subset \R^d$ be a bounded Lipschitz domain and let $1<p<\infty$. Assume that $(f_s)_s \subset L^{p'}(\Omega)$, $p'=p/(p-1)$, converges weakly to $f_1$ in $L^{p'}(\Omega)$ as $s\to 1^-$, and that $(g_s)_s \subset W^{1-\frac{1}{p},p}(\partial\Omega)$ converges strongly to $g_1$ in $W^{1-\frac{1}{p},p}(\partial\Omega)$, namely, $\|g_s-g_1\|_{W^{1-\frac{1}{p},p}(\partial\Omega)}
\xrightarrow{s\to1^-}0.$  For each $\frac{1}{p} < s \le 1$, let  the function
$u_s \in W^{s,p}(\Omega)$ satisfies in the weak sense the Dirichlet problem
\begin{align*}
(-\Delta)^s_{p,\Omega} u_s= f_s \quad \text{in }\,\,  \Omega, \qquad \text{  and }\qquad \gamma^s_0 (u_s )= g_s \quad \text{on } \,\, \partial \Omega,
\end{align*}
\noindent with $(-\Delta)^1_{p,\Omega}=-\Delta_{p} $. Then $(u_s)_s$ strongly converges to $u_1$  in the following sense
\begin{align*}
\lim_{s\to1}\|u_s -u_1\|_{W^{\eta,p}(\Omega)} =0 \qquad\text{for all }\quad 0\leq \eta<1.
\end{align*}
\end{theorem}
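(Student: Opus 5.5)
The plan is to split the argument in two: first show that $(u_s)$ converges to $u_1$ strongly in $L^p(\Omega)$ and is uniformly bounded in the renormalized scale, i.e. $\sup_s [u_s]_{W^{s,p}(\Omega)}<\infty$; then upgrade $L^p$ convergence to $W^{\eta,p}$ convergence with the robust interpolation inequality of Theorem~\ref{thm:robust-interpo-esti}. The interpolation is taken with $\sigma=0$ and an intermediate order $s$ close to $1$, applied to $v_s:=u_s-u_1$.

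\textbf{Step 1: uniform energy bound.} Because $\Omega$ is Lipschitz, I fix a bounded right inverse of the trace, $E: W^{1-\frac1p,p}(\partial\Omega)\to W^{1,p}(\Omega)$, and set $G_s:=Eg_s$. Then $G_s\to G_1$ in $W^{1,p}(\Omega)$, and $\gamma^s_0(G_s)=g_s$ for every $s>\frac1p$. Testing the weak formulation with $u_s-G_s$, which has zero trace, gives
\[
[u_s]^{*p}_{W^{s,p}(\Omega)}\lesssim \|f_s\|_{L^{p'}}\|u_s-G_s\|_{L^p}+[u_s]^{*(p-1)}_{W^{s,p}(\Omega)}[G_s]^*_{W^{s,p}(\Omega)} .
\]
To close this estimate I need two uniform facts.
\begin{enumerate}[(a)]
\item A Poincaré inequality for zero-trace functions whose constant is uniform for $s$ near $1$ in the renormalized seminorm. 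I would reduce this to the zero extension in $\R^d$ together with a uniform fractional Hardy inequality for $s>\frac1p+\delta$.
\item The Bourgain--Brezis--Mironescu-type bound $[G]^*_{W^{s,p}(\Omega)}\lesssim\|G\|_{W^{1,p}(\Omega)}$, uniformly in $s$. This follows from $\widetilde C_{d,p,s}\asymp s(1-s)$ and the standard estimate $(1-s)|G|^p_{W^{s,p}}\lesssim \|G\|^p_{W^{1,p}}$.
\end{enumerate}
Together these give $\sup_{s\ge s_0}\big(\|u_s\|_{L^p}+[u_s]_{W^{s,p}(\Omega)}\big)<\infty$.

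\textbf{Step 2: compactness and identification of the limit.} By the Bourgain--Brezis--Mironescu compactness theorem on Lipschitz domains, a subsequence satisfies $u_s\to u$ in $L^p(\Omega)$ with $u\in W^{1,p}(\Omega)$. The trace condition passes to the limit, giving $\gamma_0(u)=g_1$; I would obtain this by applying the same compactness to $u_s-G_s$ extended by zero. To identify $u=u_1$ I would use a Minty/monotonicity argument, for which two convergences are needed.
\begin{enumerate}[(a)]
\item For test functions $\varphi\in C_c^\infty(\Omega)$, the forms $\langle(-\Delta)^s_{p,\Omega}\varphi,\cdot\rangle$ converge to the $p$-Laplacian form. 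This is guaranteed by the normalization listed in the introduction.
\item $\Gamma$-convergence of the energies $\frac1p[\cdot]^{*p}_{W^{s,p}(\Omega)}$ to $\frac1p\|\nabla\cdot\|^p_{L^p}$. This yields that $u$ minimizes $\frac1p\|\nabla w\|_p^p-\int f_1w$ among $w$ with trace $g_1$.
\end{enumerate}
By uniqueness of that minimizer, $u=u_1$, so the whole family converges: $u_s\to u_1$ in $L^p(\Omega)$.

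\textbf{Step 3: interpolation.} Fix $\eta<1$ and pick $s'\in(\eta,1)$. For $s\ge s'$, the monotonicity in Theorem~\ref{thm:r-split-monot-semin}(A), applied to the short-range part, together with the tail estimate (B) gives $[v]_{W^{s',p}}\lesssim [v]_{W^{s,p}}+\|v\|_{L^p}$. Hence $[v_s]_{W^{s',p}(\Omega)}$ is bounded uniformly in $s$.

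To use Theorem~\ref{thm:robust-interpo-esti} on $\Omega$ rather than $\R^d$, I extend $v_s$ by a uniformly bounded $W^{s',p}$ and $L^p$ extension operator; this exists for Lipschitz $\Omega$ and $s'$ fixed. Interpolating with $\sigma=0$ and $s=s'$ then gives
\[
[v_s]_{W^{\eta,p}}\lesssim [v_s]_{W^{s',p}}^{\theta}\|v_s\|_{L^p}^{1-\theta}\to0 ,
\]
because $\theta<1$. This proves the claim.

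\textbf{Main obstacle.} The hardest step is the uniform Poincaré/Hardy control near $s\to1$ in Step 1, together with the identification argument in Step 2, in the regional setting. The regional kernel does not see the complement of $\Omega$, so the zero-extension comparison must be handled carefully. My first attempt would be to use the fractional Hardy inequality for $s>\frac1p$ on Lipschitz domains and track its constant, which behaves like $(sp-1)^{-1}$ and stays bounded near $s=1$.
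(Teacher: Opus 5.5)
\textbf{Gap in Step 3.} Your architecture (uniform energy bound, BBM compactness, identification by minimization, then interpolation) is viable, but Step 3 as written does not go through. Theorem~\ref{thm:r-split-monot-semin} is a statement about functions on $\R^d$. Its proof rests on $U(2t)\le 2^pU(t)$, which comes from chaining $x$, $x+t\omega$, $x+2t\omega$; for a non-convex $\Omega$ the midpoint may leave $\Omega$. On $\Omega$ itself, the naive kernel comparison on $\{|x-y|<r\}$ gives only
\[
[v]^p_{W^{s',p}(\Omega)}\lesssim \tfrac{1-s'}{1-s}[v]^p_{W^{s,p}(\Omega)}+\|v\|^p_{L^p(\Omega)},
\]
and this degenerates as $s\to1^-$. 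So your claimed uniform bound on $[v_s]_{W^{s',p}(\Omega)}$ needs an extension $W^{s,p}(\Omega)\to W^{s,p}(\R^d)$ bounded uniformly in $s\in[s',1)$ for the renormalized norms. The fixed-$s'$ extension you invoke only serves the final interpolation, not this bound. That $s$-uniform extension is exactly Theorem~\ref{thm:robust-extension}. With it, the detour through a fixed $s'$ is unnecessary: you can interpolate directly between $L^p$ and $W^{s,p}(\Omega)$ with $\theta=\eta/s$, as the paper does via Theorems~\ref{thm:robust-interpo-dom} and~\ref{thm:asymp-compact-frac}.

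You can also repair Step 3 inside your own framework.
\begin{itemize}
\item Split $u_s-u_1=(w_s-w_1)+(G_s-G_1)$ with $w_s=u_s-G_s\in W^{s,p}_0(\Omega)$.
\item The zero extensions satisfy $[\widetilde w_s]^p_{W^{s,p}(\R^d)}\le[w_s]^p_{W^{s,p}(\Omega)}+\frac{2|\mathbb S^{d-1}|}{p}(1-s)\int_\Omega|w_s|^p\delta_x^{-sp}\d x$. Your Hardy inequality controls the right-hand side uniformly in $s$.
\item Apply Theorem~\ref{thm:robust-interpo-esti} with $\sigma=0$ on $\R^d$ to $\widetilde w_s-\widetilde w_1$.
\item Handle $G_s-G_1$ by the uniform BBM bound.
\end{itemize}

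\textbf{Comparison with the paper.} State the Hardy input precisely. You need
\[
\int_\Omega|v|^p\delta_x^{-sp}\d x\le C\,s(1-s)|v|^p_{W^{s,p}(\Omega)}\qquad\text{for } v\in W^{s,p}_0(\Omega),
\]
with $C$ independent of $s$ near $1$. In other words, the Hardy constant relative to the Gagliardo seminorm must vanish like $1-s$. A constant that merely ``stays bounded'' only gives $\|v\|^p_{L^p}\lesssim[v]^p_{W^{s,p}(\Omega)}/(1-s)$. This is a nontrivial external ingredient, and the paper avoids it entirely.

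The paper proves uniform coercivity (Theorem~\ref{thm:robust-poinca-fried-regio}) by contradiction. It combines asymptotic compactness with continuity of the trace at a fixed order $\tau\in(\frac1p,1)$, which is usable because robust interpolation on $\Omega$ upgrades $L^p$-convergence to $W^{\tau,p}$-convergence. The same mechanism identifies the trace of the limit. There, your zero-extension argument (limit in $W^{1,p}(\R^d)$ vanishing off $\Omega$, hence $u-G_1\in W^{1,p}_0(\Omega)$) is a cleaner alternative once Hardy is granted. Overall, your route yields explicit constants; the paper's is self-contained but non-quantitative.

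\textbf{Step 2.} Drop the Minty part (a). The normalization in the introduction concerns the full-space operator pointwise. Passing to the limit in $\cE^{s,p}_\Omega(\varphi,u_s)$ with $u_s\to u$ only in $L^p$ would require $(-\Delta)^s_{p,\Omega}\varphi\to-\Delta_p\varphi$ in $L^{p'}(\Omega)$. That is not available here, and it is delicate for $p<2$. Part (b) suffices if you use three ingredients:
\begin{itemize}
\item the recovery sequence $v_s=v-G_1+G_s\in G_s+W^{s,p}_0(\Omega)$ for $v\in G_1+W^{1,p}_0(\Omega)$;
\item Theorem~\ref{thm:xBBM-frac};
\item the bound $[G_s-G_1]^*_{W^{s,p}(\Omega)}\lesssim\|G_s-G_1\|_{W^{1,p}(\Omega)}$.
\end{itemize}
This is precisely the paper's identification step.
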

The asymptotic behavior and convergence properties for the nonlocal Dirichlet problem governed by the full fractional $p$-Laplacian—and more generally in the context of pseudo-differential $p$-L\'evy operators—have also been investigated in \cite{Fog25, Fog26} see also \cite{FoKa24,BudT26,guy-thesis,Voi17} for the case $p=2$.
\noindent We summarize below the other contributions of this paper, all of which are direct consequences and applications of our robust interpolation inequality. The structure of the article and its remaining contributions are organized as follows:
\begin{itemize}
    \item \textit{Section~\ref{sec:chebyshev-ineq} (Chebyshev inequalities):} We establish several Chebyshev-type integral inequalities for general non-synchronous functions, which serve as the technical groundwork for our main results.

    \item \textit{Section~\ref{sec:Lp-moduos-of-cont} ($L^p$-Modulus of Continuity):} We analyze the analytical properties and asymptotic profiles of the averaged $L^p$-modulus of continuity function.

    \item \textit{Section~\ref{sec:assymp-s-near-1-0} (Seminorm Asymptotics):} We investigate the asymptotic behavior of fractional Sobolev seminorms and the normalized regional $p$-Laplacian as $s \to 0^+$ and $s \to 1^-$.  In particular, Theorem~\ref{thm:mazya-on-dom} generalizes the Maz'ya--Shaposhnikova formula to arbitrary open sets.
 Namely, for any open set $\Omega \subset \R^d$ and $u \in \bigcup_{0 < s < 1} W^{s,p}(\Omega) \cap L^p(\Omega, \delta_x^{-sp})$ with $1 \le p < \infty$, we have
\begin{align*}
\lim_{s \to 0^+} s(1-s)|u|^p_{W^{s,p}(\Omega)}=
\frac{2}{p} \int_\Omega |u(x)|^p
\int_{ \mathbb{S}^{d-1}}\hspace{-2ex} \mathds{1}_{\mathbb{S}^{d-1}\setminus S_\Omega(x)}(w)\d \sigma_{d-1}(w)  \d x,
\end{align*}
where $S_\Omega(x) =
 \{ w \in \mathbb{S}^{d-1} :  \lim\limits_{r\to \infty} \mathds{1}_{\Omega^c}(x+rw)=1\}$ is the escape sector at $x$.

    \item \textit{Section~\ref{sec:robust-interpo} (Robust Interpolation): } We prove  Theorem~\ref{thm:r-split-monot-semin} and our core robust log-convex interpolation estimates Theorems~\ref{thm:robust-interpo-esti}. By the mean of the robust extension operator developed in Appendix~\ref{sec:robust-exten},  Theorem~\ref{thm:robust-interpo-dom} extends the robust log-convex inequalities to domains with compact Lipschitz boundaries.
    \item \textit{Section~\ref{sec:application-interpola} (Applications $\&$ Convergence):} We explore the consequences of the robust interpolation inequalities. The main results include:
    \begin{itemize}
        \item \emph{Asymptotic compactness:}  In the spirit of the results from \cite{Pon04} and \cite{BBM01}, we obtain in Theorem~\ref{thm:asymp-compact-frac},  a Lions--Peetre type compactness result by  establishing the strong simultaneous convergence sequence of functions and their traces.
        \item \emph{Robust regional Poincar\'e--Friedrichs inequality:} In Theorem~\ref{thm:robust-poinca-fried-regio}, we obtain a robust regional Poincar\'e--Friedrichs inequality,  $\|u\|_{L^p(\Omega)} \leq C [u]_{W^{s,p}(\Omega)}$  for all $u \in W_0^{s,p}(\Omega)$ and $s_0 < s < 1$
for constants $C = C(d,p,\Omega) > 0$ and $s_0 = s_0(d,p,\Omega) \in (\frac{1}{p}, 1)$ that are independent of $s$.
\item \emph{Convergence of Dirichlet problem:} We show in  Theorem~\ref{thm:converg-dirichlet-frac-regional}, the strong nonlocal-to-local convergence in $W^{\eta,p}(\Omega)$, $0\leq \eta<1$ as $s\to 1^-$ of unique weak solutions for the regional fractional $p$-Laplacian $(-\Delta)^s_{p,\Omega}$.
 \end{itemize}
\item \textit{Appendix~\ref{sec:robust-exten} (Robust extension):} We show in Theorem \ref{thm:robust-extension} that if the boundary $\partial \Omega$ Lipschitz and  compact, there exists a robust Sobolev extension operator $E: W^{s,p}(\Omega) \to W^{s,p}(\R^d)$, namely, the operator norm of $E$ is controlled by a constant that is entirely independent of $s$.
\end{itemize}

\medskip

\emph{Acknowledgment:}
The author expresses their sincere gratitude to Sangchul Lee, Rui Chen, Tobias Weth, and Sven Jarohs for their inspiring and insightful discussions regarding the one-dimensional gap conjecture.

\section{Chebyshev-type integral inequalities}
\label{sec:chebyshev-ineq}

The classical Chebyshev inequality \cite{Cheb82}, along with its historical development and key contributions, is thoroughly discussed in \cite[Chapter~X]{MPF93} and \cite{MiVa75,MiPe90}.   Necessary and sufficient conditions for the validity of Chebyshev's inequalities are established in \cite{Shi11}. In probability theory, Chebyshev's integral inequality is frequently referred to as the correlation inequality (see for instance Corollary \ref{cor:cheby}), reflecting the fact that comonotone functions of a random variable are always positively correlated. Beyond this, the inequality has broad applications across analysis, statistics, and information theory, where it underpins deviation bounds, the weak law of large numbers, and moment estimates. 

\subsection{General Chebyshev-type integral inequalities}
In this section, $f,g \in L^1(X)$ are assumed to be  two integrable functions on measure space $(X,\mu)$ with a  finite positive measure, i.e., $0 < \mu(X) < \infty$.
\begin{theorem}[Chebyshev's inequality]\label{thm:cheby}
Let $f,g \in L^1(X,\mu)$ with $0 < \mu(X) < \infty$.

If $f$ and $g$ are \emph{synchronous} (also called \emph{comonotone}), that is,
\begin{align*}
(f(x) - f(y))(g(x) - g(y)) \geq 0, \quad \text{for all } x,y \in X,
\end{align*}
then the following inequality holds:
\begin{align*}
\int_X f(x) g(x) \d\mu(x) \geq\frac{1}{\mu(X)}
\int_X f(x)\d\mu(x) \int_X g(x)\d\mu(x).
\end{align*}
Equivalently, if $f$ and $g$ are \emph{asynchronous}  (also called \emph{countermonotone}), that is,
\begin{align*}
(f(x) - f(y))(g(x) - g(y)) \leq 0, \quad \text{for all } x,y \in X,
\end{align*}
then the reverse inequality holds:
\begin{align*}
\int_X f(x) g(x) \d\mu(x) \leq\frac{1}{\mu(X)}
\int_X f(x)\d\mu(x) \int_X g(x)\d\mu(x).
\end{align*}
\end{theorem}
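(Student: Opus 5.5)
The plan is the classical symmetrization argument: integrate the pointwise sign condition over the product space $X\times X$ with the product measure $\mu\otimes\mu$. Set
\begin{align*}
F(x,y) := (f(x)-f(y))(g(x)-g(y)), \qquad (x,y)\in X\times X.
\end{align*}
In the synchronous case $F\geq 0$ everywhere. Hence $\iint_{X\times X} F\,\d(\mu\otimes\mu)\geq 0$, and this integral is well defined in $[0,\infty]$ whether or not $F$ is integrable.

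Next, expand $F = f(x)g(x) + f(y)g(y) - f(x)g(y) - f(y)g(x)$ and integrate term by term. Each diagonal term integrates to $\mu(X)\int_X fg\,\d\mu$. Each cross term integrates, by Fubini--Tonelli, to $\int_X f\,\d\mu\int_X g\,\d\mu$, which is finite since $f,g\in L^1$ and $\mu\otimes\mu$ is finite. This yields the identity
\begin{align*}
\iint_{X\times X} F\,\d\mu(x)\d\mu(y) = 2\mu(X)\int_X fg\,\d\mu - 2\int_X f\,\d\mu\int_X g\,\d\mu .
\end{align*}
Nonnegativity of the left-hand side and division by $2\mu(X)>0$ give the first inequality. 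The asynchronous case follows either by repeating the argument with $F\leq 0$, or by applying the synchronous case to the pair $(f,-g)$, which is synchronous exactly when $(f,g)$ is asynchronous.

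The only genuine obstacle is integrability. The hypothesis $f,g\in L^1$ does not guarantee $fg\in L^1$, so the term-by-term expansion must be justified. I would first note that $\int_X fg\,\d\mu$ makes sense in $(-\infty,\infty]$, as follows. Fix a point $y_0$ at which $f(y_0)$ and $g(y_0)$ are finite. Then $F(\cdot,y_0)\geq 0$ gives the pointwise bound
\begin{align*}
f(x)g(x)\geq f(x)g(y_0)+f(y_0)g(x)-f(y_0)g(y_0),
\end{align*}
whose right-hand side is integrable (constants are integrable because $\mu(X)<\infty$). So the negative part of $fg$ is integrable.

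If $\int_X fg\,\d\mu=+\infty$, the synchronous inequality holds trivially. Otherwise $fg\in L^1$, every term in the expansion lies in $L^1(\mu\otimes\mu)$, and linearity of the integral justifies the identity above. In the asynchronous case the same bound with the inequality reversed shows that the positive part of $fg$ is integrable, and the argument is symmetric.
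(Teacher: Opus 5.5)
Your proposal is correct and follows the same argument as the paper: integrate $(f(x)-f(y))(g(x)-g(y))\geq 0$ over $X\times X$, expand, and divide by $2\mu(X)$. The paper skips the integrability of $fg$; your fixed-$y_0$ bound showing $(fg)^-\in L^1$, and your reduction of the asynchronous case to the pair $(f,-g)$, supply that justification.
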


\begin{proof}
We only establish the first claim. Integrating
\begin{align*}
0\leq (f(x) - f(y))(g(x) - g(y))= f(x)g(x)+ f(y)g(y) -f(x)g(y)- f(y)g(x)
\end{align*}
over $X \times X$ yields desired estimate since
\begin{align*}
0&\leq 2\mu(X) \int_X   f(x)g(x) \d \mu(x)- 2\int_Xg(y) \d \mu(y) \int_X  f(x)\ d \mu(x).
\end{align*}
\end{proof}
\begin{remark}
It is somewhat counterintuitive that comonotonicity for functions on $X \subset \R$ does not necessarily imply that $f$ and $g$ are individually monotone. Rather, it means that $f$ and $g$ vary in the same direction--either both increasing or both decreasing--over the same subsets of $X$. In other words, the two functions move in sync. Consequently, the term synchronous functions may be more intuitive in many contexts, as it directly emphasizes tandem-movement rather than individual monotonicity. For example, for any function $k:\R \to \R$, the functions $f(x)= k^2(x)$ and $g(x)= k^{4}(x)$ share the same monotonicity on the same sets. 
\end{remark}
  In probability space becomes Theorem~\ref{thm:cheby} the following.
\begin{corollary}\label{cor:cheby}
Let  $\mathbf{Z}: \Omega \to \R$ be a random variable on a  probability space $(\Omega, \mathcal{F}, \mathbb{P})$ and $f, g : \R \to \R$ be measurable functions such that $f(\mathbf{Z})$ and $g(\mathbf{Z})$ are integrable.

If $f$ and $g$ are synchronous, then the following inequality holds
\begin{align*}
\mathbb{E}[f(\mathbf{Z})g(\mathbf{Z})] \geq \mathbb{E}[f(\mathbf{Z})] \cdot \mathbb{E}[g(\mathbf{Z})].
\end{align*}
If $f$ and $g$ are asynchronous, then the reverse inequality holds
\begin{align*}
\mathbb{E}[f(\mathbf{Z})g(\mathbf{Z})] \leq \mathbb{E}[f(\mathbf{Z})] \cdot \mathbb{E}[g(\mathbf{Z})].
\end{align*}
\end{corollary}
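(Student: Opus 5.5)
This is a direct specialization of Theorem~\ref{thm:cheby} to the probability space $(\Omega,\mathcal{F},\mathbb{P})$ via the pushforward measure. Let $X=\R$ and let $\mu=\mathbb{P}\circ \mathbf{Z}^{-1}$ be the law of $\mathbf{Z}$. This is a finite positive measure on the Borel sets of $\R$ with $\mu(\R)=1$, so the standing hypothesis $0<\mu(X)<\infty$ of Theorem~\ref{thm:cheby} holds.

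Next we transfer the integrability and the integrals. By the change-of-variables formula for image measures, integrability of $f(\mathbf{Z})$ and $g(\mathbf{Z})$ on $\Omega$ is equivalent to $f,g\in L^1(\R,\mu)$, and
\begin{align*}
\mathbb{E}[f(\mathbf{Z})]=\int_\R f\d\mu,\qquad \mathbb{E}[g(\mathbf{Z})]=\int_\R g\d\mu .
\end{align*}
For the product we must also know that $fg\in L^1(\mu)$, or at least that $\int fg\d\mu$ makes sense; it is tacitly assumed whenever $\mathbb{E}[f(\mathbf{Z})g(\mathbf{Z})]$ is written. With that understanding, $\mathbb{E}[f(\mathbf{Z})g(\mathbf{Z})]=\int_\R fg\d\mu$.

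Now apply Theorem~\ref{thm:cheby} with $\mu(X)=1$. Synchronicity of $f,g$ on $\R$ is exactly the hypothesis of the first part of that theorem, and it yields $\int fg\d\mu\ge \int f\d\mu\int g\d\mu$, which is the claimed inequality. The asynchronous case follows in the same way from the second part of the theorem.

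The only point requiring care is the product term. In the proof of Theorem~\ref{thm:cheby}, integrating $(f(x)-f(y))(g(x)-g(y))\ge0$ over $X\times X$ requires $fg\in L^1$ so that the expansion is legitimate. In the synchronous case the integrand is nonnegative, so there is no ambiguity in integrating it. I would first check that $\int fg\d\mu$ is well defined, possibly equal to $+\infty$ in the synchronous case, in which case the inequality is trivial. Then I would assume $fg\in L^1(\mu)$ and expand using Fubini. This is a minor technicality rather than a genuine obstacle.
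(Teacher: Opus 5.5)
Your proposal is correct and matches the paper. The paper gives no separate argument and reads this corollary as Theorem~\ref{thm:cheby} specialized to a probability measure ($\mu(X)=1$); transporting to the law of $\mathbf{Z}$ is exactly that, as is the equivalent route of applying the theorem on $(\Omega,\mathbb{P})$ to $f\circ\mathbf{Z}$ and $g\circ\mathbf{Z}$, which inherit synchronicity. Your remark on the product term is a sound refinement: fixing $y_0$ in the synchronicity inequality gives $f(x)g(x)\geq f(x)g(y_0)+f(y_0)g(x)-f(y_0)g(y_0)$, so $(fg)^-$ is integrable and $\int fg\d\mu\in(-\infty,\infty]$. This makes your ``trivial if $+\infty$'' case split rigorous, and the asynchronous case is handled symmetrically.
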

Next, we  establish several Chebyshev-type integral inequalities with explicit constants, under relaxed variants of comonotonicity.
\begin{definition}[(b,c)-Synchronicity]
Consider two functions $f, g: X \to \R$,  $b > 0$ and $c > 0$. We say that  the pair $(f, g)$ is $(b,c)$-\emph{synchronous} if
\begin{align*}
(f(x) - b f(y)) (g(x) - c g(y)) \geq 0 \quad \text{for all } x, y \in X.
\end{align*}
We say that $(f, g)$ is $(b,c)$-\emph{asynchronous} if
\begin{align*}
(f(x) - b f(y)) (g(x) - c g(y)) \leq 0 \quad \text{for all } x, y \in X.
\end{align*}
\end{definition}

The result below generalizes the classical Chebyshev inequality, Theorem~\ref{thm:cheby}; the proof is analogous and omitted for brevity.
\begin{theorem}[Chebyshev's inequality I]\label{thm:cheby-scaled}
Let $f,g \in L^1( X, \mu)$ and  $b > 0$ and $c > 0$.

If $f$ and $g$ are $(b,c)$-\emph{synchronous} then
\begin{align*}
\int_X f(x) g(x) \d\mu(x) \geq \frac{b + c}{1 + bc} \cdot \frac{1}{\mu(X)}
\Big(\int_X f(x)\d\mu(x) \Big)\Big(\int_X g(x)\d\mu(x)\Big).
\end{align*}
If $f$ and $g$ are $(b,c)$-\emph{asynchronous}  then
\begin{align*}
\int_X f(x) g(x) \d\mu(x) \leq\frac{b + c}{1 + bc} \cdot \frac{1}{\mu(X)}
\Big(\int_X f(x)\d\mu(x) \Big)\Big(\int_X g(x)\d\mu(x)\Big).
\end{align*}
In particular, if $b=1$ or $c=1$ one gets the Chebyshev inequality from Theorem~\ref{thm:cheby}.
\end{theorem}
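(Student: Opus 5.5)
The argument follows the proof of Theorem~\ref{thm:cheby}: integrate the pointwise inequality over $X\times X$, but symmetrize first, because the $(b,c)$-condition is not symmetric in $x$ and $y$. Write $F=\int_X f\d\mu$, $G=\int_X g\d\mu$, $I=\int_X fg\d\mu$ and $m=\mu(X)$. We treat the synchronous case; the asynchronous case is identical with all inequalities reversed.

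\textbf{Step 1 (expand).} For all $x,y\in X$,
\begin{align*}
0\leq (f(x)-bf(y))(g(x)-cg(y)) = f(x)g(x) - c\,f(x)g(y) - b\,f(y)g(x) + bc\,f(y)g(y).
\end{align*}

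\textbf{Step 2 (integrate).} Every term is integrable on $X\times X$ with respect to $\mu\otimes\mu$: the diagonal terms because $fg\in L^1$ (which is needed for $I$ to make sense), and the mixed terms by Fubini since $f,g\in L^1$ and $m<\infty$. Integrating gives
\begin{align*}
0\leq mI - cFG - bFG + bc\,mI = (1+bc)\,mI-(b+c)FG.
\end{align*}
Since $1+bc>0$, dividing yields
\begin{align*}
I\geq \frac{b+c}{1+bc}\cdot\frac{1}{m}\,FG,
\end{align*}
which is the claim. The pointwise inequality is only needed almost everywhere.

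\textbf{Step 3 (special cases).} If $b=1$, then $\frac{b+c}{1+bc}=\frac{1+c}{1+c}=1$, and the same holds if $c=1$. So the constant reduces to $1/\mu(X)$, recovering Theorem~\ref{thm:cheby}.

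The only point needing care is integrability of $fg$, which I take as implicit in the statement (the left-hand side is assumed finite). Everything else is routine.
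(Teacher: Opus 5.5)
Your proof is correct and is the argument the paper has in mind; the paper omits it as analogous to Theorem~\ref{thm:cheby}. You expand $(f(x)-bf(y))(g(x)-cg(y))\geq 0$, integrate over $X\times X$, and read off $(1+bc)\,\mu(X)\int_X fg\d\mu\geq (b+c)\int_X f\d\mu\int_X g\d\mu$. Your integrability caveat can even be dropped. For fixed $y$, the pointwise inequality bounds $(fg)^-$ (resp.\ $(fg)^+$ in the asynchronous case) by an integrable function of $x$, so $\int_X fg\d\mu$ is always defined, and the claim is trivial when that integral is infinite.
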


\begin{definition}[Weak (b,c)-Synchronicity]
Consider two functions $f, g: X \to \R$,  $b > 0$ and $c > 0$. We say that the pair $(f, g)$ is \emph{weakly $(b,c)$-synchronous} (or \emph{weakly $(b,c)$-comonotone}) if,  for all $x, y \in X$, we have
\begin{align*}
\big( f(x) \leq b f(y) \quad \text{and} \quad g(x) \leq c g(y) \big)
\quad \text{or} \quad
\big( f(y) \leq b f(x) \quad \text{and} \quad g(y) \leq c g(x) \big).
\end{align*}
Equivalently, there holds if for all $x, y \in X$,
\begin{align*}
(f(x) - b f(y))(g(x) - c g(y)) \geq 0\quad \text{or} \quad
(f(x) - b^{-1} f(y))(g(x) - c^{-1}g(y))\geq 0.
\end{align*}
Similarly, we say that $(f, g)$ is \emph{weakly $(b,c)$-asynchronous} (or \emph{weakly $(b,c)$-countermonotone}) if,  for all $x, y \in X$, we have
\begin{align*}
\big(f(x) \leq b f(y) \quad \text{and} \quad cg(x) \geq  g(y) \big)
\quad \text{or} \quad
\big( f(y) \leq b f(x) \quad \text{and} \quad cg(y) \geq g(x) \big).
\end{align*}
Equivalently, there holds if for all $x, y \in X$,
\begin{align*}
(f(x) - b f(y))(g(x) - c^{-1} g(y)) \leq 0\quad \text{or} \quad
(f(x) - b^{-1} f(y))(g(x) - cg(y))\leq 0.
\end{align*}

\end{definition}

\begin{remark}
It is easy to verify that if $(f, g)$ is $(b, c)$-synchronous (resp. $(b, c)$-asynchronous), then it is also weakly $(b, c)$-synchronous (resp. weakly $(b, c)$-asynchronous). The converse does not generally hold.
However, in the special case when $b = c = 1$, all notions of synchrony and asynchrony coincide. That is, $f$ and $g$ are synchronous (resp. asynchronous) if and only if $(f, g)$ is $(1,1)$-synchronous (resp. $(1,1)$-asynchronous), if and only if $(f, g)$ is weakly $(1,1)$-synchronous (resp. weakly $(1,1)$-asynchronous).
\end{remark}

\begin{theorem}[Chebyshev's inequality II]
\label{thm:cheby-weak-sync}
Let $f,g \in L^1(X,\mu)$ with $f\geq 0$ and $g \geq 0$. Let $b>0$ and $c>0$. Define $b_{*}=\max(b,\tfrac{1}{b})$ and $c_{*}=\max(c,\tfrac{1}{c})$.

If $(f,g)$ is  weakly $(b,c)$-synchronous then we have
\begin{align*}
\int_X f(x) g(x)\,\d\mu(x) \geq \frac{b^{-1}_{*}+c^{-1}_{*}}{1+b_{*}c_{*}}\frac{1}{\mu(X)} \Big(\int_X g(x)\,\d\mu(x)\Big)\Big(\int_X f(x)\,\d\mu(x)\Big).
\end{align*}
If $(f,g)$ is weakly $(b,c)$-asynchronous then we have
\begin{align*}
\int_X f(x) g(x)\,\d\mu(x) \leq \frac{b_{*}+c_{*}}{1+b^{-1}_{*}c^{-1}_{*}}\frac{1}{\mu(X)}\Big(\int_X g(x)\,\d\mu(x)\Big)\Big(\int_X f(x)\,\d\mu(x)\Big).
\end{align*}
The special case $b = c = 1$, implies the classical Chebyshev inequality.
\end{theorem}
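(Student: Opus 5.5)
The plan is to follow the proof of Theorem~\ref{thm:cheby}. First establish a pointwise inequality that is symmetric in $x$ and $y$ and compares the diagonal products with the cross products. Then integrate it over $X\times X$. Concretely, for the synchronous case it suffices to show that, for all $x,y\in X$,
\begin{align*}
f(x)g(x)+f(y)g(y)\ \geq\ \kappa\,\big(f(x)g(y)+f(y)g(x)\big),\qquad \kappa:=\frac{b_*^{-1}+c_*^{-1}}{1+b_*c_*}.
\end{align*}
Integrating over $X\times X$ then gives $2\mu(X)\int_X fg\,\d\mu\geq 2\kappa\int_X f\,\d\mu\int_X g\,\d\mu$, which is the claim. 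The asynchronous case is handled the same way, with the reverse inequality and constant $K:=\frac{b_*+c_*}{1+b_*^{-1}c_*^{-1}}$.

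\textbf{Pointwise bound, synchronous case.} Fix $x,y$ and write $a=f(x)$, $a'=f(y)$, $e=g(x)$, $e'=g(y)$, all nonnegative. Since the inequality is symmetric in $x\leftrightarrow y$, I may assume the first alternative of weak synchrony holds: $a\le b a'$ and $e\le c e'$. Because $b\le b_*$ and $c\le c_*$, this gives $a\le b_*a'$ and $e\le c_*e'$, so the constants are no longer tied to $b$ versus $b^{-1}$. Write $S=ae+a'e'$ for the diagonal sum and $C=ae'+a'e$ for the cross sum. I would obtain two lower bounds on $S$.
\begin{enumerate}
\item From $ae'\le b_*a'e'$ and $a'e\le c_*a'e'$ we get $C\le (b_*+c_*)a'e'\le (b_*+c_*)S$.
\item Expanding $(b_*a'-a)(c_*e'-e)\geq 0$ gives $b_*c_*\,a'e'+ae\ge b_*a'e+c_*ae'\ge C$, using $b_*,c_*\ge1$. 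Since the left-hand side is at most $b_*c_*S$, this yields $C\le b_*c_*S$.
\end{enumerate}
It remains to check that $\kappa\le 1/(b_*c_*)$. This is equivalent to $b_*+c_*\le 1+b_*c_*$, i.e.\ $(b_*-1)(c_*-1)\ge0$, which holds. So bound 2 alone already gives $S\ge\kappa C$.

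\textbf{Pointwise bound, asynchronous case.} Again by symmetry I assume $a\le b_*a'$ and $e'\le c_*e$. Then $ae\le b_*a'e$ and $a'e'\le c_*a'e$, so $S\le (b_*+c_*)a'e\le (b_*+c_*)C$. Finally $(b_*+c_*)\le K$, because $K=(b_*+c_*)\frac{b_*c_*}{b_*c_*+1}$ and the last factor is less than $1$.

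The only delicate step is the synchronous pointwise estimate. The naive cross-term bound (bound 1) gives only the constant $1/(b_*+c_*)$, and this can be smaller than $\kappa$, for instance when $b_*=1$ and $c_*=2$. So one has to use the product expansion of bound 2, and that bound needs $b_*,c_*\ge 1$, which is exactly why the statement is phrased in terms of $b_*$ and $c_*$. For $b=c=1$ we get $\kappa=1$, which recovers Theorem~\ref{thm:cheby}; in that case one uses the usual identity directly rather than the bounds above.
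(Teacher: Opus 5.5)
Your synchronous half is sound. Bound~2 gives the symmetric estimate $S\ge (b_*c_*)^{-1}C$, and $(b_*c_*)^{-1}\ge\kappa$ because $(b_*-1)(c_*-1)\ge 0$, so you even get a slightly better constant. The paper proceeds differently: it integrates over $X\times X$ an asymmetric pointwise inequality, obtained by expanding the product in each alternative and relaxing $b,c$ to $b_*^{\pm1},c_*^{\pm1}$.

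\textbf{The gap.} The asynchronous half breaks at its last step. From $K=(b_*+c_*)\frac{b_*c_*}{b_*c_*+1}$, with the last factor smaller than $1$, you get $K<b_*+c_*$, not $b_*+c_*\le K$. Your termwise bound $S\le (b_*+c_*)C$ therefore only yields $\int_X fg\,\mathrm{d}\mu\le (b_*+c_*)\frac{1}{\mu(X)}\int_X f\,\mathrm{d}\mu\int_X g\,\mathrm{d}\mu$. This is weaker than the claim by the factor $1+(b_*c_*)^{-1}$. Already for $b=c=1$ it gives the constant $2$ instead of $1$, so it does not recover the countermonotone Chebyshev inequality; that is why you had to fall back on ``the usual identity'' there. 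The termwise bound throws away the cross term $f(x)g(y)$ and never couples the two hypotheses, and that loss cannot be absorbed.

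\textbf{The fix.} The missing ingredient is the same product expansion you already used in the synchronous case. Under $a\le b_*a'$ and $e'\le c_*e$, expanding $(b_*a'-a)(c_*e-e')\ge 0$ gives $c_*ae+b_*a'e'\le b_*c_*a'e+ae'$. The left side is at least $\min(b_*,c_*)\,S$, and the right side is at most $b_*c_*\,C$. Hence $S\le \max(b_*,c_*)\,C$, and $\max(b_*,c_*)\le K$ reduces, for $b_*\ge c_*$, to $c_*^2\ge 1$. With this replacement your symmetric integration scheme proves the statement, again with a slightly better constant.

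The paper does the corresponding thing asymmetrically. It expands $(f(x)-bf(y))(g(x)-c^{-1}g(y))\le 0$ and its mirror image, then relaxes the coefficients using $f,g\ge 0$. This gives $f(x)g(x)+(b_*c_*)^{-1}f(y)g(y)\le b_*f(y)g(x)+c_*f(x)g(y)$ for every ordered pair $(x,y)$, which it then integrates over $X\times X$.
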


\begin{proof}
Assume $(f,g)$ is weakly $(b,c)$-asynchronous. For fixed $x, y \in X$, there are two possible cases. If $f(x) \leq b f(y)$  and $cg(x) \geq g(y)$ that is $(g(x) - c^{-1} g(y))(f(x) - b f(y)) \leq 0,$
then we have
\begin{align*}
g(x)f(x) + b c^{-1} g(y)f(y) \leq bg(x)f(y) +  c^{-1} g(y)f(x).
\end{align*}
If $f(y) \leq b f(x)$ and $cg(y) \geq g(x)$ then  similarly we derive
\begin{align*}
g(x)f(x) + b^{-1} c g(y)f(y) \leq b^{-1} g(x)f(y) +  c g(y)f(x).
\end{align*}
Since $f\geq 0$ and $g\geq 0$, each  case implies the  inequality
\begin{align*}
g(x)f(x)+(b_{*}\,c_{*})^{-1}g(y)f(y)\leq b_{*} g(x)f(y)+c_{*}g(y)f(x).
\end{align*}
Integrating both sides with respect to $x$ and $y$ over $X \times X$ implies
\begin{align*}
\int_X f(x) g(x)\,\d\mu(x) \leq \frac{b_{*}+c_{*}}{1+b^{-1}_{*}c^{-1}_{*}}\frac{1}{\mu(X)} \Big(\int_X g(x)\,\d\mu(x)\Big)\Big(\int_X f(x)\,\d\mu(x)\Big).
\end{align*}
Analogously, if $(f,g)$ is weakly $(b,c)$-synchronous then we have
\begin{align*}
\int_X f(x) g(x)\,\d\mu(x) \geq \frac{b^{-1}_{*}+c^{-1}_{*}}{1+b_{*}c_{*}}\frac{1}{\mu(X)}\Big(\int_X g(x)\,\d\mu(x)\Big)\Big(\int_X f(x)\,\d\mu(x)\Big).
\end{align*}
\end{proof}

\subsection{Special Chebyshev-type inequalities} Our purpose here is to analyze the situation where $X$ is a subset of $\R$. Namely we have in mind the situation $X=\mathbb{N}$ or $X=\mathbb{Z}$ for the discrete setting or simply an interval for the continuous setting. To this end we introduce the notion of almost monotonicity.
\begin{definition}
Let  $f,g: X \to [0,\infty)$ be two functions defined on a nonempty set $X \subset \mathbb{R}$ and let $b>0$, $c>0$.

\noindent We say that $f$ is \emph{$b$-almost increasing} if
\begin{align*}
x \leq y \implies f(x) \leq bf(y) \quad \text{for all } x, y \in X.
\end{align*}
\noindent We say that $f$ is \emph{$b$-almost decreasing} if
\begin{align*}
x \leq y \implies  f(y) \leq bf(x) \quad \text{for all } x, y \in X.
\end{align*}
\noindent We say that $(f,g)$ is \emph{$(b,c)$-positively monotone} if $f$ is almost $b$-increasing and $g$ is almost $c$-increasing, or if $f$ is almost $b$-decreasing and $g$ is almost $c$-decreasing.

\noindent We say that $(f,g)$ is \emph{$(b,c)$-negatively monotone} if $f$ is almost $b$-increasing and $g$ is almost $c$-decreasing, or vice versa.
\end{definition}

\begin{example}
In general for a bounded  function $ \phi:X\to [m, M]$ with $0<m<M$ and any monotone function, $f:X\to [0, \infty)$ the function $g(t)= \phi(t) f(t)$ is $c$-almost monotone with $c=Mm^{-1}$. For instance $g(t) = t^\tau(2+ \sin(\log (1+|t|)))$ or  $g(t) = t^\tau(2 + \sin(t))$, for $\tau \in \R$
is $3$-almost monotone, but not monotone.
\end{example}

\begin{remark}
\textbf{(I)} The definition of $b$-almost monotonicity automatically enforces a restriction on the parameter $b$ and the range of $f$, namely that $f$ must not change sign unless $b=1$. In fact, either $f$ does not change sign on $X$ \textup{(}with $b \geq 1$ if $f \geq 0$, and $b \leq 1$ if $f \leq 0$\textup{)}, or $b = 1$ and $f$ is ordinarily monotone. Indeed,  assume $f(x)<0<f(y)$ for some $x,y$ then by definition we would have
$f(x) \leq b f(x)$ and $f(y) \leq b f(y)$ forcing  $b=1$ and hence $f$ is monotone.
\smallskip 

\textbf{(II)} If $(f,g)$ is $(b,c)$-synchronous \textup{(}resp. $(b,c)$-asynchronous\textup{)}, then
\begin{align*}
f \text{ is $b$-almost increasing} &\iff g \text{ is $c$-almost increasing \textup{(}resp. $c$-almost decreasing\textup{)},} \\
f \text{ is $b$-almost decreasing} &\iff g \text{ is $c$-almost decreasing \textup{(}resp. $c$-almost increasing\textup{)}.}
\end{align*}
In particular, if $f$ and $g$ are both $b$- and $c$-almost monotone, respectively, then the pair $(f,g)$ is $(b,c)$-positively monotone \textup{(}resp. $(b,c)$-negatively monotone\textup{)}.
\medskip 

\textbf{(III)} If  $f$ be $b$-almost monotone and $g$ be $c$-almost monotone, then 
\begin{align*}
(f,g) \text{ is $(b,c)$-synchronous} 
&\implies (f,g) \text{ is positively  $(b,c)$-monotone}\\ 
&\implies (f,g) \text{ is weakly $(b,c)$-synchronous}, \\ 
(f,g) \text{ is $(b,c)$-asynchronous}  
&\implies (f,g) \text{ is negatively $(b,c)$-monotone} 
\\
&\implies (f,g) \text{ is weakly $(b,c)$-asynchronous}. 
\end{align*}
\end{remark}

\begin{theorem}[Chebyshev's inequality III]\label{thm:almost-cheby}
Let  $b, c\in [1,\infty)$ and $f, g\in L^1( X) $ with $X \subset \R$ such that such that $f$ is $b$-almost monotone and $g$ is $c$-almost monotone. If $(f,g)$ is positively $(b,c)$-monotone, then
\begin{align*}
\int_X f(x) g(x)\, \d\mu(x)
\geq
\frac{b^{-1}+c^{-1}}{1+bc}\frac{1}{\mu(X)}
\Big( \int_X f(x)\, \d\mu(x) \Big)
\Big( \int_X g(x)\, \d \mu(x)\Big),
\end{align*}
If $(f,g)$ is negatively $(b,c)$-monotone, then
\begin{align*}
\int_X f(x) g(x)\, \d\mu(x)
\leq
\frac{b+c}{1+b^{-1}c^{-1} }\frac{1}{\mu(X)}
\Big( \int_X f(x)\, \d\mu(x) \Big)
\Big( \int_X g(x)\, \d \mu(x)\Big).
\end{align*}
\end{theorem}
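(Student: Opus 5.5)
The plan is to reduce Theorem~\ref{thm:almost-cheby} to Theorem~\ref{thm:cheby-weak-sync}, by showing that positive (resp. negative) $(b,c)$-monotonicity gives weak $(b,c)$-synchronicity (resp. weak asynchronicity), and then evaluating the constants using $b,c\geq 1$. Throughout, $f,g\geq 0$, as the definition of almost monotonicity requires (see Remark (I)).

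\emph{Positive case.} Assume first that $f$ is $b$-almost increasing and $g$ is $c$-almost increasing. Fix $x,y\in X$ and, by symmetry, let $x\leq y$. Then $f(x)\leq b f(y)$ and $g(x)\leq c g(y)$. This is exactly the first alternative in the definition of weak $(b,c)$-synchronicity. If instead $y\le x$, the same holds with $x$ and $y$ swapped, which is the second alternative. The decreasing--decreasing case works the same way: for $x\leq y$ we get $f(y)\leq bf(x)$ and $g(y)\leq cg(x)$. Hence $(f,g)$ is weakly $(b,c)$-synchronous, as already noted in Remark (III). Since $b,c\geq1$, we have $b_*=b$ and $c_*=c$. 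Theorem~\ref{thm:cheby-weak-sync} then gives the constant $\frac{b^{-1}+c^{-1}}{1+bc}$, which is the claimed lower bound.

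\emph{Negative case.} Assume $f$ is $b$-almost increasing and $g$ is $c$-almost decreasing. For $x\le y$ we get $f(x)\le bf(y)$ and $g(y)\le cg(x)$, i.e.\ $cg(x)\ge g(y)$, which is the first alternative of weak $(b,c)$-asynchronicity. For $y\le x$ we get $f(y)\le bf(x)$ and $cg(y)\ge g(x)$, the second alternative. The case where $f$ decreases and $g$ increases is symmetric: for $x\le y$ we have $f(y)\le bf(x)$ and $g(x)\le cg(y)$, which again matches the second alternative. Theorem~\ref{thm:cheby-weak-sync}, again with $b_*=b$ and $c_*=c$, gives the upper constant $\frac{b+c}{1+b^{-1}c^{-1}}$.

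\emph{Expected obstacle.} The only delicate point is keeping track of which alternative of the weak (a)synchronicity definition each ordering of $x,y$ produces, since the definition is asymmetric in $x,y$ and in $c$ versus $c^{-1}$. One must also make sure that the nonnegativity hypothesis of Theorem~\ref{thm:cheby-weak-sync} is available. If one wants to avoid citing that theorem, the alternative is to redo its proof directly. In the positive case, for $x\le y$ one integrates
\[
f(x)g(x)+bc\,f(y)g(y)\ \ge\ b\,f(y)g(x)+c\,f(x)g(y)\ \ge\ b^{-1}\cdot(\ldots)
\]
over $X\times X$, and then combines the result with the symmetric counterpart for $y\le x$.
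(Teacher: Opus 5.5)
Your proposal is correct and matches the paper's proof, which likewise observes that positive (resp.\ negative) $(b,c)$-monotonicity gives weak $(b,c)$-synchronicity (resp.\ asynchronicity) and then invokes Theorem~\ref{thm:cheby-weak-sync} with $b_*=b$, $c_*=c$; your check of which alternative each ordering of $x,y$ produces is exactly the detail the paper leaves implicit. The optional direct route you sketch at the end is imprecise as written, because integrating separately over the half-regions $\{x\le y\}$ and $\{y\le x\}$ does not produce $\int_X f\d\mu\int_X g\d\mu$; what works is to first weaken both case inequalities (using $f,g\geq 0$ and $b,c\geq 1$) to the single bound $f(x)g(x)+bc\,f(y)g(y)\ge b^{-1}f(y)g(x)+c^{-1}f(x)g(y)$, valid for all $(x,y)\in X\times X$, and then integrate, as in the proof of Theorem~\ref{thm:cheby-weak-sync}.
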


\begin{proof}
This  follows from  Theorem \ref{thm:cheby-weak-sync}. Since if $f$ and $g$ have the same monotonicity then $(f, g)$ is $(b, c)$-weakly synchronous. Otherwise, $(f, g)$ is $(b, c)$-weakly asynchronous.
\end{proof}

\begin{theorem}[Chebyshev's inequality IV]\label{thm:almost-cheby-sing}Let  $a,r\in (0,\infty)$ and $b, c\in [1,\infty)$ be fixed. Let $f, g : (0,\infty) \to [0,\infty)$ be two measurable functions such  that $f$ is $b$-almost monotone and $g$ is $c$-almost monotone. If $(f,g)$ is positively $(b,c)$-monotone, then
\begin{align*}
&\int_0^r f(t) g(t) t^{a-1} \d t
\geq
ar^{-a}\,\frac{b^{-1}+c^{-1}}{1+bc}
\Big( \int_0^r f(t) t^{a-1} \d t \Big)
\Big( \int_0^r g(t) t^{a-1} \d t \Big),\\
&\int_r^\infty f(t) g(t) t^{-a-1} \d t
\geq
ar^{a}\,\frac{b^{-1}+c^{-1}}{1+bc}
\Big( \int_r^\infty f(t) t^{-a-1} \d t \Big)
\Big( \int_r^\infty g(t) t^{-a-1} \d t \Big).
\end{align*}
If $(f,g)$ is negatively $(b,c)$-monotone, then
\begin{align*}
&\int_0^r f(t) g(t) t^{a-1} \d t
\leq
ar^{-a}\,\frac{b+c}{1+b^{-1}c^{-1}}
\Big( \int_0^r f(t) t^{a-1} \d t \Big)
\Big( \int_0^r g(t) t^{a-1} \d t \Big),\\
&\int_r^\infty f(t) g(t) t^{-a-1} \d t
\leq
ar^{a}\,\frac{b+c}{1+b^{-1}c^{-1}}
\Big( \int_r^\infty f(t) t^{-a-1} \d t \Big)
\Big( \int_r^\infty g(t) t^{-a-1} \d t \Big).
\end{align*}
\end{theorem}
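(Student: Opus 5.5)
This is a direct specialization of Theorem~\ref{thm:almost-cheby} (Chebyshev's inequality III), applied to a suitable finite measure on a half-line. For the first pair of inequalities I take $X=(0,r)$ with $\d\mu(t)=t^{a-1}\d t$. Since $a>0$, this gives
\begin{align*}
\mu(X)=\int_0^r t^{a-1}\d t=\frac{r^a}{a}\in(0,\infty),
\end{align*}
so that $\frac{1}{\mu(X)}=ar^{-a}$. For the second pair I take $X=(r,\infty)$ with $\d\mu(t)=t^{-a-1}\d t$. Then
\begin{align*}
\mu(X)=\int_r^\infty t^{-a-1}\d t=\frac{r^{-a}}{a},
\end{align*}
so that $\frac{1}{\mu(X)}=ar^{a}$. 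In both cases the constants in the statement are exactly $\frac{1}{\mu(X)}$ multiplied by the constants of Theorem~\ref{thm:almost-cheby}.

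Next, the hypotheses transfer to the restrictions. The properties of being $b$-almost monotone, $c$-almost monotone, and positively or negatively $(b,c)$-monotone are pointwise conditions on pairs $x\le y$. They are therefore inherited by $f|_X$ and $g|_X$ for any subset $X\subset(0,\infty)$. Theorem~\ref{thm:almost-cheby} then applies, and it is proved through Theorem~\ref{thm:cheby-weak-sync}. Because $b,c\ge 1$, we have $b_*=b$ and $c_*=c$, which yields the constants $\frac{b^{-1}+c^{-1}}{1+bc}$ and $\frac{b+c}{1+b^{-1}c^{-1}}$. Rewriting $\int_X fg\d\mu$, $\int_X f\d\mu$ and $\int_X g\d\mu$ as the weighted integrals in the statement gives all four inequalities.

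The one point needing care is integrability, since Theorem~\ref{thm:almost-cheby} assumes $f,g\in L^1(X,\mu)$, whereas here $f,g$ are only nonnegative and measurable. I would handle this by returning to the pointwise inequality from the proof of Theorem~\ref{thm:cheby-weak-sync}. In the asynchronous (negatively monotone) case it reads
\begin{align*}
g(x)f(x)+(bc)^{-1}g(y)f(y)\le b\,g(x)f(y)+c\,g(y)f(x),
\end{align*}
and the synchronous case is the analogous reversed inequality. This is an inequality between nonnegative functions on $X\times X$, so Tonelli's theorem lets us integrate it against $\mu\otimes\mu$ without any integrability assumption. The result is an inequality in $[0,\infty]$ involving $\mu(X)\int_X fg\d\mu$ and $\int_X f\d\mu\int_X g\d\mu$.

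Dividing by $\mu(X)(1+(bc)^{-1})$ is legitimate whenever the quantity being moved across is finite. When some integral is infinite, the inequality either holds trivially or follows from the usual convention $0\cdot\infty=0$. A slightly cleaner route is to truncate, replacing $f$ and $g$ by $\min(f,n)\mathds 1_{(1/n,n)}$. I would avoid this, because truncation can destroy almost monotonicity. The Tonelli route is therefore the one I would follow, and the main obstacle is only this bookkeeping of possibly infinite integrals.
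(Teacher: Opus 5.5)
Your proposal is correct and is the paper's own argument: the paper applies Theorem~\ref{thm:almost-cheby} on $(0,r)$ with $\d\mu(t)=t^{a-1}\d t$ and on $(r,\infty)$ with $\d\mu(t)=t^{-a-1}\d t$, and $1/\mu(X)$ supplies the factors $ar^{-a}$ and $ar^{a}$. Your Tonelli treatment of merely nonnegative measurable $f,g$ is a refinement the paper leaves implicit; no finiteness caveat is needed there, since all terms of the pointwise inequality are nonnegative and dividing by the positive finite constant $(1+(bc)^{-1})\mu(X)$ is always legitimate in $[0,\infty]$.
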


\begin{proof}
It is sufficient to apply Theorem~\ref{thm:almost-cheby} on measure spaces
$(X=(0,r),\,\d\mu(t)= t^{a-1}\d t)$ and
$(X=(r,\infty),\,\d\mu(t)= t^{-a-1}\d t)$.
\end{proof}

\begin{definition}
Let $g : X \to (0,\infty)$ be a function, $X \subset \R$ and $c>1$. We say that $g$ is called
\emph{$c$-adically decreasing} if
\begin{align*}
g(ct) \leq g(t)
\quad \text{for all } t \in X \text{ such that } ct \in X .
\end{align*}
Likewise, $g$ is called \emph{$c$-adically increasing} if
\begin{align*}
g(ct) \geq g(t)
\quad \text{for all } t \in X \text{ such that } ct \in X .
\end{align*}
\end{definition}

\begin{example}
Here are some non-trivial examples of $c$-adic monotone functions.
\begin{enumerate}[(a)]
\item For $\tau>0$, function $g(t)=\frac{|\sin (t)|^\tau}{t^\tau}$, $t\in \R$ dydically increasing but not monotone.
\item The function $g(t) = x^\tau\sin^2(\log_c(t)\pi)$ with $\tau\in \R$ and $c>1$ is $c-$adically monotone but not monotone.
\item In general, let $f:X\to \R$ be monotone. If $\phi:X \to [0, \infty)$ is $c$-adically monotone  with the same monotonicity, so is the function $g(t)= \phi(t) f(t)$.
\end{enumerate}
\end{example}

\begin{theorem}[Chebyshev's inequality V]\label{thm:chebyshev-cadic}
Let  $a,r\in (0,\infty)$ and $b, c\in (1,\infty)$.
Let $f ,g: (0, \infty) \to[0,\infty)$ be two measurable functions.  Assume $g$ is $c$-adically decreasing.
If  $f$ is $b$-almost decreasing, then the following estimates hold
\begin{align*}
\int_0^r  g(t) f(t) t^{a-1} \d t
&	\geq
\frac{a(c r)^{-a}}{b^{3}} \Big( \int_0^r  g(t) t^{a-1} \d t \Big)
\Big( \int_0^{c r} f(t) t^{a-1}\d t \Big),\\
\int_r^\infty g(t) f(t) t^{-a-1} \d t
&\geq
\frac{a(c r)^{a}}{b^{3}} \Big( \int_r^\infty  g(t) t^{-a-1} \d t \Big)
\Big( \int^\infty _{c r} f(t) t^{-a-1}\d t \Big).
\end{align*}

If $f$ is $b$-almost increasing then the estimates are reversed as follows
\begin{align*}
\int_0^r  g(t) f(t) t^{a-1} \d t
&	\leq
a(c r)^{-a} b^3\Big( \int_0^r  g(t) t^{a-1} \d t \Big)
\Big( \int_0^{c r} f(t) t^{a-1}\d t \Big),\\
\int_r^\infty g(t) f(t) t^{-a-1} \d t
&\leq
a(c r)^{a} b^3\Big(  \int_r^\infty  g(t) t^{-a-1} \d t \Big)
\Big( \int^\infty _{c r} f(t) t^{-a-1}\d t \Big).
\end{align*}
Moreover, if $g$ is merely decreasing then one can simply replace $b^3$ by $b$.
\end{theorem}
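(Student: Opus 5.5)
The plan is to reduce the continuous inequality to a discrete Chebyshev inequality along geometric orbits $\{c^{-k}t\}_{k\ge0}$, and then to decouple the remaining integral over one fundamental annulus using the almost monotonicity of $f$.

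\textbf{Step 1: orbit decomposition.} I treat the first inequality ($f$ $b$-almost decreasing, integrals over $(0,r)$). Split $(0,r]=\bigcup_{k\ge0}c^{-k}I_0$ with $I_0=(r/c,r]$. The substitution $t\mapsto c^{-k}t$ gives
\begin{align*}
\int_0^r g(t)f(t)t^{a-1}\d t=\int_{I_0}\sum_{k\ge0}g(c^{-k}t)f(c^{-k}t)\,c^{-ka}\,t^{a-1}\d t .
\end{align*}
Fix $t\in I_0$ and work on $X=\mathbb{N}_0$ with weights $\mu(\{k\})=c^{-ka}$. Then $\mu(X)=S:=(1-c^{-a})^{-1}$.

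\textbf{Step 2: discrete Chebyshev inequality.} Because $g$ is $c$-adically decreasing, the sequence $k\mapsto g(c^{-k}t)$ is increasing. Because $f$ is $b$-almost decreasing, the sequence $k\mapsto f(c^{-k}t)$ is $b$-almost increasing. The pair is therefore positively $(b,1)$-monotone. Theorem~\ref{thm:almost-cheby} then gives the constant $\frac{b^{-1}+1}{1+b}=b^{-1}$:
\begin{align*}
\sum_k g(c^{-k}t)f(c^{-k}t)c^{-ka}\ \ge\ \frac{1}{bS}\,G(t)F(t),
\end{align*}
where $G(t)=\sum_k g(c^{-k}t)c^{-ka}$ and $F(t)=\sum_k f(c^{-k}t)c^{-ka}$.

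\textbf{Step 3: decoupling in $t$.} This is the step I expect to be the main obstacle: the product $G(t)F(t)$ must be split into the product of the two integrals, and for this I need to replace $F(t)$ by a quantity independent of $t$.
\begin{itemize}
\item For $s\in(r,cr]$ we have $s>t$, hence $c^{-k}s>c^{-k}t$ for every $k$. Almost decrease of $f$ gives $f(c^{-k}s)\le b f(c^{-k}t)$ termwise, so $F(s)\le bF(t)$.
\item Writing $(0,cr]=\bigcup_k c^{-k}(r,cr]$, we obtain
\begin{align*}
\int_0^{cr}f(s)s^{a-1}\d s=\int_r^{cr}F(s)s^{a-1}\d s\le bF(t)\,\frac{(cr)^a-r^a}{a}.
\end{align*}
\item Inserting this lower bound for $F(t)$ and using $\int_{I_0}G(t)t^{a-1}\d t=\int_0^r g(t)t^{a-1}\d t$, we get
\begin{align*}
\int_0^r g f t^{a-1}\d t\ \ge\ \frac{a}{b^2 S\,((cr)^a-r^a)}\Big(\int_0^r g t^{a-1}\d t\Big)\Big(\int_0^{cr} f t^{a-1}\d t\Big).
\end{align*}
\item Since $S\,((cr)^a-r^a)=(cr)^a$, the constant is $a(cr)^{-a}b^{-2}$. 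Because $b>1$, this is at least $a(cr)^{-a}b^{-3}$, which is the claimed bound.
\end{itemize}

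\textbf{Step 4: the remaining three inequalities and the case of decreasing $g$.}
\begin{itemize}
\item \emph{Tail inequality.} Use the outward orbits $c^{k}t$ with $t\in(r,cr]$ and weights $c^{-ka}$, which come from $t^{-a-1}\d t$. Now $k\mapsto g(c^kt)$ is decreasing and $k\mapsto f(c^kt)$ is $b$-almost decreasing, so again the constant is $b^{-1}$. To decouple, write $(cr,\infty)=\bigcup_k c^k(cr,c^2r]$ and compare $F(s)\le bF(t)$ for $s\in(cr,c^2r]$, using $s>t$. The geometric sums produce exactly the factor $a(cr)^{a}$.
\item \emph{Reversed inequalities ($f$ $b$-almost increasing).} The pair is now negatively $(b,1)$-monotone, and Theorem~\ref{thm:almost-cheby} gives the upper constant $\frac{b+1}{1+b^{-1}}=b$. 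All inequalities in the decoupling step are reversed, giving $F(s)\ge b^{-1}F(t)$ for the relevant $s>t$. This yields the constant $b^2\le b^3$.
\item \emph{Merely decreasing $g$.} This is a special case of the above. If desired, one can save one further factor of $b$ by applying Theorem~\ref{thm:almost-cheby-sing} directly.
\end{itemize}
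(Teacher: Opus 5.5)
Your proof is correct and follows the same route as the paper: the $c$-adic orbit decomposition over $(r/c,r]$ (resp.\ $(r,cr]$), the discrete weighted Chebyshev inequality with weights $c^{-ak}$ and constant $\frac{c^a-1}{bc^a}$, and then decoupling of the orbit sum of $f$ via almost monotonicity. The only difference is in the decoupling step: your direct orbit comparison $F(s)\le bF(t)$ for $s>t$ costs a single factor of $b$, whereas the paper sandwiches $f(c^{-n}\tau)$ between the endpoint values $f(c^{-n})$, $f(c^{-n-1})$ and the adjacent annuli and loses $b^{2}$, so you actually obtain the constant $b^{2}$ rather than $b^{3}$.
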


\begin{proof}
Note that, if $g$ is monotone then the sought inequalities follow immediately  from Theorem \ref{thm:almost-cheby-sing}.  Due to scaling invariance, it is enough to consider the case  $r=1$. For a function $w:(0,\infty)\to[0,\infty)$, we use the following $c$-adic decomposition
\begin{align*}
\int_0^1 t^{a -1}w(t) \d t
&= \sum_{n \geq 0} \int_{c^{-(n+1)}}^{c^{-n}} t^{a -1} w(t) \d t
= \int_{\tfrac{1}{c}}^{1} \tau^{a -1} \sum_{n \geq 0} \frac{1}{c^{a n}}
w(c^{-n} \tau) \d \tau,\\
\int_1^\infty t^{-a -1} w(t)\, \d t
&= \sum_{n \geq 0} \int_{c^n}^{c^{n+1}} t^{-a -1} w(t)\, \d t
= \int_{1}^{c} \tau^{-a -1} \sum_{n \geq 0} \frac{1}{c^{a n}} w(c^n \tau)\, \d \tau.
\end{align*}
Since $f$ is $b$decreasing,  we have
$b^{-1}f(c^{-n})\leq f(c^{-n} \tau)\leq bf(c^{-n-1})$ for $\tau \in (\tfrac{1}{c},1)$,
while $b^{-1}f(c^{n+1})\leq f(c^{n} \tau)\leq bf(c^{n})$ for $\tau \in (1,c)$.
Therefore using again monotonicity and the formulas  $\int_{c^{-k}}^{c^{-(k-1)}} t^{a -1}  \d t
= \int_{c^{k-1}}^{c^{k}} t^{-a -1}  \d t
= \frac{c^{a}-1}{a c^{a k}},$ $k\in \{n,n+1\} $
we find that
\begin{align*}
\frac{ab^{-2}}{c^a-1}\int_{c^{-n}}^{c^{-(n-1)}}\hspace*{-1ex} f(t)t^{a -1} \d t &
\leq
\frac{1}{c^{a n}} f(c^{-n} \tau)\leq  	\frac{a b^2 c^{2a}}{c^a-1}\int_{c^{-n-2}}^{c^{-n-1}} \hspace*{-1ex} f(t)t^{a -1} \d t
\quad \,\, (\tau \in (\tfrac{1}{c},1)),\\
\frac{a b^{-2}c^{2a}}{c^a-1}\int_{c^{n+1}}^{c^{n+2}} \hspace*{-1ex} f(t)t^{-a -1}  \d t& \leq
\frac{1}{c^{a n}} f(c^{n} \tau)
\leq \frac{a b^2}{c^a-1} \int_{c^{n-1}}^{c^{n}} \hspace*{-1ex}f(t)t^{-a -1}  \d t \,\,\,\qquad (\tau \in (1,c)).
\end{align*}

Hence it follows that
\begin{align*}
&b^{-2}\int_{0}^{c} f(t)t^{a -1} \d t
\leq
\frac{c^a-1}{a}\sum_{n \geq 0}\frac{ f(c^{-n} \tau)}{c^{a n}}
\leq  	c^{2a}b^2\int_{0}^{c^{-1}} \hspace*{-1ex}f(t)t^{a -1} \d t
\quad  \,\, (\tau \in (\tfrac{1}{c},1)),\\
&
c^{2a}b^{-2}\int_{c}^{\infty}\hspace*{-1ex}
f(t)t^{-a -1}  \d t
\leq
\frac{c^a-1}{a} \sum_{n \geq 0}\frac{f(c^{n} \tau) }{c^{a n}}
\leq b^2\int_{c^{-1}}^{\infty} \hspace*{-1ex}f(t)t^{-a -1}  \d t
\quad (\tau \in (1,c)).
\end{align*}
Since $f $  is $b$-almost decreasing and $g(ct) \leq g(t)$, if we  fix $\tau>0$,
then the sequence $(g(c^{-n} \tau))_n$ is increasing and $(f(c^{-n} \tau))_n$ $b$-almost increasing,
while the  sequence $(g(c^{n} \tau))_n$ is decreasing and $(f(c^{n} \tau))_n$ is $b$-almost decreasing.
Whence,  applying  Theorem \ref{thm:cheby-weak-sync} for discrete sums,  with weights $\alpha_n = \frac{1}{c^{a n}}$ and using
$\sum_{n \geq 0} \frac{1}{c^{a n}} = \frac{c^{a}}{c^{a}-1}$, we obtain
\begin{align*}
\sum_{n \geq 0} \frac{1}{c^{a n}} g(c^{-n} \tau)f(c^{-n} \tau)
&\geq  \frac{c^a-1}{bc^a}\Big( \sum_{n \geq 0} \frac{1}{c^{a n}} g(c^{-n} \tau) \Big)\Big(\sum_{n \geq 0} \frac{1}{c^{a n}} f(c^{-n} \tau)\Big),\\
\sum_{n \geq 0} \frac{1}{c^{a n}} g(c^{n} \tau)f(c^{n} \tau)
&\geq  \frac{c^a-1}{bc^a} \Big(\sum_{n \geq 0} \frac{1}{c^{a n}} g(c^{n} \tau)\Big) \Big(\sum_{n \geq 0} \frac{1}{c^{a n}} f(c^{n} \tau)\Big).
\end{align*}
By exploiting these estimates inn conjunction with the $c$-adic decomposition we obtain
\begin{align*}
\int_0^1g(t)f(t) t^{a -1} \d t
&\geq   \frac{c^a-1}{bc^a} \int_{\tfrac{1}{c}}^{1} \tau^{a -1} \Big(\sum_{n \geq 0} \frac{1}{c^{a n}} g(c^{-n} \tau) \Big)\Big( \sum_{n \geq 0} \frac{1}{c^{a n}} f(c^{-n} \tau)\Big) \d \tau\\
&\geq \frac{a}{b^{3}c^a} \int_{\tfrac{1}{c}}^{1} \tau^{a -1} \Big(\sum_{n \geq 0} \frac{1}{c^{a n}} g(c^{-n} \tau)\Big)  \d \tau\,  \Big(\int_0^c f(t)t^{a -1} \d t\Big)\\
&= \frac{a}{b^{3}c^a}	\Big(\int_0^1g(t)t^{a -1} \d t\Big) \Big(\int_0^c f(t)t^{a -1} \d t\Big),
\end{align*}
while, similarly,  for the second case we get
\begin{align*}
\int_1^\infty g(t)t^{-a -1} f(t) \d t
&\geq     \frac{c^a-1}{b^{3}c^a} \int_{1}^{c} \tau^{-a -1}
\Big(\sum_{n \geq 0} \frac{1}{c^{a n}} g(c^{n} \tau)  \Big)
\Big(\sum_{n \geq 0} \frac{1}{c^{a n}} f(c^{n} \tau)\Big) \d \tau\\
&\geq \frac{a c^a}{b^{3}} \int_{1}^{c} \tau^{-a -1}\Big( \sum_{n \geq 0} \frac{1}{c^{a n}} g(c^{n} \tau) \Big)  \d \tau\, \Big(\int_c^\infty  f(t)t^{-a -1}  \d t\Big)\\
&= \frac{a c^a}{b^{3}}\Big(\int_1^\infty g(t)t^{-a -1} \d t \Big)
\Big(\int_c^\infty  f(t)t^{-a -1}  \d t\Big).
\end{align*}
\end{proof}

\begin{theorem}[Chebyshev's inequality V$'$]\label{thm:chebyshev-cadic-bis}
Let  $a,r\in (0,\infty)$ and $b, c\in (1,\infty)$.
Let $f ,g: (0, \infty) \to[0,\infty)$ be two measurable functions. Assume $g$ is $c$-adically increasing

If  $f$ is $b$-almost increasing, then the following estimates hold
\begin{align*}
\int_0^r  g(t) f(t) t^{a-1} \d t
&	\geq
\frac{a(r/c)^{-a}}{b^3}\Big( \int_0^r  g(t) t^{a-1} \d t \Big)
\Big( \int_0^{r/c} f(t) t^{a-1}\d t \Big),\\
\int_r^\infty g(t) f(t) t^{-a-1} \d t
&\geq
\frac{a(r/c)^{a}}{b^3} \Big(  \int_r^\infty  g(t) t^{-a-1} \d t \Big)
\Big( \int^\infty _{r/c} f(t) t^{-a-1}\d t \Big).
\end{align*}
If $f$ is $b$-almost decreasing then the estimates are reversed as follows
\begin{align*}
\int_0^r  g(t) f(t) t^{a-1} \d t
&	\leq
a(r/c)^{-a}b^{3} \Big( \int_0^r  g(t) t^{a-1} \d t \Big)
\Big( \int_0^{r/c} f(t) t^{a-1}\d t \Big),\\
\int_r^\infty g(t) f(t) t^{-a-1} \d t
&\leq
a(r/c)^{a}b^{3} \Big( \int_r^\infty  g(t) t^{-a-1} \d t \Big)
\Big( \int^\infty _{r/c} f(t) t^{-a-1}\d t \Big).
\end{align*}
Moreover, if $g$ is merely monotone, then one can simply replace $b^3$ by $b$.
\end{theorem}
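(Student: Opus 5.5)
The plan is to follow the proof of Theorem~\ref{thm:chebyshev-cadic} almost verbatim, with the roles of ``increasing'' and ``decreasing'' interchanged and the dilation $c$ replaced by $c^{-1}$. By scaling invariance ($t\mapsto rt$) it suffices to treat $r=1$. If $g$ is genuinely monotone (increasing, to match the $c$-adic hypothesis), the pair $(f,g)$ is $(b,1)$-positively or negatively monotone. The estimate with constant $b$ then follows directly from Theorem~\ref{thm:almost-cheby-sing}, after enlarging or shrinking the range of the $f$-integral from $(0,1)$ to $(0,1/c)$ (or from $(1,\infty)$ to $(1/c,\infty)$) using the almost monotonicity of $f$. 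For general $g$, I would use the same $c$-adic decompositions
\begin{align*}
\int_0^1 t^{a-1}w(t)\d t=\int_{1/c}^1\tau^{a-1}\sum_{n\ge0}c^{-an}w(c^{-n}\tau)\d\tau,\qquad
\int_1^\infty t^{-a-1}w(t)\d t=\int_1^c\tau^{-a-1}\sum_{n\ge0}c^{-an}w(c^{n}\tau)\d\tau .
\end{align*}

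Fix $\tau$. Since $g(ct)\ge g(t)$, the sequence $(g(c^{-n}\tau))_n$ is decreasing and $(g(c^n\tau))_n$ is increasing. If $f$ is $b$-almost increasing, then $(f(c^{-n}\tau))_n$ is $b$-almost decreasing in $n$ and $(f(c^n\tau))_n$ is $b$-almost increasing. So in both sums the pair of sequences is positively $(b,1)$-monotone. Theorem~\ref{thm:cheby-weak-sync} (or Theorem~\ref{thm:almost-cheby}), applied on $\mathbb N$ with weights $c^{-an}$ of total mass $c^a/(c^a-1)$, gives
\begin{align*}
\sum_n c^{-an}g f\ \ge\ \frac{c^a-1}{b\,c^a}\Big(\sum_n c^{-an}g\Big)\Big(\sum_n c^{-an}f\Big).
\end{align*}
The constant here is $(b^{-1}+1)/(1+b)=b^{-1}$. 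If $f$ is $b$-almost decreasing, the same argument with the negative monotonicity version gives the reverse inequality with constant $b$.

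The key step is to bound the $f$-sum uniformly in $\tau$ by an integral of $f$ over the shifted range, using $b$-almost monotonicity on each $c$-adic block. In the case where $f$ is $b$-almost increasing and $\tau\in(1/c,1)$, compare $f(c^{-n}\tau)$ with the block $(c^{-n-2},c^{-n-1})$. Using $\int_{c^{-k}}^{c^{-k+1}}t^{a-1}\d t=(c^a-1)/(ac^{ak})$, this yields
\begin{align*}
\frac{c^{-an}}{1}f(c^{-n}\tau)\ \ge\ b^{-2}\frac{a\,c^{2a}}{c^a-1}\int_{c^{-n-2}}^{c^{-n-1}}f\,t^{a-1}\d t ,
\end{align*}
and summing over $n$ gives $\sum_n c^{-an}f(c^{-n}\tau)\ge b^{-2}\frac{a c^{2a}}{c^a-1}\int_0^{1/c}f\,t^{a-1}\d t$. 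For $\tau\in(1,c)$, compare with the block $(c^{n-1},c^n)$; summing gives an integral of $f$ over $(1/c,\infty)$ with the factor $b^{-2}\,a/(c^a-1)$. The almost decreasing case is the mirror image, with upper bounds and factor $b^2$. Plugging these into the discrete Chebyshev bound and integrating back in $\tau$ via the decomposition for $g$ produces the constants $a c^{a}b^{-3}=a(1/c)^{-a}b^{-3}$ and $a c^{-a}b^{-3}=a(1/c)^{a}b^{-3}$ (respectively $b^3$), which is exactly the claimed form.

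I expect the main obstacle to be the bookkeeping in this block comparison. One must pick, for each case, the adjacent $c$-adic interval on the correct side, so that the almost monotonicity inequality points the right way for all $\tau$ in the fundamental domain. One must also verify that the powers $c^{\pm 2a}$ combine with $(c^a-1)/a$ and the weight mass to give precisely $(r/c)^{\mp a}$, with the integration range $(0,r/c)$ or $(r/c,\infty)$ and not something off by one block. If the first choice of block gives the wrong power of $c$, I would shift the comparison interval by one step, at the cost of a different admissible range; the stated ranges suggest the shift by one block toward $r/c$ is the right one.
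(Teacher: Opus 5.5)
Your argument for the four main inequalities is correct, but it is not the paper's route. The paper proves Theorem~\ref{thm:chebyshev-cadic-bis} in one line by the inversion $t\mapsto 1/t$. Under it, $\tilde g(t)=g(1/t)$ is $c$-adically decreasing and $\tilde f(t)=f(1/t)$ has the opposite almost monotonicity. The substitution turns $\int_0^r\cdots t^{a-1}\d t$ into $\int_{1/r}^\infty\cdots t^{-a-1}\d t$ and $\int_0^{r/c}$ into $\int_{c/r}^\infty$, and vice versa. All four estimates, with $b^3$, are then read off from Theorem~\ref{thm:chebyshev-cadic} at radius $1/r$.

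You instead rerun the $c$-adic decomposition directly. Your block choices, $(c^{-n-2},c^{-n-1})$ for $\tau\in(1/c,1)$ and $(c^{n-1},c^{n})$ for $\tau\in(1,c)$, are the right ones in both the almost increasing and the almost decreasing case. The constants close exactly to $ac^{a}b^{-3}$ and $ac^{-a}b^{-3}$ (resp.\ with $b^{3}$), so no further shift is needed.

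The inversion is shorter and automatically consistent with Theorem~\ref{thm:chebyshev-cadic}. Your route is self-contained and in fact sharper. Each comparison block lies entirely on one side of $c^{\mp n}\tau$, so a single application of almost monotonicity costs only a factor $b$ there, not $b^{2}$. You therefore actually get $b^{2}$ in place of $b^{3}$.

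The ``moreover'' clause, however, is not proved by what you describe. Theorem~\ref{thm:almost-cheby-sing} with parameters $(b,1)$ already costs a factor $b$. Moving the $f$-integral from $(0,1)$ to $(0,1/c)$ by almost monotonicity costs another factor $b$: for example, $f=b\mathds{1}_{(0,1/c)}+\mathds{1}_{[1/c,1)}$ is $b$-almost increasing, and $\int_0^1 f t^{a-1}\d t\big/\big(c^{a}\int_0^{1/c} f t^{a-1}\d t\big)\to 1/b$ as $c^a\to\infty$. So your chain only certifies $b^{2}$.

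To get $b$, replace $f$ by its monotone envelope $F(t)=\sup_{(0,t]}f$, or $F(t)=\inf_{(0,t]}f$ when $f$ is $b$-almost decreasing. This $F$ is monotone and satisfies $F/b\le f\le F$ (resp.\ $F\le f\le bF$). Then:
\begin{itemize}
\item apply the classical Chebyshev inequality (Theorem~\ref{thm:cheby}, constant $1$) to $(F,g)$ with the normalized measure $at^{a-1}\d t$ on $(0,1)$;
\item shift the range for the monotone $F$ at no cost, since $at^{a-1}\d t$ on $(0,1)$ stochastically dominates $ac^{a}t^{a-1}\d t$ on $(0,1/c)$, and likewise $at^{-a-1}\d t$ on $(1,\infty)$ dominates $ac^{-a}t^{-a-1}\d t$ on $(1/c,\infty)$;
\item pay the factor $b$ only once, when passing between $f$ and $F$.
\end{itemize}
The paper's own justification, ``follows immediately from Theorem~\ref{thm:almost-cheby-sing}'' in Theorem~\ref{thm:chebyshev-cadic} and inherited through the inversion, glosses over the same point.
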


\begin{proof}
Since $\tilde{g}(t) = g(1/t)$ is $c$-adically decreasing, apply Theorem~\ref{thm:chebyshev-cadic} for $a>0$, $1/r>0$ and the function $\tilde{f}(t) = f(1/t)$ is $b$-almost monotone oppositely to $f$.
\end{proof}

\begin{remark}
Chebyshev-type inequalities similar to those in Theorem~\ref{thm:chebyshev-cadic} and Theorem~\ref{thm:chebyshev-cadic-bis} can also be obtained in more complex settings, where the function $g:X\to[0,\infty)$ is simultaneously $c$-adically and $b$-almost monotone.
For instance, assume that $g:X\to[0,\infty)$ is both $c$-adically decreasing and $b$-almost decreasing, meaning that
$g(ct)\leq b\,g(t)$  for all $t\in X.$
Setting $q=\frac{\log b}{\log c},$
so that $b=c^{q}$, the function $t\mapsto g_q(t)=\frac{g(t)}{t^{q}}$
is $c$-adically decreasing that is, $g_q(ct)\leq g_q(t)$  for all $t\in X.$ Hence, Theorem~\ref{thm:chebyshev-cadic} and Theorem~\ref{thm:chebyshev-cadic-bis} apply to $g_q$ as well. We leave the derivation of the corresponding inequalities to the interested reader.
\end{remark}

\subsection{Mononicity of singular integrals}
\label{subsec:singular-integral}
In this section, we investigate various monotonicity properties involving integral operators. In the next result, we shall see that Chebyshev's inequality surprisingly yields consequences for families of density functions with the Monotone Likelihood Ratio (MLR) property, a concept of genuine significance in statistics and probability theory.  The study of the monotone likelihood ratio property dates back at least to the foundational statistical work by Karlin \& Rubin \cite{KaRu56}; a comprehensive exposition can be found in Lehmann \& Romano \cite{LeRo21}.

\begin{theorem}
\label{thm:mlr-almost}
Let $(X, \mu)$ be a measure space with $X\subset \R$ and $I\subset \R$ be a  nonempty set of indices. Let $b\geq 1$ and $c\geq 1$.  Assume $(p_\alpha)_{\alpha\in I}$ is a family of probability densities on $X$ with $b$-almost monotone likelihood ratio property in $x$, that is,  for each $\alpha<\beta$ the ratio mapping $x\mapsto g_{\alpha,\beta}(x)= \frac{p_\beta(x)}{p_\alpha(x)} $ is $b$-almost increasing on $X$. Let $f ,g: X\to [0,\infty]$ be such that $f$ is $c$-almost increasing and $g$ is $c$-almost decreasing. Then we have
\begin{align*}
 \begin{split}
\int_X f(x)p_\beta(x)\d\mu(x)
&\geq \frac{b^{-1}+c^{-1}}{1+bc}
\int_X f(x)p_\alpha (x)\d \mu(x),\\
\int_X g(x)p_\beta(x)\d\mu(x)
&\leq \frac{b+c}{1+b^{-1}c^{-1} }
\int_Xg(x)p_\alpha (x)\d \mu(x)
\end{split} && (\alpha,\beta\in I,\,\, \alpha<\beta).
\end{align*}
In other words,  if $\mathbf{X}_\alpha$ is a random variable with law $p_\alpha(x)\d \mu(x)$ then  the mappings $\alpha\mapsto \mathbb{E}[f(\mathbf{X}_\alpha)]$ and  $\alpha\mapsto \mathbb{E}[g(\mathbf{X}_\alpha)]$ are almost monotone on $I$ and we have
\begin{align*}
 \begin{split}
\mathbb{E}[f(\mathbf{X}_\beta)]&\geq\frac{b^{-1}+c^{-1}}{1+bc} \mathbb{E}[f(\mathbf{X}_\alpha )]
\quad \text{and}\quad \mathbb{E}[g(\mathbf{X}_\beta)] \leq  \frac{b+c}{1+b^{-1}c^{-1} } \mathbb{E}[g(\mathbf{X}_\alpha)]
\end{split}&&(\alpha<\beta).
\end{align*}
Analogously, if $x\mapsto g_{\alpha,\beta}(x)= \frac{p_\beta(x)}{p_\alpha(x)}$
is $b$-almost decreasing, then
\begin{align*}
 \begin{split}
\mathbb{E}[g(\mathbf{X}_\beta)]&\geq\frac{b^{-1}+c^{-1}}{1+bc} \mathbb{E}[g(\mathbf{X}_\alpha )]
\quad \text{and}\quad \mathbb{E}[f(\mathbf{X}_\beta)] \leq  \frac{b+c}{1+b^{-1}c^{-1} } \mathbb{E}[f(\mathbf{X}_\alpha)]
\end{split}&&(\alpha<\beta).
\end{align*}
\end{theorem}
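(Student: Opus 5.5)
The plan is to recast each expectation under $p_\beta$ as an integral with respect to the probability measure $p_\alpha\,\d\mu$, and then apply Theorem~\ref{thm:almost-cheby} to a suitable pair of almost monotone functions. Fix $\alpha<\beta$ in $I$ and set $\d\nu(x)=p_\alpha(x)\,\d\mu(x)$. This is a probability measure on $X$, so $\nu(X)=1$. We have $p_\beta=g_{\alpha,\beta}\,p_\alpha$ on $\{p_\alpha>0\}$ by definition of the ratio. I will assume, as the statement implicitly does, that $p_\beta$ vanishes $\mu$-a.e.\ where $p_\alpha$ does, so that the ratio is meaningful. Then
\begin{align*}
\int_X f\,p_\beta\,\d\mu=\int_X f\,g_{\alpha,\beta}\,\d\nu,\qquad \int_X g_{\alpha,\beta}\,\d\nu=\int_X p_\beta\,\d\mu=1 .
\end{align*}

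For the first inequality, $f$ is $c$-almost increasing and $g_{\alpha,\beta}$ is $b$-almost increasing. Hence the pair $(g_{\alpha,\beta},f)$ is positively $(b,c)$-monotone. Theorem~\ref{thm:almost-cheby}, applied on $(X,\nu)$, gives
\begin{align*}
\int_X f\,g_{\alpha,\beta}\,\d\nu\geq \frac{b^{-1}+c^{-1}}{1+bc}\Big(\int_X g_{\alpha,\beta}\,\d\nu\Big)\Big(\int_X f\,\d\nu\Big)=\frac{b^{-1}+c^{-1}}{1+bc}\int_X f\,p_\alpha\,\d\mu .
\end{align*}
This is the claimed estimate. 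For the second inequality, $g$ is $c$-almost decreasing and $g_{\alpha,\beta}$ is $b$-almost increasing, so the pair is negatively $(b,c)$-monotone. The second part of Theorem~\ref{thm:almost-cheby} then gives the upper bound with constant $\frac{b+c}{1+b^{-1}c^{-1}}$, again because $\int g_{\alpha,\beta}\,\d\nu=1$.

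The probabilistic reformulation is immediate: $\mathbb{E}[f(\mathbf{X}_\alpha)]=\int f p_\alpha\,\d\mu$. The "analogous" case, where $g_{\alpha,\beta}$ is $b$-almost decreasing, swaps the two situations. Now $(g_{\alpha,\beta},g)$ is positively monotone, since both are almost decreasing, which yields the lower bound for $g$. Likewise $(g_{\alpha,\beta},f)$ is negatively monotone, which yields the upper bound for $f$.

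The main technical point is that $f,g$ may take the value $+\infty$ and need not be integrable, whereas Theorem~\ref{thm:almost-cheby} is stated for $L^1$ functions. I would handle this by first applying the argument to truncations $\min(f,M)$ and $\min(g,M)$. I expect these truncations to remain almost monotone with the same constant, by the observation $\min(x,M)\le\min(c y,M)\le c\min(y,M)$ for $c\ge1$. Then I would pass to the limit $M\to\infty$ by monotone convergence; for the lower bound, an infinite right-hand side is trivially consistent. The pointwise Chebyshev inequality underlying Theorem~\ref{thm:cheby-weak-sync} only requires nonnegativity, since the integration over $X\times X$ is justified by Tonelli. A minor additional care point is the null set where $p_\alpha=0$, which is harmless after redefining $g_{\alpha,\beta}$ there consistently with its almost monotonicity.
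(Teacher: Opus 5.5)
Your proposal is correct and matches the paper's proof: apply Theorem~\ref{thm:almost-cheby} on the probability space $(X, p_\alpha \d\mu)$ to the pairs $(f,g_{\alpha,\beta})$ and $(g,g_{\alpha,\beta})$, using $\int_X g_{\alpha,\beta}\, p_\alpha \d\mu=\int_X p_\beta \d\mu=1$. Your extra details are sound and fill in points the paper leaves tacit: the implicit absolute continuity of $p_\beta$ with respect to $p_\alpha$, and truncating $f,g$ at level $M$ (which preserves $c$-almost monotonicity) before passing to the limit by monotone convergence, to handle infinite or non-integrable values.
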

\begin{proof}
The result follows by  applying Theorem~\ref{thm:almost-cheby} to the couples $(f,g_{\alpha,\beta})$ and $(g,g_{\alpha,\beta})$ with respect to the measure $\d \mu_\alpha(x)= p_\alpha(x)
 \d \mu(x)$; noting that
\begin{align*}
\int_X p_\alpha(x) \d \mu(x) = 1= \int_X p_\beta(x) \d \mu(x) = \int_X g_{\alpha,\beta}(x) p_\alpha(x) \d \mu(x).
\end{align*}
\end{proof}

The next result is a direct consequence of Theorem \ref{thm:mlr-almost} and shows that the monotone likelihood ratio property implies the  monotonicity of the expectations. 
\begin{theorem}[Monotone Likelihood Ratio]
\label{thm:mon-like-ratio} Let $(X, \mu)$ be a measure space with $X\subset \R$ and $I\subset \R$ be a  nonempty set. Assume $(p_a)_{a\in I}$ is a family of probability densities on $X$ with monotone likelihood ratio in $x$, that is,  for each $a<b$ the ratio mapping $x\mapsto \frac{p_b(x)}{p_a(x)} $ is increasing.
Let $c\geq1$ and $f ,g: X\to [0,\infty]$ be such that $f$ is $c$-almost increasing and $g$ is $c$-almost decreasing. Then we have
\begin{align*}
 \begin{split}
\int_X f(x) p_a(x)\d \mu(x)&\leq c\int_X f(x) p_b(x)\d \mu(x)\\
\int_X g(x) p_b(x)\d \mu(x)&\leq c\int_X g(x) p_a(x)\d \mu(x)
\end{split} && (a,b\in I,\,\, a<b).
\end{align*}
In other words,  if $\mathbf{X}_a$ is a random variable with law $p_a(x)\d \mu(x)$ then  the mappings $a\mapsto \mathbb{E}[f(\mathbf{X}_a)]$  and $a\mapsto \mathbb{E}[g(\mathbf{X}_a)]$ is $c$-almost monotone on $I$, and we have
\begin{align*}
\mathbb{E}[f(\mathbf{X}_a)]\leq c\mathbb{E}[f(\mathbf{X}_b)]\quad\text{and}\quad  \mathbb{E}[g(\mathbf{X}_b)]\leq c\mathbb{E}[g(\mathbf{X}_a)] && (a,b\in I,\,\,a<b).
\end{align*}
\end{theorem}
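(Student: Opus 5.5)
Since the statement is the exact-monotone special case of Theorem~\ref{thm:mlr-almost}, the plan is to repeat its Chebyshev argument with the sharper constant available when the likelihood ratio is genuinely increasing. Fix $a<b$ in $I$ and put $h(x)=p_b(x)/p_a(x)$, which is increasing on $X$ (on $\{p_a>0\}$; where $p_a=0$ the density $p_b$ charges no mass, a point to check). Work on the probability space $(X,\d\mu_a)$ with $\d\mu_a=p_a\d\mu$. Then
\begin{align*}
\int_X f\,p_b\d\mu=\int_X f\,h\d\mu_a,\qquad \int_X h\d\mu_a=\int_X p_b\d\mu=1 .
\end{align*}
So we must compare $\int f h\d\mu_a$ with $\big(\int f\d\mu_a\big)\big(\int h\d\mu_a\big)$, which is exactly a Chebyshev-type inequality with $\mu_a(X)=1$.

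For $f$ we need $\int f h\d\mu_a\ge c^{-1}\int f\d\mu_a$. I would not invoke Theorem~\ref{thm:almost-cheby} with $b=1$ directly, since it only gives the constant $\frac{1+c^{-1}}{1+c}=c^{-1}$. That is in fact exactly the required constant, so it suffices: $(f,h)$ is positively $(c,1)$-monotone, and Theorem~\ref{thm:almost-cheby} gives $\int fh\d\mu_a\ge \frac{1+c^{-1}}{1+c}\int f\d\mu_a\int h\d\mu_a=c^{-1}\int f\,p_a\d\mu$, which is the first inequality.

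To be safe, I would also verify this directly from the pointwise argument of Theorem~\ref{thm:cheby-weak-sync}. For $x\le y$ we have $f(x)\le cf(y)$ and $h(x)\le h(y)$. Hence $(cf(y)-f(x))(h(y)-h(x))\ge0$, and symmetrizing over $X\times X$ after integrating against $\mu_a\otimes\mu_a$ gives
\begin{align*}
(1+c)\int fh\,\d\mu_a\ge (1+c)\int f\d\mu_a\int h\d\mu_a - (c-1)\Big(\int fh\d\mu_a-\ldots\Big).
\end{align*}
Reconciling this with the weak-synchronicity bound is the step needing care. The cleanest route is simply to quote Theorem~\ref{thm:almost-cheby}, whose constant with $b=1$ reduces to $c^{-1}$.

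For $g$, $c$-almost decreasing, the pair $(g,h)$ is negatively $(c,1)$-monotone. Theorem~\ref{thm:almost-cheby} then gives $\int gh\d\mu_a\le\frac{c+1}{1+c^{-1}}\int g\d\mu_a=c\int g\,p_a\d\mu$, which is the second inequality. The probabilistic reformulation is just a rewriting of the two integrals as expectations. The main obstacle I anticipate is technical rather than conceptual: handling the sets where $p_a=0$ or where $f,g$ take the value $\infty$. I would treat these by restricting to $\{p_a>0\}$ (monotone likelihood ratio conventionally assumes common support) and by truncating $f\wedge N$, $g\wedge N$, which preserves almost monotonicity, and then applying monotone convergence.
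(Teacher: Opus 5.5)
Your proposal is correct and is essentially the paper's argument: the paper obtains the statement by setting $b=1$ in Theorem~\ref{thm:mlr-almost}, and that theorem's proof is exactly your application of Theorem~\ref{thm:almost-cheby} to the pairs $(f,p_b/p_a)$ and $(g,p_b/p_a)$ on $(X,p_a\d\mu)$, with constants $\frac{1+c^{-1}}{1+c}=c^{-1}$ and $\frac{c+1}{1+c^{-1}}=c$. The ``direct'' symmetrization you abandoned is unnecessary, though it does close: both orderings of $x,y$ give $f(x)h(x)+cf(y)h(y)\ge f(x)h(y)+c^{-1}f(y)h(x)$, hence $(1+c)\int fh\d\mu_a\ge(1+c^{-1})\int f\d\mu_a$; your handling of $\{p_a=0\}$ and of infinite values by truncation is, if anything, more careful than the paper, which tacitly uses $\int (p_b/p_a)\,p_a\d\mu=1$.
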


\begin{proof}
The claim follows by taking $b=1$ in Theorem~\ref{thm:mlr-almost}.  For the special case $c = 1$, see the different approach in \cite[Chapter 3]{LeRo21}.
\end{proof}

The next lemma provides an Abelian-type averaging result, establishing the limiting behavior of some singular integrals near $0$ and $\infty$. 
\begin{lemma}
[Averaging  near $0$ and $\infty$]
\label{lem:asymp-averag}
Let $r > 0$ and  $g\in L^q_{\loc}((0,\infty))$, with $1<q\leq \infty$, $g: (0, \infty)\to \R$.
If $\lim\limits_{t \to 0^+} g(t) = g(0)$ exists in $\R\cup \{ -\infty, \infty\}$, then we have
\begin{align*}
&\lim_{a\to0^+} a\int_0^r g(t) t^{a-1}\d t = g(0).
\end{align*}
If $\lim\limits_{t \to \infty} g(t) = g(\infty)$ exists in $\R\cup \{ -\infty, \infty\}$, then we have
\begin{align*}
&\lim_{a\to0^+} a\int_r^\infty g(t) t^{-a-1}\d t= g(\infty).
\end{align*}
\end{lemma}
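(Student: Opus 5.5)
The plan is the standard Abelian argument. The key fact is that $a t^{a-1}\d t$ on $(0,r)$ has total mass $r^a\to 1$ and concentrates near $0$ as $a\to0^+$. Likewise $a t^{-a-1}\d t$ on $(r,\infty)$ has mass $r^{-a}\to1$ and escapes to $\infty$.

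We treat the near-$0$ claim first, with $g(0)\in\R$. Write
\begin{align*}
a\int_0^r g(t)t^{a-1}\d t - g(0)r^a = a\int_0^\delta (g(t)-g(0))t^{a-1}\d t + a\int_\delta^r (g(t)-g(0))t^{a-1}\d t .
\end{align*}
Given $\eps>0$, choose $\delta\in(0,r)$ with $|g(t)-g(0)|\le\eps$ on $(0,\delta)$. The first term is then bounded by $\eps\delta^a\le \eps\max(1,r)$.

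For the second term, use $g\in L^q_{\loc}$, hence $g\in L^1(\delta,r)$. Since $t^{a-1}\le \max(\delta^{-1},\delta^{-1}r)$ there for $a\le1$, this term is at most $a\,C_\delta(\|g\|_{L^1(\delta,r)}+|g(0)|r)$, which tends to $0$. As $r^a\to1$, we get $\limsup|\cdot|\le\eps\max(1,r)$, and $\eps$ is arbitrary.

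Next, the case $g(0)=+\infty$ (the case $-\infty$ is symmetric). Given $M>0$, pick $\delta$ with $g\ge M$ on $(0,\delta)$. Then
\begin{align*}
a\int_0^r g t^{a-1}\ge M\delta^a - a\int_\delta^r|g|t^{a-1}\d t,
\end{align*}
which tends to $M$, so the $\liminf$ is at least $M$ for every $M$.

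For the claim at $\infty$, substitute $t=1/\tau$. This gives
\begin{align*}
a\int_r^\infty g(t)t^{-a-1}\d t = a\int_0^{1/r} g(1/\tau)\tau^{a-1}\d\tau .
\end{align*}
Set $\tilde g(\tau)=g(1/\tau)$, which satisfies $\tilde g(\tau)\to g(\infty)$ as $\tau\to0^+$. Local integrability of $\tilde g$ on compact subsets of $(0,1/r]$ follows from that of $g$ on $[r,R]$, because the Jacobian $\tau^{-2}$ is bounded there. So the first part applies to $\tilde g$.

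The one delicate point is justifying that the integrals are finite, so that the splitting is legitimate. Near $0$ (resp. $\infty$), $g$ is bounded by the convergence hypothesis in the finite-limit case. In the infinite case, $g$ has a sign there, so the integral is a well-defined element of $(-\infty,\infty]$ and the lower bound argument still applies. Local integrability on compact subintervals is all that the middle pieces need; the exponent $q>1$ is not essential.
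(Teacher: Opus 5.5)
Your proof is correct and follows the same splitting argument as the paper: the piece over $(0,\delta)$ is controlled by the convergence of $g$, the factor $a$ kills the contribution of $(\delta,r)$, and the same $\liminf$ lower bound handles infinite limits. The differences are minor: you bound the middle piece by $L^1$ times a sup bound on $t^{a-1}$ instead of using Hölder, which confirms that $q>1$ is unnecessary, and you deduce the statement at $\infty$ from the one at $0$ via $t=1/\tau$ rather than repeating the argument with a cutoff $R>r$.
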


\begin{proof}
Since $q\in (1,\infty]$, we put $q'= \frac{q}{q-1}\in [1,\infty)$.  Let us start with the cases where $g(0)$ and $g(\infty)$ are finite.  
Given $\varepsilon > 0$, there exists $0 < \delta < r$ such that $|g(t) - g(0)| < \varepsilon$ for all $ t \in (0, \delta).
$   Then for $a>0$ H\"older inequality implies
\begin{align*}
a\Big|\int_0^r g(t)t^{a-1} \d t &- \frac{g(0)}{a} r^{a} \Big|
= a \Big|\int_0^r (g(t) - g(0)) t^{a-1} \d t \Big|\\
&\leq \Big(a \int_0^\delta + a \int_\delta^r \Big)|g(t) - g(0)| t^{a-1} \d t \\
&\leq \varepsilon \delta^a + \|g-g(0)\|_{L^q( (\delta,r))}
\Big(\frac{a(r^{(a-1)q'+1} - \delta^{(a-1)q'+1})}{(a-1)q'+1}\Big)^{1/q'}.
\end{align*}
Letting $a \to 0^+$, we get
\begin{align*}
\limsup_{a \to 0^+} \Big|a\int_0^r g(t)t^{a-1} \d t - g(0)r^a \Big|\leq \varepsilon.
\end{align*}
Thus, using again the fact that $r^a \to 1$, we conclude
\begin{align*}
\lim_{a\to0^+} a\int_0^r g(t)t^{a-1}\d t = g(0).
\end{align*}
Analogously, given $\varepsilon > 0$, if  $ g(t) \to g(\infty)$ as $t\to\infty$, so there exists $R > r$ such that
$|g(t) - g(\infty)| < \eps$  for all  $t > R.$ Then for $a>0$ H\"older inequality implies

\begin{align*}
a \Big|\int_r^\infty g(t)t^{-a-1}\d t &- \frac{g(\infty)}{a} r^{-a} \Big|
= a \Big|\int_r^\infty (g(t) - g(\infty))t^{-a-1}\d t \Big|\\
&\leq \Big(a \int_r^R+ a \int_R^\infty  \Big) |g(t) - g(\infty)|t^{-a-1}\d t\\
&\leq \|g -g(\infty)\|_{L^q( (r,R))}\Big(\frac{a\big( r^{(-a-1)q'+1}-R^{(-a-1)q'+1} \big)}{(a+1)q'-1}\Big)^{1/q'}
+ \varepsilon R^{-a}.
\end{align*}
Letting $a\to 0^+$, we obtain
\begin{align*}
\limsup_{a\to 0^+}  \Big| a\int_r^\infty g(t)t^{-a-1}\d t - g(\infty) r^{-a} \Big| \leq \varepsilon.
\end{align*}
Hence, since  $r^{-a} \to 1$, we conclude
\begin{align*}
\lim_{a\to0^+} a\int_r^\infty g(t)t^{-a-1}\d t = g(\infty).
\end{align*}
Last upon replacing $g$ with $-g$, it is sufficient to deal with cases $g(0)= \infty$ and $g(\infty)=\infty $. As previously we can choose $0<\delta<r$ and $R>r$ such that
\begin{align*}
&g(t)\geq 1/\eps \quad\text{for all $0<t<\delta$},\quad \text{resp.}\quad g(t)\geq 1/\eps \quad\text{for all $t>R$}.
\end{align*}
By splitting the integrals as previously, the following estimates hold respectively
\begin{align*}
a \int_0^r g(t) t^{a-1} \,\d t
&\geq \frac{\delta^a}{\varepsilon} - a \|g\|_{L^q((\delta,r))} \Big( \frac{r^{(a-1)q'+1} - \delta^{(a-1)q'+1}}{(a-1)q'+1} \Big)^{1/q'}, \\[1.5ex]
a \int_r^\infty g(t) t^{-a-1} \,\d t
&\geq \frac{R^{-a}}{\varepsilon} - a \|g\|_{L^q((r,R))} \Big( \frac{r^{(-a-1)q'+1} - R^{(-a-1)q'+1}}{(-a-1)q'+1} \Big)^{1/q'}.
\end{align*}
From these, we deduce
\begin{align*}
&\liminf_{a\to 0^+}  a\int_0^r g(t)t^{-a-1}\d t \geq \frac{1}{\varepsilon},\quad \text{while}\quad
\liminf_{a\to 0^+}  a\int_r^\infty g(t)t^{-a-1}\d t \geq \frac{1}{\varepsilon}.
\end{align*}
Since $\eps>0$ was arbitrary, we conclude:
\begin{align*}
&\lim_{a\to0^+} a\int_0^r g(t)t^{a-1}\d t = g(0)=\infty
\quad \text{and}\quad \lim_{a\to0^+} a\int_r^\infty g(t)t^{-a-1}\d t = g(\infty)=\infty.
\end{align*}
\end{proof}

Motivated by Lemma \ref{lem:asymp-averag} can introduce  the following definition.
\begin{definition}\label{def:tail-sing-int}
Given  $r>0$ and a measurable function $g: (0,\infty)\to [0,\infty]$ we define the function $T,L: [0,\infty) \to[0,\infty]$ with
\begin{align*}
L(z)&= zr^{-z}\int_0^r  g(t)t^{z-1} \d t\qquad\text{with}\qquad L(0)= g(0), \\
T(z)&= zr^{z} \int_r^\infty g(t) t^{-z-1} \d t\qquad\text{with}\qquad T(0)=g(\infty).
\end{align*}
The conventions $L(0) = g(0)$ and $T(0) = g(\infty)$ are justified by Lemma~\ref{lem:asymp-averag}.
\end{definition}

\begin{theorem}[Monotonicity of singular integrals I]
\label{thm:almost-monotone}
Let $r>0$ and $g: (0,\infty)\to [0,\infty]$ be measurable. If $g$ is almost monotone, then so are $L(z)$ and $T(z)$ defined in Definition~\ref{def:tail-sing-int}.  Namely if $g$ is $c$-almost increasing, $c \geq 1$, then we have
\begin{align*}
L(a)\leq cL(b)\qquad \text{and}\qquad T(b)\leq cT(a) && 0\leq a\leq b.
\end{align*}
Analogously, if $g$ is $c$-almost decreasing, $c \ge 1$, then we have
\begin{align*}
L(b)\leq cL(a)\qquad \text{and}\qquad T(a)\leq cT(b)&& 0\leq a\leq b.
\end{align*}
\end{theorem}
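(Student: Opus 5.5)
The plan is to realize $L$ and $T$ as expectations of $g$ against a family of probability densities with the monotone likelihood ratio property, and then apply Theorem~\ref{thm:mon-like-ratio}. First take $0<a<b$. On $X=(0,r)$ with Lebesgue measure, define
\begin{align*}
p_z(t)=z r^{-z}t^{z-1}, \qquad z>0 .
\end{align*}
Each $p_z$ is a probability density on $(0,r)$, and $L(z)=\int_0^r g(t)p_z(t)\d t$. The likelihood ratio is
\begin{align*}
\frac{p_b(t)}{p_a(t)}=\frac{b}{a}r^{a-b}t^{b-a},
\end{align*}
which is increasing in $t$. Theorem~\ref{thm:mon-like-ratio} then gives the following: if $g$ is $c$-almost increasing, $L(a)\le cL(b)$; if $g$ is $c$-almost decreasing, $L(b)\le cL(a)$.

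For $T$, work on $X=(r,\infty)$ with $q_z(t)=z r^{z}t^{-z-1}$. These are again probability densities, and $T(z)=\int_r^\infty g\,q_z\d t$. Here
\begin{align*}
\frac{q_b(t)}{q_a(t)}=\frac{b}{a}r^{b-a}t^{a-b},
\end{align*}
which is decreasing. To fit the theorem as stated, I would reindex by $\alpha=-z$, so that the ratio becomes increasing in $t$ for $\alpha<\beta$. Equivalently, one can invoke the "analogously" clause of Theorem~\ref{thm:mlr-almost} with $b=1$. This yields: if $g$ is $c$-almost increasing, $T(b)\le cT(a)$; if $g$ is $c$-almost decreasing, $T(a)\le cT(b)$. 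This is exactly the reversed orientation claimed in the statement.

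Theorem~\ref{thm:mon-like-ratio} is stated for integrable $f$. Since $g\ge0$ may take the value $+\infty$, the Chebyshev step should be run by monotone convergence on truncations $\min(g,n)$. Truncation preserves $c$-almost monotonicity, because $\min(x,n)\le c\min(y,n)$ whenever $x\le cy$ and $c\ge1$. Letting $n\to\infty$ then passes the inequality to $g$ in $[0,\infty]$.

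The main obstacle is the endpoint $a=0$, where $L(0)=g(0)$ and $T(0)=g(\infty)$. One has to make sense of these values and compare them with $L(b)$ and $T(b)$.

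For $L$ with $g$ $c$-almost increasing, first fix $0<\varepsilon<t$. Then $g(\varepsilon)\le c\,g(t)$, so
\begin{align*}
L(b)\ge c^{-1}\,b r^{-b}\int_\varepsilon^r g(\varepsilon)t^{b-1}\d t .
\end{align*}
Letting $\varepsilon\to0$ gives $L(b)\ge c^{-1}\limsup_{\varepsilon\to0}g(\varepsilon)$, and in particular $L(b)\ge c^{-1}g(0)$ whenever the limit $g(0)$ exists. The three other endpoint cases work the same way. For instance, for $T$ with $g$ $c$-almost increasing, $g(t)\le c\,g(R)$ for $t\le R$ bounds $T(b)$ by $c\,g(\infty)$ up to a tail term that vanishes.

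Alternatively, when the relevant limit exists (the setting of Lemma~\ref{lem:asymp-averag}), one can pass to the limit $a\to0^+$ in the inequalities already proved for $a>0$. I would present the direct pointwise argument, since it avoids the local integrability hypothesis of Lemma~\ref{lem:asymp-averag}. The only care needed is to interpret $g(0)$ and $g(\infty)$ as the limits that Definition~\ref{def:tail-sing-int} presupposes.
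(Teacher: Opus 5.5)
Your proposal is correct and takes essentially the paper's own route: the paper applies Theorem~\ref{thm:mon-like-ratio} to the same densities $ar^{-a}t^{a-1}\mathds{1}_{(0,r)}(t)$ and $ar^{a}t^{-a-1}\mathds{1}_{(r,\infty)}(t)$, and mentions Theorem~\ref{thm:almost-cheby-sing} with $f(t)=t^{a-b}$ as an alternative. Your truncation by $\min(g,n)$ and your direct pointwise treatment of the endpoint $a=0$ (including reading $g(0)$, $g(\infty)$ via $\limsup/\liminf$ when the limits need not exist) are sound additions that the paper leaves implicit.
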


\begin{proof}
One can either apply Theorem~\ref{thm:mon-like-ratio} using the probability densities $p_a(t)= ar^{-a} t^{a-1}\mathds{1}_{(0,r)}(t)$ and $p^a(t)= ar^{a} t^{-a-1}\mathds{1}_{(r,\infty)}(t)$ or  apply Theorem~\ref{thm:almost-cheby-sing} with $f(t)=t^{a-b}$ which is decreasing on $(0,\infty)$
while $ f(t) t^{b-1}=  t^{a-1}$ and $ f(t) t^{-a-1}=  t^{-b-1}$.
\end{proof}
\begin{theorem}[Monotonicity of singular integrals II]\label{thm:cadic-monotonicity}
If a measurable function $g: (0,\infty)\to [0,\infty]$ is $c$-adically monotone ($c>1$), then the maps $z\mapsto c^{-z}L(z)$ and $z\mapsto c^zT(z)$, with $L$ and $T$ as in Definition~\ref{def:tail-sing-int}, are monotone. Namely, if  $g$ is $c$-adically decreasing, we have
\begin{align*}
c^{-b}L(b)\leq c^{-a}L(a)\quad \text{and}\quad  c^{a}T(a)\leq c^{b}T(b)&&0\leq a\leq b.
\end{align*}
If $g$ is $c$-adically increasing, then the inequalities are reversed, i.e., we have
\begin{align*}
c^{-a}L(a)\leq c^{-b}L(b)\quad \text{and}\quad c^{b}T(b)\leq c^{a}T(a)&&0\leq a\leq b.
\end{align*}
\end{theorem}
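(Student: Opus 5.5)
The plan is to reduce to $r=1$ and rewrite $L(z)$, via the $c$-adic decomposition from the proof of Theorem~\ref{thm:chebyshev-cadic}, as an average over a \emph{fixed} interval $(1/c,1)$. This separates the $z$-dependence into two pieces: a geometric average along $c$-adic orbits, and a probability density on $(1/c,1)$. Each piece is then compared separately for $z=a$ and $z=b$. The statement for $T$ will follow from the one for $L$ by inversion $t\mapsto 1/t$.

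\textbf{Step 1 (reduction and decomposition).} Replacing $g$ by $g(r\,\cdot)$ does not change $c$-adic monotonicity, so I take $r=1$. For $z>0$ write
\begin{align*}
L(z)=z\int_{1/c}^{1}\tau^{z-1}\sum_{n\ge0}c^{-zn}g(c^{-n}\tau)\d\tau
=\int_{1/c}^1 q_z(\tau)\,\Phi_z(\tau)\d\tau,
\end{align*}
where
\begin{align*}
q_z(\tau)=\frac{z\tau^{z-1}}{1-c^{-z}}\mathds{1}_{(1/c,1)}(\tau),\qquad \Phi_z(\tau)=(1-c^{-z})\sum_{n\ge0}c^{-zn}h_n(\tau),\qquad h_n(\tau)=g(c^{-n}\tau).
\end{align*}
Since $\int_{1/c}^1 z\tau^{z-1}\d\tau=1-c^{-z}$, each $q_z$ is a probability density on $(1/c,1)$.

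\textbf{Step 2 (the orbit average is monotone in $z$).} Assume $g$ is $c$-adically decreasing. For fixed $\tau$ the sequence $(h_n(\tau))_n$ is then increasing in $n$. Put $x=c^{-z}\in(0,1)$. Summation by parts gives
\begin{align*}
(1-x)\sum_{n\ge0}x^nh_n=h_0+\sum_{n\ge1}x^n(h_n-h_{n-1}),
\end{align*}
and every increment $h_n-h_{n-1}$ is nonnegative. So this quantity is increasing in $x$, hence $\Phi_z(\tau)$ is decreasing in $z$. The identity remains valid with values in $[0,\infty]$, since all terms are nonnegative. Thus $\Phi_b\le\Phi_a$ pointwise for $0<a\le b$.

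\textbf{Step 3 (the densities).} For $0<a\le b$ I compare $q_b$ with $q_a$:
\begin{align*}
\frac{q_b(\tau)}{q_a(\tau)}=\tau^{b-a}\,\frac{(c^a-1)/a}{(c^b-1)/b}\,c^{b-a}.
\end{align*}
Both $\tau^{b-a}\le1$ on $(1/c,1)$ and $(c^a-1)/a\le(c^b-1)/b$ hold, the latter because $z\mapsto(e^{z\log c}-1)/z$ is increasing. Hence $q_b\le c^{b-a}q_a$. Combining with Step 2,
\begin{align*}
L(b)=\int q_b\Phi_b\le c^{b-a}\int q_a\Phi_a=c^{b-a}L(a),
\end{align*}
that is, $c^{-b}L(b)\le c^{-a}L(a)$.

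\textbf{Step 4 (the increasing case).} If $g$ is $c$-adically increasing, then $h_n$ is decreasing, so $\Phi_z$ is increasing in $z$. In the density ratio, $\tau^{b-a}\ge c^{a-b}$ and the middle factor is at most $1$, so
\begin{align*}
\frac{q_b}{q_a}\ge c^{a-b}\,\frac{(c^a-1)/a}{(c^b-1)/b}\,c^{b-a}=\frac{(c^a-1)/a}{(c^b-1)/b}.
\end{align*}
This alone does not reach the required factor $c^{a-b}$. I would refine it as follows: there is a constant $\kappa=\kappa(a,b,c)\ge c^{a-b}$ with $q_b\ge\kappa q_a$. If the direct bound falls short, I would instead redo the decomposition on the interval $(1,c)$ shifted by one $c$-adic step, so that the extreme factor of $\tau$ is absorbed into the $c^{z}$ weights. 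The computation is analogous.

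\textbf{Endpoints and the statement for $T$.} The endpoint $a=0$ follows by letting $a\to0^+$ via Lemma~\ref{lem:asymp-averag}, or by monotone limits when $g(0)$ is only a limit in the $c$-adic sense. For $T$, the substitution $t\mapsto1/t$ gives $T_g(z;r)=L_{\tilde g}(z;1/r)$ with $\tilde g(t)=g(1/t)$. Here $\tilde g$ has the opposite $c$-adic monotonicity, so the $T$ inequalities follow from the $L$ inequalities already proved.

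\textbf{Main obstacle.} The only delicate point is the density comparison in Steps 3 and 4, which must produce exactly the factor $c^{\pm(b-a)}$ and not a worse one. This rests on the elementary monotonicity of $(c^z-1)/z$, together with the trivial bounds on $\tau^{b-a}$ over one $c$-adic period.
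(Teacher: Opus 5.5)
Your Steps 1--3 are correct. They prove $c^{-b}L(b)\le c^{-a}L(a)$ for $c$-adically decreasing $g$ by a route different from the paper's. The paper applies Theorem~\ref{thm:chebyshev-cadic} to the auxiliary decreasing function $f(t)=t^{a-b}$: a discrete Chebyshev inequality along each $c$-adic orbit, then a bound of the orbit sums of $f$ by $\int_0^{cr}f(t)t^{b-1}\d t$. You instead split $L(z)$ into an orbit average $\Phi_z$, monotone in $z$ by Abel summation, and a probability density $q_z$ on one period. In your version the factor $c^{b-a}$ appears transparently as the ratio of the supremum to the infimum of $\tau^{b-a}$ over $(1/c,1)$.

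The gap is Step~4. It is unfinished, and more importantly it aims at a target that cannot be reached. The stated inequality $c^{-a}L(a)\le c^{-b}L(b)$ is equivalent to $L(b)\ge c^{b-a}L(a)$. So you would need $q_b\ge\kappa q_a$ with $\kappa\ge c^{b-a}>1$, not $\kappa\ge c^{a-b}$. That is impossible for two probability densities, and no refinement, shifted period or otherwise, can fix it. The reason is that the increasing half of the statement is false as written. Take $g\equiv 1$: it is $c$-adically increasing and gives $L\equiv T\equiv 1$, so both $c^{-a}L(a)\le c^{-b}L(b)$ and $c^{b}T(b)\le c^{a}T(a)$ fail for $a<b$. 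The paper's one-line ``same analogy'' does not prove it either. Carrying out Theorem~\ref{thm:chebyshev-cadic-bis} with $f(t)=t^{b-a}$ yields instead $c^{a}L(a)\le c^{b}L(b)$ and $c^{-b}T(b)\le c^{-a}T(a)$. Only the decreasing case is used later, in Lemma~\ref{lem:short-cos} and Theorem~\ref{thm:r-split-monot-semin}.

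Your own Step~4 computation already proves the corrected version. Your bound $\frac{(c^a-1)/a}{(c^b-1)/b}$ is already $\ge c^{a-b}$, because this is equivalent to $\frac{1-c^{-a}}{a}\ge\frac{1-c^{-b}}{b}$, and $z\mapsto\frac{1-c^{-z}}{z}=\int_0^{\log c}e^{-zu}\d u$ is decreasing. Combined with $\Phi_b\ge\Phi_a$, this gives $L(b)\ge c^{a-b}L(a)$, i.e.\ $c^{a}L(a)\le c^{b}L(b)$.

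This also affects your treatment of $T$. The inversion $T_g(z;r)=L_{\tilde g}(z;1/r)$ is correct, but it swaps the type of $c$-adic monotonicity. So the true inequality $c^aT(a)\le c^bT(b)$ for $c$-adically decreasing $g$ is exactly $c^aL_{\tilde g}(a)\le c^bL_{\tilde g}(b)$ for the $c$-adically increasing $\tilde g$. Your proposal as written never establishes that, since it rests on Step~4. With the corrected Step~4 you obtain it. Applying Step~3 to $\tilde g$ then gives $c^{-b}T(b)\le c^{-a}T(a)$ for $c$-adically increasing $g$, which is the correct replacement for the stated inequality.
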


\begin{proof}
Both estimates are implied by  Theorem~\ref{thm:chebyshev-cadic} since $f(t)=t^{a-b}$ is decreasing  $(0,\infty)$, $g(t) f(t) t^{b-1}= g(t) t^{a-1}$ while $g(t) f(t) t^{-a-1}= g(t) t^{-b-1}$ and
\begin{align*}
\int_0^{c r} f(t)t^{b-1}\d t= \frac{(cr)^{a}}{a}
\quad\text{and}\quad
\int_{c r}^\infty f(t)t^{-a-1}\d t =  \frac{(cr)^{-b}}{b}.
\end{align*}
The same analogy can applied to  $f(t)=t^{b-a}$ which is increasing on $(0,\infty)$.
\end{proof}

\section{Asymptotics of first-order differences in \texorpdfstring{$L^p$}{Lp}-space}\label{sec:Lp-moduos-of-cont}
In this section we  study, for $u\in L^p(\R^d)$, the asymptotic of the averaged $L^p$-modulus of continuity function
$U: (0,\infty)\to  (0,\infty)$ given by
\begin{align*}
U(t) &= \int_{\mathbb{S}^{d-1}} \int_{\R^d} |u(x + t\omega) - u(x)|^p \d x \d\sigma_{d-1}(\omega).
\end{align*}
\subsection{Funk--Radon transform} 
We will make use of the spherical slice transform identity, via  the so called Funk--Radon transform introduced by Paul Funk in \cite{Fun11,Fun13}, and which has since undergone significant development; see, for instance, \cite{Rub15,Que20,Que20-bis}. The Funk--Radon transform (also known as the Minkowski–Funk or spherical Radon transform) is an integral operator is defined  in its normalized form for a function $g : \mathbb{S}^{d-1} \to \R$ by
\begin{align*}
\mathbf{F}g(v) := \int_{\mathbb{S}^{d-2}_v} g(w) \frac{\d\sigma_{d-2}(w)}{|\mathbb{S}^{d-2}|} \qquad (v\in \mathbb{S}^{d-1}),
\end{align*}
where $\mathbb{S}^{d-2}_v = \{ w \in \mathbb{S}^{d-1} : \langle v, w \rangle = 0 \}$ denotes the great subsphere orthogonal to $v$.
The following result is likely well-known in the literature; however, as we were unable to find a precise reference, we provide a proof for the convenience of the reader.
\begin{theorem}[Slicing method]
\label{thm:slicing-integ-on-sphere}
For $g \in L^1( \mathbb{S}^{d-1})$ with $d \geq 2$ there holds that
\begin{align*}
\int_{\mathbb{S}^{d-1}} \hspace*{-1ex}g(w) \d\sigma_{d-1}(w)
=
\int_{\mathbb{S}^{d-1}} \hspace*{-1ex}
\Big(
\int_{\mathbb{S}^{d-2}_v} g(w) \frac{\d\sigma_{d-2}(w)}{|\mathbb{S}^{d-2}|}
\Big)
\d\sigma_{d-1}(v)= \int_{\mathbb{S}^{d-1}}\hspace*{-1ex} \mathbf{F}g(v) \d\sigma_{d-1}(v).
\end{align*}
\end{theorem}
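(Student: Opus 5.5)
The plan is to prove the identity by averaging over the orthogonal group $O(d)$ and using Fubini. Write $\nu$ for the normalized Haar probability measure on $O(d)$. Two standard facts will be used: for fixed $e\in\mathbb{S}^{d-1}$, the pushforward of $\nu$ under $R\mapsto Re$ is the normalized surface measure $\sigma_{d-1}/|\mathbb{S}^{d-1}|$; and $\sigma_{d-1}$ is rotation invariant. It suffices to treat $g\ge 0$ measurable, since the general case $g\in L^1(\mathbb{S}^{d-1})$ follows by splitting $g=g^+-g^-$. For $g\ge0$, Tonelli justifies every interchange below, and the same computation applied to $|g|$ shows that $\mathbf{F}g$ is finite $\sigma_{d-1}$-a.e. and integrable.

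\textbf{Step 1.} Fix $e=e_d$ and let $\mathbb{S}^{d-2}_{e}$ be the equator. For $R\in O(d)$ we have $R\,\mathbb{S}^{d-2}_{e}=\mathbb{S}^{d-2}_{Re}$, and $R$ maps the surface measure of the equator to the surface measure of the rotated great sphere. Hence
\begin{align*}
\mathbf{F}g(Re)=\frac{1}{|\mathbb{S}^{d-2}|}\int_{\mathbb{S}^{d-2}_{e}} g(Rw)\d\sigma_{d-2}(w).
\end{align*}

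\textbf{Step 2.} Integrate this identity against $\nu$. By the first fact,
\begin{align*}
\frac{1}{|\mathbb{S}^{d-1}|}\int_{\mathbb{S}^{d-1}}\mathbf{F}g(v)\d\sigma_{d-1}(v)=\int_{O(d)}\mathbf{F}g(Re)\d\nu(R).
\end{align*}
By Tonelli, the right-hand side equals
\begin{align*}
\frac{1}{|\mathbb{S}^{d-2}|}\int_{\mathbb{S}^{d-2}_{e}}\Big(\int_{O(d)}g(Rw)\d\nu(R)\Big)\d\sigma_{d-2}(w).
\end{align*}
For each fixed unit vector $w$, the first fact applied with $w$ in place of $e$ gives
\begin{align*}
\int_{O(d)}g(Rw)\d\nu(R)=\frac{1}{|\mathbb{S}^{d-1}|}\int_{\mathbb{S}^{d-1}}g\d\sigma_{d-1}.
\end{align*}
This value is independent of $w$, so the outer integral over the equator simply cancels the normalization $|\mathbb{S}^{d-2}|$. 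Multiplying through by $|\mathbb{S}^{d-1}|$ gives the claimed identity.

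\textbf{Main obstacle.} The delicate point is measure-theoretic rather than computational. One must check that $(R,w)\mapsto g(Rw)$ is jointly measurable, which holds because it is the composition of $g$ with a continuous map, so Tonelli applies. For $d=2$, the equator $\mathbb{S}^{0}$ consists of two points, $\sigma_0$ is counting measure, and $|\mathbb{S}^0|=2$; the argument goes through unchanged with these conventions.

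An alternative route, closer to the paper's calculus style, would avoid Haar measure. One writes $\int_{\mathbb{S}^{d-1}}\mathbf{F}g\d\sigma_{d-1}$ as a limit of integrals over the thin shells $\{|\langle v,w\rangle|<\varepsilon\}$ and swaps the order via the symmetry of $\mathds{1}_{\{|\langle v,w\rangle|<\varepsilon\}}$ in $(v,w)$, using $\sigma_{d-1}(\{v:|\langle v,w\rangle|<\varepsilon\})\sim 2\varepsilon|\mathbb{S}^{d-2}|$. I would however present the group-averaging proof, since it is cleaner.
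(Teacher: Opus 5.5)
Your proof is correct, but it takes a different route from the paper. The paper never leaves the sphere. It defines $\mu(A)=\int_{\mathbb{S}^{d-1}}\sigma_{d-2}(A\cap\mathbb{S}^{d-2}_v)\,\d\sigma_{d-1}(v)/|\mathbb{S}^{d-2}|$ and uses the set identity $O(A)\cap\mathbb{S}^{d-2}_v=O(A\cap\mathbb{S}^{d-2}_{O^\top v})$ to show that $\mu$ is a rotation-invariant finite Borel measure. It then invokes uniqueness of such measures up to a constant, fixes the constant by evaluating at $A=\mathbb{S}^{d-1}$, and passes from indicators to general $g$. You instead average over Haar measure on $O(d)$ and apply Tonelli. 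Your Step 1 is the paper's set identity written for functions. The underlying input is the same in both proofs: uniqueness of rotation-invariant probability measures on $\mathbb{S}^{d-1}$, which is exactly what identifies the pushforward of $\nu$ under $R\mapsto Re$. Your route handles every nonnegative Borel $g$ in one stroke, with no reduction to indicators. The normalising constant $|\mathbb{S}^{d-2}|$ also cancels automatically rather than being computed from $\mu(\mathbb{S}^{d-1})$. The price is that you need the existence of Haar measure on $O(d)$, whereas the paper needs only uniqueness on the sphere plus a routine countable-additivity check for $\mu$.

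Make one measurability point explicit. The first equality in Step 2 requires $v\mapsto\mathbf{F}g(v)$ to be Borel on $\mathbb{S}^{d-1}$. Tonelli on $O(d)\times\mathbb{S}^{d-2}_e$ only gives Borel measurability of $R\mapsto\mathbf{F}g(Re)$. Compose with a Borel section of $R\mapsto Re$, for instance the Householder reflection $H_v=I-2\frac{(v-e)(v-e)^\top}{|v-e|^2}$. It maps $e$ to $v$ and is continuous in $v\in\mathbb{S}^{d-1}\setminus\{e\}$. Also fix a Borel representative of $g$, so that $(R,w)\mapsto g(Rw)$ really is Borel. The paper leaves the analogous measurability of $v\mapsto\sigma_{d-2}(A\cap\mathbb{S}^{d-2}_v)$ implicit as well.
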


\begin{proof}
it suffices to prove the claim for indicator functions
$g = \mathds{1}_A$, where  $A \subset \mathbb{S}^{d-1} $ is a Borel measurable set. This can be done by considering the measure defined by
\begin{align*}
\mu(A)=  \int_{\mathbb{S}^{d-1}} \sigma_{d-2}(A\cap  \mathbb{S}^{d-2}_v ) \frac{ \d \sigma_{d-1}(v)}{|\mathbb{S}^{d-2}|}.
\end{align*}
Given that the Hausdorff measures $\sigma_{d-1}$ and $\sigma_{d-2}$ are respectively rotation invariant, so does the measure $\mu$ on $\mathbb{S}^{d-1}$. Indeed, for each $v\in \mathbb{S}^{d-1}$ and $O\in \mathcal{O}(d)$, an orthogonal mapping on $\mathbb{R}^d$, since $O^\top  O= I_d$, one finds that
\begin{align*}
O(A) \cap  \mathbb{S}^{d-2}_v  = O(A \cap  \mathbb{S}^{d-2}_{O^\top (v)}).
\end{align*}
The rotation invariance of $\sigma_{d-2}$ implies
\begin{align*}
\sigma_{d-2}(O(A)\cap  \mathbb{S}^{d-2}_v ) = \sigma_{d-2}(O(A\cap  \mathbb{S}^{d-2}_{O^\top (v)})) = \sigma_{d-2}(A\cap  \mathbb{S}^{d-2}_{O^\top (v)}).
\end{align*}
The rotation invariance of $\sigma_{d-1}$ yields that
\begin{align*}
\int_{\mathbb{S}^{d-1}} \sigma_{d-2}(O(A)\cap  \mathbb{S}^{d-2}_v ) \frac{ \d \sigma_{d-1}(v)}{|\mathbb{S}^{d-2}|} &=  \int_{\mathbb{S}^{d-1}} \sigma_{d-2}(A\cap  \mathbb{S}^{d-2}_{O^\top (v)}) \frac{ \d \sigma_{d-1}(v)}{|\mathbb{S}^{d-2}|}\\
&=     \int_{\mathbb{S}^{d-1}} \sigma_{d-2}(A\cap  \mathbb{S}^{d-2}_{v}) \frac{ \d \sigma_{d-1}(O(v))}{|\mathbb{S}^{d-2}|},
\end{align*}
in other words $\mu(O(A)) =  \mu(A) $. It follows that, $\mu= c\sigma_{d-1}$ since rotation invariant Borel measures on $\mathbb{S}^{d-1} $ are unique up to a multiplicative constant.  On the other hand, once again the rotation invariance of $\sigma_{d-2}$ implies that $\sigma_{d-2}(\mathbb{S}^{d-1}\cap \mathbb{S}^{d-2}_v)  =  \sigma_{d-2}(\mathbb{S}^{d-2})$ so  that $\mu(\mathbb{S}^{d-1})= \sigma_{d-1} (\mathbb{S}^{d-1}) =|\mathbb{S}^{d-1}|$, yielding  $\mu= \sigma_{d-1}$.
\end{proof}

We need the following preparation lemma borrowed from \cite{Pon04}.

\begin{lemma}\label{lem:lp-diff-growth-spher}  Assume $d\geq 2$. For all $u\in L^p(\R^d)$, $s>0$ and $\theta\in [0,1]$ we have
\begin{align*}
U(\theta s)\leq 2^pU(s).
\end{align*}
\end{lemma}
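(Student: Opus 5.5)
The idea is to reduce the claim to a two-dimensional, or rather planar, geometric statement about increments of $u$ along directions. We bound a single increment of length $\theta s$ in direction $\omega$ by two increments of length $s$ in other directions. For $0\le\theta\le1$ and $\omega\in\mathbb{S}^{d-1}$, the vector $\theta s\omega$ can be written as $s w_1 - s w_2$ with $w_1,w_2\in\mathbb{S}^{d-1}$. Take any unit vector $v\perp\omega$, which exists since $d\ge2$, and set
\begin{align*}
w_1=\tfrac{\theta}{2}\omega+\sqrt{1-\tfrac{\theta^2}{4}}\,v,\qquad w_2=-\tfrac{\theta}{2}\omega+\sqrt{1-\tfrac{\theta^2}{4}}\,v .
\end{align*}
These are unit vectors with $w_1-w_2=\theta\omega$. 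Then
\begin{align*}
u(x+\theta s\omega)-u(x)=\big(u(x+sw_1- s w_2)-u(x-sw_2)\big)+\big(u(x-sw_2)-u(x)\big),
\end{align*}
and by convexity $|a+b|^p\le 2^{p-1}(|a|^p+|b|^p)$. Integrating in $x$ and using translation invariance of Lebesgue measure (substituting $x\mapsto x+sw_2$ in the first term), we get
\begin{align*}
\int_{\R^d}|u(x+\theta s\omega)-u(x)|^p\,\d x\le 2^{p-1}\big(\|\tau_{sw_1}u-u\|_p^p+\|\tau_{-sw_2}u-u\|_p^p\big),
\end{align*}
and $\|\tau_{-sw_2}u-u\|_p=\|\tau_{sw_2}u-u\|_p$.

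Next we average over $\omega\in\mathbb{S}^{d-1}$. The point is to choose $v$ depending on $\omega$ measurably and then average over all admissible $v$, that is, over the great subsphere $\mathbb{S}^{d-2}_\omega$. This is where the slicing formula, Theorem~\ref{thm:slicing-integ-on-sphere}, or simply rotation invariance, enters. The map $(\omega,v)\mapsto w_1$, with $\omega$ uniform on $\mathbb{S}^{d-1}$ and $v$ uniform on $\mathbb{S}^{d-2}_\omega$, pushes forward normalized $\sigma_{d-1}\otimes\sigma_{d-2}$ to a rotation-invariant probability measure on $\mathbb{S}^{d-1}$. This holds because the construction commutes with orthogonal maps $O$: $(O\omega,Ov)\mapsto Ow_1$. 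By uniqueness of rotation-invariant measures, as in the proof of Theorem~\ref{thm:slicing-integ-on-sphere}, the pushforward is normalized surface measure. The same argument applies to $w_2$. Hence
\begin{align*}
U(\theta s)&=\int_{\mathbb{S}^{d-1}}\fint_{\mathbb{S}^{d-2}_\omega}\int_{\R^d}|u(x+\theta s\omega)-u(x)|^p\,\d x\,\d\sigma_{d-2}(v)\,\d\sigma_{d-1}(\omega)\\
&\le 2^{p-1}\big(U(s)+U(s)\big)=2^pU(s).
\end{align*}

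The endpoint $\theta=0$ is trivial since $U(0)=0$, and $\theta=1$ is also covered by the construction. The main obstacle is making the measure-theoretic averaging rigorous, namely showing that the joint law of $w_1$ is uniform on the sphere. I would handle this by the rotation-invariance argument above, with measurability of the integrand justified by continuity of translations in $L^p$. For $d=1$ this fails because no orthogonal $v$ exists, which is why the lemma requires $d\ge2$.
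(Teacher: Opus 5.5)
Your proof is correct, and it differs from the paper's in how the auxiliary orthogonal direction is chosen.

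Pointwise the two constructions coincide. The paper writes $\theta\omega=O_+\omega+O_-\omega$ with $O_\pm=\tfrac{\theta}{2}I\pm\tfrac{\theta'}{2}O$, $\theta'=\sqrt{4-\theta^2}$, where $O$ is a fixed skew-symmetric orthogonal matrix. So $O_+\omega$ and $O_-\omega$ are exactly your $w_1$ and $-w_2$ for the particular choice $v=O\omega$.

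Such a global linear choice $\omega\mapsto O\omega\perp\omega$ exists only when $d$ is even. In that case $O_\pm$ are themselves orthogonal matrices, so the averaging over $\mathbb{S}^{d-1}$ is a plain change of variables. Odd $d$ is then reduced to the even case on the great subspheres $\mathbb{S}^{d-2}_v$ through Theorem~\ref{thm:slicing-integ-on-sphere}.

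You instead average over all admissible $v\in\mathbb{S}^{d-2}_\omega$. This amounts to proving the shifted slicing identity
\begin{align*}
\int_{\mathbb{S}^{d-1}}\int_{\mathbb{S}^{d-2}_\omega} g\Big(\tfrac{\theta}{2}\omega+\sqrt{1-\tfrac{\theta^2}{4}}\,v\Big)\frac{\d\sigma_{d-2}(v)}{|\mathbb{S}^{d-2}|}\d\sigma_{d-1}(\omega)=\int_{\mathbb{S}^{d-1}}g(w)\d\sigma_{d-1}(w),
\end{align*}
whose case $\theta=0$ is exactly Theorem~\ref{thm:slicing-integ-on-sphere}. Your route treats all $d\geq 2$ uniformly, with no parity split and no complex structure on $\R^d$. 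The price is invoking uniqueness of rotation-invariant measures directly in every dimension; the paper needs it only for odd $d$, via the slicing theorem.

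When you write it up, note that equivariance of $(\omega,v)\mapsto w_1$ alone does not give invariance of the pushforward. You also need the fibered measure on pairs $(\omega,v)$, which is not a product measure, to be invariant under $(\omega,v)\mapsto(O\omega,Ov)$. This follows from $O(\mathbb{S}^{d-2}_\omega)=\mathbb{S}^{d-2}_{O\omega}$ and the invariance of $\sigma_{d-1}$ and $\sigma_{d-2}$, as in the proof of Theorem~\ref{thm:slicing-integ-on-sphere}.

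A quicker way to avoid the fibered measure is to use the Haar probability measure on $\mathcal{O}(d)$. Since $\int_{\mathbb{S}^{d-1}}g\d\sigma_{d-1}=|\mathbb{S}^{d-1}|\int_{\mathcal{O}(d)}g(Re)\d R$ for every unit vector $e$, it suffices to write $\theta e_1=a-b$ with fixed unit vectors $a,b$. Then integrate in $R$ the pointwise bound $\|u(\cdot+\theta sRe_1)-u\|^p_{L^p(\R^d)}\leq 2^{p-1}\big(\|u(\cdot+sRa)-u\|^p_{L^p(\R^d)}+\|u(\cdot+sRb)-u\|^p_{L^p(\R^d)}\big)$.
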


\begin{proof}
For $u\in L^p(\R^d)$ define $U_*(h) = \|u(\cdot+h)-u\|^p_{L^p(\R^d)}$. 
Thus, we must show 
\begin{align*}
\int_{\mathbb{S}^{d-1}} U_*(\theta sw)\,\d \sigma_{d-1}(w)\leq 2^{p} \int_{\mathbb{S}^{d-1}} U_*(sw)\d \sigma_{d-1}(w).
\end{align*}
\noindent If $d$ is even then using the block matrices
$\pm\big(\begin{smallmatrix}
0&-1\\1&0
\end{smallmatrix}\big)$ we can  construct a rotation $O\in \mathcal{O}(d)$ on $\R^d$,  such that $Ow \cdot w= 0$ for all $w\in \R^d$. Equivalently $O$ is skew symmetric, i.e., $O^T=-O$.  Indeed, since $ w^\top Ow =w^\top O^\top w$ we deduce $Ow\cdot w= w^\top  O w = \frac{1}{2} w^\top  (O + O^\top ) w= w^\top O^{\operatorname{sym}}w$.
Hence $Ow\cdot w=0$ if and only if $w^\top O^{\operatorname{sym}} w=0$. Thus the symmetric part  $O^{\operatorname{sym}} $ of $O$ vanishes, i.e., $O^\top =-O$. Thereupon, consider
\begin{align*}
	O_+ w = \frac{\theta}{2} w+ \frac{\theta'}{2} Ow\quad\text{and}\quad O_- w = \frac{\theta}{2} w- \frac{\theta'}{2}Ow\;\;\text{ with  $\theta' = \sqrt{4-\theta^2}$}.
\end{align*}
It appears that  $\theta w= O_-w+O_+w$ and  $O_-,O_+ \in \mathcal{O}(d)$ are rotations. Indeed,  since
$|Ow|= |w|$ and $Ow.\cdot w=0$ we have
\begin{align*}
| O_\pm w |^2= \frac{\theta^2}{4}|w|^2+ \frac{\theta'^2}{4}|Ow|^2 \pm \frac{\theta\theta'}{2}O w\cdot w= |w|^2.
\end{align*}

\begin{figure}[ht!]
\centering
\begin{tikzpicture}[scale=1.9]
\def\r{1}
\def\thetaval{0.6} 
\pgfmathsetmacro{\thetap}{sqrt(4-\thetaval*\thetaval)}
\coordinate (O) at (0,0);
\coordinate (w) at (\r,0);
\coordinate (Ow) at (0,\r);
\coordinate (Om) at ({\thetaval/2*\r},{\thetap/2*\r});
\coordinate (Op) at ({\thetaval/2*\r},{-\thetap/2*\r});
\coordinate (Sum) at ({\thetaval*\r},0);
\coordinate (Left) at (-\r,0);
\coordinate (Below) at (0,-\r);
\coordinate (Above) at (0,\r);
\draw[thin] (O) circle (\r);
\draw[dashed] (Below) -- (Above);
\draw[dashed] (O) -- (Left);
\node[above] at (-\r/2,0) {$|w|$};
\draw[->, thick] (O) -- (w) node[right] {$w$};
\draw[->, thick] (O) -- (Ow) node[above] {$Ow$};
\draw[->, thick, blue] (O) -- (Om) node[above right] {$O_+ w$};
\draw[->, thick, blue] (O)-- (Op) node[below right] {$O_- w$};
\draw[dashed] (Om) -- (Sum);
\draw[dashed] (Op) -- (Sum);
\draw[->, thick, red] (O) -- (Sum) node[above] at (\thetaval/2,0) {$\theta w$};
   \draw (O) node[below left] {$(0,0)$};
\end{tikzpicture}
\vspace{-4mm}
\caption{Two-dimensional illustration with $\theta = 0.6$.}
\label{fig:theta-construction}
\end{figure}
Given that $\d \sigma_{d-1}$ is invariant under rotation, we obtain
\begin{align*}
\int_{\mathbb{S}^{d-1}}
U_*(\theta sw)\d \sigma_{d-1}(w)
&\leq 2^{p-1} \int_{\mathbb{S}^{d-1}}
U_*(sO_-w)+ U_*(sO_+w)\d \sigma_{d-1}(w) \\
&= 2^{p} \int_{\mathbb{S}^{d-1}}
U_*(sw)\, \d \sigma_{d-1}(w).
\end{align*}
\noindent If $d$ is odd then $d-1\geq 2$ is even thus the previous case reveals that
\begin{align*}
\int_{\mathbb{S}^{d-2}_v}
U_*(\theta sw)\d \sigma_{d-2}(w)\leq 2^{p} \int_{\mathbb{S}^{d-2}_v} U_*(sw)  \d \sigma_{d-2}(w)\quad\text{for all $v\in \mathbb{S}^{d-1}$}.
\end{align*}
Thus,  the slicing integration formula from Theorem \ref{thm:slicing-integ-on-sphere}  yields
\begin{align*}
\int_{\mathbb{S}^{d-1}} U_*(\theta sw) \d \sigma_{d-1}(w)
&= \int_{\mathbb{S}^{d-1}}
\hspace*{-1ex}\Big(
\int_{\mathbb{S}^{d-2}_v} \hspace*{-1ex} U_*(\theta sw) \frac{\d\sigma_{d-2}(w)}{|\mathbb{S}^{d-2}|}
\Big)
\d\sigma_{d-1}(v)\\
&\leq 2^{p} \int_{\mathbb{S}^{d-1}}
\hspace*{-1ex}\Big(\int_{\mathbb{S}^{d-2}_v} \hspace*{-1ex}U_*(sw) \frac{\d\sigma_{d-2}(w)}{|\mathbb{S}^{d-2}|}
\Big) \d\sigma_{d-1}(v)\\
&= 2^{p}\int_{\mathbb{S}^{d-1}} U_*(sw)
\d \sigma_{d-1}(w).
\end{align*}
\end{proof}

\subsection{Basics Properties of $U$} We now gather several properties of the function
$U$ that will play a crucial role in the subsequent analysis.
\begin{theorem}\label{thm:lp-first-order-diff}
Let $u\in L^p(\R^d)$ with $1\leq p<\infty$. Let  $AU(t)=\frac{1}{t}\int_0^t U(z)\d z$  be the  Hardy average of $U$ and $g(t)= U(t)/t^p$.
The following assertions are true.
\begin{enumerate}
\item \label{itme:lp-diff1}   For all $s, t>0$ we have $U(-t)=U(t)$, $AU(-t)= AU(t)$ and
\begin{align*}
U(t+s)\leq2^{p-1} ( U(t)+U(s))&\qquad\text{and}\qquad U(t)\leq 2^p AU(t), \\
U(\ell t) \leq \ell^pU(t)&\qquad \text{and} \qquad g(\ell t) \leq g(t)\qquad\text{ for all \, $\ell\in \mathbb{N}$ }
\end{align*}
\item  \label{itme:lp-diff2} For $0<s\leq t$ such that $t/s= \ell +\theta$ with $\theta\in [0,1]$ and $\ell \in \mathbb{N}$ we have
\begin{align*}
\frac{U(t)}{t^p}&\leq 2^{p-1}\Big(\frac{U(s)}{s^p}+ \frac{U(\theta s)}{t^p}\Big), \\
\frac{U(t)}{t^p}&\leq 4^{p-1}(1+\ell^p)\Big(\frac{U(s)}{s^p}+  \frac{U\big((1-\tfrac{\theta }{\ell})s\big) }{t^p}\Big).
\end{align*}
\begin{align*}
U(s)\leq 2^{p+1}AU(t)\quad\text{and}\quad
AU(s)\leq 2^{p+1}AU(t).
\end{align*}
\item \label{itme:lp-diff2-bis}  Assume that  $d\geq 2$. Then for $0<s\leq t$ we have
\begin{align*}
&U(s)\leq 2^pU(t)\qquad \text{and}\qquad  \frac{U(t)}{t^p}\leq 4^{p}\frac{U(s)}{s^p},\\
&U(t+s)\leq 2^{p-1}(U(t)+U(s))\leq 4^p U(t),\\
&2^{-p} U(t)\leq AU(t)\leq 2^p U(t).
\end{align*}
\item  \label{itme:lp-diff3} If $u\in L^p(\R^d)$ then $AU(t),\,U(t)\leq 2^p |\mathbb{S}^{d-1}| \|u\|^p_{L^p(\R^d)}$. Moreover,  $U$, $AU$ and $ g(t)=\frac{U(t)}{t^p}$ are continuous on $(0,\infty)$ and
\begin{align*}
&\lim_{t \to 0^+} AU(t)=\lim_{t \to 0^+} U(t)=0,\\
& \lim_{t \to \infty} AU(t)=\lim_{t \to \infty} U(t)=2|\mathbb{S}^{d-1}|\|u\|^p_{L^p(\R^d)}.
\end{align*}
In fact, for almost all $\omega\in \mathbb{S}^{d-1}$, there holds that
\begin{align*}
\lim_{t \to \infty} \|u(\cdot+ t\omega)\pm u\|^p_{L^p(\R^d)}=2\|u\|^p_{L^p(\R^d)}.
\end{align*}
\item  \label{itme:lp-diff4} If $u\in W^{1,p}(\R^d)$ then $(p+1)AU(t),\, U(t)\leq |\mathbb{S}^{d-1}| K_{d,p}\|\nabla u\|^p_{L^p(\R^d)}t^p $ and
\begin{align*}
 \lim_{t \to 0^+} \, (p+1)\frac{AU(t)}{t^p}=\lim_{t \to 0^+} \frac{U(t)}{t^p}=K_{d,p}|\mathbb{S}^{d-1}|\|\nabla u\|^p_{L^p(\R^d)}.
\end{align*}
\end{enumerate}
\end{theorem}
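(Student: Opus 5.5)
We reduce everything to the one-dimensional profile $U_*(h)=\|u(\cdot+h)-u\|^p_{L^p(\R^d)}$ and then integrate over directions. The basic tools are Minkowski's (or the convexity) inequality $|a+b|^p\le 2^{p-1}(|a|^p+|b|^p)$ and translation invariance of Lebesgue measure. First we record symmetry. Since $\|u(\cdot-t\omega)-u\|_p=\|u-u(\cdot+t\omega)\|_p$, we get $U(-t)=U(t)$, interpreting $U(-t)$ via $\omega\mapsto-\omega$. The statement for $AU$ follows by the change of variables $z\mapsto -z$.

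\textbf{Item (1).} Write $u(x+(t+s)\omega)-u(x)=[u(x+(t+s)\omega)-u(x+s\omega)]+[u(x+s\omega)-u(x)]$. Convexity and translation give $U(t+s)\le 2^{p-1}(U(t)+U(s))$. For $U(\ell t)\le \ell^pU(t)$, telescope over $\ell$ steps and apply Minkowski in $L^p$: $\|\Delta_{\ell t\omega}u\|_p\le \ell\|\Delta_{t\omega}u\|_p$. Then integrate over $\omega$; this is exactly $g(\ell t)\le g(t)$. For $U(t)\le 2^pAU(t)$, split $t\omega=z\omega+(t-z)\omega$ for $z\in(0,t)$. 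This gives $U(t)\le 2^{p-1}(U(z)+U(t-z))$, and averaging in $z$ over $(0,t)$ yields $U(t)\le 2^{p}AU(t)$.

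\textbf{Item (2).} Write $t=\ell s+\theta s$, so that $U(t)\le 2^{p-1}(U(\ell s)+U(\theta s))\le 2^{p-1}(\ell^pU(s)+U(\theta s))$. Dividing by $t^p$ and using $\ell s\le t$ gives the first inequality. The second comes from the alternative split $t=\ell(1+\theta/\ell)s$, with $(1+\theta/\ell)s=s+\tfrac{\theta}{\ell}s$, or equivalently $t=(\ell+1)s-(1-\theta)s$, applying the convexity inequality twice, which produces the constant $4^{p-1}(1+\ell^p)$. This bookkeeping is routine but fiddly. For $U(s)\le 2^{p+1}AU(t)$ with $s\le t$, write $s=z+(s-z)$, or $s=z-(z-s)$ for $z\in(s,t)$. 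We then average over $z\in(0,t)$ and bound by $AU(t)$ plus $AU$ at shifted arguments controlled by $AU(t)$. The bound $AU(s)\le 2^{p+1}AU(t)$ then follows by averaging the previous one.

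\textbf{Item (3).} When $d\ge2$, Lemma~\ref{lem:lp-diff-growth-spher} gives $U(s)\le 2^pU(t)$ directly, with $\theta=s/t$. Combining this with the first inequality of (2) gives $U(t)/t^p\le 2^{p-1}(U(s)/s^p+2^pU(s)/t^p)\le 4^pU(s)/s^p$. Also $U(t+s)\le 2^{p-1}(U(t)+U(s))\le 2^{p-1}(1+2^p)U(t)\le 4^pU(t)$. Averaging $U(z)\le 2^pU(t)$ over $z\in(0,t)$ gives $AU(t)\le 2^pU(t)$; the other direction is already in (1).

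\textbf{Item (4).} The bound $U\le 2^p|\mathbb{S}^{d-1}|\|u\|_p^p$ is immediate. Continuity follows from continuity of translations in $L^p$, together with dominated convergence on the sphere, and the limit at $0^+$ is the same fact. The main obstacle is the limit at infinity. For $u\in C_c$, the supports of $u(\cdot+t\omega)$ and $u$ separate, so $\|u(\cdot+t\omega)\pm u\|_p^p=2\|u\|_p^p$ for large $t$. For general $u$ we approximate by $\varphi\in C_c$ with an $\varepsilon$-error uniform in $t$ (by Minkowski), which gives the limit for every $\omega$. Dominated convergence over $\mathbb{S}^{d-1}$ then gives $U(t)\to 2|\mathbb{S}^{d-1}|\|u\|^p_p$, and the Cesàro limit for $AU$ follows.

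\textbf{Item (5).} For $u\in C_c^\infty$ write $u(x+t\omega)-u(x)=t\int_0^1\nabla u(x+\tau t\omega)\cdot\omega\,\d\tau$. Jensen and Fubini give $\int|\cdot|^p\,\d x\le t^p\int|\nabla u\cdot\omega|^p\,\d x$. Integrating over $\omega$ and using $\int_{\mathbb{S}^{d-1}}|\xi\cdot\omega|^p\,\d\sigma=K_{d,p}|\mathbb{S}^{d-1}||\xi|^p$ gives the bound, which extends to $W^{1,p}$ by density. Averaging the bound yields the factor $(p+1)^{-1}$ for $AU$. For the limit, $\frac{u(x+t\omega)-u(x)}{t}\to\nabla u\cdot\omega$ in $L^p$ for each $\omega$, first for smooth $u$ and then via the uniform bound. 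Dominated convergence then gives the limit of $U(t)/t^p$, and the statement for $(p+1)AU(t)/t^p$ follows since $\frac{1}{t^{p+1}}\int_0^t z^p\,\d z=\frac1{p+1}$.
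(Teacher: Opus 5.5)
Your proposal is correct and follows essentially the same route as the paper. Items (1)--(3) use the same convexity/translation splittings, including averaging $U(s)\le 2^{p-1}(U(z)+U(s-z))$ over $z\in(0,t)$, and Lemma~\ref{lem:lp-diff-growth-spher} for $d\ge 2$; the limit at infinity uses approximation by compactly supported functions, where your pointwise-in-$\omega$ version even gives the limit for every $\omega$; and the $W^{1,p}$ item uses the fundamental theorem of calculus with Jensen and Fubini.

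One small correction concerns the second inequality of item (2). Your decomposition $t=(\ell+1)s-(1-\theta)s$ is not equivalent to the first one and would produce $U((1-\theta)s)$ rather than $U((1-\tfrac{\theta}{\ell})s)$. The route that works is $U(t)\le \ell^pU(s+\tfrac{\theta}{\ell}s)$, followed by the further split $\tfrac{\theta}{\ell}s=s-(1-\tfrac{\theta}{\ell})s$ and parity. The paper does the same split after writing $U(\theta s)\le \ell^pU(\tfrac{\theta s}{\ell})$.
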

\begin{proof}
\eqref{itme:lp-diff1} The parity of  $U$ and $AU$ is easy to verify. By Convexity  we obtain
\begin{align*}
U(t+r) &= \int_{\mathbb{S}^{d-1}} \int_{\R^d} |u(x + (t+r)\omega) - u(x)|^p \d x \d\sigma_{d-1}(\omega)\\
&\leq 2^{p-1}\int_{\mathbb{S}^{d-1}} \int_{\R^d} |u(x + (t+r)\omega) - u(x + r\omega)|^p \d x \d\sigma_{d-1}(\omega)\\
&+2^{p-1}\int_{\mathbb{S}^{d-1}} \int_{\R^d} |u(x + r\omega) - u(x)|^p \d x \d\sigma_{d-1}(\omega)\\
&=2^{p-1}(U(t)+U(r)).
\end{align*}
By the same token we obtain the estimate $U(t)\leq 2^p|\mathbb{S}^{d-1}|\|u\|^p_{L^p(\R^d)}$. Furthermore, the above  estimate  implies that
\begin{align*}
U(t)\leq \frac{2^{p-1}}{t} \int_0^t (U(z)+ U(t-z) )\d z
= \frac{2^p}{t}\int_0^t U(z)\d z = 2^p AU(t).
\end{align*}
By  convexity again  for each  $ x\in \R^d$ and $\omega\in \mathbb{S}^{d-1}$ we have
\begin{align*}
|u(x+\ell t\omega )-u(x)|^p
&= \ell ^p \big|\frac{1}{\ell}\sum_{i=0}^{\ell-1}u(x+(i+1) t\omega)-u(x+i t\omega) \big|^p\\
&\leq \ell^{p-1} \sum_{i=0}^{\ell-1}|u(x+(i+1) t\omega)-u(x+i t\omega)|^p.
\end{align*}
Integrating both sides over $\mathbb{S}^{d-1}\times \R^d$ implies
\begin{align*}
U(\ell t)\leq \ell^pU(t)\qquad \text{and hence } \qquad g(\ell t) \leq g(t).
\end{align*}
\eqref{itme:lp-diff2} Since $t= s\ell+\theta s$ we deduce from the foregoing that
\begin{align*}
U(t) \leq 2^{p-1}\big(U(\ell s) +U(\theta s)\big)
&\leq
2^{p-1}\big(\ell^p U(s) +U(\theta s)\big)\leq
2^{p-1}\big( (t/s)^p U(s) +U(\theta s)\big).
\end{align*}
We conclude that
\begin{align*}
\frac{U(t)}{t^p}\leq 2^{p-1}\Big(\frac{U(s)}{s^p}+ \frac{U(\theta s)}{t^p}\Big).
\end{align*}
Moreover, we note that
\begin{align*}
U(\theta s)\leq \ell^p U\Big(\tfrac{\theta s}{\ell}\Big)
\leq 2^{p-1}\ell^p\Big( U(s)+U\big((1-\tfrac{\theta }{\ell})s\big)\Big).
\end{align*}
Inserting this in the previous expression gives
\begin{align*}
\frac{U(t)}{t^p}\leq 4^{p-1}(1+\ell^p)\Big(\frac{U(s)}{s^p}+  \frac{U\big((1-\tfrac{\theta }{\ell})s\big) }{t^p}\Big).
\end{align*}
Using the subadditivity and parity of $U$ we find that
\begin{align*}
 U(s)
&\leq \frac{2^{p-1}}{t} \int_0^t\big(U(z) +  U(s-z)\d z \big)= 2^{p-1}\big( AU(t)+ \frac{1}{t} \int^s_{s-t} U(r)\d r\big)\\
&\leq  2^{p-1}\big( AU(t)+ \frac{1}{t} \int^t_{-t} U(r)\d r\big)= 3.2^{p-1}AU(t)\leq 2^{p+1}AU(t).
\end{align*}
That is $U(r)\leq 2^{p+1} AU(t)$. Using $U(r)\leq 2^{p+1} AU(t)$ for $0<r\leq s\leq  t$ we get
\begin{align*}
AU(s)= \frac{1}{s}\int_0^s U(r)\d r \leq 2^{p+1}AU(t).
\end{align*}
\eqref{itme:lp-diff2-bis} Now assuming $d\geq 2$, by  Lemma \ref{lem:lp-diff-growth-spher}, we implies that  $U(s)\leq 2^p U(t)$ for $0<s\leq t$. Moreover, while $U(t)\leq 2^{p} AU(t)$has already been proved, combining the latter with the previously established estimates gives
\begin{align*}
\frac{U(t)}{t^p}&\leq  2^{p-1}\Big(\frac{U(s)}{s^p}+ \frac{U(\theta s)}{t^p}\Big)\leq 4^{p}\frac{U(s)}{s^p},\\
U(t+s)&\leq 2^{p-1}(U(t)+U(s))\leq 4^p U(t),\\
AU(t)&= \frac{1}{t}\int_0^t U(z)\d z \leq 2^p U(t).
\end{align*}

\eqref{itme:lp-diff3} The continuity of the shift, i.e.,  $\|u(\cdot+h)-u\|_{L^p(\R^d)}\to 0$ as $|h|\to0$ implies $\lim_{t \to 0^+} U(t)= \lim_{t \to 0^+} AU(t)=0$ and the continuity of $U$  and hence of $AU$, $\frac{U(t)}{t^p}$. To compute the limit of $U(t)$ as $t\to\infty$ let us consider $(u_n)_n\subset C_c^\infty(\R^d)$  be a sequence converging to $u$ in $L^p(\R^d)$ and let  $U_n$ be defined for $u_n$ in the same manner as $U$ is defined for $u$. For fixed $u_n$, suppose that $\supp u_n \subset B_R(0)$ for some $R>0$ so that $\supp ~u_n(\cdot+t\omega)  \subset B_R(-t\omega)$.
Therefore,  for $\omega \in \mathbb{S}^{d-1}$ and $t>2R$
\begin{align*}
\supp u_n \;\cap\; \supp\big(u_n(\cdot+t\omega)\big) = \emptyset.
\end{align*}
Consequently, since for $t>2R$, $x\in\R^d$  and $\omega\in \mathbb{S}^{d-1}$ we have
\begin{align*}
|u_n(x+t\omega)\pm u_n(x)|^p = |u_n(x+t\omega)|^p + |u_n(x)|^p.
\end{align*}
Hence we deduce that for all $t\geq 2R$ we have
\begin{align*}
\int_{\mathbb{S}^{d-1}}\|u_n(\cdot+t\omega)\pm u_n\|^p_{L^p(\R^d)}\d\sigma_{d-1}(\omega)
= 2|\mathbb{S}^{d-1}| \|u_n\|^p_{L^p(\R^d)}.
\end{align*}
This  shows that  $\|u_n(\cdot+ t\omega)\pm u_n\|^p_{L^p(\R^d)}\to 2\|u_n\|^p_{L^p(\R^d)}$  and $U_n(t)\to 2|\mathbb{S}^{d-1}|\|u_n\|^p_{L^p(\R^d)}$ as $t\to \infty$. Next, by reverse triangular inequality, we obtain
\begin{align*}
\big|U^{\frac{1}{p}}_n(t)-U^{\frac{1}{p}}(t)\big|&\leq 2|\mathbb{S}^{d-1}|^{\frac{1}{p}}\| u_n-u\|_{L^p(\R^d)}.
\end{align*}
Therefore, for each  $u_n$, we find that
\begin{align*}
\limsup_{t\to\infty}|U^{\frac{1}{p}}(t)&- 2|\mathbb{S}^{d-1}|^{\frac{1}{p}}\|u\|_{L^p(\R^d)}|\\
&\leq  \limsup_{t\to\infty}\Big(|U^{\frac{1}{p}}_n(t)- 2|\mathbb{S}^{d-1}|^{\frac{1}{p}}\|u\|_{L^p(\R^d)}|+\big|U^{\frac{1}{p}}_n(t)- U^{\frac{1}{p}}(t)\big|\Big)\\
&=2|\mathbb{S}^{d-1}|^{\frac{1}{p}} | \|u_n\|_{L^p(\R^d)}-\|u\|_{L^p(\R^d)}|+ \limsup_{t\to\infty}\big|U^{\frac{1}{p}}_n(t)- U^{\frac{1}{p}}(t)\big|\\
&\leq 4|\mathbb{S}^{d-1}|^{\frac{1}{p}}\| u_n-u\|_{L^p(\R^d)}.
\end{align*}
Letting $n\to \infty$ so that $\| u_n-u\|_{L^p(\R^d)}\to 0$ thereby yielding
\begin{align*}
\lim_{t\to\infty}U(t)=U(\infty):= 2|\mathbb{S}^{d-1}|\|u\|^p_{L^p(\R^d)}\quad\text{and hence}\quad \lim_{t\to\infty}\frac{U(t)}{t^p}=0.
\end{align*}
Analogously, one establishes that that for almost all $\omega\in \mathbb{S}^{d-1}$,
\begin{align*}
\lim_{t\to\infty}\|u(\cdot+ t\omega)\pm u\|^p_{L^p(\R^d)}= 2\|u\|^p_{L^p(\R^d)}.
\end{align*}
For the case $AU$, let $\varepsilon>0$ and $t_0$ such that $|U(t) -U(\infty)|<\varepsilon$ for all $t>t_0$.
\begin{align*}
 |AU(t) -U(\infty)|= \Big|\frac{1}{t}\int_0^t(U(z) -U(\infty))\d z\Big|\leq \frac{\varepsilon(t-t_0) }{t}+ \frac{1}{t}\int_{0}^{t_0} |U(z) -U(\infty)|\d z.
\end{align*}
It follows that $\limsup\limits_{t\to\infty} |AU(t) -U(\infty)|\leq  \varepsilon$. We deduce $AU(t)\xrightarrow{t\to\infty} U(\infty)$.
 \smallskip

\noindent \eqref{itme:lp-diff4} To show that $\frac{U(t)}{t^p}\to g(\infty):=K_{d,p}|\mathbb{S}^{d-1}|\|\nabla u\|^p_{L^p(\R^d)}$ as $t\to 0$, note that 
\begin{align*}
\int_{ \mathbb{S}^{d-1}}\int_{\R^d}|\nabla u(x)\cdot \omega|^p\d x  \d\sigma_{d-1}(\omega)= |\mathbb{S}^{d-1}|K_{d,p}\|\nabla u\|^p_{L^p(\R^d)}= g(\infty).
\end{align*}
The fundamental theorem of calculus  implies
$U(t)\leq |\mathbb{S}^{d-1}| K_{d,p}\|\nabla u\|^p_{L^p(\R^d)}t^p $ and 
\begin{align*}
\Big|(|\mathbb{S}^{d-1}|&K_{d,p})^{\frac{1}{p}}
\|\nabla u\|_{L^p(\R^d)}-\frac{U^{\frac{1}{p}}(t)}{t} \Big|
=\left|\Big( \int_{\mathbb{S}^{d-1}} \int_{\R^d} \Big| \int_0^1\nabla u(x)\cdot\omega\ d \tau \Big|^p \d x \d\sigma_{d-1}(\omega)\Big)^{\frac{1}{p}}\right.
\\ &\qquad\left.-\Big( \int_{\mathbb{S}^{d-1}} \int_{\R^d} \Big| \int_0^1\nabla u(x+ \tau t\omega)\cdot \omega\ d \tau \Big|^p \d x \d\sigma_{d-1}(\omega)\Big)^{\frac{1}{p}}\right|. 
\end{align*}
The reverse triangular inequality, Jensen's inequality and Fubini's imply

\begin{align*} 
\Big|[g(\infty)]^{\frac{1}{p}}
-\frac{U^{\frac{1}{p}}(t)}{t} \Big|
\leq \Big( \int_0^1 \int_{\mathbb{S}^{d-1}} \|\nabla u(\cdot + \tau t\omega)-\nabla u\|^p_{L^p(\R^d)}  \d\sigma_{d-1}(\omega) \d \tau\Big)^{\frac{1}{p}}\xrightarrow{t\to0}0,
\end{align*}
where the conclusion is implied by the  dominated convergence theorem, since $\|\nabla u(\cdot + \tau t\omega)-\nabla u\|^p_{L^p(\R^d)} \leq 2^p\|\nabla u\|^p_{L^p(\R^d)}$ and
$\|\nabla u(\cdot + \tau t\omega) - \nabla u\|^p_{L^p(\R^d)} \to 0$ as $t \to 0$  by the continuity of the shift operator in  $L^p(\R^d)$. Next let  $\varepsilon>0$ and consider $t_0>0$ such that $\big|\frac{U(t)}{t^p} -g(\infty)\big|<\varepsilon$  for all $0<t<t_0$. For all $0<t<t_0$ we find that
\begin{align*}
 \Big |\frac{(p+1)}{t^p}AU(t) -g(\infty)\Big|= \frac{(p+1)}{t^{p+1}}\int_0^t z^p\Big|\frac{U(z) }{z^p}-g(\infty)\Big|\d z
\leq (p+1)\varepsilon.
\end{align*}
We conclude that $(p+1)\frac{AU(t)}{t^p}\xrightarrow{t\to\infty} g(\infty)$.
\end{proof}
\begin{remark}\label{rem:almost increasing}
The approach used in Lemma~\ref{lem:lp-diff-growth-spher} strongly relies on the averaged slicing method from Theorem~\ref{thm:slicing-integ-on-sphere}, thereby precluding the one-dimensional case $d=1$. Thus, it remains an open question whether, in dimension $d=1$, there exists a constant $C_p>0$ such that the estimate
$U(\theta s) \leq C_p U(s)$ holds for all $s>0$ and $\theta \in [0,1]$. We believe this does not always hold for $d=1$. However, as we see in Theorem~\ref{thm:lp-first-order-diff}, one surrogate way to compensate for this one-dimensional gap consists in introducing the Hardy average 
$AU(s) = \frac{1}{s} \int_0^s U(z)\d z,$ 
which satisfies $AU(\theta s)\leq 2^{p+1}AU(s)$ for $d\geq 1$ while encapsulating the most essential properties inherited from $U$.
\end{remark}

\section{Asymptotics near $s=1$ and $s=0$}
\label{sec:assymp-s-near-1-0}
Let us  make a brief digression to establish asymptotic results in the regimes $s \to 0^+$ and $s \to 1^-$, for the normalized fractional $p$-Laplacian $(-\Delta)^s_p$ and the associated seminorm, with respect to normalizing  constant $\widetilde{C}_{d,p,s}$ defined in \eqref{eq:normalized-frac-cons}. We present a relatively simple approach to the proofs of the asymptotic limits of the fractional seminorm $ |\cdot|_{W^{s,p}(\R^d)} $ as $ s \to 1^- $, due to Brezis--Bourgain--Mironescu~\cite{BBM01}, and as $s \to 0^+ $, due to Maz'ya--Shaposhnikova~\cite{MS02}.

\begin{theorem}\label{thm:asymp-larg-short}
Let $u\in L^p(\R^d)$ and $r>0$. The  following assertions are true.
\begin{enumerate}
\item If $u\in L^p(\R^d)$ then we have
\begin{align*}
&\lim_{s\to0^+} s \iint_{\R^d\times B^c(0,r)} \frac{ |u(x) - u(y)|^p}{|x-y|^{d+s p}}\d y\d x= \lim_{s\to0^+} s\int_r^\infty \frac{U(t)}{t^{1+s p}}= \frac{2|\mathbb{S}^{d-1}|}{p}\|u\|^p_{L^p(\R^d)},\\
&\lim_{s\to1^-} (1-s) \iint_{\R^d\, B^c(0,r)} \frac{ |u(x) - u(y)|^p}{|x-y|^{d+s p}}\d y\d x= \lim_{s\to1^-} (1-s)\int_r^\infty \frac{U(t)}{t^{1+s p}}=0.
\end{align*}
\item If   $u\in \bigcup_{s\in (0,1)} W^{s, p}(\R^d)$ then we have
\begin{align*}
\lim_{s\to0^+}s\iint_{\R^d\times B(0,r)}\hspace{-2ex} \frac{ |u(x) - u(y)|^p}{|x-y|^{d+s p}}\d y \d x
&= \lim_{s\to0^+} s\int_0^r  \frac{U(t) }{t^{1+s p}}\d t=0.
\end{align*}
\item If $u\in W^{1,p}(\R^d)$ then we have
\begin{align*}
\lim_{s\to1^-}(1-s)&\iint_{\R^d \times B(0,r)}\hspace{-1ex} \frac{ |u(x) - u(y)|^p}{|x-y|^{d+s p}}\d y \d x\\
&= \lim_{s\to1^-} (1-s)\int_0^r  \frac{U(t) }{t^{1+s p}}\d =K_{d,p}\frac{|\mathbb{S}^{d-1}|}{p}\|\nabla u\|^p_{L^p(\R^d)}.
\end{align*}
\end{enumerate}
\end{theorem}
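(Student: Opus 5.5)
The plan is to reduce every assertion to the one-dimensional integrals in $t$ via the polar-coordinate identity $\iint_{\R^d\times B(0,r)}\frac{|u(x)-u(y)|^p}{|x-y|^{d+sp}}\d y\d x=\int_0^r \frac{U(t)}{t^{1+sp}}\d t$, and its analogue on $B^c(0,r)$ with $\int_r^\infty$. Then I would apply the averaging Lemma~\ref{lem:asymp-averag} with a suitable choice of the function $g$ and of the exponent $a$, using the properties of $U$ collected in Theorem~\ref{thm:lp-first-order-diff}.

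\textbf{Item (1).} For the tail as $s\to0^+$ I take $g=U$ and $a=sp$. Then
\begin{align*}
s\int_r^\infty \frac{U(t)}{t^{1+sp}}\d t=\frac{1}{p}\,a\int_r^\infty U(t)t^{-a-1}\d t .
\end{align*}
By Theorem~\ref{thm:lp-first-order-diff}\eqref{itme:lp-diff3}, $U$ is continuous and bounded by $2^p|\mathbb{S}^{d-1}|\|u\|^p_{L^p}$, hence lies in $L^\infty_{\loc}$, and $U(\infty)=2|\mathbb{S}^{d-1}|\|u\|_{L^p}^p$. Lemma~\ref{lem:asymp-averag} then gives the limit $\frac{2|\mathbb{S}^{d-1}|}{p}\|u\|^p_{L^p(\R^d)}$. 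As $s\to1^-$, boundedness of $U$ gives
\begin{align*}
\int_r^\infty U t^{-1-sp}\d t\le C\,\frac{r^{-sp}}{sp},
\end{align*}
which stays bounded as $s\to1^-$, so multiplying by $(1-s)$ gives $0$.

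\textbf{Item (3).} As $s\to1^-$ on $(0,r)$ I set $g(t)=U(t)/t^p$ and $a=p(1-s)$, so that
\begin{align*}
(1-s)\int_0^r \frac{U(t)}{t^{1+sp}}\d t=\frac1p\, a\int_0^r g(t)t^{a-1}\d t .
\end{align*}
By Theorem~\ref{thm:lp-first-order-diff}\eqref{itme:lp-diff4}, $g$ is bounded by $K_{d,p}|\mathbb{S}^{d-1}|\|\nabla u\|^p_{L^p}$ and continuous, and $g(0^+)=K_{d,p}|\mathbb{S}^{d-1}|\|\nabla u\|_{L^p}^p$. Lemma~\ref{lem:asymp-averag} then yields the claimed limit.

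\textbf{Item (2).} As $s\to0^+$ on $(0,r)$, I use the hypothesis $u\in W^{s_0,p}(\R^d)$ for some $s_0\in(0,1)$, i.e.\ $\int_0^\infty U(t)t^{-1-s_0p}\d t<\infty$. For $s<s_0$ and $t<r$ one has $t^{-1-sp}\le r^{(s_0-s)p}t^{-1-s_0p}$, so
\begin{align*}
s\int_0^r U(t)t^{-1-sp}\d t\le s\, r^{(s_0-s)p}|u|^p_{W^{s_0,p}}\to0 .
\end{align*}
Alternatively, one could take $g=U$, $a=sp$ and invoke Lemma~\ref{lem:asymp-averag} with $U(0^+)=0$; the difficulty there is that the lemma is phrased with weight $t^{a-1}$ whereas here the weight is $t^{-a-1}$ near $0$. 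This is exactly why the hypothesis $u\in W^{s_0,p}$ is needed, and the domination argument above handles it.

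\textbf{Main obstacle.} I expect this mismatch in item (2) to be the only step requiring care. Everything else is a direct application of Lemma~\ref{lem:asymp-averag}, together with checking its local integrability hypotheses ($q=\infty$ suffices by continuity and boundedness).
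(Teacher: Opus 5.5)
Your proposal is correct and follows the paper's proof essentially step for step. Both use Lemma~\ref{lem:asymp-averag} with $g=U$, $a=sp$ for the tail as $s\to0^+$, and with $g=U(t)/t^p$, $a=(1-s)p$ for item (3). Both use the crude bound $U\le 2^p|\mathbb{S}^{d-1}|\|u\|^p_{L^p(\R^d)}$ for the tail as $s\to1^-$, and the domination $t^{-1-sp}\le r^{(s_0-s)p}t^{-1-s_0p}$ for item (2). Your value $U(\infty)=2|\mathbb{S}^{d-1}|\|u\|^p_{L^p(\R^d)}$ is the one consistent with the statement; the paper's written proof drops the factor $2$ there, which is a typo.
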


\begin{proof}
According to Theorem \ref{thm:lp-first-order-diff} we know that $U(t)\to |\mathbb{S}^{d-1}| \|u\|^p_{L^p(\R^d)}$ as $t\to\infty$ and $U(t)\leq 2^p |\mathbb{S}^{d-1}| \|u\|^p_{L^p(\R^d)}$.  Therefore, we deduce from Lemma \ref{lem:asymp-averag} that
\begin{align*}
&\lim_{s \to 0^+} \frac{sp}{p} \int_r^\infty \frac{U(t)}{t^{1+s p}} \d t = \frac{|\mathbb{S}^{d-1}|}{p} \|u\|^p_{L^p(\R^d)}.
\end{align*}
Furthermore, since $U(t)\leq 2^p|\mathbb{S}^{d-1}|\|u\|^p_{L^p(\R^d)} $ we have
\begin{align*}
\lim_{s\to1^-} (1-s)\int_r^\infty \frac{U(t)}{t^{1+s p}}\leq 2^p|\mathbb{S}^{d-1}|\|u\|^p_{L^p(\R^d)}\lim_{s\to1^-} \frac{(1-s) r^{-sp}}{sp}=0.
\end{align*}
Assume $u\in W^{s_0, p}(\R^d)$ for some  $0<s_0<1$ then, for $0<s<s_0$ we have
\begin{align*}
s\int_0^r \frac{U(t)}{t^{1+s p} }\leq sr^{(s_0-s)p} \int_0^r \frac{U(t)}{t^{s_0p+1} } \leq sr^{(s_0-s)p} |u|^p_{W^{s_0, p}(\R^d)}\xrightarrow{s\to 0}0.
\end{align*}
Now if  $u \in W^{1,p}(\R^d)$,  then  $g(t)=\frac{U(t)}{t^p}$ satisfies $g(t)\leq |\mathbb{S}^{d-1}| K_{d,p} \|\nabla u\|^p_{L^p(\R^d)}$ and
\begin{align*}
\lim_{t \to 0^+} \frac{U(t)}{t^p} = |\mathbb{S}^{d-1}| K_{d,p} \|\nabla u\|^p_{L^p(\R^d)} = g(0).
\end{align*}
Therefore, putting $a = (1 - s)p$  we deduce from Lemma \ref{lem:asymp-averag} that
\begin{align*}
\lim_{s \to 1^-}(1 - s) \int_0^r \frac{U(t)}{t^{1+s p}} \d t = \lim_{a \to 0^+} \frac{a}{p} \int_0^r g(t)t^{a-1}\d t=\frac{g(0)}{p}=   \frac{|\mathbb{S}^{d-1}|}{p} K_{d,p} \|\nabla u\|^p_{L^p(\R^d)}.
\end{align*}
\end{proof}

\begin{definition}\label{def:bvp-space}
 For $\Omega\subset \R^d$ open,  we define  the space $BV_p(\Omega)$ with $BV_p(\Omega)= W^{1,p}(\Omega)$ if $1< p<\infty$ and $BV_1(\Omega)= BV(\Omega)$, equipped with the norm $\|u\|_{ BV_p(\Omega)} =(\|u\|^p_{L^p(\Omega)} +|u|^p_{BV_p(\Omega)})^{1/p}$ with the seminorm $|\cdot|_{BV_p(\Omega)}$ defined by
\begin{align*}
|u|_{BV_1(\Omega)} &= |u|_{BV(\Omega)} \\
 |u|_{BV_p(\Omega)} &= |u|_{W^{1,p}(\Omega)} = \|\nabla u\|_{L^p(\Omega)} \quad \text{for } 1 < p < \infty.
\end{align*}
Here  we emphasize that $BV(\Omega)$ is the space of functions of bounded variation and  $|u|_{BV_1(\Omega)}= |u|_{BV(\Omega)} $ is the total variation of $u$.
\end{definition}
\begin{theorem}\label{thm:mazy'a-bbm-limit}
Let $u\in L^p(\R^d)$, $1\leq p<\infty$. The following assertions are true.
\smallskip

\textbf{Maz'ya-Shaposhnikova:} We have  $u\in \bigcup_{s\in (0,1)} W^{s, p}(\R^d)$ if and only if
\begin{align*}
&\lim_{s\to0^+} s \iint_{\R^d \times \R^d} \frac{ |u(x) - u(y)|^p}{|x-y|^{d+s p}}\d y\d x=  \frac{2|\mathbb{S}^{d-1}|}{p}\|u\|^p_{L^p(\R^d)}.
\end{align*}
\textbf{Brezis-Bourgain-Mirunescu:}
Under the convention that $|u|_{BV_p(\R^{d})}=\infty$ whenever
$u\in L^p(\R^{d})\setminus BV_p(\R^{d})$ we have
\begin{align*}
&\lim_{s\to1^-}s(1-s)\iint_{\R^d \times \R^d}\hspace{-2ex} \frac{ |u(x) - u(y)|^p}{|x-y|^{d+s p}}\d y \d x=K_{d,p}\frac{|\mathbb{S}^{d-1}|}{p}\|\nabla u\|^p_{L^p(\R^d)}.
\end{align*}
In particular, the right-hand side limit  is finite if and only if
$u\in BV_p(\R^d)$.

\end{theorem}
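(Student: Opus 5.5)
The plan is to write the Gagliardo energy in polar form and split it at a fixed radius $r>0$:
\begin{align*}
\iint_{\R^d\times\R^d}\frac{|u(x)-u(y)|^p}{|x-y|^{d+sp}}\d y\d x=\int_0^r\frac{U(t)}{t^{1+sp}}\d t+\int_r^\infty\frac{U(t)}{t^{1+sp}}\d t .
\end{align*}
Each regime then reduces to Theorem~\ref{thm:asymp-larg-short} applied to the two pieces separately.

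\textbf{Maz'ya--Shaposhnikova.} Assume first that $u\in W^{s_0,p}(\R^d)$ for some $s_0\in(0,1)$. Part (2) of Theorem~\ref{thm:asymp-larg-short} gives $s\int_0^r U(t)t^{-1-sp}\d t\to 0$. Part (1) gives $s\int_r^\infty U(t)t^{-1-sp}\d t\to \frac{2|\mathbb{S}^{d-1}|}{p}\|u\|^p_{L^p(\R^d)}$, and this part only needs $u\in L^p(\R^d)$. Adding the two pieces yields the stated limit.

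For the converse, suppose the limit holds with the finite right-hand side. Then the full integral $\int_0^\infty U(t)t^{-1-sp}\d t$ must be finite for some small $s>0$, since otherwise $s|u|^p_{W^{s,p}}=\infty$ for all small $s$ and the limit would be $\infty$. Hence $u\in W^{s,p}(\R^d)$ for that $s$, so $u\in\bigcup_{s\in(0,1)}W^{s,p}(\R^d)$.

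\textbf{Brezis--Bourgain--Mironescu.} Here I multiply by $s(1-s)$ and use $s\to1$, so only the factor $(1-s)$ matters. The tail piece tends to $0$ for every $u\in L^p(\R^d)$, by part (1) of Theorem~\ref{thm:asymp-larg-short}. If $u\in W^{1,p}(\R^d)$, part (3) gives the local piece $\to K_{d,p}\frac{|\mathbb{S}^{d-1}|}{p}\|\nabla u\|^p_{L^p}$, which settles this case.

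The case $u\notin BV_p(\R^d)$ is the main obstacle: I must show $\liminf_{s\to1^-}(1-s)\int_0^r U(t)t^{-1-sp}\d t=\infty$. I would argue by contradiction. Suppose that along a sequence $s_k\to1$ the quantities $(1-s_k)\int_0^r U t^{-1-s_kp}\d t$ stay bounded. I would then use the $\ell$-adic monotonicity $g(\ell t)\le g(t)$ of $g=U/t^p$ from Theorem~\ref{thm:lp-first-order-diff}(1), with $\ell=2$, together with Theorem~\ref{thm:cadic-monotonicity}. Writing the local piece as $\frac1p\,a\int_0^r g(t)t^{a-1}\d t$ with $a=(1-s)p$, this gives $L(a)\ge 2^{a-a'}L(a')$ for $a\le a'$. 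So boundedness along the sequence forces a uniform bound on $L(a)$ for all small $a$.

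Next I let $a\to0$. A bounded Abel-type average of the $2$-adically decreasing function $g$ should force $\sup_{t\le r}g(t)$, or at least $\limsup_{t\to0}g(t)$, to be finite. The $2$-adic decay lets me compare $g$ on $[2^{-n-1}r,2^{-n}r]$ with $g$ at smaller scales. This yields
\begin{align*}
\sup_{t>0}\frac{\|u(\cdot+t\omega)-u\|^p_{L^p(\R^d)}}{t^p}<\infty \quad\text{after averaging in } \omega .
\end{align*}
For $p>1$ this characterizes $W^{1,p}$, and for $p=1$ it characterizes $BV$; this is the classical difference-quotient criterion, which I would prove using the continuity and monotonicity facts in Theorem~\ref{thm:lp-first-order-diff}. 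That contradicts $u\notin BV_p$.

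For $p=1$ and $u\in BV$ I would also need the limit to equal the total variation expression. I would obtain this by mollification and lower semicontinuity, noting that the identity for $W^{1,p}$ already holds. This last step, together with the Abel-average-to-supremum step, is where I expect the real work to lie.
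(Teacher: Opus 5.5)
Your Maz'ya--Shaposhnikova argument (both directions) and your Brezis--Bourgain--Mironescu argument for $u\in W^{1,p}(\R^d)$ are the paper's proof. The paper gives no argument for the two remaining pieces and simply cites \cite{BBM01}, \cite[Theorem 1.1]{Fog23} and \cite{Dav02}. These pieces are: $u\notin BV_p(\R^d)$ forces the limit to be $\infty$, and the case $u\in BV(\R^d)$ with $p=1$. Your self-contained replacement contains a step that fails as you justify it.

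\textbf{The gap.} You claim that a bounded Abel average of the $2$-adically decreasing $g=U/t^p$ forces $\sup_{t\le r}g$, or at least $\limsup_{t\to0}g$, to be finite. These two properties alone do not imply this. On $(2^{-n-1},2^{-n}]$, $n\ge0$, set $g(t)=\min\{G(2^nt),\,n+2\}$ with $G(\tau)=|\tau-\tfrac34|^{-1/2}$, and set $g\equiv2$ on $(1,\infty)$. This $g$ is continuous and satisfies $g(2t)\le g(t)$. Moreover $a\int_0^1 g(t)t^{a-1}\d t\le \frac{a}{1-2^{-a}}\int_{1/2}^1 G(\tau)\tau^{a-1}\d\tau$ stays bounded as $a\to0^+$. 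Yet $g(\tfrac34\,2^{-n})=n+2\to\infty$. So comparing dyadic shells cannot produce the bound you want. Your use of Theorem~\ref{thm:cadic-monotonicity} to get a uniform bound on $L(a)$ is correct, but it is not needed.

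\textbf{The fix.} What the Abel bound does give, directly from Lemma~\ref{lem:asymp-averag}, is $\liminf_{t\to0^+}g(t)<\infty$. Otherwise $g(t)\to\infty$, and the averages along your sequence $s_k\to1$ would diverge. To upgrade this to a uniform bound you need the additive structure of $U$, not its dyadic monotonicity. Item (2) of Theorem~\ref{thm:lp-first-order-diff} gives $g(t)\le 2^{p-1}\big(g(s)+t^{-p}U(\theta s)\big)$ for $0<s\le t$. Let $s\to0$ along a sequence realizing the $\liminf$, and use $U(\tau)\to0$ as $\tau\to0^+$. This yields $\sup_{t>0}g(t)\le 2^{p-1}\liminf_{s\to0^+}g(s)<\infty$.

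Even the supremum is more than you need, since the difference-quotient criterion runs along a single sequence $t_k\to0$ with $g(t_k)$ bounded. That criterion is a duality argument, not a consequence of the continuity and monotonicity facts you cite:
\begin{itemize}
\item For $\varphi\in C_c^\infty(\R^d)$ and each $\omega$, $\big|\int u\,\omega\cdot\nabla\varphi\d x\big|\le\liminf_k t_k^{-1}\|u(\cdot+t_k\omega)-u\|_{L^p}\|\varphi\|_{L^{p'}}$.
\item Raise this to the power $p$, integrate over $\omega$ using Fatou's lemma, and use $\int_{\mathbb{S}^{d-1}}|\omega\cdot v|^p\d\sigma_{d-1}(\omega)=K_{d,p}|\mathbb{S}^{d-1}||v|^p$.
\item This gives $\big|\int u\nabla\varphi\d x\big|\le C\|\varphi\|_{L^{p'}}$. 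Hence $\nabla u\in L^p$ for $p>1$, and $Du$ is a finite measure for $p=1$.
\end{itemize}

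\textbf{The $p=1$ case.} Your sketch works once you make both bounds explicit.
\begin{itemize}
\item Upper bound: mollify, apply item (5) of Theorem~\ref{thm:lp-first-order-diff}, and let $\eps\to0$. This gives $U(t)\le t\,K_{d,1}|\mathbb{S}^{d-1}|\,|Du|(\R^d)$, which bounds the $\limsup$.
\item Lower bound: use $|\rho_\eps*u|_{W^{s,1}(\R^d)}\le|u|_{W^{s,1}(\R^d)}$, the $W^{1,1}$ case applied to $\rho_\eps*u$, and $\|\nabla(\rho_\eps*u)\|_{L^1}\to|Du|(\R^d)$. This bounds the $\liminf$.
\end{itemize}
The same mollification inequality, for general $p$, would also settle the case $u\notin BV_p(\R^d)$ directly. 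It bypasses the Abel-average analysis altogether.
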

\begin{proof}
For $u\in \bigcup_{s\in (0,1)} W^{s, p}(\R^d)$ Theorem \ref{thm:asymp-larg-short} implies
\begin{align*}
\lim_{s\to0^+}
[u]^p_{W^{s,p}(\R^d)}&= \lim_{s\to0^+}\Big( s\int_1^\infty \frac{U(t)}{t^{1+s p}}
+s\int_0^1 \frac{U(t) }{t^{1+s p}}\d t\Big) =\frac{2|\mathbb{S}^{d-1}|}{p}\|u\|^p_{L^p(\R^d)},
\end{align*}
where we recall  that
\begin{align*}
[u]^p_{W^{s,p}(\R^d)}= s(1-s)\iint_{\R^d \times \R^d} \frac{ |u(x) - u(y)|^p}{|x-y|^{d+s p}}\d y\d x.
\end{align*}
Conversely, if the above limit holds then  $u\in \bigcup_{0 < s < 1} W^{s,p}(\R^d)$ since
\begin{align*}
s\iint_{\R^d \times \R^d} \frac{ |u(x) - u(y)|^p}{|x-y|^{d+s p}}\d y\d x\leq \big(1+\tfrac{2|\mathbb{S}^{d-1}|}{p}\|u\|^p_{L^p(\R^d)}\big)\quad\text{for all $0<s<s_0$},
\end{align*}
for some $s_0\in (0,1)$. Next for $u\in W^{1,p}(\R^d)$, $p\geq1$, Theorem \ref{thm:asymp-larg-short} implies
\begin{align*}
\lim_{s\to0^+}
[u]^p_{W^{s,p}(\R^d)}
&= \lim_{s\to1^-} (1-s) \Big( \int_0^1 \frac{U(t) }{t^{1+s p}}\d t+ \int_1^\infty  \frac{U(t) }{t^{1+s p}}\d t\Big)
= K_{d,p}\frac{|\mathbb{S}^{d-1}|}{p}\|\nabla u\|^p_{L^p(\R^d)}.
\end{align*}
The general case $u\in BV(\R^d)$ when $p=1$ follows
analogously, see \cite{Fog23} and \cite{Dav02}. Recall that by \cite{BBM01}, see also \cite[Theorem 1.1]{Fog23}, we find that
\begin{align*}
&\liminf_{s\to1^-}
[u]^p_{W^{s,p}(\R^d)}<\infty \implies \begin{cases}
u\in W^{1,p}(\R^d), & \text{if } 1<p<\infty\\
u\in BV(\R^d),&\text{if } p=1.
\end{cases}
\end{align*}
In particular, if $u\notin W^{1,p}(\R^d)$ for $1<p<\infty$ (resp. $u\notin BV(\R^d)$ for p=1) then
\begin{align*}
\liminf_{s\to1^-}s(1-s)\iint_{\R^d \times \R^d}\hspace{-2ex} \frac{ |u(x) - u(y)|^p}{|x-y|^{d+s p}}\d y \d x=\infty,
\end{align*}
that is we have $\|\nabla u\|^p_{L^{p}(\R^d)}=\infty$ (resp. $|u|_{BV(\R^d)}=\infty$).
\end{proof}

 \begin{corollary}
For $u\in L^p(\R^d)$, $1\leq p<\infty$ we have
\begin{align*}
&\lim_{s\to1^-} \frac{C_{d,p,s}}{2} \iint_{\R^d \times \R^d} \frac{ |u(x) - u(y)|^p}{|x-y|^{d+s p}}\d y\d x=  \|\nabla u\|^p_{L^p(\R^d)}\qquad \text{for $u\in  W^{1,p}(\R^d)$},\\
&\lim_{s\to0^+} \frac{C_{d,2,\frac{sp}{2}}}{2} \iint_{\R^d \times \R^d} \frac{ |u(x) - u(y)|^p}{|x-y|^{d+s p}}\d y\d x=  \|u\|^p_{L^p(\R^d)} \qquad \text{for  $u\in \bigcup_{0 < s < 1} W^{s,p}(\R^d)$}. 
\end{align*}
\end{corollary}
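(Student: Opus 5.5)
The corollary is a rescaling of Theorem~\ref{thm:mazy'a-bbm-limit}: each limit is a product of a normalizing-constant ratio and a renormalized seminorm. Write
\begin{align*}
\frac{C_{d,p,s}}{2}\iint_{\R^d \times \R^d} \frac{ |u(x) - u(y)|^p}{|x-y|^{d+s p}}\d y\d x
= \frac{C_{d,p,s}}{2s(1-s)}\cdot s(1-s)\iint_{\R^d \times \R^d} \frac{ |u(x) - u(y)|^p}{|x-y|^{d+s p}}\d y\d x,
\end{align*}
and similarly with $C_{d,2,\frac{sp}{2}}$ in place of $C_{d,p,s}$. Theorem~\ref{thm:mazy'a-bbm-limit} gives the limit of the second factor. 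As $s\to1^-$ with $u\in W^{1,p}(\R^d)$, it tends to $K_{d,p}\frac{|\mathbb{S}^{d-1}|}{p}\|\nabla u\|^p_{L^p(\R^d)}$. As $s\to0^+$ with $u\in\bigcup_{0<s<1}W^{s,p}(\R^d)$, we use $s(1-s)=s-s^2$ and the Maz'ya--Shaposhnikova part, noting that $s^2$ times the seminorm tends to $0$; the factor then tends to $\frac{2|\mathbb{S}^{d-1}|}{p}\|u\|^p_{L^p(\R^d)}$. Both limits are finite, so the limit of the product is the product of the limits.

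It remains to compute the constants, which is the content of the fourth bullet in the introduction. We need
\begin{align*}
\lim_{s\to1^-}\frac{C_{d,p,s}}{s(1-s)}=\frac{2p}{K_{d,p}|\mathbb{S}^{d-1}|},\qquad
\lim_{s\to0^+}\frac{C_{d,2,\frac{sp}{2}}}{s(1-s)}=\frac{p}{|\mathbb{S}^{d-1}|}.
\end{align*}
Multiplying each by $\tfrac12$ and by the corresponding seminorm limit gives exactly $\|\nabla u\|^p_{L^p}$ and $\|u\|^p_{L^p}$.

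\textbf{Limit as $s\to1^-$.} Write $C_{d,p,s}/(s(1-s))=\frac{(1-2s)\pi^{\frac{1-d}{2}}\Gamma(\frac{d+sp}{2})}{(1-s)\Gamma(p(1-s))\Gamma(\frac{1+sp}{2})\cos(s\pi)}$. As $s\to1^-$, $\Gamma(p(1-s))\sim\frac{1}{p(1-s)}$, so $(1-s)\Gamma(p(1-s))\to 1/p$, while $\cos(s\pi)\to-1$ and $1-2s\to-1$. The limit is therefore $p\,\pi^{\frac{1-d}{2}}\Gamma(\frac{d+p}{2})/\Gamma(\frac{1+p}{2})$. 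Substituting $|\mathbb{S}^{d-1}|=2\pi^{d/2}/\Gamma(\frac d2)$ and the Gamma formula for $K_{d,p}$, with $\Gamma(\frac12)=\sqrt\pi$, gives
\begin{align*}
K_{d,p}|\mathbb{S}^{d-1}|=\frac{2\pi^{\frac{d-1}{2}}\Gamma(\frac{p+1}{2})}{\Gamma(\frac{d+p}{2})},
\end{align*}
which matches $2p/(K_{d,p}|\mathbb{S}^{d-1}|)$.

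\textbf{Limit as $s\to0^+$.} Use $C_{d,2,t}$ with $t=sp/2$. Here $C_{d,2,t}=\frac{t(1-2t)\pi^{\frac{1-d}{2}}\Gamma(\frac{d+2t}{2})}{\Gamma(\frac{1+2t}{2})\Gamma(2(1-t))\cos(t\pi)}$, and as $t\to0$ this behaves like $t\,\pi^{\frac{1-d}{2}}\Gamma(\frac d2)/\Gamma(\frac12)=t\cdot 2/|\mathbb{S}^{d-1}|$. Dividing by $s(1-s)\sim s$ gives $\frac{p}{2}\cdot\frac{2}{|\mathbb{S}^{d-1}|}=\frac{p}{|\mathbb{S}^{d-1}|}$, as required.

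The only step needing care is this bookkeeping of Gamma factors and the sign cancellation $(1-2s)/\cos(s\pi)\to1$ at $s=1$. Everything else is a direct product of limits.
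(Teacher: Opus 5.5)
Your proposal is correct and follows the paper's proof: write $\frac{C}{2}=\frac{C}{2s(1-s)}\cdot s(1-s)$, invoke Theorem~\ref{thm:mazy'a-bbm-limit} for the renormalized seminorm, and use the endpoint limits of the normalizing constants. The only difference is that the paper simply asserts $\lim_{s\to1^-}\frac{K_{d,p}C_{d,p,s}}{2s(1-s)}=\lim_{s\to0^+}\frac{C_{d,2,sp/2}}{s(1-s)}=\frac{p}{|\mathbb{S}^{d-1}|}$, whereas you derive these limits from $\Gamma(z)\sim 1/z$ and the Gamma-function formulas for $K_{d,p}$ and $|\mathbb{S}^{d-1}|$, and that computation is correct.
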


\begin{proof}
This is directly implied by Theorem \ref{thm:mazy'a-bbm-limit} and the fact that
\begin{align*}
\lim_{s\to 1^-} \frac{K_{d,p}C_{d,p,s} }{2s(1-s)}
=\lim_{s\to 0^+} \frac{C_{d,2,\frac{sp}{2}}}{s(1-s)}=\frac{p}{|\mathbb{S}^{d-1}|}.
\end{align*}
\end{proof}
\begin{lemma}
\label{lem:ptwise-s-1-s-asymp}
Let $\Omega \subset \R^d$ be any open set and $x\in \Omega$.
Given  $f\in L^\infty_{\mathrm{loc}}(\R^d)$  and $g\in L^\infty(\Omega)$, we  define
\begin{align*}
f_\infty(x,w)=\lim_{r\to \infty} f(x+rw)\quad \text{and}\quad
g_0(x,w)=\lim_{r\to 0^+} g(x+rw)\qquad w\in \mathbb{S}^{d-1}.
\end{align*}
Assume $f_\infty(x,w)$ and $g_0(x,w)$ exist in $\R\cup \{-\infty,\infty\}$ for a.e. $w\in \mathbb{S}^{d-1}$ then
\begin{align*}
\lim_{s \to 1^-}(1-s) \int_{\Omega} \frac{ g(y)\, \d y}{|x-y|^{d-(1-s)p}}
&= \frac{1}{p} \int_{\mathbb{S}^{d-1}} g_0(x,w)\d\sigma_{d-1}(w),\\
\lim_{s \to 0^+}s \int_{\R^d \setminus \Omega} \frac{ f(y)\, \d y}{|x-y|^{d+sp}}
&= \frac{1}{p} \int_{\mathbb{S}^{d-1}} f_\infty(x,w)\mathds{1}_{S_\Omega(x)} (w)\d\sigma_{d-1}(w)
\end{align*}
where $S_\Omega(x) =
 \{ w \in \mathbb{S}^{d-1} :  \lim_{r\to \infty} \mathds{1}_{\Omega^c}(x+rw)=1\}$ is \emph{ the escape sector at $x$}.  In particular there hold that $S_\Omega(x)= \mathbb{S}^{d-1}$ if $\Omega$ is bounded,  $S_\Omega(x)= \emptyset$ if $\Omega^c$ is bounded, while $S_{\R^d_+}(x)= \mathbb{S}^{d-1}_+$  with $\R^d_+=\{x\in \R^d\,:\, x_d>0\}$ and $\mathbb{S}^{d-1}_+= \mathbb{S}^{d-1}\cap \R^d_+$.
\end{lemma}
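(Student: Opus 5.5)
Both limits will come from Lemma~\ref{lem:asymp-averag}, after passing to polar coordinates centred at $x$. Write $y=x+tw$ with $t>0$ and $w\in\mathbb{S}^{d-1}$, so that $\d y=t^{d-1}\d t\,\d\sigma_{d-1}(w)$.

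\textbf{Limit as $s\to1^-$.} The first integral becomes
\begin{align*}
(1-s)\int_\Omega \frac{g(y)\,\d y}{|x-y|^{d-(1-s)p}}=\int_{\mathbb{S}^{d-1}}(1-s)\int_0^\infty \tilde g(t,w)\,t^{(1-s)p-1}\d t\,\d\sigma_{d-1}(w),
\end{align*}
where $\tilde g(t,w)=g(x+tw)\mathds{1}_\Omega(x+tw)$. Put $a=(1-s)p$, so that $1-s=a/p$.

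Since $\Omega$ is open, choose $r>0$ with $B(x,r)\subset\Omega$. On $(0,r)$ we then have $\tilde g(t,w)=g(x+tw)$, and this tends to $g_0(x,w)$ as $t\to0^+$ for a.e. $w$. Lemma~\ref{lem:asymp-averag}, applied with $q=\infty$ since $g\in L^\infty$, gives
\begin{align*}
\frac{a}{p}\int_0^r \tilde g(t,w)\,t^{a-1}\d t\;\longrightarrow\;\frac{g_0(x,w)}{p}\qquad\text{for a.e. } w.
\end{align*}

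The piece over $(r,\infty)$ needs a tail estimate. In the form written, $t^{a-1}$ is not integrable at infinity when $\Omega$ is unbounded. I would therefore read the statement with $\Omega$ bounded, or with $g$ integrable against $|x-y|^{-d}$ away from $x$; in either case that piece is $O(a)\to0$. If instead one truncates at radius $R$, the bound is $(a/p)\|g\|_\infty\,R^{a}/a$, which is not small, so this truncation really is needed.

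Finally, the inner integrals over $(0,r)$ are bounded by $\|g\|_\infty r^a/p$ uniformly in $w$. Dominated convergence on $\mathbb{S}^{d-1}$ then yields the first formula.

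\textbf{Limit as $s\to0^+$.} Likewise,
\begin{align*}
s\int_{\R^d\setminus\Omega}\frac{f(y)\,\d y}{|x-y|^{d+sp}}=\int_{\mathbb{S}^{d-1}} s\int_0^\infty h(t,w)\,t^{-sp-1}\d t\,\d\sigma_{d-1}(w),
\end{align*}
with $h(t,w)=f(x+tw)\mathds{1}_{\Omega^c}(x+tw)$, which vanishes for $t<r$. Set $a=sp$.

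The key observation is this: wherever $f_\infty(x,w)$ exists, $\lim_{t\to\infty}h(t,w)$ equals $f_\infty(x,w)\mathds{1}_{S_\Omega(x)}(w)$.

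\textbf{Main obstacle.} This observation only holds when $\lim_{t\to\infty}\mathds{1}_{\Omega^c}(x+tw)$ exists. I would restrict to directions where the indicator has a limit, either $1$ (these $w$ form $S_\Omega(x)$) or $0$. The oscillating directions have to be assumed null, or absorbed in the hypothesis that the relevant limit exists a.e. Granting this, Lemma~\ref{lem:asymp-averag} with $q=\infty$ gives
\begin{align*}
a\int_r^\infty h(t,w)\,t^{-a-1}\d t\to f_\infty(x,w)\mathds{1}_{S_\Omega(x)}(w),
\end{align*}
with the factor $1/p$ coming from $s=a/p$.

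To pass the limit under $\d\sigma_{d-1}$, I would assume $f$ bounded near infinity, since $L^\infty_{\mathrm{loc}}$ alone does not suffice. This gives the uniform bound $a\int_r^\infty|h|\,t^{-a-1}\d t\le \|f\|_\infty r^{-a}$, and dominated convergence finishes the second formula.

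\textbf{Special cases.} If $\Omega$ is bounded, then $x+tw\in\Omega^c$ for all large $t$, so $S_\Omega(x)=\mathbb{S}^{d-1}$. If $\Omega^c$ is bounded, then $x+tw\in\Omega$ for all large $t$, so $S_\Omega(x)=\emptyset$. For $\Omega=\R^d_+$, the last coordinate $x_d+tw_d$ tends to $-\infty$ exactly when $w_d<0$, so $S_\Omega(x)=\{w_d<0\}$. This is the hemisphere $-\mathbb{S}^{d-1}_+$, which the statement labels $\mathbb{S}^{d-1}_+$; by symmetry it has the same measure.
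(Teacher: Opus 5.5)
Your route is the paper's: polar coordinates about $x$, Lemma~\ref{lem:asymp-averag} applied ray by ray with $a=(1-s)p$ (resp.\ $a=sp$), and dominated convergence on $\mathbb{S}^{d-1}$ after discarding the part of the integral away from the relevant endpoint.

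The hypotheses you add are not a weakness of your argument. The statement as printed is false without them, and the paper's proof breaks down at exactly these points.

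\textbf{Tail for $s\to1^-$.} The paper bounds the far piece by $(1-s)\|g\|_{L^\infty}\int_{B^c_\delta(x)}|x-y|^{-d+(1-s)p}\,\d y$ and assigns it a finite value. This integral is $+\infty$; with $\Omega=\R^d$ and $g\equiv1$ the left-hand side is $+\infty$ for every $s<1$.

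\textbf{Oscillating directions.} The paper asserts $\lim_{r\to\infty}\mathds{1}_{\Omega^c}(x+rw)=\mathds{1}_{S_\Omega(x)}(w)$ for every $w$. Take $\Omega^c=\bigcup_{k\ge0}\{4^k\le|y|\le 2\cdot4^k\}$, $x=0$ and $f\equiv1$. Then $s\int_{\Omega^c}|y|^{-d-sp}\,\d y=\frac{|\mathbb{S}^{d-1}|}{p}(1+2^{-sp})^{-1}\to\frac{|\mathbb{S}^{d-1}|}{2p}$, while $S_\Omega(0)=\emptyset$. So your a.e.\ existence assumption is needed; it suffices to impose it on $\{w: f_\infty(x,w)\neq0\}$.

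\textbf{Missing majorant.} The paper uses dominated convergence with only $f\in L^\infty_{\mathrm{loc}}$. For $d\ge2$ and $\Omega$ bounded, take $f=\sum_{k\ge k_0}2^{kd}\mathds{1}_{B(y_k,1)}$ with $|y_k-x|=2^k$. The angular sizes $\asymp2^{-k(d-1)}$ are summable, so $f_\infty(x,\cdot)=0$ a.e. Yet the left-hand side tends to a positive constant. So your boundedness of $f$ near infinity, or some substitute, is needed.

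\textbf{Half-space.} Your correction $S_{\R^d_+}(x)=\{w\in\mathbb{S}^{d-1}: w_d<0\}$ is right. It has measure $|\mathbb{S}^{d-1}|/2$, so Theorem~\ref{thm:mazya-on-dom} is unaffected.

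There are two slips in your treatment of the $s\to1^-$ limit.

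\textbf{Alternative decay hypothesis.} ``$g$ integrable against $|x-y|^{-d}$ away from $x$'' does not make the tail finite. For $\Omega=\R^d$ and $g(y)=(\log|y|)^{-2}$ when $|y|\ge e$, one has $\int_{|y|>e}g(y)|y|^{-d}\,\d y<\infty$, but $\int_{|y|>e}g(y)|y|^{-d+a}\,\d y=\infty$ for every $a>0$. What you need is $\int_{\Omega\setminus B_r(x)}|g(y)|\,|x-y|^{-d+a_0}\,\d y<\infty$ for some $a_0>0$; that does make the piece $O(a)$.

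\textbf{Truncation remark.} Your aside that truncating at $R$ gives a bound that is ``not small'' is wrong. For $\Omega\subset B_R(x)$ the annulus $r<|x-y|<R$ contributes at most $\|g\|_{L^\infty}|\mathbb{S}^{d-1}|(R^a-r^a)/p\to0$, which is exactly why the bounded case works. Only the region beyond $R$ is uncontrolled when $\Omega$ is unbounded.
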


\begin{proof}
Since $x\in \Omega$ we have $\delta_x=\operatorname{dist}(x,\partial\Omega)>0$. Let $c_1= \|f\|_{L^\infty(B_\rho(x))}$ and $c_2= \|g\|_{L^\infty(\Omega)}$. For arbitrarily  large $\rho>\delta_x$ and arbitrarily small $0<\delta<\delta_x$ we have
\begin{align*}
&s \int_{ \Omega^c\cap B_\rho(x)}
\frac{|f(y)|\, \d y}{|x-y|^{d+sp}} \leq
\int_{B_\rho(x)\setminus B_{\delta_x}(x)}
\hspace{-1ex} \frac{s c_1\d y}{|x-y|^{d+sp}}
= c_1\frac{|\mathbb{S}^{d-1}|}{p} (\delta_x^{-sp}
-\rho^{-sp})\xrightarrow{s\to 0^+}0,\\
&(1-s) \int_{\Omega\cap B^c_{\delta} (x)}
\frac{ |g(y)|\d y}{|x-y|^{d-(1-s)p}} \leq  \int_{B^c_\delta(x)} \frac{c_2(1-s)\d y}{|x-y|^{d-(1-s)p}}
= c_2\frac{  (1-s) |\mathbb{S}^{d-1}|}{sp \delta^{(s-1)p}}
\xrightarrow{s\to 1^-}0. 
\end{align*}
Using  the previous remark, Lemma \ref{lem:asymp-averag}  and dominated convergence we get
\begin{align*}
\lim_{s \to 0^+}s \int_{\R^d \setminus \Omega} \frac{ f(y)\, \d y}{|x-y|^{d+sp}}
&=\lim_{s \to 0^+}s \int_{B^c_\rho(x)} \frac{ f(y)\mathds{1}_{\Omega^c}(y)\, \d y}{|x-y|^{d+sp}} \\
&=  \int_{\mathbb{S}^{d-1}}
 \lim_{s\to 0^+}s\int_\rho^\infty
f(x + rw)\mathbf{1}_{\Omega^c}(x + rw)
r^{-sp-1}\d r \, \d\sigma_{d-1}(w)\\
&= \frac{1}{p} \int_{\mathbb{S}^{d-1}}\lim_{r\to \infty} f(x+rw) \mathds{1}_{\Omega^c}(x+rw)\,  \d\sigma_{d-1}(w)\\
&=\frac{1}{p} \int_{\mathbb{S}^{d-1}}
f_\infty(x, w) \mathds{1}_{S_\Omega(x)}(w)\,  \d\sigma_{d-1}(w),
\end{align*}
where we note that, for each $w \in \mathbb{S}^{d-1}$,
\begin{align*}
\lim_{r\to \infty} \mathds{1}_{\Omega^c}(x+rw)&= \mathds{1}_{S_\Omega(x)} (w)=
\begin{cases}
1 & \text{if } x + rw \in \Omega^c \text{ for all sufficiently large } r, \\
0 & \text{otherwise}.
\end{cases}
\end{align*}
Likewise, since $B_\delta(x)\subset B_{\delta_x}(x)\subset \Omega$,  dominated convergence and  Lemma \ref{lem:asymp-averag} yield 
\begin{align*}
\lim_{s\to 1^-}(1-s) \int_{\Omega} \frac{ g(y)\d y}{|x-y|^{d-(1-s)p}}
&= \lim_{a \to 0^+}\frac{a}{p} \int_{B_{\delta} (x)} \frac{ g(y)\d y}{|x-y|^{d-a}} \\
&= 
\int_{\mathbb{S}^{d-1}}  \lim_{a \to 0^+}\frac{a}{p} \int_0^\delta  g(x+r w)   r^{a-1}\d r \d \sigma_{d-1}(w) \\
&=  \frac{1}{p} \int_{\mathbb{S}^{d-1}} g_0(x,w)\d\sigma_{d-1}(w).
\end{align*}
Finally, if $\Omega$ is bounded then  we have $ \Omega\subset B_R(x)$  for $R>0$ large enough, so that $x+rw\in B_R^c(x)\subset \Omega^c$ for all $w\in \mathbb{S}^{d-1}$ and $r>R.$ Hence one readily gets that  $S_\Omega(x) =\mathbb{S}^{d-1}$. Likewise if $\Omega^c$ is bounded  say $\Omega^c \subset B_R(x)$ for some $R>0$, then for all $r>R$ and $w\in \mathbb{S}^{d-1}$ we have $x+rw\in \Omega\subset B_R^c(x)$, that is, $\mathds{1}_{\Omega^c}(x+rw)=0$  and hence $S_\Omega(x)= \emptyset$. Analogously it is not difficult to verify that $S_{\R_+}(x)=\mathbb{S}^{d-1}_+$.
\end{proof}

\begin{theorem}\label{thm:mazya-on-dom}
Let $\Omega\subset \R^d$ be any open set and  $u\in L^p(\Omega)$, $1\leq p<\infty$.  For $x\in\Omega$ we denote   $\delta_x= \dist(x,\partial \Omega)$,   $S_\Omega(x) =
 \{ w \in \mathbb{S}^{d-1} :  \lim\limits_{r\to \infty} \mathds{1}_{\Omega^c}(x+rw)=1\}$ and
\begin{align*}
 |S_\Omega(x)|:=\sigma_{d-1}(S_\Omega(x))=\int_{ \mathbb{S}^{d-1}} \mathds{1}_{S_\Omega(x)}(w)\d \sigma_{d-1}(w).
\end{align*}
The following assertions are true.
\begin{enumerate}
\item\label{item:mazya-dom0} For  $u\in \bigcup_{0 < s < 1} L^p(\Omega, \delta_x^{-sp})$ we have
\begin{align*}
\lim_{s\to0^+}
s \int_{\Omega} |u(x)|^p
\int_{\R^d \setminus \Omega}
\frac{\d y }{|x-y|^{d+sp}}  \d x
=\frac{1}{p}\int_{\Omega}
|u(x)|^p\,  |S_\Omega(x)| \d x.
\end{align*}
\item  \label{item:mazya-dom} For $u\in \bigcup_{0 < s < 1} W^{s,p}(\Omega)\cap L^p(\Omega, \delta_x^{-sp})$ we have
\begin{align*}
\lim_{s \to 0^+} s(1-s) \iint_{\Omega \times \Omega}
\frac{|u(x) - u(y)|^p}{|x-y|^{d+sp}} \d y \d x=
\frac{2}{p} \int_\Omega
\big(|\mathbb{S}^{d-1}| - |S_\Omega(x)|\big)|u(x)|^p \d x.
\end{align*}
\item \label{item:mazya-dom1} Assume that either $\Omega$ has a compact Lipschitz boundary, or that $\Omega=\{x= (x',x_d): x_d>\zeta(x')\}$ is epigraph of a Lipschitz function $\zeta:\R^{d-1}\to \R$. Then for $u\in \bigcup_{0 < s < 1} W^{s,p}(\Omega)$ we have
\begin{align*}
\lim_{s \to 0^+} s(1-s) \iint_{\Omega \times \Omega}
\frac{|u(x) - u(y)|^p}{|x-y|^{d+sp}} \d y \d x=
\frac{2}{p} \int_\Omega
\big(|\mathbb{S}^{d-1}| - |S_\Omega(x)|\big) |u(x)|^p \d x.
\end{align*}
\item\label{item:mazya-dom2} Under the conditions of item \eqref{item:mazya-dom} or  item  \eqref{item:mazya-dom1}, the following hold
\begin{align*}
&\lim_{s\to0^+}
s(1-s) \iint_{\Omega \times\Omega}
\frac{|u(x)-u(y)|^p}{|x-y|^{d+sp}}  \d x =0
\qquad\qquad\qquad \text{if $\Omega$ is bounded},\\
&\lim_{s\to0^+}
s(1-s) \iint_{\Omega \times\Omega}
\frac{|u(x)-u(y)|^p}{|x-y|^{d+sp}}  \d x =  \frac{2|\mathbb{S}^{d-1}|}{p}
\|u\|^p_{L^p(\Omega)}
\qquad\text{if $\Omega^c$ is bounded},\\
&\lim_{s\to0^+}
s(1-s)\iint_{\R^d_+\times \R^d_+}
\frac{|u(x)-u(y)|^p}{|x-y|^{d+sp}}  \d x =  \frac{|\mathbb{S}^{d-1}|}{p}\|u\|^p_{L^p(\R^d_+)}\qquad\text{if $\Omega=\R^d_+$}.
\end{align*}
\end{enumerate}
\end{theorem}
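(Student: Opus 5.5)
The plan is to split the regional double integral into a part that the full-space Maz'ya--Shaposhnikova asymptotics can handle and a boundary-interaction remainder. Extend $u$ by zero outside $\Omega$ and write
\begin{align*}
\iint_{\Omega\times\Omega}\frac{|u(x)-u(y)|^p}{|x-y|^{d+sp}}\d y\d x
= \iint_{\Omega\times\Omega}\frac{|u(x)-u(y)|^p}{|x-y|^{d+sp}}\mathds{1}_{|x-y|<1}\d y\d x
+ \iint_{\Omega\times\Omega}\frac{|u(x)-u(y)|^p}{|x-y|^{d+sp}}\mathds{1}_{|x-y|\geq 1}\d y\d x .
\end{align*}
The short-range part is bounded by $r^{(s_0-s)p}|u|^p_{W^{s_0,p}(\Omega)}$ for $s<s_0$, where $u\in W^{s_0,p}(\Omega)$. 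This is exactly the argument in Theorem~\ref{thm:asymp-larg-short}(2), so after multiplication by $s$ it tends to $0$.

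For the long-range part, expand $|u(x)-u(y)|^p$ around the diagonal terms. I will use the elementary inequality $\big||a-b|^p-|a|^p-|b|^p\big|\le C_p(|a|^{p-1}|b|+|a||b|^{p-1})$ and, for $p=1$, treat the inequality directly. The pure terms give
\begin{align*}
2s\int_\Omega |u(x)|^p\int_{\Omega\cap B^c_1(x)}\frac{\d y}{|x-y|^{d+sp}}\d x
=2s\int_\Omega|u(x)|^p\Big(\frac{|\mathbb{S}^{d-1}|}{sp}-\int_{\Omega^c\cap B_1^c(x)}\frac{\d y}{|x-y|^{d+sp}}\Big)\d x .
\end{align*}
By Lemma~\ref{lem:ptwise-s-1-s-asymp} with $f\equiv 1$, the pointwise limit of $s\int_{\Omega^c}|x-y|^{-d-sp}\d y$ is $|S_\Omega(x)|/p$. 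This yields item~(1) and the main term in item~(2). Dominated convergence is justified because $s\int_{\Omega^c}|x-y|^{-d-sp}\d y\le \frac{|\mathbb{S}^{d-1}|}{p}\delta_x^{-sp}$, and $|u|^p\delta_x^{-s_1p}\in L^1$ dominates $|u|^p\delta_x^{-sp}$ for $s<s_1$ on $\{\delta_x<1\}$, while on $\{\delta_x\ge1\}$ the bound is at most $|u|^p$. The cross terms $s\iint_{|x-y|\ge1}|u(x)|^{p-1}|u(y)|\,|x-y|^{-d-sp}$ must vanish. I would cut at $|x-y|=R$. On $1\le|x-y|\le R$ the kernel is bounded, so that piece is $O(s)$ by Young's inequality. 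On $|x-y|>R$, approximate $u$ by $u_n$ with bounded support; for $R$ large the cross term for $u_n$ vanishes, and the error is controlled via Hölder by $\frac{2|\mathbb{S}^{d-1}|}{p}\|u\|^{p-1}\|u-u_n\|$, uniformly in $s$. This mirrors the proof of Theorem~\ref{thm:lp-first-order-diff}(4).

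Item~(3) reduces to item~(2) once we show that for such $\Omega$ every $u\in W^{s_0,p}(\Omega)$ lies in $L^p(\Omega,\delta_x^{-sp})$ for some small $s>0$. This is a fractional Hardy inequality for small orders. Since $sp<1$, the inequality $\int_\Omega |u|^p\delta_x^{-sp}\le C\|u\|^p_{W^{s',p}(\Omega)}$ with $s<s'<1/p$ holds for Lipschitz domains, either from known results or by using the robust extension of Appendix~\ref{sec:robust-exten} and the Hardy inequality in $\R^d$ for functions in $W^{s',p}(\R^d)$ with $s'p<1$.

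Item~(4) then follows by inserting $|S_\Omega(x)|=|\mathbb{S}^{d-1}|$, $0$, and $|\mathbb{S}^{d-1}|/2$ respectively, using the final statements of Lemma~\ref{lem:ptwise-s-1-s-asymp}. For bounded $\Omega$ note that $\sup_x\delta_x<\infty$.

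I expect the main obstacle to be the control of the cross terms uniformly in $s$ together with the Hardy step in item~(3). The Hardy step is where Lipschitz regularity and the condition $sp<1$ are genuinely used.
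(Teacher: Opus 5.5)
Your proposal is correct, modulo a caveat at the end that applies equally to the paper. Items (1), (3) and (4) follow the paper's proof. Item (2) takes a genuinely different route.

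\textbf{How the paper proves (2).} The paper never looks at the regional long-range part. It uses the weight $u\in L^p(\Omega,\delta_x^{-s_0p})$ only to get the zero extension $\widetilde u\in W^{s_0,p}(\R^d)$. It then applies the full-space Maz'ya--Shaposhnikova limit to $\widetilde u$ and subtracts the boundary term $2s\int_\Omega|u(x)|^p\int_{\R^d\setminus\Omega}|x-y|^{-d-sp}\d y\d x$, whose limit is item (1). This is short, because the vanishing of the cross terms comes for free from the full-space theorem. But it intrinsically needs the weight, since otherwise that boundary term is infinite.

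\textbf{How you prove (2).} You cut $\Omega\times\Omega$ at $|x-y|=1$ and redo the Maz'ya--Shaposhnikova expansion inside $\Omega$, so you must control the cross terms yourself. Your density argument works, with two adjustments. First, replace $u$ by $u_n$ in both factors of $|u(x)|^{p-1}|u(y)|$; this adds an error of order $\|u-u_n\|_{L^p}^{p-1}\|u_n\|_{L^p}$ to the one you state. Second, for $p=1$ use the bound $\bigl||a-b|-|a|-|b|\bigr|\le 2\min(|a|,|b|)$.

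\textbf{What your route buys.} It gives more than you use. The boundary enters only through $s\int_{\Omega^c\cap B_1^c(x)}|x-y|^{-d-sp}\d y\le|\mathbb{S}^{d-1}|/p$, which is bounded uniformly in $x$ and $s$. So dominated convergence for the main term of (2) needs only $u\in L^p(\Omega)$, and your appeal to $\delta_x^{-sp}$ there is unnecessary. On your route, the weight is genuinely needed only for item (1). Item (2) then holds for every $u\in\bigcup_{0<s<1}W^{s,p}(\Omega)$, and the fractional Hardy step in (3) can be dropped.

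\emph{Caveat, shared with the paper rather than specific to you.} Lemma~\ref{lem:ptwise-s-1-s-asymp}, which you both use for the pointwise limit, tacitly assumes that $\lim_{r\to\infty}\mathds{1}_{\Omega^c}(x+rw)$ exists for a.e.\ $w$. In general, $s\int_{\Omega^c\cap B_\rho^c(x)}|x-y|^{-d-sp}\d y$ tends (when the limits exist) to $\frac1p$ times the spherical integral of the Abel-type logarithmic density of $\{r:\ x+rw\in\Omega^c\}$.

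\begin{itemize}
\item For $\Omega=\bigcup_{k\ge0}\{2^{2k}<|x|<2^{2k+1}\}$ and $u\in C_c^\infty(\{1<|x|<2\})$, one has $S_\Omega(x)=\emptyset$. Yet the limit in (1) equals $\frac{|\mathbb{S}^{d-1}|}{2p}\|u\|^p_{L^p(\Omega)}$, not $0$, and (2) fails correspondingly.
\item The same happens for Lipschitz epigraphs whose graph oscillates in angle, e.g.\ $\zeta(t)=|t|\sin\log(1+|t|)$.
\end{itemize}

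So ``any open set'' (and the epigraph case of (3)) needs this extra hypothesis. The compact-boundary and half-space cases are unaffected.
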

\begin{proof}
$\eqref{item:mazya-dom0}$ Let $u\in L^p(\Omega, \delta_x^{-s_0p})$ for some $0<s_0<1$. Lemma \ref{lem:ptwise-s-1-s-asymp} implies 
\begin{align*}
 s \int_{\R^d \setminus \Omega} \frac{\d y}{|x-y|^{d+sp}} \xrightarrow{s\to0^+} \frac{1}{p}|S_\Omega(x)|\qquad x\in \Omega.
\end{align*}
Moreover,  for all $0<s\leq s_0$ we have
\begin{align*}
s \int_{\R^d \setminus \Omega} \frac{\d y}{|x-y|^{d+sp}}\leq s \int_{\R^d \setminus B_{\delta_x}(x)} \frac{\d y}{|x-y|^{d+sp}}= \frac{|\mathbb{S}^{d-1}|}{p}\delta_x^{-sp}
\leq \frac{|\mathbb{S}^{d-1}|}{p}(1+\delta_x^{-s_0p}).
\end{align*}
Since $|u(x)|^p(1+\delta_x^{-s_0p})\in L^1(\Omega)$, the dominated convergence theorem yields
\begin{align*}
\lim_{s\to0^+}
s \int_{\Omega} |u(x)|^p
\int_{\R^d \setminus \Omega}
\frac{\d y }{|x-y|^{d+sp}}  \d x=\frac{|\mathbb{S}^{d-1}|}{p}
 \int_{\Omega}
|u(x)|^p\,  |S_\Omega(x)|  \d x.
\end{align*}
$\eqref{item:mazya-dom}$ Assume $u\in W^{s_0,p}(\Omega)\cap L^p(\Omega, \delta_x^{-s_0p})$ for some $0<s_0<1$. It appears that $\widetilde{u}\in W^{s_0,p}(\R^d)$ where $\widetilde{u}$ is the  zero-extension of $u$, i.e., $\widetilde{u}(x)= u(x)$ for $x\in \Omega$ and $\widetilde{u}(x)=0$ for $x\in \Omega^c$.  Indeed, we have
\begin{align*}
\iint_{\R^d \times \R^d}
\frac{|\widetilde{u}(x) - \widetilde{u}(y)|^p}{|x-y|^{d+s_0p}} \d y \d x
&\leq \iint_{\Omega \times \Omega}
\frac{|u(x) - u(y)|^p}{|x-y|^{d+s_0p}} \d y \d x
+ \frac{2 |\mathbb{S}^{d-1}|}{s_0p}\int_{\Omega} \frac{|u(x)|^p}{\delta_x^{s_0p}}\d x.
\end{align*}
Therefore, since $\widetilde{u}\in W^{s_0,p}(\R^d)$, Theorem  \ref{thm:mazy'a-bbm-limit} implies
\begin{align*}
\lim_{s \to 0^+} s [\widetilde{u}]^p_{W^{s,p}(\mathbb{R}^d)}
= \frac{2}{p}|\mathbb{S}^{d-1}|
\int_{\R^d} |\widetilde{u}(x)|^p \d x=  \frac{2}{p}|\mathbb{S}^{d-1}|
\int_{\Omega} |u(x)|^p \d x.
\end{align*}
This in conjunction with the foregoing gives
\begin{align*}
\lim_{s\to 0^+} [u]^p_{W^{s,p}(\Omega)}
&= \lim_{s\to 0^+} [\widetilde{u}]^p_{W^{s,p}(\R^d)}-2\lim_{s\to 0^+}s \int_{\Omega} |u(x)|^p
\Big( \int_{\R^d \setminus \Omega}
\frac{\d y}{|x-y|^{d+sp}}  \Big) \d x\\
&= \frac{2}{p}\int_\Omega  (|\mathbb{S}^{d-1}|- |S_\Omega(x)| )|u(x)|^p\d x.
\end{align*}
$\eqref{item:mazya-dom1}$ If $\partial\Omega$ is compact Lipschitz or $\Omega=\{x= (x',x_d): x_d>\zeta(x')\}$, then
the fractional Hardy inequality \cite{Dyd04, AJR26}   entails the continuous embedding  $W^{s,p}(\Omega)\hookrightarrow L^p(\Omega, \delta_x^{-sp})$  for $0<s<\frac{1}{p}$. The desired results follow from the previous assertion.
\medskip

\eqref{item:mazya-dom2} This follows from Lemma \ref{lem:ptwise-s-1-s-asymp} since for all $x\in \R^d_+$ we have  $S_{\R^d_+}(x)=\mathbb{S}^{d-1}_+$, whereas for all $x\in \Omega$ we have  $S_{\Omega}(x)=\mathbb{S}^{d-1}$ if $\Omega$ is bounded and $S_{\Omega}(x)=\emptyset$ if $\Omega^c$ is bounded.
\end{proof}

\begin{corollary}
 Let $\Omega\subset \R^d$ be any open bounded set. Let $u\in \bigcup_{0 < s < 1} W^{s,p}(\R^d)$, $1\leq p<\infty$. Then we have
\begin{align*}
&\lim_{s\to 0^+}
s(1-s) \iint_{\Omega \times \Omega}
\frac{|u(x) - u(y)|^p}{|x-y|^{d+sp}} \d y \d x=0,\\
&\lim_{s\to 0^+}
s(1-s) \iint_{\R^d_- \times \R^d_+}
\frac{|u(x) - u(y)|^p}{|x-y|^{d+sp}} \d y \d x=\frac{|\mathbb{S}^{d-1}|}{2p}\|u\|^p_{L^p(\R^d)}.
\end{align*}
If in addition $\Omega$ is bounded Lipschitz then we have
\begin{align*}
&\lim_{s\to 0^+}
s(1-s) \iint_{\Omega^c \times \Omega^c}
\frac{|u(x) - u(y)|^p}{|x-y|^{d+sp}} \d y \d x=
\frac{2|\mathbb{S}^{d-1}|}{p}\|u\|^p_{L^p(\Omega^c)},\\
&\lim_{s\to 0^+}
s(1-s) \iint_{\Omega^c \times \Omega}
\frac{|u(x) - u(y)|^p}{|x-y|^{d+sp}} \d y \d x=\frac{|\mathbb{S}^{d-1}|}{p}\|u\|^p_{L^p(\Omega)}.
\end{align*}
\end{corollary}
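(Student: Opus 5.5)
The four limits all follow from Theorem~\ref{thm:mazya-on-dom}, applied to suitable domains and combined through set decompositions of $\R^d\times\R^d$. In each case we first check that $u$, or its restriction, lies in the admissible class of that theorem. Fix $u\in W^{s_0,p}(\R^d)$ for some $s_0\in(0,1)$. Then $u|_\Omega\in W^{s_0,p}(\Omega)$ for every open $\Omega$.

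\emph{First limit.} Since $\Omega$ is a bounded open set but not assumed Lipschitz, we avoid the weighted hypothesis entirely. For $s<s_0$ and $\rho=\operatorname{diam}\Omega$ we bound
\begin{align*}
s(1-s)\iint_{\Omega\times\Omega}\frac{|u(x)-u(y)|^p}{|x-y|^{d+sp}}\d y\d x\le s\rho^{(s_0-s)p}|u|^p_{W^{s_0,p}(\R^d)},
\end{align*}
using $|x-y|^{-d-sp}\le \rho^{(s_0-s)p}|x-y|^{-d-s_0p}$ for $|x-y|\le\rho$. The right-hand side tends to $0$ as $s\to0^+$, exactly as in part (2) of Theorem~\ref{thm:asymp-larg-short}.

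\emph{Second limit.} Write $\R^d\times\R^d=(\R^d_+\times\R^d_+)\cup(\R^d_-\times\R^d_-)\cup(\R^d_-\times\R^d_+)\cup(\R^d_+\times\R^d_-)$, up to a null set. By symmetry the two cross terms are equal. By Theorem~\ref{thm:mazy'a-bbm-limit}, the full double integral times $s(1-s)$ tends to $\frac{2|\mathbb{S}^{d-1}|}{p}\|u\|^p_{L^p(\R^d)}$. By Theorem~\ref{thm:mazya-on-dom}\eqref{item:mazya-dom2}, applied to the epigraphs $\R^d_\pm$ (this is allowed by item \eqref{item:mazya-dom1} with $\zeta=0$ and a reflection), the two diagonal terms tend to $\frac{|\mathbb{S}^{d-1}|}{p}\|u\|^p_{L^p(\R^d_\pm)}$. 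Subtracting, each cross term tends to $\frac12\cdot\frac{|\mathbb{S}^{d-1}|}{p}\|u\|^p_{L^p(\R^d)}$. This is $\frac{|\mathbb{S}^{d-1}|}{2p}\|u\|^p_{L^p(\R^d)}$, as claimed.

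\emph{Last two limits.} Now assume $\Omega$ is bounded Lipschitz. Then $\overline\Omega^c$ has compact Lipschitz boundary, and $\partial\Omega$ is Lebesgue null, so integrals over $\Omega^c$ and $\overline\Omega^c$ coincide. Item \eqref{item:mazya-dom1} together with the ``$\Omega^c$ bounded'' case of item \eqref{item:mazya-dom2} then gives the third limit, with value $\frac{2|\mathbb{S}^{d-1}|}{p}\|u\|^p_{L^p(\Omega^c)}$. For the fourth limit, decompose the full integral into the $\Omega\times\Omega$ part, the $\Omega^c\times\Omega^c$ part and twice the cross part. The $\Omega\times\Omega$ part tends to $0$ by the first limit, and the $\Omega^c\times\Omega^c$ part is handled by the third limit. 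Subtracting from the total $\frac{2|\mathbb{S}^{d-1}|}{p}\|u\|^p_{L^p(\R^d)}$ gives
\begin{align*}
\frac{2|\mathbb{S}^{d-1}|}{p}\big(\|u\|^p_{L^p(\R^d)}-\|u\|^p_{L^p(\Omega^c)}\big)=\frac{2|\mathbb{S}^{d-1}|}{p}\|u\|^p_{L^p(\Omega)}.
\end{align*}
Halving yields the cross term $\frac{|\mathbb{S}^{d-1}|}{p}\|u\|^p_{L^p(\Omega)}$.

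The only delicate point is that all limits involved must exist and be finite before we subtract them. This holds because each was identified individually by an earlier result, together with the finiteness of $|u|_{W^{s,p}(\R^d)}$ for $s<s_0$. We also need that the Lipschitz-domain hypotheses of item \eqref{item:mazya-dom1} genuinely apply to $\R^d_\pm$ and to $\overline\Omega^c$.
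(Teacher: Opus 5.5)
Your proof is correct. For the second, third and fourth limits it follows the paper's proof: you subtract the half-space and exterior-domain limits of Theorem~\ref{thm:mazya-on-dom} (applied to $\R^d_\pm$ and to $\overline{\Omega}^c$, using $|\partial\Omega|=0$) from the full-space Maz'ya--Shaposhnikova limit, then halve the symmetric cross terms. The first limit is where you differ: the paper compares with a ball $B_R(0)\supset\Omega$ and applies Theorem~\ref{thm:mazya-on-dom} to that Lipschitz domain, whereas your diameter bound $s\,\rho^{(s_0-s)p}|u|^p_{W^{s_0,p}(\R^d)}$ is more elementary, avoids the fractional Hardy inequality, and gives an $O(s)$ rate.
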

\begin{proof}
 Since $B_R(0)$ is Lipschitz and $\Omega\subset B_R(0)$ for some $R>0$, Theorem \ref{thm:mazya-on-dom} implies
\begin{align*}
&\limsup_{s\to 0^+}
s \iint_{\Omega \times \Omega}
\frac{|u(x) - u(y)|^p}{|x-y|^{d+sp}} \d y \d x\leq \limsup_{s\to 0^+}
s \iint_{B_R(0) \times B_R(0)}
\frac{|u(x) - u(y)|^p}{|x-y|^{d+sp}} \d y \d x=0.
\end{align*}
Next, combining Theorem \ref{thm:mazy'a-bbm-limit} and Theorem \ref{thm:mazya-on-dom} we find that
\begin{align*}
\lim_{s\to 0^+}
s\iint_{\R^d_- \times \R^d_+}
\hspace{-2ex}\frac{|u(x) - u(y)|^p}{|x-y|^{d+sp}} \d y \d x
&=\lim_{s\to 0^+}
\frac{s}{2}\Big(\iint_{\R^d \times \R^d}- \iint_{\R^d_+ \times \R^d_+}- \iint_{\R^d_- \times \R^d_-}\Big)[\cdots]\\
&=\frac{|\mathbb{S}^{d-1}|}{2p}\Big( 2\|u\|^p_{L^p(\R^d)}-
\|u\|^p_{L^p(\R^d_+)}-\|u\|^p_{L^p(\R^d_-)}\Big)\\
&=\frac{|\mathbb{S}^{d-1}|}{2p}\|u\|^p_{L^p(\R^d)}.
\end{align*}
Analogously if $\Omega$ is bounded Lipschitz so that $|\partial \Omega|=0$, then Theorem \ref{thm:mazya-on-dom} implies
\begin{align*}
&\lim_{s\to 0^+}
s(1-s) \iint_{\Omega^c \times \Omega^c}
\frac{|u(x) - u(y)|^p}{|x-y|^{d+sp}} \d y \d x=
\frac{2|\mathbb{S}^{d-1}|}{p}\|u\|^p_{L^p(\Omega^c)},
\end{align*}
while, splitting the integral, we deduce from the foregoing  that
\begin{align*}
\lim_{s\to 0^+}
s\iint_{\Omega^c \times \Omega}
\hspace{-2ex} \frac{|u(x) - u(y)|^p}{|x-y|^{d+sp}} &\d y \d x
= \lim_{s\to 0^+}
\frac{s}{2}\Big(\iint_{\R^d \times \R^d} \hspace{-2ex}- \iint_{\Omega \times \Omega}- \iint_{\Omega^c \times \Omega^c}\Big)[\cdots]\\
&=\frac{|\mathbb{S}^{d-1}|}{2p}
\Big( \|u\|^p_{L^p(\R^d)}-\|u\|^p_{L^p(\Omega^c)}\Big)=\frac{|\mathbb{S}^{d-1}|}{p}\|u\|^p_{L^p(\Omega)}.
\end{align*}
\end{proof}
For completeness, we also include the asymptotic behavior as $s \to 1^-$, which can be recovered from the results established in \cite{BBM01,Dav02,Fog23,Fog25}.
\begin{theorem}\label{thm:xBBM-frac}
Assume  $\Omega\subset \R^d$ is any $BV_p$-extension domain. Then there is $C= C(d,p,\Omega)>0$ such that for $u\in L^p(\Omega)$ we have
\begin{align*}
\lim_{s\to 1^-}\frac{C_{d,p,s}}{2}\iint_{ \Omega \times \Omega}\frac{|u(x) - u(y)|^p}{|x-y|^{d+sp}} \d y \d x&= |u|^p_{BV_p(\Omega)}\\
\sup_{s\in (0,1)} \frac{\widetilde{C}_{d,p,s}}{2}\iint_{ \Omega \times \Omega}\frac{|u(x) - u(y)|^p}{|x-y|^{d+sp}} \d y \d x &\leq C \|u\|^p_{BV_p(\Omega)}.
\end{align*}
\end{theorem}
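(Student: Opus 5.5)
The plan is to reduce both assertions of Theorem~\ref{thm:xBBM-frac} to the whole-space statements. The tool is a $BV_p$-extension operator $E:BV_p(\Omega)\to BV_p(\R^d)$; by definition of a $BV_p$-extension domain it is bounded, $\|Eu\|_{BV_p(\R^d)}\le C_E\|u\|_{BV_p(\Omega)}$. We may assume $u\in BV_p(\Omega)$, since otherwise the first assertion reduces to a lower-semicontinuity statement that we treat at the end.

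\textbf{Step 1 (uniform bound).} For $v\in BV_p(\R^d)$ we use the short/long range splitting at $r=1$. The long range part satisfies $\int_1^\infty U(t)t^{-1-sp}\d t\le 2^p|\mathbb{S}^{d-1}|\|v\|^p_{L^p}/(sp)$, using $U\le 2^p|\mathbb{S}^{d-1}|\|v\|^p_{L^p}$ from Theorem~\ref{thm:lp-first-order-diff}. The short range part uses $U(t)\le |\mathbb{S}^{d-1}|K_{d,p}\|\nabla v\|^p_{L^p} t^p$, again from Theorem~\ref{thm:lp-first-order-diff}, extended to $BV$ when $p=1$ by density and lower semicontinuity. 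This gives $\int_0^1U(t)t^{-1-sp}\d t\le C\|\nabla v\|^p/((1-s)p)$. Hence
\[
s(1-s)|v|^p_{W^{s,p}(\R^d)}\le C(d,p)\|v\|^p_{BV_p(\R^d)}.
\]
Apply this to $v=Eu$, and note that the seminorm over $\Omega\times\Omega$ is dominated by the one over $\R^d\times\R^d$. Since $\widetilde C_{d,p,s}\asymp s(1-s)$ uniformly on $(0,1)$, by the stated asymptotics at both endpoints and continuity in between, the supremum bound follows.

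\textbf{Step 2 (the limit).} We localize. Fix $\delta>0$ and split $\Omega\times\Omega$ into the pairs with $|x-y|\ge\delta$ and those with $|x-y|<\delta$.
\begin{itemize}
\item For $|x-y|\ge\delta$, the contribution is at most $(1-s)\,C\delta^{-sp}\|u\|^p_{L^p}/s$, which tends to $0$.
\item For $|x-y|<\delta$, the upper bound comes from the whole-space BBM limit of Theorem~\ref{thm:mazy'a-bbm-limit} applied to $Eu$ restricted to $\Omega_\delta\times\R^d$, where $\Omega_\delta$ is the $\delta$-neighbourhood of $\Omega$. Concretely, we run the proof of Theorem~\ref{thm:asymp-larg-short}(3) with $\int_{\Omega_\delta}$ in place of $\int_{\R^d}$ in $U$. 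This yields
\[
\limsup\le \frac{K_{d,p}|\mathbb{S}^{d-1}|}{p}\|\nabla Eu\|^p_{L^p(\Omega_\delta)}.
\]
Letting $\delta\to0$, this tends to $\frac{K_{d,p}|\mathbb{S}^{d-1}|}{p}|u|^p_{BV_p(\Omega)}$ provided $|\nabla Eu|(\partial\Omega)=0$. This holds for $p>1$ automatically. For $p=1$ it needs an extension with no boundary charge, which is standard for Lipschitz/$BV$ extension domains; otherwise we cite \cite{Fog23}.
\end{itemize}

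\textbf{Step 3 (lower bound).} Take compact $K\subset\Omega$ and $\delta<\dist(K,\partial\Omega)$. Then
\[
\iint_{\Omega\times\Omega}\ge\iint_{K\times B_\delta(x)}.
\]
For smooth $u$ the Taylor argument of Theorem~\ref{thm:lp-first-order-diff}(5) gives the liminf $\ge \frac{K_{d,p}|\mathbb{S}^{d-1}|}{p}\|\nabla u\|^p_{L^p(K)}$. General $u$ are handled by mollification together with Jensen, which shows the localized seminorm does not increase under mollification. Exhausting $\Omega$ by $K$ gives the matching bound. The normalisation $\frac{C_{d,p,s}}{2}\cdot\frac{K_{d,p}|\mathbb{S}^{d-1}|}{p}\cdot\frac{1}{s(1-s)}\to1$ (Corollary above) converts this to $|u|^p_{BV_p(\Omega)}$.

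The same lower-bound argument also gives the limit $\infty$ when $u\notin BV_p(\Omega)$. In that case the BBM characterization of \cite{BBM01,Fog23} applies on each $K$.

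The main obstacle is the boundary layer in Step 2: controlling the contribution of the extension near $\partial\Omega$, and in particular the $p=1$ issue of $|DEu|(\partial\Omega)$. I would first try choosing $E$ such that $|D Eu|(\partial\Omega)=0$; failing that, I would defer to \cite{Fog23,Dav02}.
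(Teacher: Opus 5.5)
The paper gives no proof of this theorem; it only says it ``can be recovered'' from \cite{BBM01,Dav02,Fog23,Fog25}. Your scheme is the standard route behind those references: extension for the two upper bounds, and interior localization plus mollification for the lower bound. For $1<p<\infty$ it goes through, with one correction. In Step~2 the passage from $\|\nabla Eu\|_{L^p(\Omega_\delta)}$ to $\|\nabla u\|_{L^p(\Omega)}$ is not automatic. It needs $|\partial\Omega|=0$, which for $W^{1,p}$-extension domains is a nontrivial consequence of the measure density property (Haj{\l}asz--Koskela--Tuominen). The detour through $\Omega_\delta$ is unnecessary. Keep $x\in\Omega$ and set $U_\Omega(t)=\int_{\mathbb{S}^{d-1}}\int_\Omega|Eu(x+t\omega)-Eu(x)|^p\d x\d\sigma_{d-1}(\omega)$; then the pairs with $|x-y|<\delta$ contribute at most $\int_0^\delta U_\Omega(t)t^{-1-sp}\d t$. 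Since $t^{-1}(Eu(\cdot+t\omega)-Eu)\to\nabla Eu\cdot\omega$ in $L^p(\R^d)$ uniformly in $\omega$, $U_\Omega(t)/t^p$ is bounded and tends to $K_{d,p}|\mathbb{S}^{d-1}|\|\nabla u\|^p_{L^p(\Omega)}$. Lemma~\ref{lem:asymp-averag} then gives the sharp upper bound directly, so for $p>1$ there is no boundary layer at all.

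The genuine gap is at $p=1$, and it cannot be closed as the statement stands.

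(i) Step~1 relies on $\widetilde{C}_{d,p,s}\asymp s(1-s)$ on $(0,1)$. The $s\to1^-$ asymptotics you invoke concern $C_{d,p,s}$, which agrees with $\widetilde{C}_{d,p,s}$ near $s=1$ only when $p>1$. For $p=1$ one has $sp<1$ for every $s$, so $\widetilde{C}_{d,1,s}=C_{d,2,s/2}\to\Gamma(\tfrac{d+1}{2})\pi^{-\frac{d+1}{2}}>0$. Your own Step~3 gives $|u|_{W^{s,1}(\Omega)}\gtrsim(1-s)^{-1}|Du|(\Omega)$ for $s$ near $1$. Hence the supremum in the second inequality is $+\infty$ whenever $|Du|(\Omega)>0$. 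That inequality is false for $p=1$; it must be restricted to $p>1$, or stated with $s(1-s)$ in place of $\widetilde{C}_{d,p,s}$.

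(ii) For the first claim, an extension with $|D(Eu)|(\partial\Omega)=0$ is not available on every BV-extension domain. Take the slit disc $\Omega=B_1\setminus S$ with $S=[0,1)\times\{0\}\subset\R^2$. It is a BV-extension domain: the traces of $u$ from the two half discs onto $S$ are controlled by $\|u\|_{BV(\Omega)}$, so $u\in BV(B_1)$, and $B_1$ is Lipschitz. Now let $u\in W^{1,1}(\Omega)$ be the polar angle with values in $(0,2\pi)$. The sets $\Omega\times\Omega$ and $B_1\times B_1$ differ by a null set, so D\'avila's theorem on $B_1$ gives the limit $|Du|(\Omega)+2\pi\,\mathcal{H}^1(S)>|u|_{BV(\Omega)}$. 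So for $p=1$ the no-boundary-charge property must be part of the hypothesis; it does hold for Lipschitz domains. Deferring to \cite{Fog23} at that point assumes it rather than proves it.
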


Next, we provide the pointwise asymptotics for the fractional $p$-Laplacian.
\begin{theorem}
\label{thm:asymp-frac-plaplace}
Assume   $1<p<\infty$, $\frac{1}{p-1}<q\leq\infty$ and $x\in\R^d$. Consider the normalized fractional $p$-Laplacian
\begin{align*}
(-\Delta)^s_p u(x)
:= \widetilde{C}_{d,p,s}\,
\mathrm{p.v.}\int_{\mathbb{R}^d}
\frac{|u(x)-u(y)|^{p-2}(u(x)-u(y))}
{|x-y|^{d+sp}}\d y.
\end{align*}
If $L^\infty(\R^d)\cap C^2(B_1(x))$ then
\begin{align*}
\lim_{s\to 1^-}(-\Delta)^s_p u(x) = \lim_{s\to0^+} \frac{2(1-s)p}{|\mathbb S^{d-1}|K_{d,p}}\int_{\R^d}
&\frac{|u(x)-u(y)|^{p-2}(u(x)-u(y))}
{|x-y|^{d+sp}}\d y = -\Delta_p u(x).
\end{align*}
If  $u\in L^q_{\mathrm{loc}}(\R^d)\cap C^{\varepsilon}(B_1(x))$
with  $\varepsilon\in (0,1)$ and $u(\infty)=\lim\limits_{|x|\to\infty} u(x)$  exists then
\begin{align*}
\lim_{s\to0} (-\Delta)^s_p u(x)= \lim_{s\to0^+} \frac{sp}{|\mathbb S^{d-1}|}\int_{\R^d}
&\frac{|u(x)-u(y)|^{p-2}(u(x)-u(y))}
{|x-y|^{d+sp}}\d y= I_p u(x).
\end{align*}
where  we define $I_p u(x)=
|u(x)-u(\infty)|^{p-2}(u(x)-u(\infty))$. In particular, if $u\in L^q(\mathbb{R}^d)\cap C^{\varepsilon}(B_1(x))$ with $1<q<\infty$ and $\lim\limits_{|x|\to\infty} u(x)=0$   then
\begin{align*}
\lim_{s\to0} (-\Delta)^s_p u(x)=I_p u(x)=
|u(x)|^{p-2}u(x).
\end{align*}
\end{theorem}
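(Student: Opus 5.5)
The plan is to split each integral into a near part over $B_1(x)$ and a far part over $B_1(x)^c$. Each part is then treated with the asymptotics of the normalizing constant from the introduction, namely $\widetilde C_{d,p,s}\sim \frac{2p(1-s)}{|\mathbb S^{d-1}|K_{d,p}}$ as $s\to1^-$ and $\widetilde C_{d,p,s}\sim\frac{sp}{|\mathbb S^{d-1}|}$ as $s\to0^+$ (here $sp\le1$, so $\widetilde C_{d,p,s}=C_{d,2,sp/2}$). This reduces both claims to the middle expressions in the statement. Throughout, write $\varphi(t)=|t|^{p-2}t$.

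\emph{Case $s\to0^+$.} For the near part, $u\in C^\varepsilon(B_1(x))$ gives $|\varphi(u(x)-u(y))|\le [u]_{C^\varepsilon}^{p-1}|x-y|^{\varepsilon(p-1)}$. Hence
\begin{align*}
s\int_{B_1(x)}\frac{|\varphi(u(x)-u(y))|}{|x-y|^{d+sp}}\d y\le s\,[u]^{p-1}_{C^\varepsilon}\frac{|\mathbb S^{d-1}|}{\varepsilon(p-1)-sp}\xrightarrow{s\to0^+}0,
\end{align*}
so no principal value is needed. For the far part, pass to polar coordinates $y=x+rw$ and set
\begin{align*}
G(r)=\int_{\mathbb S^{d-1}}\varphi\big(u(x)-u(x+rw)\big)\d\sigma_{d-1}(w).
\end{align*}
Since $u(x+rw)\to u(\infty)$ as $r\to\infty$ uniformly in $w$, we get $G(r)\to|\mathbb S^{d-1}|\varphi(u(x)-u(\infty))$. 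Moreover $|G|\le C\big(1+\int_{\mathbb S^{d-1}}|u(x+rw)|^{p-1}\d\sigma_{d-1}(w)\big)$. Because $q(p-1)>1$, Jensen's inequality together with $u\in L^q_{\loc}$ shows $G\in L^{q(p-1)}_{\loc}((1,\infty))$, with $q(p-1)>1$. This is exactly the integrability needed in Lemma~\ref{lem:asymp-averag}, which explains the hypothesis $q>\frac1{p-1}$. Applying the lemma with $a=sp$ (extend $G$ by $0$ on $(0,1)$ if needed) gives
\begin{align*}
s\int_1^\infty G(r)r^{-sp-1}\d r\to\frac1p|\mathbb S^{d-1}|\varphi(u(x)-u(\infty)).
\end{align*}
Multiplying by $\frac{sp}{s|\mathbb S^{d-1}|}$ yields $I_pu(x)$. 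The final statement with $u(\infty)=0$ is the special case.

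\emph{Case $s\to1^-$.} The far part is at most $\|u\|_\infty^{p-1}2^{p-1}(1-s)\int_{|h|>1}|h|^{-d-sp}\d h\to0$. For the near part, symmetrize the principal value:
\begin{align*}
\pv\int_{B_1}\frac{\varphi(u(x)-u(x+h))}{|h|^{d+sp}}\d h=\frac12\int_{B_1}\frac{\varphi(u(x)-u(x+h))+\varphi(u(x)-u(x-h))}{|h|^{d+sp}}\d h.
\end{align*}
Write $a=\nabla u(x)\cdot h$ and $b=\frac12h^\top D^2u(x)h+o(|h|^2)$, so that $u(x)-u(x\pm h)=\mp a-b$. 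The numerator is then $\varphi(-a-b)+\varphi(a-b)\approx-2(p-1)|a|^{p-2}b$. Passing to polar coordinates $h=rw$, each ray contributes $r^{p}$ against $r^{-1-sp}$, so $\int_0^1r^{p-1-sp}\d r=\frac1{p(1-s)}$. This produces
\begin{align*}
-\frac{p-1}{2p(1-s)}\int_{\mathbb S^{d-1}}|\nabla u\cdot w|^{p-2}\,w^\top D^2u\,w\d\sigma_{d-1}(w).
\end{align*}
The angular integral is then identified via the moments $\int_{\mathbb S^{d-1}}|e\cdot w|^{p-2}w_iw_j\d\sigma_{d-1}$ with $e=\nabla u/|\nabla u|$. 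These are diagonal in a frame adapted to $e$, with entries expressible through $K_{d,p}$ and $K_{d,p-2}$ by integration by parts on the sphere. The result should be $\frac{|\mathbb S^{d-1}|K_{d,p}}{p-1}\,|\nabla u|^{p-2}\big(\Delta u+(p-2)D^2u\,e\cdot e\big)$, i.e.\ a multiple of $\Delta_pu(x)$. Combined with the prefactor $\frac{2p(1-s)}{|\mathbb S^{d-1}|K_{d,p}}$, this should give $-\Delta_pu(x)$.

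\emph{Main obstacle.} The hard step is controlling the Taylor remainder in the expansion of $\varphi(-a-b)+\varphi(a-b)$ uniformly, so that its contribution is $o(1/(1-s))$. For $p\ge2$, $\varphi$ is $C^1$ and the mean value theorem gives an error $O(|h|^{p+\min(1,p-2)})$, which is harmless. For $1<p<2$, the derivative $\varphi'$ is singular near $a=0$ and $\nabla u(x)$ may vanish. I would first treat $\nabla u(x)\neq0$: split the sphere into the cone $|e\cdot w|\le\delta$, which contributes $O(\delta^{p-1})/(1-s)$ using $|\varphi(t_1)-\varphi(t_2)|\le C|t_1-t_2|^{p-1}$, and its complement, where the $C^1$ expansion is valid. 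Then let $s\to1$ and afterwards $\delta\to0$. When $p<2$ and $\nabla u(x)=0$, the expression $\Delta_pu(x)$ has to be interpreted appropriately, and I expect this degenerate case to require the most care.
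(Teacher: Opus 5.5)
\paragraph{The case $s\to0^+$.} Your argument is the paper's: H\"older continuity disposes of $B_1(x)$, and polar coordinates plus Lemma~\ref{lem:asymp-averag} handle the exterior. Averaging over the sphere into $G$ before applying the lemma is actually cleaner than the paper's ray-by-ray use of the lemma, which interchanges the limit with the $w$-integral without justification.

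The integrability step, however, is wrong. From $u\in L^q_{\loc}$ you only get $|u|^{p-1}\in L^{q/(p-1)}_{\loc}$, so Jensen yields $G\in L^{q/(p-1)}_{\loc}((1,\infty))$, not $L^{q(p-1)}_{\loc}$. The lemma also holds for $L^1_{\loc}$ (take $q'=\infty$ in its proof), so what you need is $|u|^{p-1}\in L^1_{\loc}$, i.e.\ $q\ge p-1$. For $p\le 2$ this follows from $q>\frac1{p-1}$. For $p>2$ it does not, and the conclusion then fails. Take $d=1$, $p=4$, $q=2$, $x=0$ and $u(y)=\chi(y)|y-2|^{-1/3}$, where $0\le\chi\le1$ is smooth, supported in $(\frac32,\frac52)$ and equal to $1$ near $2$. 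Then $u\in L^2(\R)$, $u=0$ on $B_1(0)$ and $u(\infty)=0$. Yet $\varphi(u(0)-u(y))=-|y-2|^{-1}$ near $y=2$, so the integral equals $-\infty$ for every $s$, while $I_pu(0)=0$. The paper's proof makes the same slip (it asserts $g_{x,w}\in L^{(p-1)q}_{\loc}$), so the hypothesis should read $q\ge p-1$ rather than $q>\frac1{p-1}$.

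\paragraph{The case $s\to1^-$.} Here the paper gives no argument and only refers to \cite[Section 9]{Fog25}. Your symmetrization and second-order expansion is therefore a genuinely different, self-contained route, and its leading order is correct. With $e=e_d$, the matrix $\int_{\mathbb S^{d-1}}|w_d|^{p-2}w_iw_j\,\d\sigma_{d-1}(w)$ is diagonal. Its $(d,d)$-entry is $|\mathbb S^{d-1}|K_{d,p}$ and its transverse entries are $\frac{|\mathbb S^{d-1}|}{d-1}(K_{d,p-2}-K_{d,p})=\frac{|\mathbb S^{d-1}|K_{d,p}}{p-1}$, because $K_{d,p-2}=\frac{d+p-2}{p-1}K_{d,p}$. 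This produces exactly $-\Delta_pu(x)$, and for $p\ge2$ the remainder control you describe suffices.

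For $1<p<2$ with $\nabla u(x)\ne0$, the fixed-cone split does not work as written. Write $b_\pm$ for the second-order increments at $\pm h$. On $\{|e\cdot w|\le\delta\}$ the $(p-1)$-H\"older bound only gives $|\varphi(a-b_-)-\varphi(a+b_+)|\le C|h|^{2(p-1)}$. Then $\int_0^1 r^{2p-3-sp}\,\d r=\infty$ as soon as $s\ge 2-\frac2p$, which is $<1$, so you do not get $O(\delta^{p-1})/(1-s)$.

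The split has to be made at the scale $|\nabla u(x)\cdot h|\le K|h|^2$, a region of angular width $\sim r$. There the H\"older bound only costs $\int_0^1 r^{2p-2-sp}\,\d r<\infty$. On the complement (with $K$ large), the mean-value point $\xi$ satisfies $|\xi|\ge|a|/2$, hence $|\varphi'(\xi)-\varphi'(a)|\le C|a|^{p-3}r^2$. Combined with $\int_{|e\cdot w|\ge Kr}|e\cdot w|^{p-3}\,\d\sigma_{d-1}(w)\lesssim r^{p-2}$, this makes the error $O(r^{2p-1})$, which is harmless after multiplying by $1-s$.

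Finally, the case $p<2$, $\nabla u(x)=0$ is not a matter of extra care. Take $u(y)=(y_1^2-y_2^2)\zeta(|y|)$ in $\R^2$, with $\zeta$ a smooth cutoff equal to $1$ on $[0,2]$. The principal value vanishes for every $s$ by the symmetry $y_1\leftrightarrow y_2$, while $\Delta_pu(0)$ has no pointwise meaning: its limits along the two axes are $\mp\infty$. So the first claim must be restricted to $p\ge2$ or $\nabla u(x)\ne0$.
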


\begin{proof}
The first limit near $s=1$ can be found in \cite[Section 9]{Fog25}. If $u(\infty)$ exists and $u\in L^q( \R^d)$ with $1<q<\infty$, necessarily $u(\infty)=0.$ Moreover, note that $\lim_{s\to0}\frac{ \widetilde{C}_{d,p,s}}{s(1-s)}
=\frac{p}{|\mathbb S^{d-1}|}.$  Since $u\in C^\varepsilon(B_1(x))$, there exists $C>0$ such that $|u(x)-u(y)| \leq C |x-y|^\varepsilon$, $y\in B_1(x)$. Therefore, for  $0<s<\varepsilon(p-1)$, we find that
\begin{align*}
\left|
s\int_{B_1(x)}\hspace{-2ex}
\frac{|u(x)-u(y)|^{p-2}(u(x)-u(y))}
{|x-y|^{d+sp}}dy
\right|
&\leq C
\int_{B_1(x)}
|x-y|^{-d-sp+\varepsilon(p-1)}dy \\
&=C\frac{ s|\mathbb S^{d-1}|}{ -sp+\varepsilon(p-1)} \to 0
\quad \text{as } s\to0^+.
\end{align*}
Next,  define $u_\infty(x)= u(x)-u(\infty)$ and 
$$g_{x, w}(t)= |u(x)-u(x+t w)|^{p-2}(u(x)-u(x+t w))$$ 
so that $g_{x,w}(\infty)
=
|u_\infty(x)|^{p-2}u_\infty(x)$  and $g_{x, w}\in L^{(p-1)q}_{\mathrm{loc}}((0,\infty))$ with $1<(p-1)q\leq \infty$. In view of Lemma \ref{lem:asymp-averag} we find that
\begin{align*}
\lim_{s\to0^+}s\int_{B^c_1(x)}\hspace{-3ex}
\frac{|u(x)-u(y)|^{p-2}(u(x)-u(y))}
{|x-y|^{d+sp}}\d y&=
\lim_{s\to0^+}s\int_{\mathbb{S}^{d-1}} \int_1^\infty \frac{g_{x, w}(t)}{ t^{1+sp}}  \d t\d \sigma_{d-1}(w)\\
&=\frac{|\mathbb S^{d-1}|}{p}g_{x, w}(\infty)= \frac{|\mathbb S^{d-1}|}{p}|u_\infty(x)|^{p-2}u_\infty(x),
\end{align*}
with $u_\infty(x)= u(x)-u(\infty)$.  Altogether we obtain
\begin{align*}
\lim_{s\to0^+} \frac{sp}{|\mathbb S^{d-1}|}\int_{\R^d}
&\frac{|u(x)-u(y)|^{p-2}(u(x)-u(y))}
{|x-y|^{d+sp}}\d y= |u_\infty(x)|^{p-2}u_\infty(x).
\end{align*}
\end{proof}

\section{Proof of Theorem \ref{thm:robust-interpo-esti} and Theorem \ref{thm:r-split-monot-semin} }
\label{sec:robust-interpo}
In this section, we establish the proofs of Theorem~\ref{thm:robust-interpo-esti} and Theorem~\ref{thm:r-split-monot-semin}. Let us recall that the function $U \colon [0,\infty) \to [0,\infty)$, given for $u \in L^p(\mathbb{R}^d)$ by
\begin{align*}
U(t) = \int_{\mathbb{S}^{d-1}} \int_{\mathbb{R}^d} |u(x + t\omega) - u(x)|^p \d x \, \d \sigma_{d-1}(\omega).
\end{align*}
\subsection{Proof of Theorem \ref{thm:r-split-monot-semin}} \label{sec:r-split-monot-semin}
We begin with the monotonicity of the short-range interaction $s \mapsto L_p(s,u,r)$ and the long-range map $s \mapsto T_p(s,u,r)$ with respect to $s$ (see Definition \ref{def:long-short-range}), starting with the case $p=2$.
\begin{lemma}\label{lem:short-cos}
Let  $\xi\in\R$ and $r>0$.  For $0\leq \eta\leq s\leq 1$ we have
\begin{align*}
(1-\eta) (2r)^{-2(1-\eta)} \int_0^r\frac{(1-\cos (\xi t))}{t^{1+2\eta}}\d t&\leq (1-s)  (2r)^{-2(1-s)} \int_0^r \frac{(1-\cos (\xi t))}{t^{1+2s}}\d t,
\end{align*}
with the extreme cases $s=1$ and $\xi=\infty$  given by
\begin{align*}
&\lim_{s\to 1^-}(1-s)  \int_0^1 \frac{(1-\cos (\xi t))}{t^{1+2s}}\d t=\frac{\xi^2}{4},\\
&\lim_{\xi\to \infty}(1-s) \int_0^1 \frac{(1-\cos (\xi t))}{t^{1+2s}}\d t =\infty.
\end{align*}
\end{lemma}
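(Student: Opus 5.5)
The plan is to recognize the left and right sides as instances of the singular integral $L(z)$ of Definition~\ref{def:tail-sing-int}, and then apply the $c$-adic monotonicity result, Theorem~\ref{thm:cadic-monotonicity}, with $c=2$. Set
\begin{align*}
g(t)=\frac{1-\cos(\xi t)}{t^2}=\frac{2\sin^2(\xi t/2)}{t^2}, \qquad z=2(1-s)\in[0,2].
\end{align*}
Since $t^{-1-2s}=t^{-2}\,t^{z-1}$, we have
\begin{align*}
(1-s)(2r)^{-2(1-s)}\int_0^r \frac{1-\cos(\xi t)}{t^{1+2s}}\d t=\tfrac12\, z\,(2r)^{-z}\int_0^r g(t)t^{z-1}\d t=\tfrac12\, 2^{-z}L(z).
\end{align*}
The claimed inequality for $0\le\eta\le s\le1$ therefore becomes $2^{-z_\eta}L(z_\eta)\le 2^{-z_s}L(z_s)$, where $z_s=2(1-s)\le z_\eta=2(1-\eta)$.

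The key step is to check that $g$ is $2$-adically decreasing. The double-angle formula gives
\begin{align*}
g(2t)=\frac{2\sin^2(\xi t)}{4t^2}=\frac{8\sin^2(\xi t/2)\cos^2(\xi t/2)}{4t^2}=g(t)\cos^2(\xi t/2)\le g(t).
\end{align*}
Theorem~\ref{thm:cadic-monotonicity}, in its $c$-adically decreasing case with $c=2$, then yields $2^{-b}L(b)\le 2^{-a}L(a)$ for $0\le a\le b$. Taking $a=z_s$ and $b=z_\eta$ gives exactly the desired inequality. The case $s=1$, i.e.\ $z=0$, is covered by the convention $L(0)=g(0)$.

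For the endpoint limit as $s\to1^-$, note that $g(t)\to \xi^2/2$ as $t\to0^+$ and that $g$ is bounded, so $g\in L^\infty_{\loc}$. Lemma~\ref{lem:asymp-averag} then gives
\begin{align*}
(1-s)\int_0^1 g(t)t^{z-1}\d t=\tfrac z2\int_0^1 g(t)t^{z-1}\d t\to \tfrac12 g(0)=\tfrac{\xi^2}{4}.
\end{align*}
This also confirms that the case $s=1$ of the inequality is consistent with the limit.

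For the limit as $\xi\to\infty$, substitute $\tau=\xi t$ to get
\begin{align*}
(1-s)\int_0^1\frac{1-\cos(\xi t)}{t^{1+2s}}\d t=(1-s)\,\xi^{2s}\int_0^\xi\frac{1-\cos\tau}{\tau^{1+2s}}\d\tau.
\end{align*}
For $0<s<1$, the integral increases in $\xi$ to a positive limit, so the factor $\xi^{2s}$ forces divergence. For $s=0$, the integral $\int_0^\xi(1-\cos\tau)\tau^{-1}\d\tau$ grows like $\log\xi$. For $s=1$, the value $\xi^2/4$ itself diverges.

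The only real obstacle is spotting the $2$-adic structure of $g$ and matching the factor $(2r)^{-z}$ with $c^{-z}r^{-z}$ for $c=2$. Once that identification is made, everything else is a direct invocation of earlier results.
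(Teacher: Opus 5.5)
Your proposal is correct and follows the paper's own proof. The paper likewise notes that $g_\xi(t)=(1-\cos(\xi t))/t^2$ is dyadically decreasing and applies Theorem~\ref{thm:cadic-monotonicity} with $c=2$ and $a=2(1-s)\le b=2(1-\eta)$. Your explicit check $g(2t)=g(t)\cos^2(\xi t/2)$ and your derivation of the two endpoint limits (via Lemma~\ref{lem:asymp-averag} and the substitution $\tau=\xi t$) supply details the paper leaves unstated.
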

\begin{proof}
For $\xi\in \R$ the mapping $t\mapsto g_\xi(t) =\frac{1-\cos (\xi t)}{t^2}$ with $ g_\xi(0)= \frac{\xi^2}{ 2}$ is continuous and  dyadically decreasing, i.e., $g_\xi(2t)\leq g_\xi(t)$ for all $t\geq 0$. Applying the Chebyshev type inequality in Theorem \ref{thm:cadic-monotonicity} with $a=2(1-s)\leq b=2(1-\eta)$ implies that
\begin{align*}
2(1-\eta)(2 r)^{-2(1-\eta)} \int_0^{r} g_\xi(t) t^{2(1-\eta)-1}\d t\leq 2(1-s)(2 r)^{-2(1-s)}\int_0^r  g_\xi(t)t^{2(1-s)-1} \d t,
\end{align*}
yielding the sought inequality.
\end{proof}

\begin{lemma}\label{lem:tail-short-cos}
Let  $\xi\in\R$ and $0\leq \sigma\leq \eta<1$. The following estimates hold true
\begin{align*}
\eta  r^{2\eta}\int_r^\infty \frac{(1-\cos (\xi t))}{t^{1+2\eta}}\d t
&\leq 4\sigma  r^{2\sigma}\int_r^\infty \frac{(1-\cos (\xi t))}{t^{1+2\sigma}}\d t,\\
\eta(1-\eta) r^{2\eta} \int_r^\infty \frac{(1-\cos (\xi t))}{t^{1+2\eta}}\d t
&\leq 3\sigma(1-\sigma)  r^{2\sigma}\int_r^\infty \frac{(1-\cos (\xi t))}{t^{1+2\sigma}}\d t .
\end{align*}
with the extreme cases $\sigma=0$ and $\xi=\infty$ given by
\begin{align*}
&\lim_{\xi\to \infty}\sigma  \int_1^\infty \frac{(1-\cos (\xi t))}{t^{1+2\sigma}}\d t =
\frac{1}{2} \quad\text{and}\quad
\lim_{\sigma\to 0^+}\sigma  \int_1^\infty \frac{(1-\cos (\xi t))}{t^{1+2\sigma}}\d t =\begin{cases}
\frac{1}{2}&\xi>0\\
0&\xi=0.
\end{cases}
\end{align*}
\end{lemma}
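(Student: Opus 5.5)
The plan is to reduce both estimates to statements about the one-variable functional
\begin{align*}
T(z)=z r^{z}\int_r^\infty g(t)\,t^{-z-1}\,\d t,\qquad g(t)=1-\cos(\xi t)=2\sin^2(\tfrac{\xi t}{2}),
\end{align*}
from Definition~\ref{def:tail-sing-int}, with $z=2\eta$ and $z=2\sigma$. By scaling, $t\mapsto rt$ and $\xi\mapsto \xi r$, it suffices to take $r=1$ and $\xi\geq 0$; the case $\xi=0$ is trivial. The obstruction is that $g$ is neither almost monotone nor $c$-adically monotone on $(1,\infty)$, so neither Theorem~\ref{thm:almost-monotone} nor Theorem~\ref{thm:cadic-monotonicity} applies directly. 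I would therefore pass to the Hardy-type primitive
\begin{align*}
G(t)=\int_0^t g(\tau)\,\d\tau=t\,h(\xi t),\qquad h(x)=1-\frac{\sin x}{x}.
\end{align*}
This primitive plays the same role for $g$ that $AU$ plays for $U$ in Remark~\ref{rem:almost increasing}.

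\textbf{Step 1: almost monotonicity of $h$.} The function $h$ is nonnegative and increasing on $(0,x_1)$, where $x_1\approx 4.49$ and $h(x_1)\approx 1.217$. For $x\geq x_1$ it oscillates in $[h_{\min},h(x_1)]$ with $h_{\min}\approx 0.872$. Hence $h$, and therefore $t\mapsto h(\xi t)$, is $c$-almost increasing with an explicit $c=h(x_1)/h_{\min}<2$.

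\textbf{Step 2: integration by parts.} Integrating by parts in $T(z)$, with $G(t)t^{-z-1}\to0$ at $\infty$, gives
\begin{align*}
T(z)=z(z+1)\int_1^\infty h(\xi t)\,t^{-z-1}\,\d t-z\,h(\xi).
\end{align*}
The first term is $(z+1)$ times the almost-monotone quantity $\widetilde T(z)$ built from $h(\xi\,\cdot)$. By Theorem~\ref{thm:almost-monotone}, $\widetilde T(2\eta)\leq c\,\widetilde T(2\sigma)$.

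\textbf{Step 3: the boundary term.} This is the main obstacle: the term $-z h(\xi)$ has the wrong sign for a clean comparison. I would split into two regimes.
\begin{itemize}
\item If $\xi\geq \pi$, the integral over each half-period of $g$ has average $1$. This gives the two-sided bound $\tfrac12\lesssim T(z)\leq 2$ uniformly in $z$, since $T(z)\leq 2$ trivially because $z\int_1^\infty t^{-z-1}\,\d t=1$. The ratio bound then follows directly.
\item If $\xi<\pi$, then $g$ is increasing on $(1,\pi/\xi)$, and on the remaining range $(\pi/\xi,\infty)$ the average argument applies again. I would combine Theorem~\ref{thm:almost-monotone} on the first piece with the Chebyshev inequality of Theorem~\ref{thm:almost-cheby-sing} against the decreasing density ratio $t^{2\sigma-2\eta}$, as in the proof of Theorem~\ref{thm:almost-monotone}.
\end{itemize}
The explicit constant $4$ should come from multiplying the almost-monotonicity constant (at most $2$) by the factor $2$ lost from $1-\cos\leq 2$ against the half-period average $1$. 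Tracking these constants carefully is the delicate part. I expect that the first step to try is to check numerically that the worst case occurs at $\sigma=0$, where $T(0)=g(\infty)$ is interpreted via Lemma~\ref{lem:asymp-averag} as the mean value $1$.

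\textbf{Step 4: the second estimate and the limits.} For the second estimate, I would multiply by $(1-\eta)\leq 1$ and use $(1-\sigma)\geq 1-\eta$. I would refine the constant from $4$ to $3$ in the regime $\eta\geq \tfrac12$, using $\eta(1-\eta)\leq \tfrac14$ together with the trivial bound $T\leq 2$, and otherwise $(1-\eta)/(1-\sigma)\leq 1$ combined with the sharper regime-by-regime constants from Step 3.

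The limits follow from Lemma~\ref{lem:asymp-averag} applied to $g$, since its Ces\`aro-type mean at infinity is $1$ for $\xi>0$, giving $\tfrac12\cdot 1$ after the factor $\sigma=\tfrac{2\sigma}{2}$. Strictly, Lemma~\ref{lem:asymp-averag} needs $\lim_{t\to\infty} g(t)$ to exist, which fails here; I would instead apply it to $h$ after the integration by parts. The $\xi\to\infty$ limit follows from the Riemann--Lebesgue lemma applied to $\int_1^\infty \cos(\xi t)\,t^{-1-2\sigma}\,\d t$.
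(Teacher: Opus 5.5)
Your proposal does not prove the lemma. The content of the statement is the explicit constants $4$ and $3$, and these are never derived. Step~3, where the argument has to happen, is only a sketch, and its pieces do not close as written.

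\textbf{The first estimate.}

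\emph{Steps~1--2 cannot finish on their own.} Put $a=2\sigma<b=2\eta$ and $c=h(x_1)/h_{\min}\approx1.40$. Theorem~\ref{thm:almost-monotone} gives
$T(b)\le\frac{c(b+1)}{a+1}\bigl(T(a)+a h(\xi)\bigr)-b h(\xi)$.
For $a\le b/4$ the $h(\xi)$-terms have nonpositive total coefficient, so what remains is the factor $\frac{c(b+1)}{a+1}$. This tends to $3c\approx4.19>4$ as $\sigma\to0$, $\eta\to1$. So the integration by parts alone does not reach $4$.

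\emph{The regime $\xi\ge\pi$.} Half-period averaging against the decreasing weight $t^{-z-1}$ only yields $T(z)\ge t_*^{-z}$, where $t_*\le 1+2\pi/\xi\le3$. For $z$ near $2$ this is about $1/9$. So no uniform bound $T\ge\frac12$ follows, and the ratio bound is not ``direct''. It can be rescued by adding the trivial comparison $T(2\eta)\le\frac{\eta}{\sigma}T(2\sigma)$, which comes from $t^{-2\eta}\le t^{-2\sigma}$ on $[1,\infty)$. This reduces to $\sigma<\eta/4$, where $T(2\sigma)\ge 3^{-1/2}$ and $T\le2$ give a ratio at most $2\sqrt3<4$.

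\emph{The regime $\xi<\pi$.} Here you give no actual argument. One route that works:
\begin{enumerate}
\item Dominate $g$ on $[1,\infty)$ by the nondecreasing function $\tilde g=g\mathbf{1}_{[1,\pi/\xi)}+2\cdot\mathbf{1}_{[\pi/\xi,\infty)}$, and apply Theorem~\ref{thm:almost-monotone} to $\tilde g$.
\item Bound the $\eta$-tail on $(\pi/\xi,\infty)$ by $2(\xi/\pi)^{2\eta}\le2(\xi/\pi)^{2\sigma}$.
\item Prove $2\sigma\int_{\pi/\xi}^\infty g\,t^{-2\sigma-1}\d t\ge(\xi/\pi)^{2\sigma}$ by Chebyshev's inequality on each period $[(2m+1)\pi/\xi,(2m+3)\pi/\xi]$. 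On such a period $g$ is symmetric about the midpoint and increasing away from it, while $t^{-2\sigma-1}$ is convex.
\end{enumerate}
Together these give $T(2\eta)\le4T(2\sigma)$.

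\textbf{The second estimate.} Your device for $\eta\ge\frac12$, combining $\eta(1-\eta)\le\frac14$ with $T\le2$, is an absolute upper bound on the left-hand side. But the right-hand side tends to $0$ as $\xi\to0$ for fixed $\sigma>0$: indeed $T(2\sigma)=2\sigma\xi^{2\sigma}\int_\xi^\infty(1-\cos u)u^{-1-2\sigma}\d u\asymp\xi^{2\sigma}$. So no absolute bound can work, and a relative comparison is required.

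For $\eta\ge\frac12$ and $\sigma<\eta/3$, the factor $(1-\eta)/(1-\sigma)\le\frac34$ does turn $4$ into $3$. When $\eta$ is small and $\sigma<\eta/3$, however, this factor is close to $1$. A constant of order $3$ must then be proved directly, and nothing in your plan does that; multiplying by $1-\eta\le1-\sigma$ just keeps the constant $4$.

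This is what the paper's case analysis handles.
\begin{itemize}
\item Integration by parts is used only to get $|f(\eta,\xi)-\frac12|\le2\eta/\xi$, with $f(\eta,\xi)=\frac12T(2\eta)$, which settles $\xi\ge8$.
\item Small $\eta$ is treated via the explicit constant $C_{1,\eta}$.
\item $\sigma\ge\frac14$ is treated via the ratio $\eta/\sigma$.
\item The hard regime $\xi\le3$, $\sigma\le\frac14\le\eta$ uses the Gamma-mixture representation $f(\eta,\xi)=\frac12\int_0^\infty K(x,\xi)\frac{x^{2\eta-1}e^{-x}}{\Gamma(2\eta)}\d x$. The kernel $K(\cdot,\xi)$ is $4$-almost decreasing, and Theorem~\ref{thm:mon-like-ratio} (monotone likelihood ratio) is applied.
\end{itemize}

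Your treatment of the two limits is correct: Lemma~\ref{lem:asymp-averag} applied to $h$ after the integration by parts, and Riemann--Lebesgue for $\xi\to\infty$.
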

\begin{proof}
Up to replacing $\xi$ by $r\xi$ we can assume by scaling argument that $r=1$ and by parity that $\xi\geq 0$.
 Define the function
\begin{align*}
f(\eta,\xi):=\eta  \int_1^\infty \frac{(1-\cos (\xi t))}{t^{1+2\eta}}\d t&= \eta \xi^{2\eta}  \int_{\xi}^\infty \frac{1-\cos (t)}{t^{1+2\eta}}\d t.
\end{align*}
In most cases we  will only show that $f(\eta,\xi)\leq 3 f(\sigma,\xi)$ as this readily implies
the second desired estimate $ (1-\eta)f(\eta,\xi)\leq 3 (1-\sigma)f(\sigma,\xi)$, since $1-\eta\leq 1-\sigma$. We begin by performing integration by parts, yielding
\begin{align}\label{eq:xexpansion-away-zero}
f(\eta,\xi)= \frac{1}{2}- \eta  \int_1^\infty \frac{\cos (\xi t)}{t^{1+2\eta}}\d t
&=\frac{1}{2} + \frac{\eta\sin (\xi)}{\xi} -\frac{\eta(1+2\eta)}{\xi}   \int_1^\infty \frac{\sin (\xi t)}{t^{2+2\eta}}\d t.
\end{align}
For fixed $\xi_0>0$  then we have
\begin{align}\label{eq:xuniform-away-zero}
\sup_{\xi\geq \xi_0}\Big| f(\eta,\xi)-\frac12\Big| \leq \frac{\eta}{\xi_0}+ \frac{\eta (1+2\eta)}{\xi_0}\int_1^\infty\frac{\d t}{t^{2+2\eta}}=
\frac{2\eta}{\xi_0}.
\end{align}
We note in passing that $f(\sigma, 0)=0$ while for $\xi>0$ the estimate \eqref{eq:xuniform-away-zero} readily implies
\begin{align*}
f(0,\xi)=& \lim_{\sigma\to 0^+}\sigma  \int_1^\infty \frac{(1-\cos (\xi t))}{t^{1+2\sigma}}\d t
=\frac{1}{2},
\\
f(\sigma, \infty)=&\lim_{\xi\to \infty}\sigma  \int_1^\infty \frac{(1-\cos (\xi t))}{t^{1+2\sigma}}\d t =
\frac{1}{2}.
\end{align*}
From now on,  we take $\xi_0=8$ hence  we arrive at
\begin{align}\label{eq:xgen-bound}
\frac{1}{2}- \frac{\eta}{4}\leq f(\eta,\xi)\leq \frac{1}{2}+ \frac{\eta}{4}&& \text{for all $\eta\in (0,1) $ and $\xi\geq 8$}.
\end{align}
In particular, since $0\leq\eta\leq 1$ we find that
\begin{align*}
\frac{1}{4}\leq f(\eta,\xi)\leq \frac{3}{4}&& \text{for all $\eta\in (0,1) $ and $\xi\geq 8$}.
\end{align*}
The latter estimate clearly implies
\begin{align*}
f(\eta,\xi)\leq 3f(\sigma,\xi) && \text{for all $\sigma, \eta\in (0,1) $ and $\xi\geq 8$}.
\end{align*}
Therefore, it remains to deal with the case $0\leq \xi\leq 8$ distinguished in tree cases.
\vspace{1mm}

\textbf{Case 1: }
Assume $0\leq \sigma\leq \eta\leq \frac{1}{4}$ and $0\leq \xi\leq 8$. Let us obverse that
\begin{align*}
f(\eta,\xi)=\eta  \xi^{2\eta}  \int_{\xi}^\infty \frac{1-\cos (t)}{t^{1+2\eta}}\d t\leq \frac{\eta \xi^{2\eta}}{2}\int_{\R} \frac{1-\cos (t)}{t^{1+2\eta}}\d t=  \frac{\eta \xi^{2\eta} }{2 C_{1,\eta}} ,
\end{align*}
where $C_{1,\eta}$,  is normalization constant of the one dimensional fractional Laplacian, see \cite[Section 2.3]{guy-thesis} or \cite[Chapter 1]{BuVa16},  given by
\begin{align*}
\frac{1}{C_{1,\eta}} = \int_{\R} \frac{1-\cos (t)}{t^{1+2\eta}}\d t =\frac{\pi^{1/2} \Gamma(1 - \eta)}{2^{2\eta} \eta\Gamma\left( \frac{1 + 2\eta}{2} \right)}=  \frac{ \pi}{ \Gamma(1+2\eta) \sin(\pi \eta)}.
\end{align*}
The last expression follows from   the Schwartz's reflection and duplication formulas
\begin{align*}
\Gamma(1-\eta)\Gamma (\eta)= \frac{\pi}{\sin (\pi\eta)}\qquad \text{ and }\qquad  \Gamma(2\eta)= \frac{2^{2\eta} \Gamma(\eta)\Gamma(\tfrac{1+2\eta}{2})}{2\pi^{1/2}}.
\end{align*}
Note that $\frac{\sqrt{\pi}}{2}= \Gamma(1+\tfrac{1}{2})\leq \Gamma(1+2x)\leq 1$  for $x\in (0,\frac{1}{4})$,  while since $x\mapsto \frac{\sin (x)}{x} $ is decreasing on $ [0, \frac{\pi}{4}]$ we have,
\begin{align*}
1\leq \frac{x}{\sin (x)}\leq \frac{\pi\sqrt{2}}{4},\qquad x\in [0, \frac{\pi}{4}].
\end{align*}
Inserting this in the previous expression,  implies that
\begin{align}\label{eq:eta-less-quart-upb}
f(\eta,\xi)\leq \frac{ \pi\eta \xi^{2\eta}}{ 2\Gamma(1+2\eta) \sin(\pi \eta)}\leq \frac{\sqrt{2\pi}}{4}\xi^{2\eta}
&&\quad \text{ for all  \,\, $0\leq \eta\leq \frac{1}{4}$ and $\xi\geq 0$}.
\end{align}
Now, since $0\leq \xi\leq \xi_0=8$ and $0\leq \sigma\leq \frac{1}{4}$, the lower estimate in \eqref{eq:xgen-bound} implies
\begin{align*}
f(\sigma,\xi)\geq \sigma \xi^{2\sigma}  \int_{\xi_0}^\infty \frac{1-\cos (t)}{t^{1+2\sigma}}\d t= \big(\frac{\xi}{\xi_0}\big)^{2\sigma}f(\sigma,\xi_0)\geq\big(\frac{\xi}{\xi_0}\big)^{2\sigma} \frac{2-\sigma}{4}\geq \frac{7}{16} \big(\frac{\xi}{\xi_0}\big)^{2\sigma}.
\end{align*}
In other words, we have 
\begin{align}\label{eq:sigma-less-quart-lob}
f(\sigma,\xi)\geq\big(\frac{\xi}{\xi_0}\big)^{2\sigma} \frac{2-\sigma}{4}\geq \frac{7}{16} \big(\frac{\xi}{\xi_0}\big)^{2\sigma}&&\text{$0\leq \sigma\leq \frac{1}{4}$ and $0\leq \xi\leq \xi_0=8$ }.
\end{align}

\begin{itemize}
\item
If $0\leq \sigma\leq \eta\leq \frac{1}{4}$ and $0\leq \xi\leq 3$, the estimates \eqref{eq:eta-less-quart-upb} and \eqref{eq:sigma-less-quart-lob} imply
\begin{align*}
\frac{f(\eta,\xi) }{f(\sigma,\xi)}\leq\xi^{2\eta} \big(\frac{\xi}{\xi_0}\big)^{2(\eta-\sigma)}
\frac{4\sqrt{2\pi}}{7}\leq \xi^{1/2}
\frac{4\sqrt{2\pi}}{7}\leq \frac{4\sqrt{6\pi}}{7}<\frac{5}{2}<3.
\end{align*}
\item If $0\leq \sigma\leq  \eta\leq\frac{1}{4}$
and $3\leq \xi\leq 8$, the lower estimate  in \eqref{eq:xuniform-away-zero} implies that
\begin{align}\label{eq:sigma-less-quart-lob-bis}
f(\sigma,\xi)\geq\big( \frac{1}{2}-\frac{2\sigma}{\xi}\big)\geq \big( \frac{1}{2}-\frac{1}{6}\big)=\frac{1}{3}&& \text{for all $0\leq \sigma\leq \frac{1}{4}$ and $\xi\geq 3$}.
\end{align}
This together with the fact that $ f(\eta,\xi) \leq 1$ implies
\begin{align*}
f(\eta,\xi) \leq 1\leq3f(\sigma,\xi).
\end{align*}
\end{itemize}
\textbf{Case 2: } Assume  that $\frac{1}{4}\leq\sigma\leq \eta\leq 1$ and $0\leq \xi\leq 8$.
\begin{itemize}
\item Assume $\frac{1}{4}\leq\sigma\leq \eta\leq \frac{3}{4}$ or  $\frac{3}{4}\leq\sigma\leq \eta\leq 1$ and $0\leq \xi\leq 8$. Since $t^{-2\eta}\leq t^{-2\sigma} $, $t\geq 1$ and $\frac{1}{3}\leq \frac{\eta}{\sigma} \leq 3$, it follows that
\begin{align*}
f(\eta,\xi)\leq 3f(\sigma,\xi)  && \text{for all $\xi\geq 0$}.
\end{align*}
\item Assume $\frac{1}{3}\leq\sigma\leq  \frac{3}{4}\leq \eta\leq 1$ and $0\leq \xi\leq 8$. Since  $\frac{1}{3}\leq \frac{\eta}{\sigma} \leq 3$ and $t^{-2\eta}\leq t^{-2\sigma} $, $t\geq 1$, it follows that
\begin{align*}
f(\eta,\xi)\leq 3f(\sigma,\xi)  && \text{for all $\xi\geq 0$}.
\end{align*}
\item  Assume  $\frac{1}{4}\leq\sigma\leq \frac{1}{3}$ and $ \frac{3}{4}\leq \eta\leq 1$  and $0\leq \xi\leq 8$. Analogously, it follows that
\begin{align*}
f(\eta,\xi)\leq 4f(\sigma,\xi)  && \text{for all $ \xi\geq 0$}.
\end{align*}
On the other hand, using again  $t^{-2\eta}\leq t^{-2\sigma}$, $t\geq1$, we immediately have
\begin{align*}
(1-\eta) f(\eta, \xi)\leq (1-\sigma) f(\sigma,\xi),&& \text{for all $ \xi\geq 0$},
\end{align*}
where we note that $\frac{\eta(1-\eta)}{\sigma(1-\sigma)}\leq 1$ since
\begin{align*}
\max_{\tau\in\left[\frac{3}{4},\,1\right]} \tau(1-\tau)
= \frac{3}{16} =
\min_{\tau\in\left[\frac{1}{4},\,\frac{1}{3}\right]} \tau(1-\tau).
\end{align*}

\end{itemize}
\textbf{Case 3: } Assume $0\leq \sigma\leq \frac{1}{4}\leq \eta\leq 1$ and $0\leq \xi\leq 8$.
\begin{itemize}
\item
If $0\leq \sigma\leq \frac{1}{4}\leq \eta\leq 1$ and $3\leq \xi\leq 8$, the estimate \eqref{eq:xuniform-away-zero} yields
\begin{align}\label{eq:sigma-less-quart-lob-bis}
f(\sigma,\xi)\geq\big( \frac{1}{2}-\frac{2\sigma}{\xi}\big)\geq \big( \frac{1}{2}-\frac{1}{6}\big)=\frac{1}{3}&& \text{for all $0\leq \sigma\leq \frac{1}{4}$ and $\xi\geq 3$}.
\end{align}
This together with the fact that $ f(\eta,\xi) \leq 1$
implies
\begin{align*}
f(\eta,\xi) \leq 1\leq 3f(\sigma,\xi).
\end{align*}
\item
If $0\leq \sigma\leq \frac{1}{4}\leq \eta\leq 1$ and $0\leq \xi\leq 3$. We will show that $f(\eta,\xi)\leq 4f(\sigma,\xi)$ which implies
$(1-\eta)f(\eta,\xi)\leq 3(1-\sigma)f(\sigma,\xi)$ since $(1-\eta)\leq \frac{3}{4}\leq (1-\sigma)$. Let us observe that
\end{itemize}
\begin{align*}
f(\eta, \xi)
&= \eta \int_{1}^{\infty}  (1-\cos(\xi t))
\left( \frac{1}{\Gamma(1+2\eta)} \int_{0}^{\infty} x^{2\eta} e^{-xt} \, \d x \right)  \d t \\
&=  \frac{1}{2} \int_{0}^{\infty} \frac{x^{2\eta-1} }{\Gamma(2\eta)}  \left( \int_{1}^{\infty} (1-\cos(\xi t)) xe^{-xt} \, \d t \right)  \d x \\
&=  \frac{1}{2} \int_{0}^{\infty} K(x,\xi) \frac{x^{2\eta-1}}{\Gamma(2\eta)} e^{-x} \d x
\end{align*}
where we consider the kernel
\begin{align*}
K(x,\xi)=e^x \operatorname{Re}\left( \int_{1}^{\infty} (1-e^{i\xi t})  xe^{-xt} \, \d t \right) =
1- \frac{x^2\cos (\xi) -x\xi  \sin(\xi)}{x^2+\xi^2 }.
\end{align*}
\noindent Cauchy-Schwartz inequality implies $-\xi\sin\xi + x\cos\xi
\leq \sqrt{x^2+\xi^2}$, so that
\begin{align*}
\frac{x\xi\sin\xi}{x^2+\xi^2}
\leq
\frac{x}{\sqrt{x^2+\xi^2}}- \frac{x^2\cos\xi}{x^2+\xi^2}
\leq 1-\frac{x^2\cos\xi}{x^2+\xi^2}.
\end{align*}
Since $\sin\xi\geq 0$ as  $0\leq \xi\leq 3<\pi$ if follows that
\begin{align*}
 g(x, \xi) \leq K(x,\xi)= g(x,\xi) +
\frac{x\xi\sin\xi}{x^2+\xi^2}\leq 2g(x, \xi)
\end{align*}
where $g(\cdot,\xi)$ is given by
\begin{align*}
g(x,\xi)= 1-\frac{x^2\cos\xi}{x^2+\xi^2}= 1-\cos\xi + \frac{\xi^2\cos\xi}{x^2+\xi^2}.
\end{align*}
If $0\leq \xi\leq \frac{\pi}{2}$ so that  $\cos \xi\geq 0$ then clearly $g(x,\xi)$ is decreasing. Hence
\begin{align*}
K(y,\xi)\leq 2K(x,\xi)\qquad \text{$x\leq y$ and $0\leq \xi\leq \frac{\pi}{2}$}.
\end{align*}
If $\frac{\pi}{2}<\xi\leq \pi,$ then $g(x,\xi)$  is increasing, $g(\infty,\xi) =2(1-\cos\xi ) g(0, \xi) \leq 4  g(x,\xi)$. Then
\begin{align*}
 K(y,\xi)\leq 2 g(y,\xi)\leq 2 g(\infty,\xi) \leq 4  g(x,\xi) \leq 4 K(x,\xi)
\end{align*}
for all $x,y\in (0,\infty)$. Altogether, we deduce that $K(\cdot,\xi)$ is $4$-almost decreasing for $0\leq \xi\leq 3$, i.e., we have $K(y,\xi)\leq 4K(x,\xi) $ for $0<x<y$.  The  family $(\frac{x^{2\eta-1}}{\Gamma(2\eta)} e^{-x})_\eta$  of probability densities function of  $(2\eta,1)$-Gamma distributions  on $(0,\infty)$ and
possesses the monotone likelihood ratio property in $x$, i.e.,  for $\sigma<\eta$, the mapping
\begin{align*}
x\mapsto  \frac{\Gamma(2\sigma)}{\Gamma(2\eta)}x^{2(\eta-\sigma)}
\quad\text{is increasing on $(0,\infty)$}.
\end{align*}
Accordingly, Theorem \ref{thm:mon-like-ratio} implies
that  $f(\eta,\xi)\leq 4f(\sigma,\xi)$.
\end{proof}

\begin{theorem}
\label{thm:r-split-monot-semin-l2}
Let $u\in L^2(\R^d)$ and $r>0$. The following estimates  hold
\begin{align*}
L_2(\eta, u,r)&\leq  L_2(s, u,r)&&0\leq  \eta<s\leq 1,\\
T_2(\eta, u,r)&\leq 2 T_2(\sigma , u,r), && 0\leq \sigma< \eta \leq 1,\\
(1-\eta)^{1/2}T_2(\eta, u,r)&\leq \sqrt{3} (1-\sigma)^{1/2}T_2(\sigma , u,r),
&& 0\leq \sigma< \eta \leq 1.
\end{align*}
\end{theorem}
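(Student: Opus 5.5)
The plan is to reduce everything, via Plancherel, to the scalar inequalities of Lemma~\ref{lem:short-cos} and Lemma~\ref{lem:tail-short-cos}. First express $U$ in Fourier variables. For $u\in L^2(\R^d)$, Plancherel gives $\|u(\cdot+t\omega)-u\|_{L^2}^2=(2\pi)^{-d}\int_{\R^d}|e^{it\omega\cdot\xi}-1|^2|\widehat u(\xi)|^2\d\xi=2(2\pi)^{-d}\int_{\R^d}(1-\cos(t\,\omega\cdot\xi))|\widehat u(\xi)|^2\d\xi$. Integrating over $\omega\in\mathbb{S}^{d-1}$ and using Tonelli, we get
\begin{align*}
U(t)=2(2\pi)^{-d}\int_{\R^d}\int_{\mathbb{S}^{d-1}}\big(1-\cos(t\,\omega\cdot\xi)\big)\d\sigma_{d-1}(\omega)\,|\widehat u(\xi)|^2\d\xi .
\end{align*}
For $d=1$ the sphere is $\{\pm1\}$ and this is just a sum of two terms. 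Everything is nonnegative, so for any weight $\phi\ge0$ on $(0,r)$ or $(r,\infty)$ Tonelli gives $\int\phi(t)U(t)\d t=2(2\pi)^{-d}\int_{\R^d}\int_{\mathbb{S}^{d-1}}\big(\int\phi(t)(1-\cos(t\,\omega\cdot\xi))\d t\big)\d\sigma_{d-1}(\omega)|\widehat u(\xi)|^2\d\xi$.

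\textbf{Step 1 (short range).} Take $\phi(t)=(1-s)(2r)^{-2(1-s)}t^{-1-2s}\mathds 1_{(0,r)}$. Then $L_2(s,u,r)^2$ is a nonnegative superposition, over the frequencies $\zeta=\omega\cdot\xi\in\R$, of the scalar quantities $(1-s)(2r)^{-2(1-s)}\int_0^r(1-\cos(\zeta t))t^{-1-2s}\d t$. Lemma~\ref{lem:short-cos} says each of these is nondecreasing in $s$ on $[0,1]$. Integrating against the positive measure $\d\sigma_{d-1}(\omega)|\widehat u(\xi)|^2\d\xi$ then yields $L_2(\eta,u,r)\le L_2(s,u,r)$ for $\eta<s<1$. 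The endpoint $s=1$ follows by letting $s\to1^-$, using the continuity and limit identification provided by Theorem~\ref{thm:asymp-larg-short}, i.e.\ $L_2(1,u,r)=[u]_{W^{1,2}}$. If $u\notin W^{1,2}$, the right-hand side is $+\infty$ and there is nothing to prove.

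\textbf{Step 2 (tail).} Take $\phi(t)=\eta r^{2\eta}t^{-1-2\eta}\mathds 1_{(r,\infty)}$. Here the first inequality of Lemma~\ref{lem:tail-short-cos} gives, pointwise in $\zeta$, the scalar quantity for $\eta$ bounded by $4$ times that for $\sigma$. Integrating gives $T_2(\eta)^2\le4T_2(\sigma)^2$, i.e.\ $T_2(\eta,u,r)\le2T_2(\sigma,u,r)$. The second inequality of that lemma, which carries the factors $(1-\eta)$ and $(1-\sigma)$, gives the third claim with constant $\sqrt3$ in the same way.

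\textbf{Step 3 (endpoints for the tail).} The case $\sigma=0$ is interpreted through $T_2(0,u,r)=[u]_{W^{0,2}}$, which is the limit $\sigma\to0^+$ by Theorem~\ref{thm:asymp-larg-short}. The Fourier-side limit in Lemma~\ref{lem:tail-short-cos}, namely $\tfrac12$ for $\zeta\neq0$, is consistent with this. Since the lemma includes $\sigma=0$ in the pointwise sense, one can also just pass to the limit $\sigma\to0^+$ in the already established inequality using that continuity.

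For $\eta=1$ the lemma is stated only for $\eta<1$. I would handle it by letting $\eta\to1^-$, using continuity of $s\mapsto T_2(s,u,r)$. This continuity follows from dominated convergence, since $U\le 4|\mathbb{S}^{d-1}|\|u\|_2^2$ is bounded on $(r,\infty)$. With the factor $(1-\eta)$, the left side of the third claim tends to $0$ anyway.

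The main obstacle is justifying the interchange of integrals and the endpoint passages, and making sure the lemmas apply uniformly for every $\zeta\in\R$, including $\zeta=0$ and negative $\zeta$ (handled by parity). All of this is routine given the positivity of the integrands.
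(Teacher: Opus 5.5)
Your proposal is correct and follows the paper's proof: Plancherel writes $U(t)$ as a nonnegative superposition of $1-\cos(t\,\omega\cdot\xi)$, and Lemma~\ref{lem:short-cos} and Lemma~\ref{lem:tail-short-cos} are then applied pointwise in the frequency and integrated. Your separate treatment of the endpoints $s=1$, $\sigma=0$ and $\eta=1$ by monotone limits and dominated convergence supplies details the paper leaves implicit.
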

\begin{proof}
Using the Fourier transform, Plancherel's identity implies
\begin{align*}
U(t)&= \int_{ \mathbb{S}^{d-1}} \int_{\R^d} | u(x+\omega t)-u(x)|^2\d x\d \sigma_{d-1}(\omega)\\
&= 2\int_{\mathbb{S}^{d-1}}  \int_{\R^d} |\widehat{u}(\xi)|^2(1-\cos (|\omega \cdot \xi| t))\d \xi\d \sigma_{d-1}(\omega).
\end{align*}
Accordingly,  Lemma \ref{lem:short-cos} and Lemma \ref{lem:tail-short-cos} imply all desired estimates since
\begin{align*}
T^2_2 (\eta, u,r) &=2 \eta \int_{\mathbb{S}^{d-1}}  \int_{\R^d}  |\widehat{u}(\xi)|^2 r^{2\eta}\int_r^\infty \frac{(1-\cos (|\omega\cdot \xi| t))}{t^{1+2\eta}}\d t \d \xi \d \sigma_{d-1}(\omega),\\
L^2_2 (\eta, u,r) &=2 (1-\eta )\int_{\mathbb{S}^{d-1}}  \int_{\R^d}  |\widehat{u}(\xi)|^2 (2r)^{2\eta}\int_0^r \frac{(1-\cos (|\omega\cdot \xi| t))}{t^{1+2\eta}}\d t \d \xi \d \sigma_{d-1}(\omega).
\end{align*}
\end{proof}
\begin{proof}[Proof of Theorem \ref{thm:r-split-monot-semin}]
 The extreme cases $\sigma=0$ and $s=1$ follow by letting $\sigma\to 0^+$ and $s\to1^-$ using Theorem \ref{thm:mazy'a-bbm-limit}, thus we only consider $0<\sigma<\eta<s<1$.
The continuities are not difficult to establish. By Theorem \ref{thm:lp-first-order-diff}
we know that $t \mapsto g(t) = \frac{U(t)}{t^p}$ is dyadically decreasing, i.e.,  $g(2t) \leq g(t)$ and satisfies
\begin{align*}
g(0)=\lim_{t\to0^+} g(t)= \frac{|\mathbb{S}^{d-1}|}{p}K_{d,p} \|\nabla u\|^p_{L^p(\R^d)} =[u]^p_{W^{1,p}(\R^d)}.
\end{align*}
Therefore,  for $0<\eta<s\leq 1$ and $1\leq p<\infty$,  Theorem~\ref{thm:cadic-monotonicity} implies that
\begin{align*}
(1-\eta)(2r)^{-(1-\eta )p}\int_0^r g(t)t^{(1-\eta) p-1} \d t
&\leq (1-s) (2r)^{-(1-s) p}\int_0^r g(t)t^{(1-s) p-1}\d t.
\end{align*}
In other words we have
\begin{align*}
(1-\eta)(2r)^{-(1-\eta) p}\int_0^r \frac{U(t)}{t^{1+\eta p}} \d t
&\leq (1-s) (2r)^{-(1-s) p}\int_0^r \frac{U(t)}{t^{1+s p}}\d t.
\end{align*}
That is, $L_p(\eta, u,r) \leq L_p(s, u,r)$. Now if $p=\infty$  and $0<\eta<s\leq  1$, we get
\begin{align*}
L_\infty(\eta, u,r)&=
(2r)^{\eta-s} (2r)^{-(1-s)} \sup_{0<|x-y|\leq r} \frac{|u(x)-u(y)|}{|x-y|^s} |x-y|^{s-\eta}\\
&\leq 2^{\eta-s}L_\infty(s, u,r)\leq L_\infty(s, u,r).
\end{align*}
In particular, $ 2^{-\eta}L_\infty(\eta, u,r) \leq 2^{-s}L_\infty(s, u,r)$. 
Next,  we turn our attention to long range. Since   $U(t)\leq 2^p |\mathbb{S}^{d-1}| \|u\|^p_{L^p(\R^d)}$ by Theorem \ref{thm:lp-first-order-diff}, for $\sigma=0$ we get 
\begin{align*}
T_p(\eta, u,r)\leq \Big(2^{p-1}\frac{2|\mathbb{S}^{d-1}|}{p}\|u\|^p_{L^p(\R^d)}
\Big)^{\frac{1}{p}}=2^{1-\frac{1}{p}}T_p(0, u,r)\leq 2T_p(0, u,r).
\end{align*}
For  $p=\infty$,  we get  $T_\infty(\eta, u, r)\leq 2T_\infty(0, u,r)$, while for $0<\sigma<\eta<1$ we have
\begin{align*}
T_\infty(\eta, u,r)&= r^{\sigma}r^{\eta-\sigma}\sup_{|x-y|\geq r} \frac{|u(x)-u(y)|}{|x-y|^{\sigma}}|x-y|^{\sigma-\eta} \leq T_\infty(\sigma, u,r).
\end{align*}
The case $p=2$ and $d\geq 1$ follows immediately from Lemma \ref{lem:tail-short-cos}.
Now assume $d \geq 2$. In view of Theorem \ref{thm:lp-first-order-diff} we know that
\begin{align*}
U(\infty)=\lim_{t\to^\infty} U(t)= \frac{2|\mathbb{S}^{d-1}|}{p} \|u\|^p_{L^p(\R^d)} =[u]^p_{W^{0,p}(\R^d)},
\end{align*}
and $U$ is $2^p$-almost increasing, i.e., $U(s) \leq 2^p U(t)$ for all $s < t$. Accordingly, Theorem~\ref{thm:almost-monotone} implies  that for $0\leq \sigma<\eta $ we have $T_p(\eta, u,r) \leq 2 T_p(\sigma, u,r)$, that is,
\begin{align*}
\eta r^{\eta p}\int_r^\infty \frac{U(t)}{t^{1+\eta p}}\d t
&\leq 2^p\sigma r^{ \sigma p}\int_r^\infty \frac{U(t)}{t^{ 1+\sigma p}} \d t.
\end{align*}

Finally, it remains to show that  $T_p(\eta, u,r) \leq 2^{1+\frac{1}{p}}  r^\sigma(1-\sigma)^{-\frac{1}{p}}
[u]_{W^{\sigma,p}(\R^d)}$ for $d\geq1$,  $\sigma>0$ and $p\neq \infty$.  Since $z^{-\frac{1}{\eta p}} \leq z^{-\frac{1}{\sigma p}}$ for $z\in (0,1)$, Theorem \ref{thm:lp-first-order-diff} implies  $U(rz^{-\frac{1}{\eta p}})\leq 2^{p+1}AU(rz^{-\frac{1}{\sigma p}})$, yielding
\begin{align*}
\int_0^1 U(rz^{-\frac{1}{\eta p}}) \d z \leq 2^{p+1} \int_0^1 AU(rz^{-\frac{1}{\sigma p}})\d z .
\end{align*}
The latter estimate  is equivalent to saying
\begin{align*}
\eta r^{\eta p}\int_r^\infty \frac{U(t)}{t^{1+\eta p}}\d t
&\leq 2^{p+1}\sigma r^{\sigma p}\int_r^\infty \frac{AU(t)}{t^{ 1+\sigma p}} \d t.
\end{align*}
Now, applying Fubini's theorem to interchange the order of integration gives
\begin{align*}
 \int_r^\infty \frac{AU(t)}{t^{1+\sigma p}}\d t
&=
 \int_r^\infty
\Big(\frac{1}{t}\int_0^t U(z)\d z\Big)
\frac{\d t}{ t^{1+\sigma p}}=
 \int_0^\infty U(z) \Big( \int_{\max(r,z)}^\infty \frac{\d t}{
t^{2+\sigma p}}\Big)
\d z\\
&\leq   \int_0^\infty
U(z) \Big(\int_{z}^\infty t^{-(2+\sigma p)}\d t\Big)
\d z = \frac{1}{1+\sigma p}  \int_0^\infty
\frac{U(z)}{z^{1+\sigma p}} \d z.
\end{align*}
Altogether  yields the estimate $T_p(\eta, u,r) \leq 2^{1+\frac{1}{p}}  r^\sigma(1-\sigma)^{-\frac{1}{p}}
[u]_{W^{\sigma,p}(\R^d)}$ since 
\begin{align*}
\eta r^{\eta p}\int_r^\infty \frac{U(t)}{t^{1+\eta p}}\d t \leq2^{p+1} \sigma r^{\sigma p}  \int_r^\infty \frac{AU(t)}{t^{1+\sigma p}}\d t\leq
2^{p+1} \sigma r^{\sigma p} \int_0^\infty \frac{U(t)}{t^{1+\sigma p}}\d t.
\end{align*}
\end{proof}
 
\begin{corollary}[Characterization of $W^{s,p}(\R^d)$]
For $0< s\leq 1$, $r>0$ and $u\in L^p(\R^d)$, $1\leq p<\infty$ we have
\begin{align*}
L_p(s,u,r) = \sup_{0\leq \eta< s} L_p(\eta,u,r).
\end{align*}
Furthermore, with the understanding that the space $W^{1,1}(\mathbb{R}^d)$ is replaced by $BV(\R^d)$ when $s = 1$ and $p = 1$, we have that $u \in W^{s,p}(\R^d)$ if and only if
\begin{align*}
\sup_{0 \leq \eta < s} L_p(\eta,u,r) < \infty.
\end{align*}
\end{corollary}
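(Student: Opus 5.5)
The plan is to read off both claims from part $(A)$ of Theorem~\ref{thm:r-split-monot-semin}, using the BBM limit (Theorem~\ref{thm:asymp-larg-short}) for the endpoint $s=1$, and to obtain the characterization by comparing $L_p(\eta,u,r)$ with the full Gagliardo seminorm, splitting it into short- and long-range parts.

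\textbf{Step 1: the supremum identity.} Part $(A)$ of Theorem~\ref{thm:r-split-monot-semin} gives $L_p(\eta,u,r)\le L_p(s,u,r)$ for every $0\le\eta<s$, hence $\sup_{0\le\eta<s}L_p(\eta,u,r)\le L_p(s,u,r)$. For the reverse inequality I distinguish two cases.
\begin{itemize}
\item If $0<s<1$, the map $\eta\mapsto L_p(\eta,u,r)$ is continuous (again part $(A)$), so $L_p(\eta,u,r)\to L_p(s,u,r)$ as $\eta\to s^-$. Concretely, $\eta\mapsto\int_0^r U(t)t^{-1-\eta p}\d t$ increases to $\int_0^r U(t)t^{-1-sp}\d t$ by monotone convergence, and this includes the value $+\infty$.
\item If $s=1$, Theorem~\ref{thm:asymp-larg-short}(3) gives $\lim_{\eta\to1^-}L_p(\eta,u,r)=[u]_{W^{1,p}(\R^d)}=L_p(1,u,r)$. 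When $u\notin W^{1,p}$ (resp.\ $u\notin BV$ for $p=1$), the BBM characterization in Theorem~\ref{thm:mazy'a-bbm-limit} makes the left side blow up, consistently with the convention $L_p(1,u,r)=\infty$.
\end{itemize}
Together these show that the supremum equals $L_p(s,u,r)$.

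\textbf{Step 2: the characterization of $W^{s,p}(\R^d)$.} By Step~1, finiteness of the supremum is equivalent to $L_p(s,u,r)<\infty$. For $s=1$ this is exactly $\|\nabla u\|_{L^p}<\infty$, or $|u|_{BV}<\infty$ when $p=1$. For $0<s<1$, split
\begin{align*}
|u|^p_{W^{s,p}(\R^d)}=\int_0^r\frac{U(t)}{t^{1+sp}}\d t+\int_r^\infty\frac{U(t)}{t^{1+sp}}\d t .
\end{align*}
The tail is always finite for $u\in L^p$, since Theorem~\ref{thm:lp-first-order-diff} gives $U\le 2^p|\mathbb{S}^{d-1}|\|u\|^p_{L^p}$ and hence
\begin{align*}
\int_r^\infty\frac{U(t)}{t^{1+sp}}\d t\le 2^p|\mathbb{S}^{d-1}|\|u\|^p_{L^p}\,\frac{r^{-sp}}{sp}.
\end{align*}
Therefore $u\in W^{s,p}$ if and only if the short-range integral is finite, which is exactly $L_p(s,u,r)<\infty$.

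\textbf{Main obstacle.} The only delicate point is the endpoint $s=1$ when $u\notin W^{1,p}$ (or $u\notin BV$ for $p=1$). There one needs the $\liminf$ blow-up statement of Theorem~\ref{thm:mazy'a-bbm-limit}, applied only to the short-range part. This is legitimate because the long-range part multiplied by $(1-\eta)$ tends to $0$ by Theorem~\ref{thm:asymp-larg-short}(1).
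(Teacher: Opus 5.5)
Your proposal is correct and follows essentially the paper's own proof: monotonicity from part $(A)$ plus a limiting argument as $\eta\to s^-$ for the supremum identity, the short/long-range split with the tail bounded via $U\le 2^p|\mathbb{S}^{d-1}|\|u\|^p_{L^p(\R^d)}$ for $0<s<1$, and the BBM/D\'avila characterization at $s=1$ (which the paper simply cites from \cite{Fog23}). Two small repairs are needed. First, when $r>1$ the integrand $U(t)t^{-1-\eta p}$ decreases in $\eta$ on $(1,r)$, so replace monotone convergence by Fatou's lemma (as the paper does) or by dominated convergence on $(1,r)$. Second, for $p=1$ and $u\in BV(\R^d)\setminus W^{1,1}(\R^d)$ neither Theorem~\ref{thm:asymp-larg-short}(3) nor the blow-up statement applies, so you must invoke the total-variation limit in Theorem~\ref{thm:mazy'a-bbm-limit} together with Theorem~\ref{thm:asymp-larg-short}(1).
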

\begin{proof}
Assume  $0<s<1$. By Theorem \ref{thm:r-split-monot-semin}, $s\mapsto L_p(s,u,r)$ is increasing. Therefore, using this and Fatou's Lemma imply that
\begin{align*}
L_p(s,u,r)\leq \liminf_{n\to\infty} L_p(s-\tfrac{1}{n},u,r)\leq \sup_{0\leq \eta< s} L_p(\eta,u,r)\leq L_p(s,u,r).
\end{align*}
That is, $L_p(s,u,r) = \sup_{0\leq \eta< s} L_p(\eta,u,r).$ In particular  if $u\in W^{s,p}(\R^d)$ then
\begin{align*}
\sup_{0\leq \eta< s} L_p(\eta,u,r)=L_p(s,u,r) <\infty.
\end{align*}
Conversely if $
\sup_{0\leq \eta< s} L_p(\eta,u,r)<\infty$ then $u\in W^{s,p}(\R^d)$. Indeed, since
$T_p^p(s, u,r)\leq 2^pT_p^p(0, u,r)$ with $T_p^p(0, u,r) =\frac{2}{p}|\mathbb{S}^{d-1}|\|u\|^p_{L^p(\R^d)}$ we have
\begin{align*}
[u]^p_{W^{s,p}(\R^d)}
&= (2r)^{(1-s)p} s L_p^p(s, u,r) + r^{-sp} (1-s)T_p^p(s, u,r)\\
&\leq (2r)^{(1-s)p} s \sup_{0\leq \eta< s} L^p_p(\eta,u,r) + r^{-sp} 2^pT_p^p(0, u,r)<\infty.
\end{align*}
The case $s = 1$, which provides a standard nonlocal characterization of $W^{1,p}(\R^d)$, is less obvious.
It follows from
\cite[Theorems 1.4 and 1.4$'$]{Fog23}; see also \cite{BBM01}.
\end{proof}
\subsection{Proof of Theorem \ref{thm:robust-interpo-esti} } \label{sec:robust-interpo-esti}
We also need the following elementary lemma.
\begin{lemma}
\label{lem:min-power-interpolation}
Given $a, b, \alpha, \beta > 0$,   the function $r\mapsto F(r) := a r^\alpha + b r^{-\beta},$ $r>0$ attains a unique global minimum at $r_* = \left( \frac{b \beta}{a \alpha} \right)^{\frac{1}{\alpha + \beta}}$ and satisfies
\begin{align*}
\min_{r > 0} F(r)
&= F(r_*) = (\alpha + \beta) \left( \frac{a}{\alpha} \right)^{\frac{\beta}{\alpha + \beta}} \left( \frac{b}{\beta} \right)^{\frac{\alpha}{\alpha + \beta}} = \kappa(\theta)\, a^{\theta} b^{1 - \theta},
\end{align*}
where $\theta = \frac{\beta}{\alpha + \beta} \in (0,1)$ and  the function $\theta \mapsto \kappa(\theta)=\theta^{-\theta}(1 - \theta)^{-(1-\theta)} $ satisfies  $\kappa(0^+) =\kappa(1^-) =1$, $\kappa(1/2)=2 $  and the universal bounds
\begin{align*}
1\leq \kappa(\theta) \leq 2 \quad \text{for all } \theta \in (0,1).
\end{align*}
Analogously the function $r\mapsto G(r) := \max(a r^\alpha, b r^{-\beta}),$ $r>0$ attains a unique global minimum at $r_0 = \left( \frac{b}{a} \right)^{\frac{1}{\alpha + \beta}}$ and satisfies
\begin{align*}
\min_{r > 0} G(r)
&= G(r_0) = a^{\frac{\beta}{\alpha + \beta}} b^{\frac{\alpha}{\alpha + \beta}} = a^{\theta} b^{1 - \theta}.
\end{align*}
\end{lemma}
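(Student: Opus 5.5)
The plan is elementary calculus, treating $F$ and $G$ separately and then the function $\kappa$.

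For $F(r)=ar^\alpha+br^{-\beta}$, first note that $F$ is smooth on $(0,\infty)$ and tends to $+\infty$ both as $r\to0^+$ and as $r\to\infty$. Its derivative is
\begin{align*}
F'(r)=a\alpha r^{\alpha-1}-b\beta r^{-\beta-1}=r^{-\beta-1}\big(a\alpha r^{\alpha+\beta}-b\beta\big).
\end{align*}
This is negative for $r<r_*$ and positive for $r>r_*$, where $r_*^{\alpha+\beta}=\frac{b\beta}{a\alpha}$. Hence $r_*$ is the unique global minimizer.

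To evaluate $F(r_*)$, use the critical point relation $a r_*^\alpha=\frac{b\beta}{\alpha}r_*^{-\beta}$, which gives
\begin{align*}
F(r_*)=\frac{\alpha+\beta}{\alpha}\,a r_*^\alpha .
\end{align*}
Substituting $r_*^\alpha=(\frac{b\beta}{a\alpha})^{\frac{\alpha}{\alpha+\beta}}$ and collecting powers yields $(\alpha+\beta)(a/\alpha)^{\frac{\beta}{\alpha+\beta}}(b/\beta)^{\frac{\alpha}{\alpha+\beta}}$. Now set $\theta=\frac{\beta}{\alpha+\beta}$, so that $1-\theta=\frac{\alpha}{\alpha+\beta}$. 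Then
\begin{align*}
(\alpha+\beta)\,\alpha^{-\theta}\beta^{-(1-\theta)}=\Big(\tfrac{\alpha}{\alpha+\beta}\Big)^{-\theta}\Big(\tfrac{\beta}{\alpha+\beta}\Big)^{-(1-\theta)}=(1-\theta)^{-\theta}\theta^{-(1-\theta)}.
\end{align*}
This expression is $\kappa(1-\theta)$, and it equals $\kappa(\theta)$ only because $\kappa$ is symmetric under $\theta\mapsto1-\theta$. The bookkeeping of exponents here is the step I expect to need the most care, since it is where the symmetry $\kappa(\theta)=\kappa(1-\theta)$ silently enters.

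For $\kappa$, write $\log\kappa(\theta)=-\theta\log\theta-(1-\theta)\log(1-\theta)$, the binary entropy. It is nonnegative, concave, and symmetric about $\tfrac12$. Its limits at $0^+$ and $1^-$ are $0$, and its maximum is $\log 2$, attained at $\tfrac12$. This gives $1\le\kappa\le2$, $\kappa(0^+)=\kappa(1^-)=1$, and $\kappa(1/2)=2$.

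For $G(r)=\max(ar^\alpha,br^{-\beta})$, observe that $ar^\alpha$ is strictly increasing and $br^{-\beta}$ is strictly decreasing. So $G$ equals the decreasing branch for $r\le r_0$ and the increasing branch for $r\ge r_0$, where the branches cross at $r_0^{\alpha+\beta}=b/a$. Hence $G$ is strictly decreasing and then strictly increasing, and $r_0$ is the unique minimizer. The minimum value is
\begin{align*}
G(r_0)=a\,(b/a)^{\frac{\alpha}{\alpha+\beta}}=a^{\frac{\beta}{\alpha+\beta}}b^{\frac{\alpha}{\alpha+\beta}}=a^\theta b^{1-\theta}.
\end{align*}
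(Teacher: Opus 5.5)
Your overall route is the same as the paper's: sign of $F'$, concavity of the entropy $\log\kappa$, and the crossing point for $G$. Those parts are fine. The evaluation of $F(r_*)$, however, is wrong, and it reaches $\kappa(\theta)$ only through a false identity.

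\textbf{Errors in the evaluation of $F(r_*)$.} From $a r_*^\alpha=\frac{b\beta}{\alpha}r_*^{-\beta}$ you get $b r_*^{-\beta}=\frac{\alpha}{\beta}\,a r_*^\alpha$. Hence $F(r_*)=\frac{\alpha+\beta}{\beta}\,a r_*^\alpha$, not $\frac{\alpha+\beta}{\alpha}\,a r_*^\alpha$. Your coefficient also does not actually collect into $(\alpha+\beta)(a/\alpha)^{\frac{\beta}{\alpha+\beta}}(b/\beta)^{\frac{\alpha}{\alpha+\beta}}$; the two differ by the factor $(\beta/\alpha)^{\frac{2\alpha}{\alpha+\beta}}$.

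The decisive error is the claim that $(1-\theta)^{-\theta}\theta^{-(1-\theta)}=\kappa(1-\theta)$. By definition $\kappa(1-\theta)=(1-\theta)^{-(1-\theta)}\theta^{-\theta}$, which is literally the same product as $\kappa(\theta)$. So the symmetry you invoke is vacuous, and your expression has the exponents interchanged. At $\theta=\frac13$ it equals $3\cdot 2^{-1/3}\approx 2.38$, while $\kappa(\frac13)=3\cdot 2^{-2/3}\approx 1.89$. In fact your expression exceeds $2$, so it cannot be a value of $\kappa$ by the very bound you prove in the next paragraph.

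\textbf{Source of the trouble and the fix.} The intermediate expression in the statement is misprinted: $\alpha$ and $\beta$ are interchanged in the denominators. The paper's proof writes the same chain of equalities without carrying out the computation. For $\alpha=1$, $\beta=2$, $a=b=1$ one has $r_*=2^{1/3}$ and $\min F=2^{1/3}+2^{-2/3}=3\cdot 2^{-2/3}$, which is $\kappa(\frac23)$. By contrast, $(\alpha+\beta)(a/\alpha)^{\frac{\beta}{\alpha+\beta}}(b/\beta)^{\frac{\alpha}{\alpha+\beta}}=3\cdot 2^{-1/3}$.

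With the correct coefficient, substituting $r_*^\alpha=(\frac{b\beta}{a\alpha})^{1-\theta}$ gives $F(r_*)=(\alpha+\beta)(a/\beta)^{\theta}(b/\alpha)^{1-\theta}$. Then $(\alpha+\beta)\beta^{-\theta}\alpha^{-(1-\theta)}=\theta^{-\theta}(1-\theta)^{-(1-\theta)}=\kappa(\theta)$ directly, with no symmetry involved. So the endpoint identity $\min_{r>0}F=\kappa(\theta)a^\theta b^{1-\theta}$, which is the form actually used later in the interpolation proof, is true. You should establish it by this computation and flag the middle expression as incorrect, rather than bending the algebra to match it.
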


\begin{proof}
The unique critical point is $ r_* = \big(\frac{b \beta}{a \alpha}\big)^{\frac{1}{\alpha + \beta}}$ satisfying  $F'(r_*) = 0$ with $F'(r) = a \alpha r^{\alpha - 1} - b \beta r^{-\beta - 1}.$
Moreover, evaluating  the second derivative $F''(r)$  at $r=r_*$ reveals that $r_*$ is a global minimum since
\begin{align*}
F''(r_*)
&= a \alpha (\alpha - 1) r_*^{\alpha - 2} + b \beta (\beta + 1) r_*^{-\beta - 2}
=  (a\alpha)^{\frac{\alpha+2}{\alpha + \beta} } (b\beta)^{\frac{\beta-2}{\alpha + \beta} } (\alpha+\beta)>0
\end{align*}
It follows that, the minimal value is given by
\begin{align*}
F(r_*)
&= a \Big(\frac{b \beta}{a \alpha}\Big)^{\frac{\alpha}{\alpha + \beta}} + b \Big(\frac{b \beta}{a \alpha}\Big)^{-\frac{\beta}{\alpha + \beta}}
=(\alpha + \beta)  \Big(\frac{a}{\alpha}\Big)^{\frac{\beta}{\alpha + \beta}}  \Big(\frac{b }{\beta}\Big)^{\frac{\alpha}{\alpha + \beta}}= \kappa(\theta)\, a^{\theta} b^{1 - \theta}.
\end{align*}
Next,  taking $\rho(\theta)= \log(\theta^{\theta} (1-\theta)^{(1-\theta)})$ we find that
\begin{align*}
\rho'(\theta)
&= \log \theta - \log(1 - \theta) \quad\text{and}\quad
\rho''(\theta) = \frac{1}{\theta} + \frac{1}{1 - \theta}>0.
\end{align*}
This implies that $\rho'$ is strictly increasing and  for all $\theta\in (0, \frac12)$ we have
\begin{align*}
-\infty=\lim_{\theta\to 0^+} \rho' (\theta)< \rho'(\theta)< \rho'(1/2)=0< \rho'(1-\theta)<\lim_{\theta\to 0} \rho' (1-\theta)=\infty.
\end{align*}
Thus,  $\rho$ is strictly decreasing on $(0, \tfrac{1}{2})$ and strictly increasing on $(\tfrac{1}{2}, 1)$, yielding that  the  minimum is achieved at $\theta = \tfrac{1}{2}$  and the maximum at $\theta \in \{0,1\}$. We find that
\begin{align*}
\frac{1}{2}\leq \theta^{\theta} (1-\theta)^{(1-\theta)} \leq 1 \quad \text{for all } \theta \in (0,1).
\end{align*}
Therefore, since $1/\kappa(\theta) =
\theta^{\theta}(1 - \theta)^{1-\theta}$ we deduce
\begin{align*}
\kappa(0^+) =\kappa(1^-) =1\leq \kappa(\theta) \leq 2=\kappa(1/2) \quad \text{ for all }
\theta \in (0,1).
\end{align*}
The claim for $G$ follows since $G$ is decreasing on $(0, r_0)$ and increasing on $(r_0,\infty)$.
\end{proof}

\begin{proof}[Proof of Theorem \ref{thm:robust-interpo-esti}]
Let $0\leq \sigma<\eta<s\leq 1$ and $\theta=\tfrac{\eta-\sigma}{s-\sigma}\in(0,1)$. We only prove the estimates for $\R^d$ as those of $\R^d_+$ follow by even reflection; see Lemma~\ref{lem:even-reflec}. For $1 \leq p < \infty$ and $r > 0$, setting $\rho = r^{s-\sigma}$ in Theorem~\ref{thm:r-split-monot-semin} implies that
\begin{align*}
(1-\eta)^{\frac{1}{p}} T_p(\eta, u, r) \leq 2\varkappa r^{\sigma} [u]_{W^{\sigma,p}(\R^d)} \quad \text{and} \quad s^{\frac{1}{p}} L_p(\eta, u, r) \leq (2r)^{-(1-s)} [u]_{W^{s,p}(\R^d)},
\end{align*}
where we recall $\tiny{\varkappa = \begin{cases} 2^{\frac{1}{p}}& \text{if } d = 1, \\ 1 & \text{if } d \geq 2. \end{cases}}$. Therefore, we deduce
\begin{align*}
[u]^p_{W^{\eta,p}(\R^d)}
&=\eta(1-\eta)\int_0^r \frac{U(t)}{t^{1+\eta p}} \d t + \eta(1-\eta)\int_r^\infty \frac{U(t)}{t^{1+\eta p}} \d t\\
&= (2r)^{(1-\eta)p} \eta L_p^p(\eta, u,r) + r^{-\eta p} (1-\eta)T_p^p(\eta, u,r) \\
&\leq (2r)^{(1-\eta)p} sL_p^p(s, u,r) + 2^pr^{-\eta p} (1-\sigma)T_p^p(\sigma, u,r) \\
& \leq 2^{(s-\eta) p}r^{(s-\eta) p}  [u]^p_{W^{s,p}(\R^d)}+ 2^p\varkappa^p r^{-(\eta-\sigma) p}  [u]^p_{W^{\sigma,p}(\R^d)}\\
&=2^{(s-\sigma)(1-\theta)p} \rho^{(1-\theta)p }  [u]^p_{W^{s,p}(\R^d)}+ 2^p\varkappa^p\rho^{-\theta p}  [u]^p_{W^{\sigma,p}(\R^d)},
\end{align*}
According to  Lemma \ref{lem:min-power-interpolation} the value $\rho_*$ that minimizes the right-hand side is
\begin{align*}
\rho_*
& = \Big( \frac{2^{p} \varkappa^p \theta }{2^{(s-\sigma)(1-\theta)p}(1-\theta) } \frac{[u]^p_{W^{\sigma,p}(\R^d)}}{[u]^p_{W^{s,p}(\R^d)}} \Big)^{\frac{1}{p}}.
\end{align*}
Moreover, for   this choice $\rho_*$, the minimal value of the right-hand side becomes:
\begin{align*}
[u]_{W^{\eta,p}(\R^d)}
&\leq	\min_{\rho>0}\Big(2^{(s-\sigma)(1-\theta)p} \rho^{(1-\theta)p }  [u]^p_{W^{s,p}(\R^d)}+ 2^p\rho^{-\theta p }  [u]^p_{W^{\sigma,p}(\R^d)}\Big)^{\frac{1}{p}} \\
&= \varkappa 2^{(s-\eta)\theta+(1-\theta)}\kappa^{\frac{1}{p}}(\theta) [u]_{W^{s,p} (\R^d)}^{\theta } \cdot
[u]_{W^{\sigma,p} (\R^d)}^{1 - \theta}\\
&=\varkappa N_p(\sigma, \eta, s) [u]_{W^{s,p} (\R^d)}^{\theta } \cdot
[u]_{W^{\sigma,p} (\R^d)}^{(1 - \theta)}.
\end{align*}
where we note that  $\kappa(\theta)=\theta^{-\theta}(1-\theta)^{-(1-\theta)}\in [1,2]$ and define
\begin{align*}
N_p(\sigma,\eta,s)
=2^{(s-\eta)\theta+(1-\theta)}
\kappa(\theta)^{\frac{1}{p}}
= 2^{\frac{(s-\eta)}{s-\sigma}(\eta+1-\sigma)}
\left(\frac{s-\sigma}{\eta-\sigma}\right)^{\frac{\eta-\sigma}{(s-\sigma)p}}
\left(\frac{s-\sigma}{s-\eta}\right)^{\frac{s-\eta}{(s-\sigma)p}},
\end{align*}
which satisfies   $1\leq N_p(\sigma, \eta, s)\leq 2^{1+\frac{1}{p}}$.  Analogously for the case $p=\infty$ we have
\begin{align*}
|u|_{W^{\eta,\infty}(\R^d)} &=
\max\Big(\sup_{|x-y|< r}\frac{| u(x)-u(y)|}{|x-y|^\eta}, \,\,  \sup_{|x-y|\geq r}\frac{| u(x)-u(y)|}{|x-y|^\eta}\Big)\\
&= \max \Big((2r)^{(1-\eta)}  L_\infty(\eta, u,r) ,\,\,   r^{-\eta} T_\infty(\eta, u,r)\Big) \\
&\leq  \max\Big((2r)^{(1-\eta)}  L_\infty(s, u,r) ,  \,\, 2r^{-\eta} T_\infty(\sigma, u,r)\Big) \\
&\leq \max \Big((2r)^{s-\eta}[u]_{W^{1,\infty}(\R^d)}, \,\,  2r^{\sigma-\eta}[u]_{W^{0,\infty}(\R^d)}\Big)\\
&=\max\Big(2^{(s-\sigma)(1-\theta)} \rho^{1-\theta}  [u]_{W^{s,\infty}(\R^d)}, \,\, 2\rho^{-\theta }  [u]_{W^{\sigma,\infty}(\R^d)}\Big).
\end{align*}
By Lemma \ref{lem:min-power-interpolation} taking $
\rho= \rho_*= 2^{\eta+ 1-s}\frac{ [u]_{W^{\sigma,\infty}(\R^d)}}{ [u]_{W^{s,\infty}(\R^d)}}$
yields that
\begin{align*}
|u|_{W^{\eta,\infty}(\R^d)}
&\leq \min_{\rho>0} \max\Big(2^{(s-\sigma)(1-\theta)} \rho^{1-\theta}  [u]_{W^{s,\infty}(\R^d)}, \,\, 2\rho^{-\theta }  [u]_{W^{\sigma,\infty}(\R^d)}\Big)\\
&=2^{(s-\eta)\theta+(1-\theta)}[u]_{W^{s,\infty} (\R^d)}^{\theta } \cdot
[u]_{W^{\sigma,\infty} (\R^d)}^{1 - \theta}\\
&= N_\infty(\sigma, \eta, s)[u]_{W^{s,\infty} (\R^d)}^{\theta } \cdot
[u]_{W^{\sigma,\infty} (\R^d)}^{1 - \theta}.
\end{align*}
On the other side, given that $[u]^{*\,p}_{W^{t,p}(\R^d)}= B_{d,p,t} [u]^p_{W^{t,p}(\R^d)}$, it merely follows that
\begin{align*}
[u]^*_{W^{\eta,p} (\R^d)}
&\leq \Upsilon^*_{d,p}(\sigma, \eta, s)
[u]^{*\, \theta }_{W^{s,p} (\R^d)} \cdot
[u]^{*\,(1 - \theta)}_{W^{\sigma,p} (\R^d)},
\end{align*}
with $\Upsilon^*_{d,p}(\sigma, \eta, s)= N_p(\sigma, \eta, s) \Big(\frac{B_{d,p,\eta}}{B_{d,p,\sigma}^{\theta}B_{d,p,s}^{1-\theta}} \Big)^{\frac{1}{p}}$. Likewise, by substituting the expression of $[\cdot]_{W^{t,p}(\R^d)}$ in term of $|\cdot|_{W^{t,p}(\R^d)}$ yields:

\textbf{Case $\sigma\neq 0,\,\, s\neq 1$.}
\begin{align*}
|u|_{W^{\eta,p}(\R^d)} \leq
\frac{ N_p(\sigma,\eta,s)}{[\eta(1-\eta)]^{\frac{1}{p}}}  \Big[ (s(1-s))^\theta (\sigma(1-\sigma))^{1-\theta}\Big]^{\frac{1}{p}}
|u|_{W^{s,p}(\R^d)}^{\theta} |u|_{W^{\sigma,p}(\R^d)}^{1-\theta}.
\end{align*}
\textbf{Case  $\sigma=0, s\neq 1$.}
\begin{align*}
|u|_{W^{\eta,p}(/R^d)} \leq\frac{ N_p(\sigma,\eta,s)}{[\eta(1-\eta)]^{\frac{1}{p}}}  \Big[ (s(1-s))^\theta (2|\mathbb{S}^{d-1}|/p)^{1-\theta} \Big]^{\frac{1}{p}}
|u|_{W^{s,p}(\R^d)}^\theta |u|_{W^{0,p}(\R^d)}^{1-\theta}.
\end{align*}
\textbf{Case $\sigma\neq 0,\,\,  s=1$.}
\begin{align*}
|u|_{W^{\eta,p}(\R^d)} \leq \frac{ N_p(\sigma,\eta,s)}{[\eta(1-\eta)]^{\frac{1}{p}}}  \Big[ (K_{d,p} |\mathbb{S}^{d-1}|/p)^\theta (\sigma(1-\sigma))^{1-\theta}\Big]^{\frac{1}{p}}
|u|_{W^{1,p}(\R^d)}^\theta |u|_{W^{\sigma,p}(\R^d)}^{1-\theta}.
\end{align*}
\textbf{Case $\sigma=0, s=1$.}
\begin{align*}
|u|_{W^{\eta,p}(\R^d)} \leq \frac{ N_p(\sigma,\eta,s)}{[\eta(1-\eta)]^{\frac{1}{p}}}
 \Big[  \frac{|\mathbb{S}^{d-1}|}{p}K_{d,p}^\theta 2^{1-\theta} \Big]^{\frac{1}{p}}
|u|_{W^{1,p}(\R^d)}^\theta |u|_{W^{0,p}(\R^d)}^{1-\theta}.
\end{align*}
\end{proof}

The following corollary is a direct consequence of Theorem~\ref{thm:robust-interpo-esti} and provides a stronger resolution to \cite[Open Problem 1.10]{BSY22} where one notes that $s(1-s)\leq\min(s,1-s)\leq 2s(1-s)$, $s\in [0,1]$;  see also the alternative answer in \cite[Theorem 3.9]{DoMi23}.
\begin{corollary}
\label{cor:open-question}
For $u\in L^p(\R^d)$ and $0\leq \sigma<\eta<s\leq 1$ we have
\begin{align*}
[u]_{W^{\eta,p}(\R^d)}
&\leq
\varkappa N_p(\sigma,\eta,s)
\,\big([u]^p_{W^{\sigma,p}(\R^d)} +[u]^p_{W^{s,p}(\R^d)}\big)^{\frac{1}{p}}\\
[u]_{W^{\eta,p}(\R^d_+)}
&\leq
4^{1/p}\varkappa N_p(\sigma,\eta,s)
\big( [u]^p_{W^{\sigma,p}(\R^d_+)} + [u]^p_{W^{s,p}(\R^d_+)}
\big)^{\frac{1}{p}},
\end{align*}
where we recall that $N_p(\sigma,\eta,s)\leq 2^{1+\frac{1}{p}}$.  In particular for $\sigma=0$ we have
\begin{align*}
[u]_{W^{\eta,p}(\R^d)} &\leq
2^{1+\frac{2}{p}} (2|\mathbb{S}^{d-1}|+1)
\,\big(\|u\|^p_{L^p(\R^d)} +[u]^p_{W^{s,p}(\R^d)}\big)^{\frac{1}{p}}\\
[u]_{W^{\eta,p}(\R^d_+)} &\leq
2^{1+\frac{4}{p}} (2|\mathbb{S}^{d-1}|+1)
\big( \|u\|^p_{L^p(\R^d_+)} + [u]^p_{W^{s,p}(\R^d_+)}
\big)^{\frac{1}{p}}.
\end{align*}
\end{corollary}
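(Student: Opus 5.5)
The plan is to deduce Corollary~\ref{cor:open-question} directly from Theorem~\ref{thm:robust-interpo-esti}\,(I) using Young's inequality, and then to specialise to $\sigma=0$.

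\textbf{Step 1 (first two estimates).} Set $A=[u]_{W^{s,p}(\R^d)}$ and $B=[u]_{W^{\sigma,p}(\R^d)}$. If either of them is infinite, the right-hand side is infinite and there is nothing to prove, so assume both are finite. By the weighted AM--GM (Young) inequality with exponents $1/\theta$ and $1/(1-\theta)$,
\begin{align*}
A^{\theta}B^{1-\theta}=\big(A^p\big)^{\theta/p}\big(B^p\big)^{(1-\theta)/p}\leq \big(\theta A^p+(1-\theta)B^p\big)^{\frac1p}\leq \big(A^p+B^p\big)^{\frac1p}.
\end{align*}
Substituting this into the first inequality of Theorem~\ref{thm:robust-interpo-esti}\,(I) gives the $\R^d$ estimate. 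The half-space estimate follows in the same way from the second inequality of (I), which carries the extra factor $4^{1/p}$. The bound $N_p(\sigma,\eta,s)\le 2^{1+1/p}$ is already part of Theorem~\ref{thm:robust-interpo-esti}.

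\textbf{Step 2 ($\sigma=0$).} By definition, $[u]^p_{W^{0,p}(\R^d)}=\tfrac{2|\mathbb S^{d-1}|}{p}\|u\|^p_{L^p(\R^d)}$. Since $p\ge1$, we have
\begin{align*}
[u]^p_{W^{0,p}}+[u]^p_{W^{s,p}}\le \big(2|\mathbb S^{d-1}|+1\big)\big(\|u\|^p_{L^p}+[u]^p_{W^{s,p}}\big).
\end{align*}
We combine this with $\varkappa N_p\le 2^{1/p}\cdot 2^{1+1/p}=2^{1+2/p}$. To absorb the factor $(2|\mathbb S^{d-1}|+1)^{1/p}$ into the stated constant, we use $c^{1/p}\le c$ for $c\ge1$. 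This yields
\begin{align*}
[u]_{W^{\eta,p}(\R^d)}\le 2^{1+2/p}\,(2|\mathbb S^{d-1}|+1)\,\big(\|u\|^p_{L^p}+[u]^p_{W^{s,p}}\big)^{1/p}.
\end{align*}
On $\R^d_+$ the additional factor $4^{1/p}$ raises the power of $2$ to $2^{1+4/p}$, as stated.

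The only genuine content lies in Theorem~\ref{thm:robust-interpo-esti}. The one point that needs care here is the bookkeeping of constants, which amounts to checking the worst case $\varkappa=2^{1/p}$, i.e.\ $d=1$.
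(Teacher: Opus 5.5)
Your proposal is correct and is essentially the paper's own argument. The paper records the corollary as a direct consequence of Theorem~\ref{thm:robust-interpo-esti}\,(I) combined with the elementary bound $A^{\theta}B^{1-\theta}\le (A^p+B^p)^{1/p}$ (your weighted AM--GM step even yields the slightly sharper $(\theta A^p+(1-\theta)B^p)^{1/p}$), and your bookkeeping for $\sigma=0$, namely $\tfrac{2|\mathbb{S}^{d-1}|}{p}\le 2|\mathbb{S}^{d-1}|$, $\varkappa N_p\le 2^{1+2/p}$, $c^{1/p}\le c$ for $c\ge 1$, and the extra factor $4^{1/p}$ on $\R^d_+$, reproduces the stated constants exactly.
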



\begin{theorem}\label{thm:robust-interpo-dom}
Let $\Omega \subset \R^d$ be a robust $W^{s,p}$-extension domain with corresponding constant $C = C(d,p,\Omega) > 0$ as in Theorem \ref{thm:robust-extension} wherein we consider the norm
\begin{align*}
\|u\|_{W^{s,p}(\Omega)}&:=  \Big(\|u\|^p_{L^p(\Omega)}+ [u]^p_{W^{s,p}(\Omega)} \Big)^{1/p}\qquad s\in [0,1].
\end{align*}
 Then  for $0\leq\sigma <\eta< s\leq1$ there holds the estimate
\begin{align*}
\|u\|_{W^{\eta,p}(\Omega)}
\leq C\,\big(\varkappa N_p(\sigma,\eta,s)+1\big)\,
\|u\|_{W^{s,p}(\Omega)}^{\theta}
\|u\|_{W^{\sigma,p}(\Omega)}^{1-\theta}\leq 9C
\|u\|_{W^{s,p}(\Omega)}^{\theta}
\|u\|_{W^{\sigma,p}(\Omega)}^{1-\theta}.
\end{align*}
This holds in  particular,  if $\Omega = \R^d_+$ or if $\partial\Omega$ is compact and Lipschitz.
\end{theorem}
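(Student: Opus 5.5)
The plan is to transfer the whole-space estimate of Theorem~\ref{thm:robust-interpo-esti}\,(I) to $\Omega$ through the robust extension operator $E$ of Theorem~\ref{thm:robust-extension}, and to treat the $L^p$ part of the norm separately by an elementary interpolation. I assume Theorem~\ref{thm:robust-extension} gives a single linear $E$, independent of $t\in[0,1]$, with $Eu|_\Omega=u$ and
\begin{align*}
\|Eu\|_{W^{t,p}(\R^d)}\leq C\|u\|_{W^{t,p}(\Omega)}\qquad\text{for all } t\in[0,1],
\end{align*}
with $C$ independent of $t$. This uniformity in $t$ is essential, since $E$ has to be used at both endpoints $\sigma$ and $s$ at once.

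\textbf{Step 1 (restriction).} Since $\Omega\subset\R^d$ and the integrand is nonnegative, $[u]_{W^{\eta,p}(\Omega)}\leq[Eu]_{W^{\eta,p}(\R^d)}$ for $0<\eta<1$, with the same normalizing factor $\eta(1-\eta)$ on both sides.

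\textbf{Step 2 (interpolate on $\R^d$ and extend back).} Apply Theorem~\ref{thm:robust-interpo-esti}\,(I) to $Eu$:
\begin{align*}
[Eu]_{W^{\eta,p}(\R^d)}\leq\varkappa N_p(\sigma,\eta,s)\,[Eu]^{\theta}_{W^{s,p}(\R^d)}[Eu]^{1-\theta}_{W^{\sigma,p}(\R^d)}.
\end{align*}
Bounding each seminorm of $Eu$ by its full norm and using the extension bound gives $\leq \varkappa N_p C\|u\|^\theta_{W^{s,p}(\Omega)}\|u\|^{1-\theta}_{W^{\sigma,p}(\Omega)}$; the exponents $\theta+(1-\theta)=1$ make the powers of $C$ combine to $C^1$.

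\textbf{Step 3 ($L^p$ part).} Trivially
\begin{align*}
\|u\|_{L^p(\Omega)}=\|u\|^\theta_{L^p(\Omega)}\|u\|^{1-\theta}_{L^p(\Omega)}\leq\|u\|^\theta_{W^{s,p}(\Omega)}\|u\|^{1-\theta}_{W^{\sigma,p}(\Omega)}.
\end{align*}
Combining with $\|u\|_{W^{\eta,p}}\leq\|u\|_{L^p}+[u]_{W^{\eta,p}}$ and taking $C\geq1$ gives the first inequality. The second follows from $\varkappa\leq 2$ and $N_p\leq 4$, so that $\varkappa N_p+1\leq 9$.

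\textbf{Endpoints.} At $\sigma=0$ the seminorm $[u]_{W^{0,p}}$ is a multiple of $\|u\|_{L^p}$, and at $s=1$ it is a multiple of $\|\nabla u\|_{L^p}$. Restriction still holds in both cases, and the extension bound is assumed to cover $t\in\{0,1\}$. The final assertion then reduces to the existence of robust extensions for $\R^d_+$ (even reflection, Lemma~\ref{lem:even-reflec}, whose constant is $4^{1/p}$) and for compact Lipschitz boundaries (Theorem~\ref{thm:robust-extension}).

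\textbf{Main obstacle.} Everything hinges on the $s$-uniformity of the extension constant, especially as $t\to 0^+$ and $t\to1^-$, where the factor $t(1-t)$ must be tracked. That is the content of the appendix, and I will simply invoke it. I will also check that $C$ is calibrated to the norm $\|\cdot\|_{W^{t,p}}$ defined with the renormalized seminorm, including at $t=0,1$.
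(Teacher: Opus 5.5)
Your proposal is correct and follows the same route as the paper's proof. Both use the $t$-uniform extension operator $E$ of Theorem~\ref{thm:robust-extension}, bound $[u]_{W^{\eta,p}(\Omega)}$ by $[Eu]_{W^{\eta,p}(\R^d)}$, apply Theorem~\ref{thm:robust-interpo-esti}\,(I) to $Eu$, and absorb the $L^p$ part through $\|u\|_{L^p}=\|u\|_{L^p}^{\theta}\|u\|_{L^p}^{1-\theta}$. Your version, using $(a^p+b^p)^{1/p}\le a+b$ and keeping $\varkappa$ to get $\varkappa N_p+1\le 2^{1+2/p}+1\le 9$, is slightly cleaner than the paper's displayed chain, which drops $\varkappa$ and a $p$-th power.
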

\begin{proof}
By Theorem \ref{thm:robust-extension} (see Appendix \ref{sec:robust-exten}), there exist a  $C=C(d,p,\Omega)>0$ and  $\overline{u}\in L^p(\R^d)$ a suitable extension of $u\in L^p(\Omega)$ such that $\overline{u}|_\Omega=u$ a.e. in $\Omega$ and $\|\overline{u}\|_{W^{t, p}(\R^d)}\leq C \|u\|_{W^{t, p}(\Omega)}$ for all $t\in [0,1]$.
 In view of Theorem \ref{thm:robust-interpo-esti} we find that
\begin{align*}
\|u\|_{W^{\eta, p}(\Omega)}
&=\Big(\|u\|^p_{L^p(\Omega)}+ [u]^p_{W^{\eta,p}(\Omega)} \Big)^{1/p}\leq  \Big(\|u\|^p_{L^p(\Omega)}+ [\overline{u}]^p_{W^{\eta,p}(\R^d)} \Big)^{1/p}
\\
&\leq \Big(\|u\|^p_{L^p(\Omega)}+N_p(\sigma, \eta, s)
[\overline{u}]^\theta_{W^{s, p}(\R^d)} [\overline{u}]^{1-\theta}_{W^{\sigma, p}(\R^d)}\Big)^{1/p}\\
&\leq  \Big(\|u\|^{\theta p}_{L^p(\Omega)}\|u\|^{(1-\theta)p}_{L^p(\Omega)}+ CN^p_p(\sigma, \eta, s)\|u\|^{\theta p}_{W^{s, p}(\Omega)}\|u\|^{(1-\theta)p}_{W^{\sigma, p}(\Omega)} \Big)^{1/p}\\
&\leq C(N_p(\sigma, \eta, s)+1)
\|u\|^{\theta }_{W^{s, p}(\Omega)}\|u\|^{1-\theta}_{W^{\sigma, p}(\Omega)} \leq 9C
\|u\|^{\theta }_{W^{s, p}(\Omega)}\|u\|^{1-\theta}_{W^{\sigma, p}(\Omega)}.
\end{align*}
\end{proof}

\subsection{Special case $p=2$}
The cactus case $p = 2$ is simpler to analyzed due to the availability of the Fourier transform.
Recall that the Fourier transform extends uniquely as a unitary operator on $L^2(\R^d)$, defined for
$u \in L^2(\R^d) \cap L^1(\R^d)$ by
\begin{align*}
\hat{u}(\xi):= \frac{1}{(2\pi)^{d/2}}\int_{\R^d} e^{-i\xi\cdot x} u(x)\d x\qquad (\xi\in \R^d).
\end{align*}
It is well-known that by Plancherel's theorem, we have
\begin{align}\label{eq:xxplancherel}
[u]^{*\, 2}_{H^s(\R^d)}:=\frac{ C_{d,s}}{2}\iint_{\R^d \times \R^d}\frac{|u(x+h)-u(x)|^2}{|h |^{d+2s}}\d h= \int_{\R^d}|\hat{u}(\xi)|^2|\xi|^{2s}\d \xi,
\end{align}
where $C_{d,s}$ is the usual normalization constant of the fractional Laplacian
\begin{align*}
C_{d,s}
= \widetilde{C}_{d,2,s}
= \frac{s (1-s) 2^{2s} \Gamma\left( \frac{d + 2s}{2} \right)}{\pi^{d/2} \Gamma(2 - s)}=\Big(\int_{\R^d}\frac{1-\cos(h_1)}{|h|^{d+2s}}\d h\Big)^{-1},
\end{align*}
see \cite[Section 2.3]{guy-thesis} or \cite[Chapter 1]{BuVa16}.
We need the following lemma.
\begin{lemma}
\label{lem:bounds-renorm-fract-cst} Recalling  $|\mathbb{S}^{d-1}|=\frac{2\pi^{d/2}}{\Gamma(d/2)}$, $d\geq1$, there holds that
\begin{align*}
\lim_{s \to 0^+} \frac{C_{d,s}}{s(1 - s)} =
\frac{2}{|\mathbb{S}^{d-1}|}\quad\text{and}\quad
\lim_{s \to 1^-} \frac{C_{d,s}}{s(1 - s)} = \frac{4d}{|\mathbb{S}^{d-1}|}.
\end{align*}
For $d\geq 2$, the map $s\mapsto \frac{C_{d,s}}{s(1-s)} $ is strictly increasing on $(0,1)$. In particular,
\begin{align*}
 \frac{2}{|\mathbb{S}^{d-1}|} < \frac{C_{d,s}}{s(1-s)}  < \frac{4d}{|\mathbb{S}^{d-1}|}\qquad \quad  \text{ for } \,\,  s\in(0,1).
\end{align*}
For $d=1$  the map $s\mapsto \frac{C_{1,s}}{s(1-s)} $ is strictly increasing on $(\tfrac{1}{16},1)$.  Moreover we have
\begin{align*}
\frac{25}{26}
&<\frac{2^{1/8}\Gamma\Big(\frac{9}{16}\Big)}{\sqrt{\pi}} \leq \frac{C_{d,s}}{s(1-s)} \leq2\quad \qquad \text{ for\,\,  $s\in (0,1)$}.
 \end{align*}
\end{lemma}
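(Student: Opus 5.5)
The plan is to work with the explicit closed form
\begin{align*}
\phi_d(s):=\frac{C_{d,s}}{s(1-s)}=\frac{2^{2s}\,\Gamma\big(\tfrac d2+s\big)}{\pi^{d/2}\,\Gamma(2-s)},
\end{align*}
and to read every claim off the function $\log\phi_d$ through the digamma function $\psi=\Gamma'/\Gamma$. For the limits, $\phi_d$ is continuous on $[0,1]$, so it suffices to evaluate it at the endpoints. At $s=0$ we get $\phi_d(0)=\Gamma(d/2)/\pi^{d/2}=2/|\mathbb{S}^{d-1}|$. At $s=1$ we get $\phi_d(1)=4\Gamma(\tfrac d2+1)/\pi^{d/2}=2d\,\Gamma(d/2)/\pi^{d/2}=4d/|\mathbb{S}^{d-1}|$.

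\textbf{Monotonicity for $d\ge2$.} Differentiating gives
\begin{align*}
\frac{\d}{\d s}\log\phi_d(s)=2\log2+\psi\big(\tfrac d2+s\big)+\psi(2-s).
\end{align*}
Since $\psi$ is increasing on $(0,\infty)$ and both arguments are $\ge1$ when $d\ge2$ and $s\in(0,1)$, each $\psi$ term is at least $\psi(1)=-\gamma$. Hence the derivative is at least $2(\log2-\gamma)>0$. So $\phi_d$ is strictly increasing, and the strict two-sided bound follows from the endpoint values.

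\textbf{The case $d=1$.} Here $h(s):=2\log2+\psi(\tfrac12+s)+\psi(2-s)$. Because $\psi$ is concave on $(0,\infty)$, $h$ is concave in $s$, so on $[\tfrac1{16},1]$ its minimum is attained at an endpoint.
\begin{itemize}
\item At $s=1$: using $\psi(\tfrac32)=2-\gamma-2\log2$, we get $h(1)=2-2\gamma>0$.
\item At $s=\tfrac1{16}$: we need $h(\tfrac1{16})=2\log2+\psi(\tfrac9{16})+\psi(\tfrac{31}{16})>0$. This is the only genuinely numerical step and the main obstacle. First-order expansions around $\psi(\tfrac12)=-\gamma-2\log2$ and $\psi(2)=1-\gamma$, with the monotonicity of $\psi'$ controlling the errors, give roughly $-1.67+0.38+1.386\approx0.1>0$. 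The margin is small, so I would certify it with explicit remainder bounds, or with the series $\psi(x)=-\gamma+\sum_{n\ge0}\big(\frac1{n+1}-\frac1{n+x}\big)$ truncated with a tail estimate.
\end{itemize}
Concavity then shows $h>0$ on $[\tfrac1{16},1]$, so $\phi_1$ is strictly increasing there.

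\textbf{Bounds for $d=1$.} Since $h$ is concave, $h(0^+)<0$ (as $\psi(\tfrac12)+\psi(2)+2\log2=1-2\gamma<0$) and $h(\tfrac1{16})>0$, the function $\phi_1$ decreases and then increases on $(0,1)$. Its supremum is therefore $\max(\phi_1(0),\phi_1(1))=\max(1,2)=2$. The minimum is attained in $(0,\tfrac1{16})$. There I would bound the factors separately:
\begin{itemize}
\item $2^{2s}\ge1$;
\item $\Gamma(\tfrac12+s)\ge\Gamma(\tfrac9{16})$, because $\Gamma$ is decreasing on $[\tfrac12,\tfrac9{16}]$;
\item $\Gamma(2-s)\le\Gamma(2)=1$, because $\Gamma$ is increasing on $[\tfrac{31}{16},2]$.
\end{itemize}
This yields $\phi_1(s)\ge\Gamma(\tfrac9{16})/\sqrt\pi$, which is the stated bound up to the harmless power of $2$. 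I would check whether the stated $2^{1/8}$ factor really follows, or whether the bound with $\Gamma(\tfrac9{16})/\sqrt\pi$ alone already exceeds $\tfrac{25}{26}$; it does numerically, since $\Gamma(0.5625)\approx1.58$ and $\sqrt\pi\approx1.772$ give about $0.89$. That is below $25/26$, so the factor $2^{1/8}$ must be recovered by a finer estimate of $2^{2s}\Gamma(\tfrac12+s)$ near $s=\tfrac1{16}$, and this too belongs to the numerical obstacle.
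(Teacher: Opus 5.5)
Your limits, your monotonicity argument for $d\ge 2$, and your concavity argument for $d=1$ on $(\tfrac1{16},1)$ are sound. They are essentially the paper's argument: with $G=1/\phi_d$, the paper shows $H=(G'/G)'=-h'$ is increasing, which is exactly your concavity of $h$. The upper bound $\phi_1\le 2$ also follows as you say. Your numerical step $h(\tfrac1{16})>0$ (true value $\approx 0.083$) can be certified along the lines you indicate. Combine $\psi(\tfrac9{16})\ge\psi(\tfrac12)+\tfrac1{16}\psi'(\tfrac9{16})$ and $\psi(\tfrac{31}{16})\ge\psi(2)-\tfrac1{16}\psi'(\tfrac{31}{16})$ with $x^{-2}+(1+x)^{-1}\le\psi'(x)\le x^{-2}+x^{-1}$; this gives $h(\tfrac1{16})>0.03$.

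The genuine gap is the lower bound $\phi_1\ge 2^{1/8}\Gamma(\tfrac9{16})/\sqrt\pi$ for $d=1$, which your proposal leaves open. Bounding the factors separately cannot work. The factor $2^{2s}$ is minimal at $s=0$ while $\Gamma(\tfrac12+s)$ is minimal at $s=\tfrac1{16}$, so you lose exactly $2^{1/8}$; as you noticed, $\Gamma(\tfrac9{16})/\sqrt\pi\approx0.89<\tfrac{25}{26}$.

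What is missing is not a finer estimate near $s=\tfrac1{16}$. It is monotonicity of the product $2^{2s}\Gamma(\tfrac12+s)$ on all of $(0,\tfrac1{16}]$. Its logarithmic derivative is $2\log2+\psi(\tfrac12+s)\le 2\log2+\psi(\tfrac9{16})$. By concavity, $\psi(\tfrac9{16})\le\psi(\tfrac12)+\tfrac1{16}\psi'(\tfrac12)=-\gamma-2\log2+\tfrac{\pi^2}{32}$, so this derivative is at most $\tfrac{\pi^2}{32}-\gamma<0$. Hence the product is decreasing, and $2^{2s}\Gamma(\tfrac12+s)\ge 2^{1/8}\Gamma(\tfrac9{16})$ on $(0,\tfrac1{16}]$. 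Together with $\Gamma(2-s)\le 1$, this gives $\phi_1(s)\ge 2^{1/8}\Gamma(\tfrac9{16})/\sqrt\pi$ there. On $[\tfrac1{16},1)$ your monotonicity gives $\phi_1(s)\ge\phi_1(\tfrac1{16})=\frac{2^{1/8}\Gamma(9/16)}{\sqrt\pi\,\Gamma(31/16)}$, which is larger because $\Gamma(\tfrac{31}{16})<1$. This is precisely the paper's argument.

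There is also a route inside your own framework. On $[0,\tfrac1{16}]$, concavity together with $h(\tfrac1{16})>0>h(0)$ gives $h\ge h(0)=1-2\gamma$. Hence $\log\phi_1(s)=\int_0^s h\ge (1-2\gamma)/16$, so $\phi_1\ge e^{(1-2\gamma)/16}\approx 0.990>\tfrac{25}{26}$. With this route, the intermediate constant $2^{1/8}\Gamma(\tfrac9{16})/\sqrt\pi$ still needs a separate numerical comparison. In either route, the final inequality $\tfrac{25}{26}<2^{1/8}\Gamma(\tfrac9{16})/\sqrt\pi\approx 0.973$ is a numerical check on $\Gamma(\tfrac9{16})$, which the paper also only asserts.
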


\begin{proof}
Let us recall that the digamma  function given by $\psi(x)=\frac{\Gamma'(x)}{\Gamma(x)}= \frac{d}{d x}\log \Gamma (x)$ is strictly increasing on $(0,\infty)$ where it derivative $\psi'(x)=\sum_{n=0}^\infty \frac{1}{(x+n)^2}>0$ is also called the trigamma function. Moreover $\psi$ has only one positive root $x_0 \approx 1.461632 \in (1,\frac{3}{2})$. Hence the function  $x\mapsto \Gamma(x)$ is strictly increasing on $(x_0,\infty)$ and strictly decreasing on $(0, x_0)$, in particular $\Gamma(x_0)=\min_{x\in (0,\infty)}\Gamma(x)$. Furthermore, let $\gamma \approx 0.5772$ be  Euler-Mascheroni constant then some special values  of $\psi$ are given by
\begin{align*}
 \psi(1)&= -\gamma \approx -0.5772,\qquad && \psi(1/2)= -\gamma -2\log2 \approx -1.9635, \\
\psi(0^+)&= \lim_{x\to 0^+} \psi(x) = -\infty,&& \psi(x+1)=\psi(x)+\frac{1}{x},\quad x>0.
\end{align*}
Define the function $G: (0,1)\to (0,\infty)$ with
\begin{align*}
G(s) :=\frac{s(1-s)}{C_{d,s}}=\frac{\pi^{d/2} \, \Gamma(2 - s)}{2^{2s} \, \Gamma\left( \frac{d + 2s}{2} \right)}.
\end{align*}
For  $d\geq 2$ or $\frac12\leq s<1$, since $\psi$ is increasing  we have $\psi(\frac{d+2s}{2})\geq \psi(1)$  and $\psi(2-s)\geq \psi(1)$. Thus, the logarithmic derivative of $G$ satisfies
\begin{align*}
\frac{d}{ds}\log G(s)= \frac{G'(s)}{G(s)} &= -\psi(2-s) - 2 \log 2 - \psi\Big(\frac{d+2s}{2}\Big) \\
&\leq
 -2\psi(1) - 2 \log 2 =2\gamma- 2 \log 2<0 .
\end{align*}
Therefore, for $d\geq 2,$ $G$ is strictly decreasing on $(0,1)$ yielding that
\begin{align*}
\frac{\pi^{d/2}}{2d\,\Gamma(d/2)}
<G(s)<
\frac{\pi^{d/2}}{\Gamma(d/2)},
\qquad
\text{for } s \in (0,1), \; d \geq 2.
\end{align*}
For $d=1$  and $\frac{1}{2}\leq s\leq 1$ we have  $\frac{G'(s)}{G(s)}<0$. Next, since the trigamma function $\psi'(x)= \frac{d^2}{dx^2}\log\Gamma(x)$ is decreasing, the function $s\mapsto H(s)$ with $H(s):=\big(\tfrac{G'(s)}{G(s)} \big)'= \psi'(2-s)  -\psi'(\frac{1+2s}{2})$ is increasing. Thus, using $\psi'(1)= \frac{\pi^2}{6}$,  $\psi'(\frac{1}{2})= \frac{\pi^2}{2}$ and the formula $\psi'(x+1)=\psi'(x)-\frac{1}{x^2}$, we have
\begin{align*}
H(s)\leq H(\tfrac12)=\psi'(\frac{3}{2})- \psi'(1)= \frac{\pi^2}{3} - 4<0&& \text{for $0<s<\frac{1}{2}$}.
\end{align*}
This implies that $s\mapsto \frac{G'(s)}{G(s)}$ is decreasing on $(0,\frac{1}{2})$, in particular
\begin{align*}
 \frac{G'(s)}{G(s)}\leq  \frac{G'(1/16)}{G(1/16)}<0,\qquad \frac{1}{16}<s<\frac{1}{2}.
\end{align*}
Thus we have  $\frac{G'(s)}{G(s)}<0$ for  $\frac{1}{16}<s<1$. Altogether we find that $G$ is strictly decreasing on $(\frac{1}{16},1)$. Recalling $G(s)= \frac{\sqrt{\pi}\,\Gamma(2-s)}{2^{2s}\,\Gamma\!\left(\frac{1+2s}{2}\right)}$ and $\Gamma\left(\tfrac{31}{16}\right)= \tfrac{15}{16}\Gamma\left(\tfrac{15}{16}\right)\approx 0.97<1$  we get
\begin{align*}
\frac{1}{2}=G(1)<G(s)\leq G(\frac{1}{16})= \frac{\sqrt{\pi}}
{2^{1/8}\,\Gamma\big(\frac{9}{16}\big)} \Gamma\big(\frac{31}{16}\big) <\frac{\sqrt{\pi}}
{2^{1/8}\,\Gamma\big(\frac{9}{16}\big)}, \quad\text{$\frac{1}{16}\leq s<1$}.
\end{align*}
 It remains  the case $d=1$ and $0<s<\tfrac{1}{16}$
we have $2-s \in (\tfrac{31}{16},2) \subset (x_0,\infty)$ and $\tfrac{1+2s}{2} \in (\tfrac12,\tfrac{9}{16}) \subset (0,x_0).$
Since the digamma function is strictly increasing we find that
\begin{align*}
\frac{d}{d s} \log \Big( \frac{1}{2^{2s}\Gamma\big(\frac{1+2s}{2}\big)} \Big)
= -2\log 2 -\psi\big(\tfrac{1+2s}{2}\big)
\geq -2\log 2 -\psi\big(\frac{9}{16}\big)\approx 0.26>0.
\end{align*}
 Whence,  $s \mapsto \frac{1}{2^{2s}\,\Gamma\!\left(\frac{1+2s}{2}\right)}$
is strictly increasing on $ (0,\tfrac{1}{16}).$ On the other hand, since $\Gamma$ is  increasing on $(x_0,\infty)$ we deduce that
\begin{align*}
\Gamma\Big(\frac{31}{16}\Big) &\leq\Gamma(2-s) \leq 1\quad \text{and}\quad
1 \leq \frac{\sqrt{\pi}}{2^{2s}\Gamma\Big(\frac{1+2s}{2}\Big)} \leq \frac{\sqrt{\pi}}{2^{1/8}\Gamma\Big(\frac{9}{16}\Big)}.
\end{align*}
It follows that, for $0<s<\frac{1}{16}$,
\begin{align*}
\frac{24}{25}<0.97\approx  \Gamma\Big(\frac{31}{16}\Big)
\leq G(s)
= \frac{\sqrt{\pi}\,\Gamma(2-s)}{2^{2s}\,\Gamma\big(\tfrac{1+2s}{2}\big)}
\leq \frac{\sqrt{\pi}}{ 2^{1/8}\Gamma\big(\tfrac{9}{16}\big)}< 1.04 =\frac{26}{25}.
\end{align*}
Hence we have
\begin{align*}
\frac{1}{2}<G(s)\leq  \frac{\sqrt{\pi}}
{2^{1/8}\,\Gamma\left(\frac{9}{16}\right)}<\frac{26}{25} \qquad\text{for \,\, $0<s<1$}.
\end{align*}
\end{proof}
\begin{theorem}
\label{thm:robust-interpo-L2}
Let  $u\in L^2(\R^d)$  and $0\leq \sigma<\eta<s\leq 1$  say $\eta =  \theta s+(1-\theta) \sigma$ with $\theta\in (0,1)$. Then we can choose $\Upsilon^*_{2,p}(\sigma, \eta, s)=1$. In particular we have
\begin{align*}
[u]^*_{H^\eta (\R^d)} &\leq[u]^{*\,\theta}_{H^s (\R^d)} \cdot  [u]^{*\,(1-\theta)}_{H^\sigma(\R^d)},
\\
[u]_{H^\eta (\R^d)}
&\leq \widetilde{N}_2(\sigma, \eta, s) \, [u]_{H^s (\R^d)}^{\theta}  \cdot [u]_{H^\sigma(\R^d)}^{1-\theta}
\end{align*}
where $\widetilde{N}_2(\sigma, \eta, s)=\big(\tfrac{B_{d,\sigma}^{\theta}B_{d,s}^{1-\theta}}{B_{d,\eta}} \big)^{1/2}\leq \sqrt{2d}$
and we define $s\mapsto B_{d,2,s}$ by
\begin{align*}
B_{d,2,s}= \frac{C_{d,s}}{2s(1-s)} =\frac{2^{2s-1} \Gamma\left( \frac{d + 2s}{2} \right)}{\pi^{d/2} \Gamma(2 - s)},
\quad
B_{d,2,0}=\frac{1}{|\mathbb{S}^{d-1}|}\quad\text{and}\quad B_{d,2,1}=\frac{2d}{|\mathbb{S}^{d-1}|}.
\end{align*}
\end{theorem}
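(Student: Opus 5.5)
The plan is to work entirely on the Fourier side and obtain the first inequality as a plain Hölder inequality in frequency space.

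\textbf{Step 1 (Fourier representation).} By the Plancherel identity \eqref{eq:xxplancherel}, for $0<t<1$ we have
\begin{align*}
[u]^{*\,2}_{H^t(\R^d)}=\int_{\R^d}|\hat u(\xi)|^2|\xi|^{2t}\d\xi .
\end{align*}
This also holds at the endpoints. For $t=0$ it is Plancherel itself, since $[u]^{*}_{W^{0,2}}=\|u\|_{L^2}$. For $t=1$ it reads $\|\nabla u\|^2_{L^2}=\int|\hat u|^2|\xi|^2\d\xi$.

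\textbf{Step 2 (H\"older on the Fourier side).} Write $\eta=\theta s+(1-\theta)\sigma$ and split
\begin{align*}
|\hat u|^2|\xi|^{2\eta}=\big(|\hat u|^2|\xi|^{2s}\big)^{\theta}\big(|\hat u|^2|\xi|^{2\sigma}\big)^{1-\theta}.
\end{align*}
Apply H\"older with exponents $1/\theta$ and $1/(1-\theta)$. This gives
\begin{align*}
[u]^{*\,2}_{H^\eta}\le [u]^{*\,2\theta}_{H^s}\,[u]^{*\,2(1-\theta)}_{H^\sigma},
\end{align*}
which is the first inequality with $\Upsilon^*=1$. If one of the right-hand seminorms is infinite there is nothing to prove.

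\textbf{Step 3 (unstarred seminorms).} Use the relation $[u]^{*\,2}_{H^t}=B_{d,2,t}[u]^2_{H^t}$ for $t\in[0,1]$. At the endpoints one must check that the constants $B_{d,2,0}=1/|\mathbb S^{d-1}|$ and $B_{d,2,1}=2d/|\mathbb S^{d-1}|$ match the definitions of $[u]_{W^{0,2}}$ and $[u]_{W^{1,2}}$, using $K_{d,2}=1/d$. Substituting into Step 2 yields
\begin{align*}
[u]_{H^\eta}\le\Big(\tfrac{B_{d,2,s}^{\theta}B_{d,2,\sigma}^{1-\theta}}{B_{d,2,\eta}}\Big)^{1/2}[u]^{\theta}_{H^s}[u]^{1-\theta}_{H^\sigma}.
\end{align*}
The statement writes the exponents on $B$ in the other order; I would follow whichever convention the derivation produces. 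For the bound on this constant, Lemma~\ref{lem:bounds-renorm-fract-cst} gives
\begin{align*}
\tfrac{1}{|\mathbb S^{d-1}|}\le B_{d,2,t}\le \tfrac{2d}{|\mathbb S^{d-1}|}\qquad (d\ge2).
\end{align*}
Taking the maximum over the numerator and the minimum over the denominator gives a ratio at most $2d$, hence the factor $\sqrt{2d}$.

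\textbf{Main obstacle.} The delicate point is $d=1$. There Lemma~\ref{lem:bounds-renorm-fract-cst} only gives $\frac{25}{26}<\frac{C_{1,s}}{s(1-s)}\le 2$, so the ratio of $B$'s is at most $2\cdot\frac{26}{25}$. This slightly exceeds $2=2d$, and I expect the stated bound $\sqrt{2d}$ to need adjustment in that case.

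I would first try to use the fact that the extremal values are nearly attained only near the endpoints $t\to0$ and $t\to1$. A convexity argument for $\log B_{d,2,t}$ may then reduce the effective ratio. If this fails, I would record the slightly larger constant $\sqrt{52/25}$ for $d=1$.
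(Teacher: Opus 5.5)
You follow the paper's route: H\"older on the Fourier side via \eqref{eq:xxplancherel}, then rescaling by $B_{d,2,t}$ and invoking Lemma~\ref{lem:bounds-renorm-fract-cst}. The first inequality and the case $d\ge2$ are correct as you wrote them.

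Your remark on the exponent order is right, and it is not cosmetic. The paper's proof works with $\eta=\theta\sigma+(1-\theta)s$, which is where $B_{d,\sigma}^{\theta}B_{d,s}^{1-\theta}$ comes from. Under the statement's convention, Fourier-side H\"older is sharp: take $\hat u$ concentrated on a thin shell $|\xi|\approx R$. So the optimal constant is exactly your $\big(B_{d,2,s}^{\theta}B_{d,2,\sigma}^{1-\theta}/B_{d,2,\eta}\big)^{1/2}$. For $d\ge2$, $t\mapsto B_{d,2,t}$ is increasing, so the printed constant is strictly smaller than the optimal one when $\theta>\tfrac12$. The statement as literally printed is therefore false there.

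\textbf{The gap: $d=1$.} You leave this case unresolved, and the paper's proof has the same hole, since it only cites the Lemma there. No argument that bounds numerator and denominator separately can give $2$. With $G=1/(2B_{1,2,\cdot})$ as in the Lemma's proof,
$(\log G)'(0)=-\psi(2)-2\log 2-\psi(\tfrac12)=2\gamma-1>0$.
Hence $B_{1,2,t}<\tfrac12=B_{1,2,0}$ for small $t>0$, while $B_{1,2,1}=1$. A convexity argument for $\log B_{1,2,t}$ would not help either: convexity only yields the lower bound $\widetilde N_2\ge1$.

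Your expectation that $\sqrt2$ must be weakened is nevertheless wrong; the weights do the job.
\begin{itemize}
\item Put $b(t)=B_{1,2,t}$ and $Q=b(s)^\theta b(\sigma)^{1-\theta}/b(\eta)$.
\item The Lemma gives $\tfrac{25}{52}<b\le1$, and its proof shows that $b$ first decreases and then increases.
\item Direct computation gives $b(\tfrac14)=\tfrac23\sqrt{2/\pi}>\tfrac12$ and $b(\tfrac34)=\sqrt{2/\pi}$.
\item If $b(\eta)\ge\tfrac12$, then $Q\le2$ because $b\le1$.
\item If $b(\eta)<\tfrac12$, the shape of $b$ together with $b(0)=\tfrac12<b(\tfrac14)$ forces $\eta<\tfrac14$ and $b(\sigma)\le\tfrac12$. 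Hence $Q<\tfrac{26}{25}\,(2b(s))^{\theta}$.
\item For $s\le\tfrac34$, the shape of $b$ gives $2b(s)\le2\sqrt{2/\pi}$. Since $x^\theta\le\max\{1,x\}$, this yields $Q<1.7$.
\item For $s>\tfrac34$, one has $\theta\le\eta/s<\tfrac13$, so $Q<\tfrac{26}{25}\,2^{1/3}<1.4$.
\end{itemize}
Thus $\widetilde N_2\le\sqrt2$ also holds for $d=1$, and you do not need the weaker constant $\sqrt{52/25}$.
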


\begin{proof}
Since  $\eta= \theta\sigma +(1-\theta) s$ then H\"older inequality implies
\begin{align*}
\int_{\R^d}|\hat{u}(\xi)|^2
|\xi|^{2\theta \sigma} |\xi|^{2(1-\theta)s}\d \xi
&\leq
\Big(  \int_{\R^d}|\hat{u}(\xi)|^2
|\xi|^{2\sigma}\d \xi\Big)^{\theta}\Big(  \int_{\R^d}|\hat{u}(\xi)|^2
|\xi|^{2s}\d \xi\Big)^{1-\theta}.
\end{align*}
In other words,  by the identity \eqref{eq:xxplancherel} we get
$[u]^*_{H^\eta (\R^d)}\leq[u]^{*\,\theta}_{H^s (\R^d)} [u]^{*\,(1-\theta)}_{H^\sigma(\R^d)}. $
Given that $[u]^{*\,2}_{H^{t}(\R^d)}= B_{d,2,t} [u]^2_{H^{t}(\R^d)}$, we conclude using Lemma \ref{lem:bounds-renorm-fract-cst} that
\begin{align*}
[u]_{H^\eta (\R^d)}\leq \big(\tfrac{B_{d,2,\sigma}^{\theta}B_{d,2,s}^{1-\theta}}{B_{d,2,\eta}} \big)^{1/2} [u]_{H^\sigma(\R^d)}^{\theta}\cdot [u]_{H^s (\R^d)}^{(1-\theta)} \leq  \sqrt{2d} [u]_{H^\sigma(\R^d)}^{\theta}\cdot [u]_{H^s (\R^d)}^{(1-\theta)}.
\end{align*}
\end{proof}

\subsection{One dimensional conjecture gap}\label{sec:conjecture}
From our analysis, the following question remains open: Prove that for $d=1$ there exists a constant $C_p>0$ such that
\begin{align}\label{eq:desired-ineq}
\eta \int_1^\infty \frac{U(t)}{t^{1+\eta p}}\d t
\leq
C_p\, \sigma \int_1^\infty \frac{U(t)}{t^{1+\sigma p}}\d t
\qquad\text{for }\,\, 0\leq \sigma<\eta<1,
\end{align}
for all  $u\in L^p(\R)$, where we note that $d=1$ we have
\begin{align*}
U(t)=\int_{\R}|u(x+t)-u(x)|^p+ |u(x-t)-u(x)|^p \d x=2 \int_{\R}|u(x+t)-u(x)|^p\d x.
\end{align*}
We make the following observations.
\begin{enumerate}
\item \textit{Natural guess for $C_p$.}
Theorem \ref{thm:r-split-monot-semin} establishes that $C_p = 2^p$ when $d \geq 2$, which is thus a natural guess for the constant in our conjecture for $d = 1$.
\item  \textit{Special case $p=2\, \&\, d\geq 1$.}
Thanks to the Fourier transform, the case $p=2$ for $d\geq 1$ boils down to proving the following inequality for $\xi\geq 0$:
\begin{align*}
\eta \int_1^\infty \frac{1-\cos(t\xi)}{t^{1+2\eta }}\d t\leq C_2\, \sigma \int_1^\infty \frac{1-\cos(t\xi)}{t^{1+2\sigma }}\d t\quad\text{for }\,\, 0\leq \sigma<\eta<1.
\end{align*}
We postulate that the sharper bound $C_2 = 2$ can be obtained, thereby improving upon the constant $C_2 = 4$ established in Lemma~\ref{lem:tail-short-cos}.
\item \textit{Better constant.} In general, for $d \ge 1$, the case $\sigma = 0$ suggests that the sought estimate could hold with the sharper constant $C_p = 2^{p-1}$. Indeed, by Theorem~\ref{thm:lp-first-order-diff}, we have $U(t) \leq 2^p |\mathbb{S}^{d-1}| \|u\|^p_{L^p(\R^d)} =: 2^{p-1} U(\infty).$ and hence , applying Lemma~\ref{lem:asymp-averag}, we obtain
\begin{align*}
\qquad \eta \int_1^\infty \frac{U(t)}{t^{1+\eta p}}\d t
\leq
\frac{2^{p-1} }{p} U(\infty) =2^{p-1} \Big(\lim_{\sigma\to 0^+} \sigma \int_1^\infty  \frac{U(t)}{t^{1+\sigma p}} \d t\Big).
\end{align*}
The same observation aligns with the conjecture in the case  $p=2$.

\item \textit{From $d=1$ to $d\geq 2$}.
A relevant motivation for finding a better constant $C_p$ in dimension $d=1$ is that the same constant $C_p$ also works in all dimensions $d\geq 2$ via successive applications of the slicing method from Theorem~\ref{thm:slicing-integ-on-sphere}.

\item \textit{Relaxed inequality.}
Although the inequality \eqref{eq:desired-ineq} is interesting in its own right, the interpolation inequality can also be obtained with a better constant through the following relaxed version:
\begin{align}\label{eq:relaxed-gap-ineq}
(1-\eta)^{\frac{1}{p}}T_p(\eta, u,1)\leq 2 (1-\sigma)^{\frac{1}{p}}T_p(\sigma,u, 1).
\end{align}
A closer look at our proof of Lemma \ref{lem:tail-short-cos}, for  the case $p=2$, hints that inequality \eqref{eq:relaxed-gap-ineq} could be easier to prove than \eqref{eq:desired-ineq}.

\item \textit{Another open question.}
 Although we are unable to construct a suitable counterexample, a related open question is to prove that $U$ is not always $2^p$-almost increasing when $d = 1$. Note that in Theorem~\ref{thm:r-split-monot-semin}, the estimate $T_p(\eta, u,1)\leq 2^p T_p(\sigma,u,1)$ for $d \ge 2$ relies on the fact that $U$ is $2^p$-almost increasing.
\end{enumerate}

\section{Stability regional Dirichlet boundary value problem} \label{sec:application-interpola}
In this section, let $\Omega \subset \mathbb{R}^d$ be an open, bounded Lipschitz domain, and assume that $sp>1$ with $0<s\leq 1$ and $1<p<\infty$, so that $\widetilde{C}_{d,p,s}= C_{d,p,s}$.  Our goal is to establish the convergence of weak solutions to the Dirichlet problem associated with the normalized regional fractional $p$-Laplacian $(-\Delta)_{p,\Omega}^s$.
 The form  $\cE^{s,p}_\Omega(\cdot,\cdot)$ associated to $(-\Delta)_{p,\Omega}^s$ is given, for $u,v\in W^{s,p}(\Omega)$, by
\begin{align*}
\cE^{s,p}_{\Omega}(u,v) &= \frac{C_{d,p,s}}{2} \iint_{\Omega \times \Omega}  \frac{|u(y)-u(x)|^{p-2}(u(y)-u(x)) }{|x-y|^{d+sp}} (v(y)-v(x))\d y \, \d x,\\
\cE^{1,p}_{\Omega}(u,v)&:= \int_\Omega |\nabla u(x)|^{p-2}\nabla u(x)\cdot \nabla v(x)\d x.
\end{align*}
Note that Theorem \ref{thm:xBBM-frac} implies that
$\cE^{s,p}_{\Omega}(u,v)\xrightarrow{s\to 1^-} \cE^{1,p}_{\Omega}(u,v)$ for $u,v\in W^{1,p}(\Omega)$;  see, for instance \cite{Fog26}.
Let us also recall that for $sp > 1$, the trace operator $\gamma^s_0: W^{s,p}(\Omega) \to W^{s-\frac{1}{p},p}(\partial\Omega)$ is continuous and surjective. Moreover, for $g \in W^{s-\frac{1}{p},p}(\partial\Omega)$, we consider the norm defined by
\begin{align*}
\|g\|_{W^{s-\frac{1}{p},p}(\partial\Omega)}
= \inf \big\{ \|u\|^*_{W^{s,p}(\Omega)} : \gamma^s_0 u = g \big\},
\end{align*}
where, for practical reasons, we consider renormalized norm
\begin{align*}
\|u\|^*_{W^{s,p}(\Omega)}
:= \big( \|u\|_{L^p(\Omega)}^p + \cE^{s,p}_{\Omega}(u,u) \big)^{\frac{1}{p}}.
\end{align*}
The fractional case is treated, for instance, in \cite{DMT22} and \cite[Chapter~9]{Leo23}, while the classical case $s = 1$ can be found in \cite[Chapter~III]{BF13}.

\subsection{Asymptotic compactness of Lions--Peetre type}
The following result, originally proved in \cite[Corollary~7]{BBM01}, establishes a form of asymptotic Lions--Peetre-type compactness (cf. \cite[Theorems~V.2.1 and~V.2.2]{LiPe64}). 
\begin{theorem}\label{thm:asymp-compact-frac}
Assume $\Omega \subset \R^d$ is open bounded Lipschitz. Let $s_0 \in [0,1)$ and $1\leq p<\infty$. Assume $(u_s)_{s\in[0,1)} \subset L^p(\Omega)$ is a bounded family such that
\begin{align*}
\sup_{s\in(s_0,1)}
\Big(\int_\Omega |u_s(x)|^p\d x+ \frac{C_{d,p,s}}{2}
\iint_{\Omega \times \Omega} \frac{|u_s(x)-u_s(y)|^p}{|x-y|^{d+sp}} \d y\d x\Big)<\infty.
\end{align*}
 Then there exist a subsequence $s_n\to 1^-$ and $u \in BV_p(\Omega)$ such that
\begin{itemize}
\item $u_{s_n}\to u$ in $L^p(\Omega)$ and  for all $0\leq \eta<1$ we have
\begin{align*}
\|u_s-u\|_{W^{\eta,p}(\Omega)}
\xrightarrow{s\to 1^-}0\quad \text{for all $0\leq \eta<1$}.
\end{align*}
\item  
We have $u\in BV_p(\Omega)$ (see Definition \ref{def:bvp-space}) and satisfies
\begin{align*}
|u|^p_{BV_p(\Omega)}
&\leq \liminf_{s_n\to 1^-} \frac{C_{d,p,s_n}}{2}
\iint_{\Omega \times \Omega} \frac{|u_{s_n}(x)-u_{s_n}(y)|^p}{|x-y|^{d+s_np}} \d y\d x.
\end{align*}
\item If $1<p<\infty$ then for all $0< \tau <1$ and $\tau p>1$ we have
\begin{align*}
\|\gamma^{s_n}_0(u_{s_n})-\gamma^1_0(u)\|_{L^{p}(\partial \Omega)}+ \|\gamma_0^{s_n} (u_{s_n})-\gamma^1_0(u)\|_{W^{\tau-\frac{1}{p},p}(\partial \Omega)}
\xrightarrow{s_n\to 1^-}0.
\end{align*}
\end{itemize}
\end{theorem}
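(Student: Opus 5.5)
The plan has four stages: extend the family to $\R^d$ by a robust extension, take a limit there with a BBM-type compactness argument, upgrade $L^p$ convergence to $W^{\eta,p}$ convergence by interpolation, and finally handle the traces.

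\emph{Extension.} By the robust extension operator of Theorem~\ref{thm:robust-extension}, we extend each $u_s$ to $\overline{u}_s\in L^p(\R^d)$ with $\|\overline{u}_s\|_{W^{t,p}(\R^d)}\le C\|u_s\|_{W^{t,p}(\Omega)}$ uniformly in $t$. Since $C_{d,p,s}\asymp s(1-s)$, the hypothesis gives $\sup_{s\in(s_0,1)}\big(\|\overline{u}_s\|_{L^p}^p+[\overline{u}_s]^p_{W^{s,p}(\R^d)}\big)<\infty$. Multiplying by a fixed cutoff $\chi\in C_c^\infty$ with $\chi=1$ on $\Omega$ keeps these bounds and gives uniform compact support.

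\emph{Compactness and the limit.} Precompactness in $L^p$ is handled by the monotonicity in Theorem~\ref{thm:r-split-monot-semin}(A). For fixed $r$ it gives $L_p(\eta,\overline u_s,r)\le L_p(s,\overline u_s,r)\lesssim [\overline u_s]_{W^{s,p}}$. Combined with the dyadic decrease of $U(t)/t^p$ (Theorem~\ref{thm:lp-first-order-diff}) in the Chebyshev form of Theorem~\ref{thm:chebyshev-cadic}, this should yield a uniform modulus $U_s(h)\le C h^{p}\,[\overline u_s]^p_{W^{s,p}}/\text{(const)}$, at least in averaged form $AU_s(h)\lesssim h^{p}$. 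This is the classical BBM estimate, and Fréchet--Kolmogorov then applies. I would therefore simply cite \cite[Theorem 4]{BBM01} and \cite{Pon04}, or derive it from Theorem~\ref{thm:r-split-monot-semin}, to extract $s_n\to1$ with $u_{s_n}\to u$ in $L^p(\Omega)$. The membership $u\in BV_p(\Omega)$ and the $\liminf$ inequality follow from the BBM/Ponce $\Gamma$-liminf theory (Theorem~\ref{thm:xBBM-frac} and \cite{Pon04}). One route is to apply the monotonicity $L_p(\eta,\cdot,r)\le L_p(s_n,\cdot,r)$ with $\eta$ fixed, pass to the limit by Fatou in $n$, and then let $\eta\to1$ using Theorem~\ref{thm:asymp-larg-short}. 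This also gives $\overline u\in W^{1,p}$ (resp.\ $BV$).

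\emph{Convergence in $W^{\eta,p}$.} This is the heart of the statement. Fix $0\le\eta<1$ and choose $s\in(\eta,1)$. Apply Theorem~\ref{thm:robust-interpo-dom} with $\sigma=0$ and the pair $(\eta,s_n)$ to $v_n=u_{s_n}-u$. The exponent $\theta_n=\eta/s_n\to\eta<1$, and $N_p$ is uniformly bounded, so
\begin{align*}
\|v_n\|_{W^{\eta,p}(\Omega)}\le 9C\,\|v_n\|_{W^{s_n,p}(\Omega)}^{\theta_n}\|v_n\|_{L^p(\Omega)}^{1-\theta_n}.
\end{align*}
The first factor is bounded by the hypothesis together with Theorem~\ref{thm:xBBM-frac} applied to $u$ (robust bound $\sup_s[u]_{W^{s,p}(\Omega)}\lesssim\|u\|_{BV_p}$). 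The second factor tends to $0$, and $1-\theta_n\to1-\eta>0$. This gives convergence along the subsequence, and since the claim is stated for the family I would read it along $s_n$.

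\emph{Traces.} For $1<p<\infty$, $\tau p>1$, $\tau<1$, the trace $\gamma_0:W^{\tau,p}(\Omega)\to W^{\tau-1/p,p}(\partial\Omega)\hookrightarrow L^p(\partial\Omega)$ is continuous. Moreover, $\gamma^{s_n}_0$ and $\gamma^1_0$ coincide with $\gamma^\tau_0$ on the respective spaces, by density of smooth functions and the embedding $W^{s_n,p}\subset W^{\tau,p}$ for $s_n>\tau$. Applying this to $v_n$ with $\eta=\tau$ from the previous step gives the trace convergence.

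The main obstacle I expect is the uniform compactness step: obtaining a modulus of continuity uniform in $s$ near $1$, which is where the robust (dyadic Chebyshev) monotonicity is genuinely needed. I would try the BBM citation first, and fall back on Theorem~\ref{thm:r-split-monot-semin}.
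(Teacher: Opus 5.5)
Your proposal is correct and is essentially the paper's proof. Compactness and the $\liminf$ bound are taken from \cite{Pon04,BBM01}. The limit $u$ is bounded in $W^{s_n,p}(\Omega)$ uniformly in $n$ by the robust upper bound; the paper cites \cite[Lemma 2.12]{Fog23}, which is equivalent to the second estimate of Theorem~\ref{thm:xBBM-frac}. Theorem~\ref{thm:robust-interpo-dom} is then applied to $u_{s_n}-u$ with $\sigma=0$ and $\theta=\eta/s_n$. The trace claim follows from continuity of $\gamma_0^\tau$ on $W^{\tau,p}(\Omega)$, and convergence is obtained only along $s_n$, exactly as in the paper.

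The extension step and your side heuristics are not needed, and two of them are off. The uniform modulus $U_s(h)\lesssim h^p$ is false as stated: uniformly in $s$ one only gets bounds of order $h^{sp}$, which still suffice for Fr\'echet--Kolmogorov. The extension-plus-Fatou route would give $u\in BV_p(\Omega)$ but not the sharp $\liminf$ constant on $\Omega$. Since you rely on the citations for these points, this does not affect your argument.
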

\begin{proof}
The existence of subsequence $s_n\to 1^-$ such that $ \|u_{s_n}-u\|_{L^p(\Omega)}\xrightarrow{s_n\to 1^-}0$ for some $u\in BV_p(\Omega)$, which additionally satisfies the second claim,  can be found in \cite[Theorem 1.2]{Pon04}; see also \cite{BBM01}.  Next, note that by  \cite[Lemma 2.12]{Fog23} we have
\begin{align}\label{eq:xxuniform-bound}
s(1-s)
\iint_{\Omega \times \Omega} \frac{|u(x)-u(y)|^p}{|x-y|^{d+sp}} \d y\d x\leq C(d,p,\Omega)
\begin{cases}
\|u\|^p_{W^{1,p}(\Omega)}&1<p<\infty\\
\|u\|_{BV(\Omega)} &p=1.
\end{cases}
\end{align}
Thus by assumption we find that $\|u_{s_n}\|_{W^{s_n,p}(\Omega)}+\|u\|_{W^{s_n,p}(\Omega)}\leq C(d,p,\Omega)$. Let us consider $0\leq \eta < s_n <1$, for $n$ large enough, so that $u_{s_n}, u \in W^{\eta,p}(\Omega)$. The robust interpolation, Theorem \ref{thm:robust-interpo-dom}, with $\theta=\frac{\eta}{s_n}$ implies 
\begin{align*}
\|u_{s_n}-u\|_{W^{\eta,p}(\Omega)}
&\leq C \|u_{s_n}-u\|^{\frac{\eta}{s_n}}_{W^{s_n,p}(\Omega)} \|u_{s_n}-u\|^{1-\frac{\eta}{s_n}}_{L^{p}(\Omega)} \leq C \|u_{s_n}-u\|^{1-\frac{\eta}{s_n}}_{L^{p}(\Omega)} \xrightarrow{n\to\infty} 0.
\end{align*}
Note that $u_{s_n}, u \in W^{\tau,p}(\Omega)$ for $\frac{1}{p} < \tau \le s_n < 1$. By the continuity of the trace operator and the previous claims, we have
\begin{align*}
\|\gamma^{s_n}_0(u_{s_n}) - \gamma^1_0(u)\|_{L^p(\partial \Omega)}
&+ \|\gamma^{s_n}_0(u_{s_n}) - \gamma^1_0(u)\|_{W^{\tau-\frac{1}{p},p}(\partial \Omega)} \\
&\leq C_\tau \|u_{s_n} - u\|_{W^{\tau,p}(\Omega)} \xrightarrow{n \to \infty} 0,
\end{align*}
where $C_\tau = C(d,p,\Omega,\tau) > 0$ is a constant. This proves the third claim.
\end{proof}

\begin{theorem}[Robust regional Poincar\'e-Friedrichs]
\label{thm:robust-poinca-fried-regio}
Assume  $\Omega\subset \R^d$ is open  bounded set with Lipschitz boundary and $1< p <\infty$. Then there exist $s_0= s_0(d,p,\Omega)\in (\frac{1}{p},1 )$  and $B=B(d,p,\Omega)>1$ such that for all $u\in W^{s,p}_0(\Omega)$  and $s_0 \leq s <1$  we have
\begin{align*}
\int_\Omega |u(x)|^p\d x\leq Bs(1-s) \iint_{ \Omega \times \Omega}\frac{|u(x)-u(y)|^p}{|x-y|^{d+sp}}\d y \d x.
\end{align*}
In particular, letting  $s\to 1^-$ yields
\begin{align*}
\|u\|^p_{L^p(\Omega)}\leq C\|\nabla u\|^p_{L^p(\Omega)}.
\end{align*}
\end{theorem}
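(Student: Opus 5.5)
The plan is a contradiction argument combined with the asymptotic compactness of Theorem~\ref{thm:asymp-compact-frac} and the classical Poincar\'e--Friedrichs inequality for $W^{1,p}_0(\Omega)$.

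Suppose the claim fails. Then there are $s_n\to1^-$ (with $s_np>1$) and $u_n\in W^{s_n,p}_0(\Omega)$, normalized so that $\|u_n\|_{L^p(\Omega)}=1$, with
\begin{align*}
s_n(1-s_n)\iint_{\Omega\times\Omega}\frac{|u_n(x)-u_n(y)|^p}{|x-y|^{d+s_np}}\d y\d x\leq \frac1n .
\end{align*}
A uniform choice of $s_0$ and $B$ is exactly what fails in this case. Indeed, if for every $s_0$ some $s\ge s_0$ and some $u$ violate the inequality with constant $n$, this produces such a sequence; if instead the $s_n$ accumulate below $1$, one uses the fixed-$s$ fractional Poincar\'e inequality and a compactness argument at fixed $s$. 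Since $\widetilde C_{d,p,s}\asymp s(1-s)$, the family $(u_n)$ satisfies the hypothesis of Theorem~\ref{thm:asymp-compact-frac}. Along a subsequence we therefore get $u_n\to u$ in $L^p(\Omega)$, with $u\in W^{1,p}(\Omega)$ and
\begin{align*}
\|\nabla u\|^p_{L^p}\le\liminf_n \frac{C_{d,p,s_n}}{2}\,|u_n|^p_{W^{s_n,p}(\Omega)}=0 .
\end{align*}
Hence $u$ is constant on each connected component of $\Omega$, and $\|u\|_{L^p(\Omega)}=1$.

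Next I need the boundary condition. By the third item of Theorem~\ref{thm:asymp-compact-frac}, for a fixed $\tau\in(\frac1p,1)$ we have $\gamma_0^{s_n}(u_n)\to\gamma_0^1(u)$ in $L^p(\partial\Omega)$. Since $u_n\in W^{s_n,p}_0(\Omega)$ with $s_np>1$, its trace $\gamma_0^{s_n}(u_n)$ vanishes, so $\gamma^1_0(u)=0$. A function that is constant on each component of the bounded Lipschitz set $\Omega$ and has zero trace must vanish, because every component has nonempty Lipschitz boundary. This contradicts $\|u\|_{L^p}=1$.

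The main obstacle is the compatibility of traces: the statement that $W^{s,p}_0(\Omega)$ functions have zero trace uniformly in $s$, together with the use of the trace convergence from Theorem~\ref{thm:asymp-compact-frac}, which requires the $W^{\tau,p}$ convergence established there via the robust interpolation of Theorem~\ref{thm:robust-interpo-dom}. If needed, I would instead note that the $\gamma^{\tau}_0$ of $u_n$ vanish for fixed $\tau$ and pass to the limit in $W^{\tau,p}$ directly.

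The final inequality $\|u\|^p_{L^p}\le C\|\nabla u\|^p_{L^p}$ for $u\in W^{1,p}_0(\Omega)\subset W^{s,p}_0(\Omega)$ follows by letting $s\to1^-$ in the main estimate, using Theorem~\ref{thm:xBBM-frac}. That theorem gives $s(1-s)|u|^p_{W^{s,p}(\Omega)}\to K_{d,p}\frac{|\mathbb{S}^{d-1}|}{p}\|\nabla u\|^p_{L^p(\Omega)}$, so one may take $C=B\,K_{d,p}|\mathbb{S}^{d-1}|/p$.
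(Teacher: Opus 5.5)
Your proposal is correct and follows essentially the same route as the paper. You argue by contradiction with $s_n\to1^-$ and $\|u_{s_n}\|_{L^p(\Omega)}=1$, use the asymptotic compactness theorem to get a limit $u$ with $\nabla u=0$ and $\|u\|_{L^p(\Omega)}=1$, obtain $\gamma^1_0(u)=0$ from $\gamma^\eta_0(u_{s_n})=\gamma^{s_n}_0(u_{s_n})=0$ together with the $W^{\eta,p}$-convergence (your fallback is exactly the paper's step), and conclude $u=0$.

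The differences are cosmetic. The paper finishes by noting that the zero extension of $u\in W^{1,p}_0(\Omega)$ has vanishing gradient on $\mathbb{R}^d$, rather than arguing component by component. Also, your case where the $s_n$ accumulate below $1$ never arises: the negation of the statement lets you choose $s_0=1-2^{-n}$, which forces $s_n\to1^-$.
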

\begin{proof}
Assume there exist no such $s_0$ and $B$ exist. Taking $s_0 = 1-\frac{1}{2^n}>\frac{1}{p}$ with $n \geq 1$ large and $B = 2^n$ there exist $s_n \in (1-\frac{1}{2^n},1)$ and $u_{s_n} \in W^{s_n, p}_0(\Omega)$ such that
\begin{align*}
\|u_{s_n}\|_{L^p(\Omega)} = 1\quad \text{and}\quad s_n(1-s_n)\iint_{ \Omega \times \Omega}\frac{|u_{s_n}(x)-u_{s_n}(y)|^p}{|x-y|^{d+s_sp}}\d y \d x<\frac{1}{2^n}.
\end{align*}
In particular, $u_{s_n}\in W^{s_n, p}_0(\Omega)$, $\gamma^{s_n}_0 u_{s_n}=0$ and
\begin{align*}
\sup_{n\geq 1}\Big( \int_\Omega |u_{s_n}(x)|^p\d x+ s_n(1-s_n)\iint_{ \Omega \times \Omega}\frac{|u_{s_n}(x)-u_{s_n}(y)|^p}{|x-y|^{d+s_sp}}\d y \d x\Big)<2.
\end{align*}
By the asymptotic compactness Theorem \ref{thm:asymp-compact-frac} there is $u \in W^{1,p}(\Omega)$ and a subsequence still denoted $(u_{s_n})_n$  such that $\|u_{s_n}-u\|_{W^{\eta,p}(\Omega)}\to0$, $0\leq \eta<1$ as $n\to \infty$ and
\begin{align*}
K_{d,p} \frac{|\mathbb{S}^{d-1}|}{p}\|\nabla u\|_{L^p(\Omega)}^p \leq \liminf_{n\to\infty} s_n(1-s_n)\iint_{ \Omega \times \Omega}\frac{|u_{s_n}(x)-u_{s_n}(y)|^p}{|x-y|^{d+s_sp}}\d y \d x= 0.
\end{align*}
In particular $\|u\|_{L^p(\Omega)}=1$ and $\nabla u$=0 a.e on $\Omega$.  For fixed $\eta\in (\frac{1}{p}, 1)$ and  $N\geq 1$ sufficiently large such that $\eta<s_n<1$ for all $n\geq N$ we have $W^{s_n,p}(\Omega)\subset W^{\eta,p}(\Omega)$. This implies $\gamma^{s_n}_0 v= \gamma^\eta_0v $ for all $v\in W^{s_n,p}(\Omega)\cap C^1(\overline{\Omega})$. By density argument, it is not difficult to conclude that
$ \gamma^\eta_0\mid_{W^{s_n,p}(\Omega)}= \gamma^{s_n}_0$.
Since, $u_{s_n}\in  W^{s_n,p}_0(\Omega)$, it follows that
\begin{align*}
\gamma^\eta_0 (u_{s_n})= \gamma^{s_n}_0 (u_{s_n})=0\qquad\text{for all $n\geq N$}.
\end{align*}
Therefore, by definition of the trace norm we have
\begin{align*}
&\|\gamma^1_0(u)\|_{W^{\eta-\frac{1}{p},p}(\partial\Omega)}
=\|\gamma^{s_n}_0(u_{s_n}) -\gamma^1_0(u)\|_{W^{\eta-\frac{1}{p},p}(\partial\Omega)}\leq \|u_{s_n}-u\|_{W^{\eta,p}(\Omega)} \xrightarrow{s_n\to 1^-}0.
\end{align*}
It follows that  $\gamma^1_0 u=0$. Since  $\partial\Omega$ is Lipschitz, we deduce that  $u \in W^{1,p}_0(\Omega)$. In conclusion,  $u \in W^{1,p}_0(\Omega)$, $\nabla u = 0$ a.e. in $\Omega$ and $\|u\|_{L^p(\Omega)}=1$. Consequently,  the zero extension $\widetilde{u}$ of $u$ belongs to  $ W^{1,p}(\R^d)$ and satisfies $\nabla \widetilde{u}=  \widetilde{\nabla u}= 0$ a.e. in $\R^d$. Hence $\widetilde{u}$ is constant on $\R^d$ and we have $\widetilde{u}\equiv0$ on $\R^d$, since $\widetilde{u}= 0$ on $\R^d \setminus \Omega$. This clearly implies that  $u \equiv 0$ on $\Omega$. Thereby contradicting the fact that $\|u\|_{L^p(\Omega)}=1$  and thus our initial assumption was wrong.
\end{proof}

\subsection{Dirichlet  problem for regional fractional $p$-Laplacian}\label{sec:regional-frac-plaplace}
\begin{proof}[Proof of Theorem \ref{thm:converg-dirichlet-frac-regional}]
Throughout this proof, we assume $sp>1$, let $C>0$ denote a generic constant independent of $s$, while $s_* \in (\tfrac{1}{p}, 1)$ is sufficiently close to $1$.
Given that  $(f_s)_s$ in $L^{p'}(\Omega)$  converges  weakly as $s\to 1^-$ we may assume thanks to the uniform boundedness principle that $(\|f_s\|_{L^{p'}(\Omega)})_{s\in (s_*,1)}$ is bounded; namely, $\sup_{s_*<s<1} \| f_s\|_{L^{p'}(\Omega)} < \infty.$ Now let $\overline{g}_1, h_s \in W^{1,p}(\Omega)$ such that
$\gamma^1_0(\overline{g}_1)= g_1$, $\gamma^1_0 (h_s)= g_s-g_1$ and
\begin{align*}
\|h_s\|_{W^{1,p}(\Omega)}\leq 1-s+ \|g_s-g_1\|_{W^{1-\frac{1}{p},p}(\partial \Omega)}.
\end{align*}
The function $ \overline{g}_s= h_s +\overline{g}_1\in W^{1,p}(\Omega)$ satisfies  $\gamma^1_0(\overline{g}_s) = \gamma^1_0(h_s+\overline{g}_1) = g_s$  and
\begin{align}\label{eq:gs-frac-extens}
\|\overline{g}_s-\overline{g}\|_{W^{1,p}(\Omega)}\leq 1-s+ \|g_s-g\|_{W^{1-\frac{1}{p},p}(\partial \Omega)}\xrightarrow{s\to1^-}0.
\end{align}
Hence, replacing $g_s$ by $\overline{g}_s$ if necessary, we may assume without loss of generality that $g_s\in W^{1,p}(\Omega)$ such that $\|g_s-g_1 \|_{W^{1,p}(\Omega)}\xrightarrow{s\to1^-}0.$ In particular, taking into account Theorem \ref{thm:xBBM-frac}  we get
\begin{align}\label{eq:xgs-conv-uni}
 \|g_s-g_1 \|_{W^{s,p}(\Omega)}\leq C\|g_s-g_1 \|_{W^{1,p}(\Omega)}\xrightarrow{s\to1^-}0.
\end{align}
Thus, up to relabeling $s_*$, we can assume $\|g_s-g \|_{W^{s,p}(\Omega)}<1$ for $s\in (s_*, 1)$, so that taking into account Theorem \ref{thm:xBBM-frac}  we get
\begin{align*}
\sup_{s_*<s<1}\|g_s \|_{W^{s,p}(\Omega)}\leq 1+ \sup_{s_*<s<1}
\|g \|_{W^{s,p}(\Omega)}\leq 1+  C \|g\|^p_{W^{1,p}(\Omega)}<\infty.
\end{align*}
Accordingly, for $s_*$ close enough to $1$,  we obtain the uniform estimate
\begin{align}\label{eq:uniform-bdd-f-g-D-regio}
M:= 	\sup_{s_*<s<1}\big(\|f_s \|_{L^{p'}\Omega)}+\|g_s \|_{W^{s,p}(\Omega)}\big)<\infty.
\end{align}
By the robust Poincar{\'e}-Friedrichs inequality, Theorem \ref{thm:robust-poinca-fried-regio}, we can assume that $s_* \in (\frac{1}{p},1)$ and $C>0$ are such that, for all $s\in ( s_*,1)$ and $v \in W^{s,p}_0(\Omega)$ we have
\begin{align}\label{eq:uniform-coercivity-D-regio}
\|v\|^p_{W^{s,p}(\Omega)}\leq C\cE^{s,p}_\Omega(v,v).
\end{align}
Each $u_s\in W^{s,p}(\Omega)$, $s_*<s\leq 1$, uniquely  satisfies the minimization problem
\begin{align*}
\cJ^{s,p}_0(u_s) &= \min_{v-g_s\in W^{s,p}_0(\Omega)} \cJ^{s,p}_0(v),\\
\cJ^{s,p}_0(v) &= \tfrac{1}{p} \cE^{s,p}_\Omega(v,v)
-\int_\Omega f_s(v-g_s)\d x.
\end{align*}
In particular,  upon deriving the Euler Lagrange equation $u_s$ verifies
\begin{align*}
\cE^{s,p}_\Omega(u_s,u_s-g_s)
=  \int_\Omega f_s ( u_s-g_s)\d x \quad \text{and}\quad u_s-g_s\in W^{s,p}_0(\Omega).
\end{align*}
Since  $u_s-g_s\in W^{s,p}_0(\Omega),$ the estimate \eqref{eq:uniform-coercivity-D-regio} implies that
\begin{align*}
\begin{split}
\| u_s-g_s \|_{W^{s,p}(\Omega)}
\leq C\cE^{s,p}_\Omega(u_s,u_s)^{\frac{1}{p}}+ C\cE^{s,p}_\Omega(g_s,g_s)^{\frac{1}{p}}.
\end{split}
\end{align*}
Using this estimate  we also find that
\begin{align*}
\begin{split}
&\cE^{s,p}_\Omega(u_s,u_s)
=\int_\Omega f_s ( u_s-g_s)\d x +\cE^{s,p}_\Omega(u_s,g_s)\\
&\leq \| u_s-g_s \|_{W^{s,p}(\Omega)} \|f_s\|_{L^{p'}\Omega)}+\cE^{s,p}_\Omega(u_s,u_s)^{\frac{1}{p'}}\cE^{s,p}_\Omega(g_s,g_s)^{\frac{1}{p}}\\
&\leq  C \|f_s\|_{L^{p'}\Omega)}\big(
\cE^{s,p}_\Omega(u_s,u_s)^{\frac{1}{p}}+\cE^{s,p}_\Omega(g_s,g_s)^{\frac{1}{p}}\big)
+\cE^{s,p}_\Omega(u_s,u_s)^{\frac{1}{p'}}\cE^{s,p}_\Omega(g_s,g_s)^{\frac{1}{p}}.
\end{split}
\end{align*}
In view of the  Young inequality $|ab|\leq \frac{\delta^p |a|^p}{p}+ \frac{|b|^{p'}}{p'\delta^{p'}} $, $a,b\in \R$ and $\delta>0$,  we get

\begin{align*}
\cE^{s,p}_\Omega(u_s,u_s)^{\frac{1}{p'}}\cE^{s,p}_\Omega(g_s,g_s)^{\frac{1}{p}}
&\leq  \frac{\delta^{p}\cE^{s,p}_\Omega(u_s,u_s)}{p'}+ \frac{\cE^{s,p}_\Omega(g_s,g_s)}{p\delta^{\frac{p^2}{p'}}},\\
C\|f_s\|_{L^{p'}\Omega)} \cE^{s,p}_\Omega(u_s,u_s)^{\frac{1}{p}}
&\leq  \frac{\delta^p\cE^{s,p}_\Omega(u_s,u_s)}{p}+ \frac{C^{p'}\|f_s\|_{L^{p'}\Omega)} ^{p'}}{p'\delta^{p'}},\\
C\|f_s\|_{L^{p'}\Omega)}\cE^{s,p}_\Omega(g_s,g_s)^{\frac{1}{p}}
&\leq \frac{\delta^{p}\cE^{s,p}_\Omega(g_s,g_s)}{p} + \frac{C^{p'}\|f_s\|_{L^{p'}\Omega)} ^{p'}}{p'\delta^{p'}}.
\end{align*}
Inserting altogether in the previous estimate gives
\begin{align*}
\cE^{s,p}_\Omega(u_s,u_s)
&\leq 	 \delta^{p}\cE^{s,p}_\Omega(u_s,u_s) + ( \frac{1}{p\delta^{\frac{p^2}{p'}}}+ \frac{\delta^{p}}{p}) \cE^{s,p}_\Omega(g_s,g_s)
+ \frac{2C^{p'}}{p'\delta^{p'}} \|f_s\|_{L^{p'}\Omega)} ^{p'}.
\end{align*}
\noindent Taking in particular  $\delta^p=\frac12$ we arrive at
\begin{align*}
\cE^{s,p}_\Omega(u_s,u_s)&\leq C(\|f_s\|_{L^{p'}\Omega)} ^{p'} + \cE^{s,p}_\Omega(g_s,g_s)) .
\end{align*}
On the other hand, the coercivity estimate \eqref{eq:uniform-coercivity-D-regio} implies
\begin{align}\label{eq:xlp-bound-dirich}
\begin{split}
\|u_s\|_{L^p(\Omega)}
&\leq  \|g_s\|_{L^p(\Omega)}+ C\cE^{s,p}_\Omega(u_s-g_s,u_s-g_s)^{\frac{1}{p}}\\
&\leq C\|g_s\|_{W^{s,p}(\Omega) }+ C\cE^{s,p}_\Omega(u_s,u_s)^{\frac{1}{p}}.
\end{split}
\end{align}
By combining everything together and accounting \eqref{eq:uniform-bdd-f-g-D-regio}  leads to the uniform estimate
\begin{align}\label{eq:unifo-bdedness-D}
\|u_s\|_{W^{s,p}(\Omega)}
&\leq C\big(\|f_s \|^{\frac{1}{p-1}}_{L^{p'}\Omega)}+\|g_s \|_{W^{s,p}(\Omega)}\big)
\leq C \qquad s\in (s_*,1)
\end{align}
with  a generic  constant  $C>0$ independent of $s$. Accordingly, by the asymptotic compactness
Theorem \ref{thm:asymp-compact-frac}, there is $u\in W^{1,p}(\Omega)$ and subsequence $s_n\to 1^-$ such that
\begin{align*}
&\|u_{s_n} -u\|_{W^{\eta,p}(\Omega)}\xrightarrow{s_n\to 1^-}0
\qquad\text{for all $0\leq \eta<1$}, \\
&\|\gamma^{s_n}_0(u_{s_n}) -\gamma^1_0(u)\|_{W^{\tau-\frac{1}{p},p}(\partial\Omega)}\xrightarrow{s_n\to 1^-}0\qquad\text{for all $\frac1{p} <\tau<1$},\\
&\cE^{1,p}_\Omega(u,u) \leq \liminf_{n\to\infty}\cE^{s_n,p}_\Omega(u_{s_n}, u_{s_n}).
\end{align*}
Since $\gamma^{s_n}_0 (u_{s_n})= \gamma^{s_n}_0(g_{s_n})$,  $ \gamma^{\tau}_0|_{W^{s_n,p}(\Omega)}=\gamma^{s_n}_0$ and $\gamma^{s_n}_0|_{W^{1,p}(\Omega)}= \gamma^{1}_0$  it follows that
\begin{align*}
\|\gamma^1_0(u)-\gamma^1_0(g_1)\|_{L^{p}(\partial\Omega)}
&\leq \|\gamma^{s_n}_0(u_{s_n}) -\gamma^1_0(u)\|_{L^{p}(\partial\Omega)}
+ \|\gamma^{s_n}_0(g_{s_n}) -\gamma^1_0( g_1)\|_{L^{p}(\partial\Omega)}\\
&\leq C_\tau \|\gamma^{s_n}_0(u_{s_n}) -\gamma^1_0(u)\|_{W^{\tau-\frac{1}{p},p}(\partial\Omega)}
+ C\|g_{s_n}- g_1\|_{W^{1,p}(\Omega)}
\to0,
\end{align*}
as $s_n\to 1^-$. This implies $\gamma^1_0(u)= \gamma^1_0(g_1)$, i.e., $u-g_1\in W^{1,p}_0(\Omega)$.
 From the fact that  $u_{s_n}-g_{s_n}\to u-g_{1}$ in $L^p(\Omega)$ and $f_{s_n}\rightharpoonup f_1 $ we find that
\begin{align*}
\liminf_{n\to\infty}\cJ_0^{s_n,p}(u_{s_n})= \liminf_{n\to\infty}\big(\tfrac{1}{p} \cE_\Omega^{s_n,p}(u_{s_n},u_{s_n} )-\int_\Omega f_{s_n}(u_{s_n}-g_{s_n})\d x \big) \geq  \cJ^{1,p}_0(u).
\end{align*}
Now, we want show that $u=u_1$.  Let $v\in g_1+W^{1,p}_0(\Omega)\subset g_1+W^{s,p}_0(\Omega)$. Define $v_s=v-g_1+g_s$ so that $v_s\in g_s+ W^{s,p}_0(\Omega)$. Using $v_s-v=-g_1+g_s$, we find that
\begin{align*}
\big|\cE^{s,p}_\Omega(v_s,v_s)^{\frac{1}{p}}-\cE^{1,p}_\Omega(v,v)^{\frac{1}{p}}\big|
&\leq \cE^{s,p}_\Omega(v_s-v,v_s-v)^{\frac{1}{p}}+ \big|\cE^{s,p}_\Omega(v,v)^{\frac{1}{p}}-\cE^{1,p}_\Omega(v,v)^{\frac{1}{p}}\big|\\
&\leq  \|g_s-g\|_{W^{s,p}(\Omega)}+ \big|\cE^{s,p}_\Omega(v,v)^{\frac{1}{p}}-\cE^{1,p}_\Omega(v,v)^{\frac{1}{p}}\big|.
\end{align*}
Given that $\Omega$ is Lipschitz, we have  $ \cE^{s,p}_\Omega(v,v)\xrightarrow{s\to1^-}\cE^{1,p}_\Omega(v,v)$  and from \eqref{eq:xgs-conv-uni} we have $\|g_s-g_1\|_{W^{s,p}(\Omega)}\xrightarrow{s\to1^-}0$, it follows that
$\cE^{s,p}_\Omega(v_s,v_s)
\xrightarrow{s\to 1^-} \cE^{1,p}_\Omega(v,v).$
Noting that $v_s-g_s= v-g_1\in W^{1,p}_0(\Omega)$, this and  $f_{s}\rightharpoonup f_1 $ yields
\begin{align*}
\lim_{s\to1^-}\cJ^{s,p}_0(v_s)
&= \lim_{s\to1^-}\big(\tfrac{1}{p}  \cE^{s,p}_\Omega(v_s, v_s)-\int_\Omega f_{s} (u_{s}-g_{s})\d x \big)\\
&=\tfrac{1}{p}\cE^{1,p}(v,v) -\int_\Omega f_1(u-g_{1})\d x= \cJ^{1,p}_0(v).
\end{align*}
Since, each $u_{s_n}$ satisfies
so that  $\cJ_0^{s_n,p}(u_{s_n})\leq
\cJ_0^{s_n,p}(v_{s_n}) $, we deduce that
\begin{align*}
\cJ^{1,p}_0(u)\leq \liminf_{n\to\infty}\cJ_0^{s_n,p}(u_{s_n})\leq
\liminf_{n\to\infty}\cJ_0^{s_n,p}(v_{s_n})= \cJ^{1,p}_0(v).
\end{align*}
In conclusion, $\|u_{s_n}-u\|_{W^{\eta,p}(\Omega)}\to 0$ as $s_n\to1^-$ for all $0\leq \eta<1$ and
\begin{align*}
\cJ^{1,p}_0(u)&= \min_{v-g_1\in W^{1,p}_0(\Omega)} \cJ^{1,p}_0(v).
\end{align*}
In other words, $u= u_1\in W^{1,p}(\Omega)$ is the unique weak solution to the local  Dirichlet. The uniqueness of $u_1$ implies  that $\|u_{s}-u_1\|_{W^{\eta,p}(\Omega)}\to 0$ as $s\to1^-$ for all $0\leq \eta<1$.
\end{proof}

\appendix
\section{Robust extension}\label{sec:robust-exten}
In this section we prove that if the boundary $\partial \Omega$ is Lipschitz and compact, then $\Omega$ is a robust extension domain. More precisely, we show that the Sobolev extension operator for $E: W^{s,p}(\Omega)\to W^{s,p}(\R^d)$ is bounded, with an operator norm that is bounded by a constant  independent of $s$. For the convenience of the reader, we follow the approach presented in \cite{Hitchhiker}. For this section we merely consider the norm
\begin{align*}
\|u\|_{W^{s,p}(\Omega)}:=  \big(\|u\|^p_{L^p(\Omega)}+ [u]^p_{W^{s,p}(\Omega)} \big)^{1/p}\qquad s\in [0,1].
\end{align*}
In general, an arbitrary open set is not necessarily a $W^{s,p}$-extension domain.
The following result shows that the half–space $\R^d_+$ is a $W^{s,p}$-robust extension domain.
\begin{lemma}[Even reflection] \label{lem:even-reflec}
Assume that $\Omega \subset \mathbb{R}^d$ is an open set which is symmetric with respect to the hyperplane $\{x_d = 0\}$, i.e., for $(x', x_d)\in \R^{d-1}\times \R$  we have  $(x', x_d)\in \Omega$ if and only if $(x',-x_d)\in \Omega$.
In particular, $\Omega = \R^d$ is allowed. So that  $\Omega=\Omega_+\cup\Omega_-$ with  $\Omega_+=\Omega \cap \R^d_+$ and $\Omega_-= \Omega \cap (\R^d\setminus \R^d_+)$ where $\R^d_+=\{x\in \R^d\,:\, x_d>0\}$  is the upper half space.   Define refection of $u:\Omega_+\to\R$ by
\begin{align*}
\overline{u}(x)=u(x',|x_d|)=
\begin{cases}
u(x',x_d)&x_d\geq 0,\\
u(x',-x_d) &x_d<0.
\end{cases}
\end{align*}
Then the linear map  $\overline{\cdot}:W^{s,p}(\Omega_+)\to W^{s,p}(\Omega)$, $u\mapsto \overline{u}$  is continuous and satisfies
\begin{align*}
[\overline{u}]_{W^{s,p}(\Omega)} \leq
4^{1/p}[u]_{W^{s,p}(\Omega_+)}\qquad \text{for all $s\in [0,1]$,\,\,\, $1\leq p\leq \infty$}.
\end{align*}
In particular, $\R^d_+$ is a $W^{s,p}$-robust extension domain.
\end{lemma}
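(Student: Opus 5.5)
The plan is to compare the double integral over $\Omega\times\Omega$ with the one over $\Omega_+\times\Omega_+$ by splitting $\Omega\times\Omega$ into the four pieces $\Omega_\pm\times\Omega_\pm$. Denote by $R(x',x_d)=(x',-x_d)$ the reflection, an isometry of $\R^d$ with Jacobian one. By construction $\overline{u}=u\circ R$ on $\Omega_-$, up to the null set $\{x_d=0\}$. Throughout we take $0<s<1$ and $1\le p<\infty$ first, and treat the endpoint cases separately at the end.

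On $\Omega_-\times\Omega_-$ the change of variables $(x,y)\mapsto(Rx,Ry)$ preserves $|x-y|$, so this piece equals the integral over $\Omega_+\times\Omega_+$. For the mixed pieces, take $x\in\Omega_+$ and $y\in\Omega_-$ and substitute $y=Rz$ with $z\in\Omega_+$. Then $\overline{u}(y)=u(z)$, and since $x_d,z_d>0$,
\begin{align*}
|x-Rz|^2=|x'-z'|^2+(x_d+z_d)^2\geq |x'-z'|^2+(x_d-z_d)^2=|x-z|^2 .
\end{align*}
Hence the kernel $|x-Rz|^{-d-sp}$ is bounded by $|x-z|^{-d-sp}$, and each mixed piece is at most the $\Omega_+\times\Omega_+$ integral. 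Summing the four pieces gives
\begin{align*}
\iint_{\Omega\times\Omega}\frac{|\overline{u}(x)-\overline{u}(y)|^p}{|x-y|^{d+sp}}\d y\d x\le 4\iint_{\Omega_+\times\Omega_+}\frac{|u(x)-u(y)|^p}{|x-y|^{d+sp}}\d y\d x .
\end{align*}
Multiplying by $s(1-s)$ and taking $p$-th roots yields the claimed constant $4^{1/p}$.

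The endpoint cases follow separately. For $s=0$ we have the identity $\|\overline{u}\|^p_{L^p(\Omega)}=2\|u\|^p_{L^p(\Omega_+)}$, and $2\le 4$. For $s=1$ we need $\overline{u}\in W^{1,p}(\Omega)$ with $\nabla\overline{u}(x)=(\nabla_{x'}u,\operatorname{sgn}(x_d)\partial_d u)(x',|x_d|)$, which gives $\|\nabla\overline{u}\|^p_{L^p(\Omega)}=2\|\nabla u\|^p_{L^p(\Omega_+)}$. This is the one point needing care: the absence of a jump across $\{x_d=0\}$ must be justified. I would do this by approximating $u$ by functions smooth up to the boundary (for $\Omega=\R^d$ this is standard) and checking that the distributional derivative has no singular part on the hyperplane. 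An alternative is to obtain the $s=1$ case as a limit of the $s<1$ estimate via Theorem~\ref{thm:mazy'a-bbm-limit}, although that also relies on the BBM characterization. For $p=\infty$, the same pointwise inequality $|x-Rz|\ge|x-z|$ bounds the Hölder quotients on the mixed pieces directly, which gives constant $1$ for $0<s<1$. The cases $s\in\{0,1\}$ with $p=\infty$ are immediate.

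Linearity of the reflection map is clear. Combining the $L^p$ identity with the seminorm bound gives continuity with an $s$-independent constant, which is the asserted robustness of $\R^d_+$ when $\Omega=\R^d$. I expect the main obstacle to be only the rigorous treatment of the gradient at $s=1$ across the reflecting hyperplane; every other step reduces to the elementary distance inequality above.
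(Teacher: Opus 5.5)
Your proof is correct and is essentially the paper's argument: you split $\Omega\times\Omega$ into the four pieces $\Omega_\pm\times\Omega_\pm$, handle $\Omega_-\times\Omega_-$ by the isometry $R$, and bound the two mixed pieces via $(x_d+z_d)^2\geq(x_d-z_d)^2$ to get the factor $4$. Like the paper, you use $\|\overline{u}\|^p_{L^p(\Omega)}=2\|u\|^p_{L^p(\Omega_+)}$ for $s=0$, the same distance inequality for $p=\infty$, and for $s=1$ the standard gradient reflection, which the paper likewise only cites (Brezis, Lemma 9.2). For a general symmetric $\Omega$, note that the cutoff-in-$x_d$ argument of that lemma is more convenient than approximating $u$ by functions smooth up to $\partial\Omega_+$, since only the behaviour near $\Omega\cap\{x_d=0\}$ matters.
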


\begin{proof}
The case $s=1$ albeit standard, is somewhat technical and can be adapted for instance from \cite[Lemm 9.2]{Bre10} so  that $\partial_{x_i} \overline{u} = \overline{\partial_{x_i} u}$ for $i = 1,\dots,d-1$ and $\partial_{x_d} \overline{u} = -\overline{\partial_{x_d} u}$. For $1\leq p\leq \infty$, the  change of variables $z=(z',z_d)= (x',-x_d)$ yields
\begin{align*}
\|\overline{u}\|_{L^p(\Omega)}
= 2^{1/p} \|u\|_{L^p(\Omega_+)}\quad \text{and}\quad
\|\nabla \overline{u}\|_{W^{1,p}(\Omega)}
= 2^{1/p}\|\nabla u\|_{W^{1,p}(\Omega_+)}.
\end{align*}
Now we turn our attention to the case $0\leq s<1$. For $x\in \R^d_+$ and $y\in\R^d_-$ we have  $(x_d-y_d)^2\geq (x_d+y_d)^2 $ and therefore
\begin{align*}
\int_{\Omega}\int_{\Omega} \frac{|\overline{u}(x)-\overline{u}(y)|^p}{|x-y|^{d+sp}} &\d y\d x
=  \int_{ \Omega_-}\int_{ \Omega_-} \frac{|u(x',-x_d)-u(y',-y_d)|^p}{|x-y|^{d+sp}} \d y\d x\\
&+\int_{\Omega_+}\int_{\Omega_+} \frac{|u(x)-u(y)|^p}{|x-y|^{d+sp}} \d y \d x+ 2\int_{\Omega_+}\int_{ \Omega_-} \frac{|u(x)-u(y',-y_d)|^p}{|x-y|^{d+sp}} \d y\d x \\
& \leq  4\int_{\Omega_+}\int_{\Omega_+} \frac{|u(x)-u(y)|^p}{|x-y|^{d+sp}} \d y \d x.
\end{align*}
By the same token,  it readily follows that
\begin{align*}
[\overline{u}] _{W^{s,\infty}(\Omega)} \leq  [u]_{W^{s,\infty}(\Omega_+)}\qquad s\in (0,1).
\end{align*}
 Therefore, $\R^d_+$ is a $W^{s,p}$-extension domain since  if $\Omega_+=\R^d_+$ then $\Omega=\R^d$.
\end{proof}

To construct an extension of a function $u$ defined on $\Omega$ to the whole space $\R^d$, we first establish the following preliminary lemmas.
\begin{lemma}[Cutoff function]\label{lem:cutoff}
Let $\Omega\subset \R^d$ be open, $s\in[0,1]$ and
$1\leq p\leq \infty$.  Let $u\in L^p(\Omega)$ and $\varphi \in L^\infty(\Omega)$.  The following assertions are true.
\begin{itemize}
\item If $u, \varphi \in W^{s,p}(\Omega) \cap L^\infty(\Omega)$ then we have  $u\varphi \in W^{s,p}(\Omega)$ and
\begin{align*}
[u\varphi ]_{W^{s,p}(\Omega)}
\leq
2^{1-1/p}\big(\|\varphi\|_{L^\infty(\Omega)} [u]_{W^{s,p}(\Omega)}
+ \|u\|_{L^\infty(\Omega)}  [u]_{W^{s,p}(\Omega)}\big).
\end{align*}
\item If $u\in W^{s,p}(\Omega)$ and $\varphi\in C^{0,1}(\Omega)$, then we have  $u\varphi \in W^{s,p}(\Omega)$ and
\begin{align*}
\|u\varphi \|_{W^{s,p}(\Omega)}\leq  2^{1-1/p}(1+|\mathbb{S}^{d-1}|)^{1/p} \|\varphi\|_{W^{1,\infty}(\Omega)}  \|u\|_{W^{s,p}(\Omega)}.
\end{align*}
 \item If $u\in W^{s,p}(\Omega)$ and $\varphi\in C^{0,1}_c(\Omega)$, then we have  $u\varphi \in W^{s,p}_0(\Omega)$.
\end{itemize}
\end{lemma}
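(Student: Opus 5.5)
The plan is to prove the three assertions of Lemma~\ref{lem:cutoff} in order, each by pointwise splitting of differences and then integrating. The cases $s=0$ and $s=1$ are immediate: the product rule $\nabla(u\varphi)=\varphi\nabla u+u\nabla\varphi$ and the triangle inequality in $L^p$ give them. The same applies to $p=\infty$, where one takes suprema instead of integrals. So I concentrate on $0<s<1$ and $1\le p<\infty$. There I start from the algebraic identity
\begin{align*}
u(x)\varphi(x)-u(y)\varphi(y)=\varphi(x)\big(u(x)-u(y)\big)+u(y)\big(\varphi(x)-\varphi(y)\big),
\end{align*}
combined with the convexity bound $|a+b|^p\le 2^{p-1}(|a|^p+|b|^p)$. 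This is what produces the factor $2^{1-1/p}$ after taking $p$-th roots.

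\emph{First bullet.} Integrating against $s(1-s)|x-y|^{-d-sp}$ over $\Omega\times\Omega$, I bound the first term by $\|\varphi\|^p_{L^\infty}[u]^p_{W^{s,p}(\Omega)}$ and the second by $\|u\|^p_{L^\infty}[\varphi]^p_{W^{s,p}(\Omega)}$. Then subadditivity of $t\mapsto t^{1/p}$ gives the claim. I note that the second seminorm in the displayed statement should read $[\varphi]_{W^{s,p}(\Omega)}$; this is evidently a typo.

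\emph{Second bullet.} The first term is handled exactly as above. For the second, the key point is robustness in $s$. I use
\begin{align*}
|\varphi(x)-\varphi(y)|\le \min\big(\|\nabla\varphi\|_{L^\infty}|x-y|,\,2\|\varphi\|_{L^\infty}\big)\le 2\|\varphi\|_{W^{1,\infty}}\min(1,|x-y|).
\end{align*}
Lipschitz continuity on a non-convex $\Omega$ is understood with the global Lipschitz constant of $\varphi$, which is what $C^{0,1}$ means here. Fubini then reduces the second term to $\int_\Omega|u(y)|^p\,I_s\,\d y$, where
\begin{align*}
I_s=s(1-s)\int_{\R^d}\frac{\min(1,|h|^p)}{|h|^{d+sp}}\d h=s(1-s)|\mathbb S^{d-1}|\Big(\frac{1}{(1-s)p}+\frac{1}{sp}\Big)=\frac{|\mathbb S^{d-1}|}{p}.
\end{align*}
This bound is uniform in $s\in(0,1)$, and that uniformity is the whole point of the lemma. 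Collecting terms, and adding the trivial bound $\|u\varphi\|_{L^p}\le\|\varphi\|_{L^\infty}\|u\|_{L^p}$, yields the stated bound with constant $2^{1-1/p}(1+|\mathbb S^{d-1}|)^{1/p}$, possibly up to harmless adjustments of powers of $2$ coming from the factor $2\|\varphi\|_{W^{1,\infty}}$. The main bookkeeping obstacle is matching the exact constant; if it does not match, I would instead use $\min(\|\nabla\varphi\|_\infty|h|,2\|\varphi\|_\infty)$ and split at $|h|=1$ more carefully.

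\emph{Third bullet.} Let $K=\supp\varphi\Subset\Omega$. I take $u_n\in C^\infty(\Omega)\cap W^{s,p}(\Omega)$ with $u_n\to u$ in $W^{s,p}(\Omega)$, which is possible by Meyers--Serrin-type density for fractional spaces. Then I mollify $\varphi u_n$ at a scale smaller than $\dist(K,\partial\Omega)$. This produces functions in $C_c^\infty(\Omega)$ converging to $\varphi u$. The convergence follows from the second bullet, which gives $\|\varphi(u_n-u)\|_{W^{s,p}}\le C\|u_n-u\|_{W^{s,p}}$, together with continuity of translations in $W^{s,p}$ for compactly supported functions. Hence $u\varphi\in W^{s,p}_0(\Omega)$.
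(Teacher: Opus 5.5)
Your proof of the first two bullets follows the paper's argument. The paper splits the difference the same way (it writes $\varphi(y)(u(x)-u(y))+u(x)(\varphi(x)-\varphi(y))$), uses the same convexity factor $2^{p-1}$, and uses the same kernel identity $s(1-s)\int_{\R^d}\min(1,|h|^p)|h|^{-d-sp}\d h=|\mathbb{S}^{d-1}|/p$, which is where uniformity in $s$ comes from. The typo you flag ($[u]$ written for $[\varphi]$) is also in the paper's proof. Your worry about the constant is well founded. The paper asserts $|\varphi(x)-\varphi(y)|\le\|\varphi\|_{W^{1,\infty}(\Omega)}(1\wedge|x-y|)$, which drops exactly the factor $2$ you keep; it fails, for instance, for $\varphi(x)=\sin(\lambda x_1)$ on $\R^d$ with $\lambda$ small. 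Like you, the paper also tacitly uses a global Lipschitz constant on non-convex $\Omega$. So the explicit constant is not really established there either. Your version gives a somewhat larger constant, still independent of $s$, and only that independence is used later in Theorem~\ref{thm:robust-extension}.

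The third bullet is where your route differs. The paper does no preliminary approximation of $u$; it mollifies $u\varphi$ directly. It sets $U_\varphi(x,y)=((u\varphi)(x)-(u\varphi)(y))|x-y|^{-d/p-s}$ and splits the seminorm of $(u\varphi)*\zeta_\varepsilon-u\varphi$ into two parts:
\begin{itemize}
\item the $\Omega'\times\Omega'$ part, bounded by $\int\zeta(z)\|U_\varphi(\cdot-\varepsilon z,\cdot-\varepsilon z)-U_\varphi\|^p_{L^p(\Omega'\times\Omega')}\d z$ via continuity of shifts in $L^p$ and dominated convergence;
\item the cross part $\Omega'\times(\Omega\setminus\Omega')$, bounded by $2\|(u\varphi)*\zeta_\varepsilon-u\varphi\|^p_{L^p}\int_{|h|>\delta}|h|^{-d-sp}\d h$.
\end{itemize}
Your Meyers--Serrin step buys only that $\varphi u_n$ is Lipschitz with compact support in $\Omega$. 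Its zero extension is then trivially in $W^{1,p}(\R^d)$, and mollification converges there. You could even replace the shift argument by the elementary control of the Gagliardo seminorm by the $W^{1,p}$ norm. The price is a fractional Meyers--Serrin theorem on arbitrary open sets. Its standard proof already contains the fact you are proving here: mollification of compactly supported $W^{s,p}(\Omega)$ functions, glued together with the cutoff estimate of your second bullet. The translation-continuity argument you invoke for $\varphi u_n$ applies verbatim to $\varphi u$, once its zero extension is known to lie in $W^{s,p}(\R^d)$ (Lemma~\ref{lem:zero-exten}, or directly the paper's cross-term bound). So the density step is superfluous, though not wrong; both routes are correct.
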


\begin{proof}
We only prove for $0\leq s <1$  as the case $s=1$ is easily yielded by the same token. Clearly   for $1\leq p< \infty$  we  have
\begin{align*}
|\varphi(x)u(x)-\varphi(y)u(y)|^p
&\leq 2^{p-1}\Big( \|\varphi\|^p_{L^\infty(\Omega)}|u(x)-u(y)|^p+ |u(x)|^p|\varphi(x)-\varphi(y)|^p \Big)
\end{align*}
for all $x,y\in \Omega$. This immediately implies
\begin{align*}
[u\varphi ]^p_{W^{s,p}(\Omega)}
\leq
2^{p-1}\big(\|\varphi\|^p_{L^\infty(\Omega)} [u]^p_{W^{s,p}(\Omega)}
+ \|u\|^p_{L^\infty(\Omega)}  [u]^p_{W^{s,p}(\Omega)}\big).
\end{align*}
Now if  $\varphi\in C^{0,1}(\Omega)$ then  $ |\varphi(x)-\varphi(y)|\leq \|\varphi\|_{W^{1,\infty}(\Omega)}(1\land |x-y|^p)$, so that
\begin{align*}
|\varphi(x)u(x)-\varphi(y)u(y)|^p
&\leq 2^{p-1} \|\varphi\|^p_{W^{1,\infty}(\Omega)}
\Big( |u(x)-u(y)|^p+ |u(x)|^p(1\land |x-y|^p)\Big).
\end{align*}
Moreover, we note that
\begin{align*}
 \int_{\Omega}\int_{\Omega} \frac{|u(x)|^p(1\land |x-y|^p)}{|x-y|^{d+sp}} \d y \d x
&\leq  \|u\|^p_{L^p(\Omega)} \int_{\R^d} \frac{1\land |h|^p}{|h|^{d+sp}}\d h =  \frac{|\mathbb{S}^{d-1}|}{ps(1-s)} \|u\|^p_{L^p(\Omega)}.
\end{align*}
Therefore, integrating both sides  yields
\begin{align*}
 s(1-s)&\int_{\Omega}\int_{\Omega} \frac{|\varphi(x)u(x)-\varphi(y)u(y)|^p}{|x-y|^{d+sp}} \d y \d x
\\
&\leq  2^{p-1} \|\varphi\|^p_{W^{1,\infty}(\Omega)}   s(1-s)\int_{\Omega}\int_{\Omega} \frac{|u(x)|^p(1\land |x-y|^p)}{|x-y|^{d+sp}} \d x\d y\\
& \qquad\qquad\qquad + 2^{p-1} \|\varphi\|^p_{W^{1,\infty}(\Omega)}  s(1-s)\int_{\Omega}\int_{\Omega} \frac{|u(x)-u(y)|^p}{|x-y|^{d+sp}} \d y \d x\\
& \leq  2^{p-1} \|\varphi\|^p_{W^{1,\infty}(\Omega)}
\bigg(  |\mathbb{S}^{d-1}|\|u\|^p_{L^p(\Omega)}+  s(1-s) \int_{\Omega}\int_{\Omega} \frac{|u(x)-u(y)|^p}{|x-y|^{d+sp}} \d y \d x\bigg).
\end{align*}
Moreover, we have  $\|u\varphi\|_{L^p(\Omega)}
\leq \|\varphi \|_{L^\infty(\Omega)}  \|u\|_{L^p(\Omega)}$. Finally, while the case $p=\infty$ is obvious, we deduce that
\begin{align*}
\|u\varphi \|_{W^{s,p}(\Omega)}
&\leq  2^{1-1/p}(1+|\mathbb{S}^{d-1}|)^{1/p} \|\varphi\|_{W^{1,\infty}(\Omega)}  \|u\|_{W^{s,p}(\Omega)}.
\end{align*}
 Now assume $\varphi\in C^{0,1}_c(\Omega)$ with $\supp \varphi\subset K$. Let  $\zeta_\eps(x)=\eps^{-d} \zeta(\eps^{-1}x)$, $\eps>0$ with $\zeta$ satisfying $\zeta\in C^\infty(B_1(0))$, $\int\zeta =1$ and $0\leq \zeta\leq 1$. There is  bounded set $\Omega'\subset \Omega$  such that $\delta:= \dist(\Omega', \partial \Omega)>0$ and for  $\eps>0$ sufficiently small we have $\supp (u\varphi)*\eta_\eps \subset \supp\varphi+ B_\eps(0)\subset \overline{\Omega'}\subset \Omega$ and $\Omega'+ B_\eps(0)\subset\Omega$. It follows in particular that $[u\varphi]*\zeta_\eps\in C_c^\infty(\Omega)$ and $\| [u\varphi]*\zeta_\eps-[u\varphi]\|_{ L^p(\Omega)}\xrightarrow{\eps\to 0}0$.
 Moreover, the function
$(x,y)\mapsto U_\varphi (x, y)=([u\varphi](x)-[u\varphi](y))|x-y|^{-\frac{d	}{p}-s}$ satisfies, since $\Omega'+ B_\eps(0)\subset \Omega$, $\|U_\varphi (\cdot-\eps z, \cdot-\eps z)-U_\varphi \|_{L^p(\Omega'\times\Omega')}\leq 2^p \|U_\varphi \|_{L^p(\Omega\times\Omega)}=2^p |u|^p_{W^{s,p}(\Omega)}$ and, by continuity of the shift in $L^p(\Omega'\times \Omega')$, $\|U_\varphi (\cdot-\eps z, \cdot-\eps z)-U_\varphi \|_{L^p(\Omega'\times\Omega')}\xrightarrow{\eps\to1^-}0$, for each $z\in B_1(0)$. This in conjunction with dominated convergence theorem yield
\begin{align*}
\big|[u\varphi]*\zeta_\eps- &[u\varphi]\big|^p_{W^{s,p}(\Omega)}= \int_{\Omega'}\int_{\Omega'}\frac{|([u\varphi]*\zeta_\eps- [u\varphi])(x)- ([u\varphi]*\zeta_\eps- [u\varphi])(y)|^p}{|x-y|^{d+sp}}\d y\d x\\
&+2 \int_{\Omega'}|[u\varphi]*\zeta_\eps(x)- [u\varphi](x)|^p\int_{\Omega\setminus
\Omega'}\frac{\d y}{|x-y|^{d+sp}}\d x\\
&\leq \int_{B_1(0)}\hspace{-2ex}\zeta(z)\int_{\Omega'}\int_{\Omega'}|U_\varphi (x-\eps z, y-\eps z)-U_\varphi (x, y)|^p\d y\d x\d z \\
&
+2 \|[u\varphi]*\eta_\eps -[u\varphi]\|^p_{L^p(\Omega')}\int_{|h|>\delta}\frac{\d h}{|h|^{d+sp}}\,\xrightarrow{\eps\to1^-}0.
\end{align*}
Thus we obtain  $\| [u\varphi]*\zeta_\eps-[u\varphi]\|_{ W^{s,p}(\Omega)} \to 0$ as $\eps \to 0$, and hence $u\varphi \in W^{s,p}_0(\Omega)$.
\end{proof}

\begin{lemma}[Zero-extension]\label{lem:zero-exten}
Let $\Omega\subset \R^d$ be an open set, and $u\in W^{s,p}(\Omega)$ with $s\in[0,1]$ and
$1\leq p\leq \infty$. Assume there exists a closed subset $K\subset\Omega$ such that $u\equiv 0$ in $\Omega\setminus K$, then the zero-extension function $\widetilde{u}$ defined as
\begin{align*}
\widetilde{u}(x)=
\begin{cases}
  u(x) \quad x\in \Omega \,,\\
\,0 \qquad x\in \R^d \setminus \Omega\,
\end{cases}
\end{align*}
belongs to $W^{s,p}(\R^d)$ and we have
\begin{align*}
\|\widetilde{u}\|^p_{W^{s,p}(\R^d)} \leq 2(1+ |\mathbb{S}^{d-1}|) \max(1,\delta^{-p}) \|u\|^p_{W^{s,p}(\Omega)}  .
\end{align*}
\end{lemma}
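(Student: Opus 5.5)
The plan is to interpret $\delta$ as $\delta=\dist(K,\R^d\setminus\Omega)$. I will assume, as the statement implicitly requires, that $\delta>0$ (e.g.\ $K$ compact, or more generally at positive distance from $\partial\Omega$). The $L^p$ part is trivial: $\|\widetilde u\|_{L^p(\R^d)}=\|u\|_{L^p(\Omega)}$. This also settles the case $s=0$, since there $[\cdot]_{W^{0,p}}$ is a multiple of the $L^p$ norm. So the work is in the seminorm for $0<s\le1$, and I treat $0<s<1$ first.

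For $1\le p<\infty$ and $0<s<1$, I split $\R^d\times\R^d$ into $\Omega\times\Omega$, $\Omega\times\Omega^c$, $\Omega^c\times\Omega$ and $\Omega^c\times\Omega^c$. The last block contributes nothing, and the first gives exactly $[u]^p_{W^{s,p}(\Omega)}$. The two mixed blocks are equal by symmetry. Since $u(x)=0$ for $x\in\Omega\setminus K$, and $|x-y|\ge\delta$ for $x\in K$, $y\in\Omega^c$, each mixed block is bounded by
\begin{align*}
\int_K|u(x)|^p\int_{|x-y|\ge\delta}\frac{\d y}{|x-y|^{d+sp}}\d x=\frac{|\mathbb{S}^{d-1}|}{sp}\,\delta^{-sp}\|u\|^p_{L^p(\Omega)} .
\end{align*}
Multiplying by $s(1-s)$, the mixed contribution is at most $\frac{2(1-s)}{p}|\mathbb{S}^{d-1}|\delta^{-sp}\|u\|^p_{L^p}$. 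This is $\le 2|\mathbb{S}^{d-1}|\max(1,\delta^{-p})\|u\|^p_{L^p(\Omega)}$, using $\delta^{-sp}\le\max(1,\delta^{-p})$ for $s\in[0,1]$. Adding the pieces gives
\begin{align*}
\|\widetilde u\|^p_{W^{s,p}(\R^d)}\le \|u\|^p_{L^p}+[u]^p_{W^{s,p}(\Omega)}+2|\mathbb{S}^{d-1}|\max(1,\delta^{-p})\|u\|^p_{L^p},
\end{align*}
which is below the claimed bound $2(1+|\mathbb{S}^{d-1}|)\max(1,\delta^{-p})\|u\|^p_{W^{s,p}(\Omega)}$. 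For $p=\infty$ the same splitting works with suprema: for $x\in K$, $y\in\Omega^c$ one has $|u(x)|/|x-y|^s\le\delta^{-s}\|u\|_{L^\infty}$, and mixed pairs with $x\in\Omega\setminus K$ vanish.

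The case $s=1$ is where I expect the only real subtlety, namely showing that $\widetilde u\in W^{1,p}(\R^d)$ with $\nabla\widetilde u=\widetilde{\nabla u}$. I would fix a Lipschitz cutoff $\varphi$ with $\varphi=1$ on $K$, $\varphi=0$ outside $K+B_{\delta/2}$, and $|\nabla\varphi|\le 2/\delta$; for instance $\varphi(x)=(1-2\dist(x,K)/\delta)^+$. Then $u=u\varphi$ on $\Omega$. For a test function $\psi\in C_c^\infty(\R^d)$, the product $\varphi\psi$ is Lipschitz with compact support in $\Omega$, so it is an admissible test function for the weak derivative of $u$ in $\Omega$. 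This yields
\begin{align*}
\int_{\R^d}\widetilde u\,\partial_i\psi=\int_\Omega u\,\partial_i(\varphi\psi)-\int_\Omega u\,\psi\,\partial_i\varphi .
\end{align*}
The last integral vanishes, because $\partial_i\varphi=0$ a.e.\ on $K$ and $u=0$ off $K$. The first integral equals $-\int\widetilde{\partial_iu}\,\psi$, using once more that $\nabla u=0$ a.e.\ where $u=0$, together with the identity $\varphi\,\partial_iu=\partial_iu$. Hence $\|\nabla\widetilde u\|_{L^p(\R^d)}=\|\nabla u\|_{L^p(\Omega)}$, which is well within the bound. The $BV$ case for $p=1$ is handled identically with distributional derivatives. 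The case $p=\infty$, $s=1$ is the same identity.
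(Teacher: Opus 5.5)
Your proof is correct and follows the paper's argument. Like the paper, you split $\R^d\times\R^d$ into the four blocks and control the two mixed blocks through $|x-y|\ge\delta$ for $x\in K$, $y\in\R^d\setminus\Omega$; the only differences are that you compute the tail integral exactly as $\frac{|\mathbb{S}^{d-1}|}{sp}\delta^{-sp}$ where the paper compares it with the kernel $(1\land|h|^p)|h|^{-d-sp}$ (both give the same $s$-independent constant), and that your explicit cutoff argument for $s=1$ and your observation that $\delta>0$ must be assumed (true e.g.\ for compact $K$, as in the extension theorem) make precise points the paper only asserts.
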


\begin{proof}
Let $\delta=\dist(K,\partial\Omega)>0$, so that $|x-y|\geq \delta>0$ for $x\in K$ and $y\in\R^d\setminus \Omega$. In other words $ \R^d\setminus \Omega\subset \R^d\setminus B_\delta(x)$ for every $x\in K$. Hence  for $x\in K$ we find that
\begin{align*}
\int_{\R^d\setminus \Omega} \hspace{-0.4ex}\frac{\d y }{|x-y|^{d+sp}}
\leq \int_{B^c_\delta (0)}\hspace{-0.4ex}\frac{\d h }{|h|^{d+sp}}
&\leq \max(1,\delta^{-p}) \int_{\R^d}\frac{1\land |h|^p }{|h|^{d+sp}} \d h=  \max(1,\delta^{-p}) \frac{|\mathbb{S}^{d-1}|}{s(1-s)p}.
\end{align*}
Since $\supp u\subset K\subset\Omega$  we can split the seminorm as follows
\begin{align*}
s(1-s)\int_{\R^d}\int_{\R^d} &\frac{|\widetilde{u}(x)-\widetilde{u}(y)|^p}{|x-y|^{d+sp}} \d y\d x =  s(1-s) \int_{\Omega}\int_{\Omega} \frac{|u(x)-u(y)|^p}{|x-y|^{d+sp}} \d y \d x  \\
&+ 2 s(1-s) \int_{K}  |u(x)|^p \left( \int_{\R^d \setminus \Omega} \frac{\d y}{|x-y|^{d+sp}}\right)\d x\\
&\leq  s(1-s) \int_{\Omega}\int_{\Omega} \frac{|u(x)-u(y)|^p}{|x-y|^{d+sp}} \d y \d x + 2|\mathbb{S}^{d-1}| \max(1,\delta^{-p})\|u\|^p_{L^p(\Omega )}.
\end{align*}
Beside this, it is readily seen that  $\|\widetilde{u}\|_{L^p(\R^d)} =  \|u\|_{L^p(\Omega)}$, while  the case $s=1$  is standard and $\|\nabla \widetilde{u}\|_{L^p(\R^d)} =  \|\nabla u\|_{L^p(\Omega)}$ if $u\in W^{1,p}(\Omega)$.   Therefore, we find that
\begin{align*}
\|\widetilde{u}\|^p_{W^{s,p}(\R^d)} \leq 2(1+ |\mathbb{S}^{d-1}|) \max(1,\delta^{-p}) \|u\|^p_{W^{s,p}(\Omega)},
\end{align*}
for $s\in [0,1]$ and $1\leq p< \infty$. The case $p=\infty$ and $s\in (0,1)$ is straightforward.
\end{proof}

\begin{lemma}[Change of variables]\label{lem:change-variables}
Let $T : U'\to U$ be a bi-Lipschitz mapping  with  $U\subset \R^d$ and $U' \subset \R^d$ be open sets. Given $u:U\to \R$ and $v:U'\to \R$ such that $v= u\circ  T$, i.e., $u= v\circ T^{-1}$ then $u \in W^{s,p}(U)$, if and only if $u\circ T\in W^{s,p}(U')$.  Moreover there exists  $C = C(d,p,T)> 0$, such that
\begin{align*}
\begin{split}
[u\circ T]_{W^{s,p}(U')}&\leq C[u]_{W^{s,p}(U)}\\
[v\circ T^{-1}]_{W^{s,p}(U)}&\leq C[u]_{W^{s,p}(U')}
\end{split}\quad\quad\text{for $s\in [0,1]$ \,\, $1\leq p\leq \infty$}.
\end{align*}
\end{lemma}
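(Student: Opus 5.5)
The plan is to reduce everything to a pointwise comparison of difference quotients through the substitution $x = T(x')$, $y = T(y')$. Let $L\ge 1$ be a bi-Lipschitz constant of $T$, so that
\begin{align*}
L^{-1}|x'-y'|\le |T(x')-T(y')|\le L|x'-y'|\qquad (x',y'\in U').
\end{align*}
Because $T$ is Lipschitz, Rademacher's theorem makes it differentiable almost everywhere. The Jacobian of both $T$ and $T^{-1}$ is bounded by $L^d$, and the area formula therefore gives $\d x\le L^d \d x'$ and $\d x'\le L^d\d x$. Since the roles of $T$ and $T^{-1}$ are symmetric, it is enough to prove the first inequality, $[u\circ T]_{W^{s,p}(U')}\le C[u]_{W^{s,p}(U)}$. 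The second follows by applying the same argument to $T^{-1}$, and the equivalence $u\in W^{s,p}(U)\iff u\circ T\in W^{s,p}(U')$ is then immediate.

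The cases are handled in order of difficulty.

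For $s=0$, the change of variables gives directly $\|u\circ T\|_{L^p(U')}\le L^{d/p}\|u\|_{L^p(U)}$. The renormalizing constant $\tfrac{2|\mathbb{S}^{d-1}|}{p}$ appears on both sides and cancels.

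For $0<s<1$ and $p<\infty$, the seminorm is transformed as
\begin{align*}
\iint_{U'\times U'}\frac{|u(T x')-u(T y')|^p}{|x'-y'|^{d+sp}}\d x'\d y'
\le L^{d+sp}L^{2d}\iint_{U\times U}\frac{|u(x)-u(y)|^p}{|x-y|^{d+sp}}\d x\d y,
\end{align*}
using $|x'-y'|^{-1}\le L|x-y|^{-1}$. Multiplying by $s(1-s)$ leaves the constant $L^{3d+p}$, which is independent of $s$. For $p=\infty$, the analogous estimate is the pointwise sup bound with constant $L^s\le L$.

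For $s=1$, the plan is to use the chain rule for Sobolev functions composed with bi-Lipschitz maps, $\nabla(u\circ T)=(DT)^\top(\nabla u)\circ T$ a.e. (Ziemer's theorem). This gives $|\nabla(u\circ T)|\le L|\nabla u|\circ T$, and the change of variables then yields $\|\nabla(u\circ T)\|_{L^p(U')}\le L^{1+d/p}\|\nabla u\|_{L^p(U)}$. When $p=1$ and $BV$ is intended, one can either pass through the BBM characterization of Theorem~\ref{thm:mazy'a-bbm-limit}, applying the uniform fractional estimate and letting $s\to1^-$, or cite the standard pushforward of measures.

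The main obstacle is exactly this $s=1$ endpoint, specifically justifying the chain rule for a merely Lipschitz (not $C^1$) change of variables. The first attempt would be to avoid it altogether. The fractional bound holds with a constant uniform in $s$, and for $U$ and $U'$ of the kinds that matter (Lipschitz extension domains) one can let $s\to1^-$ using Theorem~\ref{thm:xBBM-frac}, or the general BBM $\liminf$ inequality on arbitrary open sets (valid locally), to deduce $[u\circ T]_{W^{1,p}(U')}\le C[u]_{W^{1,p}(U)}$. This gives a constant $C=C(d,p,L)$ valid for all $s\in[0,1]$ at once.
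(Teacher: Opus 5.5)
Your proposal is correct and is essentially the paper's proof. For $s<1$ you substitute $x=T(x')$, $y=T(y')$, compare $|x'-y'|$ with $|x-y|$ via the bi-Lipschitz bounds, and use the Rademacher/area-formula bounds on the Jacobians to get an $s$-independent constant (the paper obtains $M^{d+p+2}$, you obtain $L^{3d+p}$); at $s=1$ both arguments rest on the chain rule $\nabla(u\circ T)=(DT)^\top(\nabla u)\circ T$ for bi-Lipschitz $T$, which the paper likewise cites rather than proves. The BBM-based workaround you float for $s=1$ is not needed and would only apply to extension-type domains, because for a general open $U$ the upper bound $\limsup_{s\to1^-}[u]^p_{W^{s,p}(U)}\lesssim\|\nabla u\|^p_{L^p(U)}$ can fail, so the chain-rule route is the one to keep.
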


\begin{proof}
We only prove the first estimate as the second one is similar. Since $T : U' \to U$ and $T^{-1} : U \to U'$ are Lipschitz with constants $\mathrm{Lip}(T)$ and $\mathrm{Lip}(T^{-1})$, respectively, it follows from Rademacher's theorem (see \cite{Hei05}), that both $T$ and $T^{-1}$ are differentiable almost everywhere.Moreover , the Jacobian $DT$ and $DT^{-1}$ are bounded, with $\|DT\|_{L^\infty(U')} \leq \mathrm{Lip}(T)$ and $
\|DT^{-1}\|_{L^\infty(U)} \leq \mathrm{Lip}(T^{-1})
$. Let $M>1$ such that
\begin{align*}
\|\det D T\|_{L^\infty(U)}+ \|\det D T^{-1}\|_{L^\infty(U')}+ \mathrm{Lip}(T)+\mathrm{Lip}(T^{-1})\leq M.
\end{align*}
While the case $p=\infty$ is obvious, the case $s=1$ is also standard since (by mollifying $T$) one obtains the chain rule $\nabla (u\circ T)= (\nabla u)\circ T\cdot D T$; see  \cite[Theorem III.2.13.]{BF13}. Now we consider the case $1\leq p<\infty$ and $s\in [0,1)$. It is easy to find that
$\|u\circ T\|^p_{L^p(U')}
\leq M\|u\|^p_{L^p(U)}$.  Next,  we have $|x-y|^{d+sp}\leq M^{d+p}|T^{-1}(x)-T^{-1} (y)|^{d+sp}$ as since $T$ is Lipschitz.
Using the change of variables $x=T(\hat{x})$  and $y=T(\hat{y})$ implies
\begin{align*}
\int_{U'}\int_{U'}& \frac{|u(T(\hat{x}))-u(T(\hat{y}))|^p}{|\hat{x}-\hat{y}|^{d+sp}} \d\hat{y} \d\hat{x}  \\
 & = \int_{U}\int_{U} \frac{|u(x)-u(y)|^p}{|T^{-1}(x)-T^{-1}(y)|^{d+sp}}|\det DT^{-1}(x)| \d y |\det DT^{-1}(y)| \d x \\
 &  \leq M^{d+p+2} \int_{U}\int_{U} \frac{|u(x)-u(y)|^p}{|x-y|^{d+sp}}\d y \d x.
\end{align*}
That is $[u\circ T]_{W^{s,p}(U') } \leq C [u]_{W^{s,p}(U)} $ with $C=M^{\frac{d+2}{p}+1}$.
\end{proof}

\begin{theorem}[$W^{s,p}$-robust extension]\label{thm:robust-extension}
Assume $\Omega \subset \R^d$ is an open set whose boundary $\partial\Omega$
 is compact and Lipschitz. Then $\Omega$ is a robust  $W^{s,p}$-\emph{extension domain}, with $1\leq p \leq \infty$, in the following sense:

\smallskip
 \noindent For any open set $\Omega'\subset \R^d$  such that $\overline{\Omega}\subset \Omega'$, there exist a constant $C=C(d,p,\Omega, \Omega')>0$ and a linear operator $E: L^p(\Omega)\to L^p(\R^d) $
for which  the following properties hold.
\begin{itemize}
\item For every $u\in L^p(\Omega)$ we have, $Eu|_\Omega=u$  on $\Omega$ and $\supp Eu\subset \Omega'$.
\item  For  every $u\in W^{s,p}(\Omega)$ with $s\in [0,1]$,  we have $Eu\in W^{s,p}_0 (\Omega')$.
\item
For  each $s\in [0,1]$, the operator $E:  W^{s,p}(\Omega)\to  W^{s,p}(\R^d)$ is  bounded  and
\begin{align*}
\|Eu \|_{W^{s,p}(\R^d)}
\leq C\|u\|_{W^{s,p}(\Omega)},
\qquad\text{ $u\in W^{s,p}(\Omega)$}.
\end{align*}
\end{itemize}
\end{theorem}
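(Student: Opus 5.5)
The plan is the classical partition-of-unity construction (as in \cite{Hitchhiker}), taking care that each constant is independent of $s\in[0,1]$. First, since $\partial\Omega$ is compact and Lipschitz, we cover it by finitely many open sets $W_1,\dots,W_N$ with $\overline{W_j}\subset\Omega'$. For each $j$ we fix a bi-Lipschitz map $T_j:Q\to W_j$, where $Q=(-1,1)^d$, with $T_j(Q_+)=W_j\cap\Omega$ and $T_j(Q_0)=W_j\cap\partial\Omega$; these come from flattening the Lipschitz graph, $T_j(x',x_d)=(x',x_d+\zeta_j(x'))$ after a rotation. We add an open $W_0\Subset\Omega$ so that $\overline\Omega\subset\bigcup_{j=0}^N W_j$. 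We then choose a Lipschitz partition of unity $\psi_0,\dots,\psi_N$ with $\supp\psi_j\subset W_j$ compact and $\sum_j\psi_j=1$ on a neighbourhood of $\overline\Omega$ contained in $\Omega'$. All these choices depend only on $\Omega,\Omega'$.

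Next, for $u\in L^p(\Omega)$ we set $u_j=\psi_j u$ and define $Eu=\sum_j E_j u_j$. The piece $E_0u_0=\widetilde{u_0}$ is the zero extension. For $j\ge1$ we put $v_j=u_j\circ T_j$ on $Q_+$, take its even reflection $\overline{v_j}$ on $Q$, and set $E_ju_j=(\overline{v_j}\circ T_j^{-1})$ extended by zero outside $W_j$. Linearity, $Eu|_\Omega=u$ and $\supp Eu\subset\bigcup_j\supp\psi_j\subset\Omega'$ are immediate. For the bounds we chain the earlier lemmas, each with $s$-independent constants in the normalized norm. Lemma~\ref{lem:cutoff} gives $\|u_j\|_{W^{s,p}(\Omega)}\le C\|\psi_j\|_{W^{1,\infty}}\|u\|_{W^{s,p}(\Omega)}$. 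Lemma~\ref{lem:change-variables} transfers this to $v_j$ on $Q_+$. Lemma~\ref{lem:even-reflec}, applied with the symmetric set $Q$, gives the factor $4^{1/p}$. Lemma~\ref{lem:change-variables} then brings the reflected function back to $W_j$. Finally Lemma~\ref{lem:zero-exten} controls the zero extension to $\R^d$. This applies because $\overline{v_j}\circ T_j^{-1}$ vanishes outside the compact set $T_j(\overline{\text{reflected support}})\subset W_j$, whose distance $\delta_j$ to $\partial W_j$ is fixed. Summing over $j\le N$ by the triangle inequality yields $\|Eu\|_{W^{s,p}(\R^d)}\le C\|u\|_{W^{s,p}(\Omega)}$ with $C=C(d,p,\Omega,\Omega')$.

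For $Eu\in W^{s,p}_0(\Omega')$, we choose $\chi\in C^{0,1}_c(\Omega')$ with $\chi=1$ on $\bigcup_j\supp\psi_j$, hence on $\supp Eu$. Then $Eu=\chi\,Eu$, and the third item of Lemma~\ref{lem:cutoff} gives the claim.

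The step I expect to be the main obstacle is keeping every constant uniform in $s$. The real danger is the non-local coupling between the pieces when passing through cut-offs and zero extensions. Lemma~\ref{lem:zero-exten} handles this because it has already absorbed the $1/(s(1-s))$ blow-up of $\int_{|h|>\delta}|h|^{-d-sp}\,\d h$ into the normalization. The other delicate point is the bi-Lipschitz change-of-variables constant $M^{(d+2)/p+1}$. I would double-check that it is indeed independent of $s$, which holds since $sp\le p$. I would also verify that the reflected function lies in $W^{s,p}(Q)$ rather than merely in $W^{s,p}(Q_+)$ up to the boundary. The endpoints $s=0$ and $s=1$ are handled by the same chain, since each lemma covers them.
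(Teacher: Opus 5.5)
Your proposal is essentially the paper's proof. Both use local bi-Lipschitz flattening, a partition of unity, pull-back, even reflection via Lemma~\ref{lem:even-reflec}, push-forward, zero extension via Lemma~\ref{lem:zero-exten}, and summation. The only differences are that you multiply by $\psi_j$ before flattening (the paper does it after the push-forward, on $w_j$), and that you obtain $Eu\in W^{s,p}_0(\Omega')$ from one global cut-off $\chi$ rather than piece by piece.

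Two small corrections. First, with your ordering $\supp E_ju_j$ is the reflected compact set $K_j\subset W_j$ that you yourself identify, not $\supp\psi_j$. So $\supp Eu\subset\bigcup_j\supp\psi_j$ is false as written, and $\chi$ must equal $1$ on $\bigcup_j K_j$; the paper's ordering gives $\supp(\psi_jw_j)\subset\supp\psi_j$ directly. Second, exactly as in the paper, a compactly supported $\psi_0$ and $\chi$ tacitly assume $\Omega$ is bounded. For an exterior domain you should take $\psi_0=1-\sum_{j\ge1}\psi_j$ and add a truncation at infinity.
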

\smallskip

\begin{proof}
\textbf{Step 1: Local flattening.} Since $\partial\Omega$ is compact Lipschitz and $\dist(\Omega, \partial \Omega')>0$, there exists a finite number of open sets $U_1,U_2,\cdots, U_k$ and a finite number of elements $x_1, x_2, \cdots , x_k\in \partial\Omega$ where $\overline{U}_j\subset \Omega' $ and  $x_j\in \partial\Omega\cap U_j$, such that $\partial{\Omega} \subset \bigcup_{j=1}^k U_j$.  Moreover, for each $j\in\{1,\cdots,k\}$ there is a bi-Lipschitz isomorphism $T_j:Q\to U_j$ with inverse $T^{-1}_j: U_j\to Q$
such that
\begin{align*}
T_j(Q_+)=U_j\cap\Omega, \qquad
T_j(Q_0)=U_j\cap\partial\Omega
\end{align*}
where $Q_0:=\{ x\in Q :
 x_d=0\}$ and $Q_+=Q\cap \R^d_+$ with
\begin{align*}
Q&:=\{ x=(x',x_d)\in\R^d : |x'|<1\;\mbox{and}\;|x_d|<1\}.
\end{align*}
\textbf{Step 2: Partition of unity.} Clearly, we can write $ \Omega'= \bigcup_{j=1}^k U_j \cup \big(\Omega' \setminus \partial{\Omega}  \big)$. Accordingly, by the partition of unity,  there exist $k+1$ smooth functions $\psi_0$, $\psi_1$,..., $\psi_k\in C_c^\infty(\R^d)$ such that for each $j\in \{0,... ,k\}$ we have $\supp \psi_0\subset \R^d\setminus \partial{\Omega}$, $\supp\psi_j\subset U_j$, $0\leq \psi_j \leq 1$ and $\sum_{j=0}^k \psi_j=1$. We obviously have $
u=\sum_{j=0}^k \psi_j u.$
\medskip

\textbf{Step 3: Pull-back.} Recall that $T_j: Q\to U_j$ is the isomorphism of class $C^{0,1}$ with $T_j(Q_+)=U_j\cap \Omega$, $j\in\{1,...,k\}$. Let us consider the pull-back of $u|_{U_j\cap\Omega }$ defined by
\begin{align*}
v_j(y):=u( T_j(y) ) \qquad \text{ for any } y\in Q_+.
\end{align*}
 According to Lemma \ref{lem:change-variables}, $v_j \in W^{s,p}\left( Q_+ \right)$ and we have
\begin{align}\label{eq:Tj-push-forward}
\begin{split}
[v_j]^p_{W^{s,p}(Q_+) }
&\leq C [u]^p_{W^{s,p}(U_j\cap\Omega)}
\qquad \text{ and }\qquad
\|v_j\|^p_{L^p(Q_+) }
\leq C \|u\|^p_{L^p(U_j\cap\Omega)}.
\end{split}
\end{align}
Here and in the sequel, it will be clear from the context that $C = C(d,p,\Omega)>0$  denotes a generic constant independent of $s$ and possibly different step by step.

\smallskip
\textbf{Step 4: Even reflection.} According to  Lemma~\ref{lem:even-reflec}, the even reflection $\overline{v}_j:Q\to \R$ with  $\overline{v}_j(x)=v_j(x',|x_d|)$  belongs to $W^{s,p}(Q)$ and satisfies
\begin{align*}
[\overline{v}_j]^p_{W^{s,p}(Q)}
&\leq 4 [v_j]^p_{W^{s,p}(Q_+)}\qquad \text{ and }\qquad
\|\overline{v}_j\|^p_{L^p(Q) }
= 2 \|v_j\|^p_{L^p(Q_+)}.
\end{align*}
\textbf{Step 5: Push-forward.} Next, after extending $v_j$ to $Q$ as $\overline{v}_j$ we  have to push the extension back to $\Omega$ by defining
\begin{align*}
w_j(x):=\overline{v}_j\big(T_j^{-1}(x)\big)
\quad \text{for all } x\in U_j .
\end{align*}
It is worth noting that  $w_j\equiv u$. Since $T_j$ is bi-Lipschitz, by arguing  as above using  Lemma \ref{lem:change-variables},  it follows that $w_j\in W^{s,p}(U_j)$ and
\begin{align} \label{eq:Tj-pull-back}
\begin{split}
[w_j]^p_{W^{s,p}(U_j)}
&\leq C [\overline{v}_j]^p_{W^{s,p}(Q)}\leq C [v_j]^p_{W^{s,p}(Q_+)}\leq C [u]^p_{W^{s,p}(U_j\cap \Omega)}\\
\|w_j\|^p_{L^p(U_j) }
&\leq C \|\overline{v}_j\|^p_{L^p(Q)} \leq C\|v_j\|^p_{L^p(Q_+) }
\leq C \|u\|^p_{L^p(U_j\cap\Omega)}.
\end{split}
\end{align}
\textbf{Step 6: Zero-extension.} By definition $\psi_0u$ and $\psi_jw_j$,  $j\in\{1,\cdots,k\}$ , have compact support in $U_j\subset \Omega'$ so that Lemma \ref{lem:cutoff} implies $\psi_0u,\psi_jw_j\in W^{s,p}_0(U_j)$. By Lemma~\ref{lem:zero-exten} their corresponding zero the extensions $\widetilde{\psi_0 u}$ and $\widetilde{\psi_jw_j}$ to all $\R^d$ belong to $W^{s,p}(\R^d)$. Furthermore, since $\psi_0u,\psi_jw_j\in W^{s,p}_0(U_j)$ and  $\overline{U}_j\subset \Omega'$ it is not difficult to verify that $\widetilde{\psi_0 u}, \widetilde{\psi_jw_j}\in W^{s,p}_0(\Omega')$.
In sum,  $\widetilde{\psi_0 u}, \widetilde{\psi_jw_j}\in W^{s,p}_0(\Omega')\cap W^{s,p}(\R^d)$ and using Lemma~\ref{lem:zero-exten}, Lemma~\ref{lem:cutoff} and the estimate \eqref{eq:Tj-pull-back} we get
\begin{align}\label{eq:jth-cutoff}
\|\widetilde{\psi_j\,w_j}\|_{W^{s,p}(\R^d)}
\leq  C \|\psi_j\,w_j\|_{W^{s,p}(U_j)}  &\leq C \|w_j\|_{W^{s,p}(U_j)}\leq C \|u\|_{W^{s,p}(\Omega\cap U_j)},
\\
\label{eq:first-cutoff}
\|\widetilde{\psi_0\,u}\|_{W^{s,p}(\R^d)}&\leq \,C\,\| \psi_0\,u\|_{W^{s,p}(\Omega)}\,\leq\, C\, \|u\|_{W^{s,p}(\Omega)}.
\end{align}
\textbf{Step 7: Extension.} Finally, we define the desired linear extension operator by
\begin{align*}
Eu=\widetilde{\psi_0u}
+ \sum_{j=1}^k \widetilde{\psi_j w_j}.
\end{align*}
It readily it follows that $\supp Eu\subset \Omega'$, since $\supp \psi_j\subset U_j\subset \Omega'$. Moreover, we have $Eu \in W^{s,p}_0(\Omega')\cap W^{s,p}(\R^d)$  since $\widetilde{\psi_0 u}, \widetilde{\psi_jw_j}\in W^{s,p}_0(\Omega')\cap W^{s,p}(\R^d)$.  Next, since  $w_j\equiv u$ (and hence $\psi_jw_j\equiv \psi_ju$) on $U_j\cap \Omega$,  we have
\begin{align*}
 Eu(x)=\widetilde{\psi_0u}(x)+ \sum_{j=1}^k \widetilde{\psi_j w_j}(x)= \psi_0u(x)+ \sum_{j=1}^k \psi_j w_j(x)= u(x)\quad\text{for $x\in \Omega$}.
\end{align*}
That is $Eu|_{\Omega}= u$. Moreover, combining \eqref{eq:jth-cutoff} with \eqref{eq:first-cutoff}, we get
\begin{align*}
\|Eu\|_{W^{s,p}(\R^d)}  \leq  C \|u\|_{W^{s,p}(\Omega)}
\end{align*}
with  $C=C(d,p,\Omega)$ independent of $s\in [0,1]$.
\end{proof}


\vspace{1mm}
\noindent \textbf{Data Availability Statement (DAS)}: Data sharing not applicable, no datasets were generated or analyzed during the current study.

\vspace{1mm}
\noindent {\small \textbf{Conflict of Interest:}
{The author declares that there is no conflict of interest regarding the publication of this paper.}


\end{document}